\documentclass{article}
\usepackage{amsmath,amsthm,amssymb}

\title{The transmission problem with imperfect interfaces of small resistance\thanks{This work was supported by a KIAS Individual Grant (MG089001) at Korea Institute for Advanced Study.}}
\author{Shota Fukushima\thanks{Graduate School of Science and Technology, Gunma University, Gunma 376-8515, Japan. Email: \texttt{shota.fukushima@gunma-u.ac.jp}} \and Yong-Gwan Ji\thanks{School of Mathematics, Korea Institute for Advanced Study, Seoul 02455, S. Korea. Email: \texttt{ygji@kias.re.kr}} \and Hyeonbae Kang\thanks{Department of Mathematics and Institute of Applied Mathematics, Inha University, Incheon 22212, S. Korea. Email: \texttt{hbkang@inha.ac.kr}}}

\usepackage{mathrsfs,bm}
\usepackage{enumitem, mathtools,tablists}

\usepackage[labelfont=bf]{caption}

\usepackage{here}

\newcommand{\beq}{\begin{equation}}%
\newcommand{\eeq}{\end{equation}}%
\makeatletter
\newcommand{\leqnomode}{\tagsleft@true\let\veqno\@@leqno}
\makeatother

\newcommand{\jbk}[1]{\left\langle {#1} \right\rangle}

\newcommand{\e}{\mathrm{e}}
\newcommand{\iu}{i}

\newcommand{\df}{d}

\newcommand{\lap}{\Delta}

\newcommand{\nv}{\nu}
\newcommand{\fff}{\mathrm{I}}
\newcommand{\sff}{\mathrm{II}}
\DeclareMathOperator{\tr}{tr}
\DeclareMathOperator{\ad}{ad}

\DeclareMathOperator{\supp}{supp}
\DeclareMathOperator{\pv}{p.\!{}v.\!{}}
\DeclareMathOperator{\Ker}{Ker}
\DeclareMathOperator{\Ran}{Ran}
\DeclareMathOperator{\lspan}{span}
\DeclareMathOperator{\dom}{dom}

\newcommand{\p}{\partial}

\renewcommand{\Bbb}{\mathbb{B}}
\newcommand{\Cbb}{\mathbb{C}}

\newcommand{\Nbb}{\mathbb{N}}

\newcommand{\Rbb}{\mathbb{R}}

\newcommand{\Rscr}{\mathscr{R}}

\newcommand{\Acal}{\mathcal{A}}
\newcommand{\Bcal}{\mathcal{B}}

\newcommand{\Dcal}{\mathcal{D}}
\newcommand{\Ecal}{\mathcal{E}}
\newcommand{\Fcal}{\mathcal{F}}

\newcommand{\Hcal}{\mathcal{H}}

\newcommand{\Kcal}{\mathcal{K}}
\newcommand{\Lcal}{\mathcal{L}}
\newcommand{\Mcal}{\mathcal{M}}
\newcommand{\Ncal}{\mathcal{N}}

\newcommand{\Pcal}{\mathcal{P}}
\newcommand{\Qcal}{\mathcal{Q}}
\newcommand{\Rcal}{\mathcal{R}}
\newcommand{\Scal}{\mathcal{S}}
\newcommand{\Tcal}{\mathcal{T}}
\newcommand{\Ucal}{\mathcal{U}}

\newcommand{\Xcal}{\mathcal{X}}
\newcommand{\Ycal}{\mathcal{Y}}

\newcommand{\Ge}{\epsilon}

\newcommand{\Gvf}{\varphi}
\newcommand{\Gg}{\gamma}

\newcommand{\Gv}{\nu}

\newcommand{\Gt}{\theta}

\newcommand{\Gs}{\sigma}

\newcommand{\Gy}{\psi}

\newtheorem{prop}{Proposition}[section]
\newtheorem{theo}[prop]{Theorem}
\newtheorem{coro}[prop]{Corollary}
\newtheorem{lemm}[prop]{Lemma}

\theoremstyle{definition}
\newtheorem{defi}[prop]{Definition}

\newtheorem*{exam*}{Example}
\newtheorem*{rema*}{Remark}

\numberwithin{equation}{section}

\usepackage{xcolor}
\definecolor{blue800}{HTML}{0031d8}
\definecolor{myred}{HTML}{ce0000}

\usepackage{hyperref}
\hypersetup{%
colorlinks=true,
linkcolor=blue800,
urlcolor=blue800,
citecolor=blue800,
}
\usepackage{tikz}
\usetikzlibrary{math, cd, decorations.pathmorphing}

\newcommand{\SL}[1][\partial D]{\Scal_{#1}}
\newcommand{\DL}[1][\partial D]{\Dcal_{#1}}
\newcommand{\NP}[1][\partial D]{\Kcal_{#1}}

\DeclareMathOperator{\Capac}{\mathbf{Cap}}
\DeclareMathOperator{\capac}{cap}

\mathtoolsset{showonlyrefs=true}
\numberwithin{equation}{section}

\begin{document}
\maketitle

\begin{abstract}
We consider the transmission problem in presence of interfaces with imperfect bonding. The imperfect bonding condition is characterized by the positive resistance along the interface, which causes discontinuity of the potential across the interface while the flux is continuous. If the interface resistance is zero, then the interface is of perfect bonding, where both the potential and the flux of the solution are continuous across the interface. In this paper, we first construct using layer potentials the solution to the transmission problem with imperfect interfaces. We then prove that the solutions converge in various Sobolev spaces to the solution to the transmission problem with perfect interfaces as the interface resistance tends to zero. In particular, it is shown that the gradient of the solution converges in the uniform norm if the boundary is sufficiently regular.
\end{abstract}

\noindent{\footnotesize \textbf{MSC2020. }Primary 35J47; Secondary 31A10, 31B10, 74K15}

\noindent{\footnotesize \textbf{Key words. } transmission problem; imperfect bonding; perfect bonding; interface resistance; layer potential, convergence}

\tableofcontents

\section{Introduction}

Let $D\subset \Rbb^d$ ($d\geq 2$) be a bounded open set with the compact Lipschitz boundary $\partial D$. We decompose $D$ into connected components as
\[
    D=\bigcup_{j=1}^N D_j.
\]
We denote by $D^+$ the exterior $\Rbb^d\setminus \overline{D}$ of $D$. For a function $u$ defined on $\Rbb^d\setminus \partial D$, we denote by $u|_-$ the boundary trace from the interior domain $D$ and by $u|_+$ that from the exterior domain if they exist. The subject of this paper is the following transmission problem for perfect conductors:
\begin{equation}\leqnomode
    \label{IPB}\tag{ImPB}
    \begin{dcases}
        \lap u=0 & \text{in } \Rbb^d\setminus \partial D, \\
        u|_+-\gamma \p_\nv u|_+=u|_-=\mathrm{const.} & \text{on } \partial D_j, \ j=1, \ldots, N,  \\
        \int_{\partial D_j} \p_\nv u|_+ \, \df \sigma=0 & \text{for } j=1, \ldots, N,  \\
        u (x)-h(x)=O(|x|^{-d+1}) & \text{as } |x|\to \infty,
    \end{dcases}
\end{equation}
where $h$ is a given harmonic function on $\Rbb^d$ and the constants in the second condition are not given, but determined by the third condition, and may differ depending on the index $j$ of the connected component of $D$. Here and throughout this paper, $\p_\nv u$ denotes the normal derivative where the normal vector directs outward with respect to $D$.

This problem is realized as the limit, as the conductivity $k$ of $D$ tends to $\infty$ (so yielding the name `the perfect conducting problem'), of the same problem with the second and third conditions replaced with the following two conditions:
\begin{equation}\label{imperfect}
u|_+-\gamma \p_\nv u|_+=u|_-, \quad \p_\nv u |_+ = k \p_\nv u |_- \quad\mbox{on } \p D.
\end{equation}
In application, the boundary $\partial D$ is interpreted as the membrane, the interface between different phases or bonding of composite materials. The interface condition on $\p D$ is said to be perfect if both the potential $u$ and the associated flux are continuous across $\p D$, and imperfect or non-ideal if either $u$ or the flux fails to be continuous across the interface. The condition \eqref{imperfect} is imperfect if $\gamma >0$ because the potential $u$ is discontinuous, while it is perfect if $\gamma =0$. If the solution $u$ is discontinuous but its associated flux is continuous across the interface as in \eqref{IPB}, the interface condition is said to be of low conductivity type since it can be realized by taking a limit of a core-shell structure as the thickness of the shell and the conductivity inside the shell tend to zero such that the ratio of them converges to some positive quantity. Actually, this limit of the ratio corresponds to $\gamma^{-1}$. We refer to \cite{Benveniste-Miloh99, Milton23} for more details of the derivation of the imperfect interface condition. This scheme is studied in a mathematically rigorous way in \cite{BCF80}. There, this scheme is called interior reinforcement. The parameter $\gamma$ (or $\gamma^{-1}$) is referred to by various terms in different fields. On one hand, the quantity $\gamma$ is called (electric/thermal) interface resistance, contact resistance or Kapitza resistance (see \cite{Persson22, Torquato-Rintoul95}). On the other hand, its reciprocal $\gamma^{-1}$ is referred to as interface conductance, permeability coefficient or transfer coefficient (see \cite{TWSB19, VLA16}). The solution $u$ is interpreted as a static temperature distribution, electric or magnetic potential \cite{Hashin01}. An elastic analogue of \eqref{IPB} is also studied in relation to continuum mechanics of composite materials \cite{Hashin02}.

There are also mathematical studies of the imperfect interface for periodically placed inclusions (\cite{CPR09} for two-dimensional case and \cite{DallaRiva-Musolino13, Pernin99} for general dimensional case).
In \cite{Kang-Li19}, weakly neutral inclusions in two-dimensional plane are constructed with imperfect bonding interface condition. Here, the weakly neutral inclusion means a pair of $D$ and $\gamma$, where $\gamma$ may vary depending on the point on the interface $\partial D$, such that the solution $u$ decays faster at $\infty$, namely, the right hand side of the fourth condition in \eqref{IPB} is $O(|x|^{-d})$.

Recent interest in \eqref{IPB} arose from the estimates of the stress, the gradient $|\nabla u|$ of the solution $u$. In the perfect bonding case, namely, when $\Gg=0$, the stress may blow up as the distance $\Ge$ between $D_1$ and $D_2$ (assuming that there are two inclusions) tends to $0$, depending on the given harmonic function $h$. If the stress blows up, it is proved that the blow-up rate is $\Ge^{-1/2}$ in two-dimensions \cite{AKLLL07, Yun07} (see also Figure \ref{fg:two:disks}) and $(\Ge |\ln \Ge|)^{-1}$ in three-dimensions \cite{BLY09}. There is extensive literature regarding the gradient estimates for which we refer to \cite{DLY24, DLY25, Ji-Kang23IMRN} and references therein. Quite recently the stress estimate problem has been considered for \eqref{IPB} with a positive $\Gg$ as an approximation of the core-shell structure which is reminiscent of biological cells. It is proved in \cite{FJKLfse} that, if $D\subset \Rbb^2$ is the union of two distinct disks with same radii and $h$ is a linear function whose gradient is parallel to the shortest segment connecting the two disks, then, for fixed interface resistance $\gamma>0$, the gradient of the solution $u$ remains finite independent of $\Ge$. This result is generalized in \cite{DYZ26} to general dimensions and inclusions of general shape.

A natural question arises: Denoting the solution to \eqref{IPB} by $u^\Gg$, does $u^\Gg$ converge to $u^0$ as the parameter $\gamma>0$ tends to zero. The aim of this paper is to investigate this question. In the course of investigation, we also prove that the problem \eqref{IPB} admits a unique solution in appropriate Sobolev spaces.

In order to state the results in a precise manner, we introduce some notation.
We begin with function spaces where the unique solution to \eqref{IPB} exists. For $s\in \Rbb$ and $p\in [1, \infty]$, let $H^{s, p}(\Rbb^d)$ be the usual Sobolev space (also referred to as Bessel potential space) with the norm  
\[
    \|u\|_{H^{s, p}(\Rbb^d)}:=\|\Fcal^{-1} [\omega_s \Fcal [u]]\|_{L^p (\Rbb^d)},
\]
where $\omega_s(\xi):=(1+|\xi|^2)^{s/2}$ and $\Fcal$ is the Fourier transform. If $p\in (1, \infty)$ and $k \in \Nbb_0$ ($\Nbb_0$ is the set of all nonnegative integers), $H^{k, p}(\Rbb^d)$ coincides with the space of all functions in $L^p (\Rbb^d)$ whose weak derivatives of order up to $k$ also belong to $L^p (\Rbb^d)$, endowed with the natural norm. It is well-known that the family $\{H^{s, p}(\Rbb^d)\}_{s\in \Rbb, p\in (1, \infty)}$ is a complex interpolation scale in the sense that 
\begin{equation}\label{eq:Sobolev:interpolation}
    [H^{s_0, p_0}(\Rbb^d), H^{s_1, p_1}(\Rbb^d)]_\theta=H^{s, p}(\Rbb^d)
\end{equation}
provided $s_0, s_1\in \Rbb$, $s_0\neq s_1$, $p_0, p_1\in (1, \infty)$, $s=(1-\theta)s_0+\theta s_1$, $1/p=(1-\theta)/p_0+\theta /p_1$ and $\theta \in (0, 1)$. For a bounded Lipschitz $D\subset \Rbb^d$ and $s\in [0, \infty)$, the fractional Sobolev space $H^{s, p}(D)$ is defined by 
\[
    H^{s, p}(D):=\{ f|_D \mid f\in H^{s, p}(\Rbb^d)\}
\]
endowed with the quotient norm 
\[
    \|u\|_{H^{s, p}(D)}:=\inf \{ \|f\|_{H^{s, p}(\Rbb^d)}\mid f\in H^{s, p}(\Rbb^d), \ f|_D=u\}.
\]
Then the following properties are well-known for bounded Lipschitz $D\subset \Rbb^d$ \cite{Bergh-Loefstroem76,Jerison-Kenig95}. 
\begin{itemize}
    \item If $p\in (1, \infty)$ and $k \in \Nbb_0$, $H^{k, p}(D)$ coincides with the space of all functions in $L^p (D)$ whose weak derivatives of order up to $k$ also belong to $L^p (D)$, endowed with the natural norm.
    \item $[ H^{s_0, p_0}(D), H^{s_1, p_1}(D)]_\theta=H^{s, p}(D)$ provided $s_0, s_1\geq 0$, $p_0, p_1 \in (1, \infty)$, $\theta\in (0, 1)$, $s=(1-\theta)s_0+\theta s_1$ and $1/p=(1-\theta)/p_0+\theta/p_1$, where $[\cdot, \cdot]_\theta$ denotes the complex interpolation with parameter $\theta\in (0, 1)$. 
\end{itemize}

Next, we introduce Besov spaces. First, for $s\in \Rbb$ and $p, q\in [1, \infty]$, the Besov space $B^{s, p}_q (\Rbb^d)$ is defined as the collection of all $u$ satisfying
\[
    \|u\|_{B^{s, p}_q (\Rbb^d)}:=\sum_{j=0}^\infty \|2^{js}\Fcal^{-1}[\psi_j\Fcal[u]]\|_{L^p(\Rbb^d)}^q<\infty,
\]
where $\{\psi_j\}_{j=0}^\infty$ is the Littlewood--Paley decomposition, namely,
\begin{align*}
    &\psi_j (\xi):=\psi (2^{-j+1}|\xi|) \quad (j\geq 1) \text{ for some } \psi\in C_{\mathrm{c}}^\infty (\Rbb) \text{ with } \supp \psi\subset \left(\frac{1}{2}, 2\right), \\
    &\psi_0(\xi)+\sum_{j=1}^\infty \psi_j(\xi)=1 \text{ for } \xi\in \Rbb^d.
\end{align*}
It is well-known that, if $s=k+\theta$ for some $k\in \Nbb_0$ and $\theta\in (0, 1]$, and $p, q\in [1, \infty]$, the space $B^{s, p}_q (\Rbb^d)$ is characterized by the norm
    \begin{align*}
        \|u\|_{B^{s, p}_q (\Rbb^d)}:=&\sum_{|\alpha|\leq k} \|\partial^\alpha u\|_{L^p (\Rbb^d)}+\sum_{|\alpha|=k} [\partial^\alpha u]_{B^{\theta, p}_q (\Rbb^d)},
    \end{align*}
    where the seminorm $[\cdot]_{B^{\theta, p}_q(D)}$ is defined by 
    \begin{equation}\label{eq:Besov:difference}
        [u]_{B^{\theta, p}_q(\Rbb^d)}:=
        \begin{cases}
            \displaystyle \left(\int_{\Rbb^d}\|u-u(\cdot +h)\|_{L^p}^q\frac{dh}{|h|^{d+\theta q}}\right)^{1/q} & \text{if } \theta\in (0, 1), \\[10pt]
            \displaystyle \left(\int_{\Rbb^d}\|u(\cdot-h)+u(\cdot+h)-2u\|_{L^p}^q\frac{dh}{|h|^{d+\theta q}}\right)^{1/q} & \text{if } \theta=1.
        \end{cases}
    \end{equation}
    (See \cite{Triebel92}.) Here and throughout this paper, we employ the multi-index notation
\[
    \partial^\alpha:=\partial_{x_1}^{\alpha_1}\cdots \partial_{x_d}^{\alpha_d}, \ |\alpha|:=\alpha_1+\cdots+\alpha_d \quad (\alpha=(\alpha_1\ldots, \alpha_d)\in \Nbb_0^d).
\]
In the case when $q=\infty$, the seminorm $[\cdot]_{B^{\theta, p}_\infty}$ is interpreted as 
    \[
        [u]_{B^{\theta, p}_\infty (\Rbb^d)}=
        \begin{cases}
            \displaystyle \sup_{h\neq 0} \frac{\|u-u(\cdot +h)\|_{L^p}}{|h|^\theta} & \text{if } \theta\in (0, 1), \\[10pt]
            \displaystyle \sup_{h\neq 0}\frac{\|u(\cdot-h)+u(\cdot+h)-2u\|_{L^p}}{|h|^\theta} & \text{if } \theta=1.
        \end{cases}
    \]
    In particular, for $\theta\in (0, 1)$, 
\[
    [u]_{B^{\theta, \infty}_\infty (\Rbb^d)}=\sup_{x\neq y} \frac{|u(x)-u(y)|}{|x-y|^{\theta}}
\]
is nothing but the H\"older seminorm. The space $B^{s, \infty}_\infty (\Rbb^d)$ is also referred to as the H\"older--Zygmund space. 

Next, similarly to the fractional Sobolev spaces on domains, for a bounded Lipschitz $D\subset \Rbb^d$ and $s\in (0, \infty)$, the Besov space $B^{s, p}_q (D)$ is defined by 
\[
    B^{s, p}_q (D):=\{ f|_D \mid f\in B^{s, p}_q (\Rbb^d)\}
\]
endowed with the quotient norm 
\[
    \|u\|_{B^{s, p}_q (D)}:=\inf \{ \|f\|_{B^{s, p}_q (\Rbb^d)}\mid f\in B^{s, p}_q (\Rbb^d), \ f|_D=u\}.
\]
Similar characterization to \eqref{eq:Besov:difference} for $B^{s, p}_q(D)$ in terms of difference of function inside $D$ is known \cite{Dispa03}.   

The family of Besov spaces has the following interpolation properties \cite{Bergh-Loefstroem76, Jerison-Kenig95}. In what follows, $(\cdot, \cdot)_{\theta, q}$ denotes the real interpolation.
\begin{itemize}
    \item $(B^{s_0, p}_{q_0} (\Rbb^d), B^{s_1, p}_{q_1} (\Rbb^d))_{\theta, r}=B^{s, p}_r (\Rbb^d)$ provided $s_0, s_1\in \Rbb$, $s_0\neq s_1$, $p, q_0, q_1, r\in [1, \infty]$, $s=(1-\theta)s_0+\theta s_1$ and $\theta\in (0, 1)$.
    \item For bounded Lipschitz $D\subset \Rbb^d$, 
\[
    ( B^{s_0, p}_{q_0}(D), B^{s_1, p}_{q_1}(D))_{\theta, r}=B^{s, p}_r (D)
\]
provided $s_0, s_1> 0$, $s_0\neq s_1$, $p, q_0, q_1, r\in [1, \infty]$, $s=(1-\theta)s_0+\theta s_1$ and $\theta\in (0, 1)$.
\item For bounded Lipschitz $D\subset \Rbb^d$, 
\[
    (L^p (D), H^{k, p}(D))_{\theta, q}=B^{\theta k, p}_q (D)
\]
provided $k\in \Nbb$, $p, q\in [1, \infty]$ and $\theta\in (0, 1)$.
\end{itemize}
We define $B^{s, p}(D):=B^{s, p}_p (D)$ for short. It is well-known that $B^{s,2}(\Rbb^d)=H^{s, 2}(\Rbb^d)$ with the norm equivalence for any $s\in \Rbb$. Thus we have $B^{s,2}(D)=H^{s, 2}(D)$ for any open $D\subset \Rbb^d$ and $s>0$ as well. In fact, the continuous embeddings 
    \begin{align*}
        &B^{s, p}(D) \hookrightarrow H^{s, p}(D) \hookrightarrow B^{s, p}_2 (D) \quad (p\in (1, 2]), \\
        &B^{s, p}_2 (D) \hookrightarrow H^{s, p}(D) \hookrightarrow B^{s, p} (D) \quad (p\in [2, \infty))
    \end{align*}
    are valid for $s>0$ \cite{Bergh-Loefstroem76}.

The following is the first main result of this paper. 

\begin{theo}
    \label{theo:L2:sol:approx}
    Let $D\subset \Rbb^d$ ($d \ge 3$) be a bounded open set with Lipschitz boundary. Then there exists $\Ge_*=\Ge_* (D)\in (0, 1]$ which only depends on $D$ (and will be introduced in Theorem \ref{theo:layer:Lip}) such that there exists a unique solution $u^\gamma$ to \eqref{IPB} such that 
    \[
        u^\gamma |_{U\setminus \overline{D}}\in \bigcap_{1<p\leq 2} B^{1+1/p, p}_2 (U\setminus \overline{D})\cap \bigcap_{2\leq p<\frac{2}{1-\Ge_*}} H^{1+1/p, p}(U\setminus \overline{D})
    \]
    for any open ball $U\subset \Rbb^d$ containing $\overline{D}$. 

    If $\partial D$ is $C^1$, then we can take $\Ge_*=1$. Moreover, $u^\gamma$ converges to $u^0$ as $\Gg \to 0$ in $B^{1/p, p}_2 (U\setminus \overline{D})$ for any $p\in (1, 2]$ and in $H^{1/p, p}(U\setminus \overline{D})$ for any $p\in [2, 2/(1-\Ge_*))$.
\end{theo}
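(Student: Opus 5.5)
We will construct $u^\gamma$ by layer potentials. Inside $D$ the solution is a constant $c_j$ on each $D_j$. Outside $D$ we make the ansatz
\[
u^\gamma = h + \SL[\varphi] \quad \text{in } D^+ ,
\]
with a density $\varphi$ on $\partial D$ to be determined. The jump relation gives $\partial_\nu \SL[\varphi]|_+ = (\tfrac12 I + \NP^*)[\varphi]$. The third condition of \eqref{IPB} then forces $\int_{\partial D_j}\varphi\,d\sigma=0$ for every $j$, since $\int_{\partial D_j}(\tfrac12 I+\NP^*)\varphi = \int_{\partial D_j}\varphi$ up to the exterior contribution $\int_{\partial D_j}\partial_\nu h=0$. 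The second condition becomes
\[
\SL[\varphi] - \gamma\Bigl(\tfrac12 I + \NP^*\Bigr)[\varphi] = -h + \gamma\,\partial_\nu h + \sum_j c_j\chi_{\partial D_j} \quad \text{on } \partial D .
\]
So we must solve the system for $(\varphi, c_1,\dots,c_N)$, with $\varphi$ in the mean-zero subspace $L^p_0(\partial D)$ (mean zero on each component) and with the constraint above. Because $d\ge3$, $\SL\colon L^p(\partial D)\to H^{1,p}(\partial D)$ is invertible for Lipschitz $\partial D$ in the range $1<p<2+\epsilon$. This is where the range of $p$ comes from, and $\Ge_*$ is the constant of Theorem~\ref{theo:layer:Lip}.

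We will write the operator as $A_\gamma := \SL - \gamma(\tfrac12 I+\NP^*)$, augmented by the constants. At $\gamma=0$ the augmented operator $A_0$ is an isomorphism from $L^p_0(\partial D)\times\Cbb^N$ onto $H^{1,p}(\partial D)$; this is the classical perfect-conductor solvability. For $\gamma>0$ we will \textbf{not} treat $\gamma(\tfrac12+\NP^*)$ as a small perturbation: it is unbounded relative to the $H^{1,p}$ target, since it only maps $L^p\to L^p$. Instead we will use an energy/coercivity argument. Pairing the equation with $\varphi$ gives
\[
\langle \SL\varphi,\varphi\rangle \;+\; \gamma\,\|\partial_\nu \SL\varphi|_+\|\cdots ,
\]
which, via Green's identity, is the energy $\int_{D^+}|\nabla \SL\varphi|^2$ plus $\gamma\int_{\partial D}|\partial_\nu u|_+|^2$. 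Both terms are nonnegative. This gives uniqueness, and in $L^2$ (or $H^{-1/2}$) the invertibility of $A_\gamma$. It also gives a bound on $\varphi$ in $H^{-1/2}(\partial D)$ that is uniform in $\gamma$, and with it uniform bounds on $\gamma^{1/2}\|\varphi\|_{L^2}$.

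Next we pass to $L^p$. We will write $\varphi_\gamma - \varphi_0 = -A_0^{-1}\bigl[\gamma(\tfrac12+\NP^*)\varphi_\gamma - \gamma\partial_\nu h\bigr]$. From $\gamma\|\varphi_\gamma\|_{L^2}=O(\gamma^{1/2})$ we will deduce $\varphi_\gamma\to\varphi_0$ in the dual space $H^{-1,p}$-type sense. We will then interpolate with a uniform $L^p$ bound on $\varphi_\gamma$. For that uniform bound we will try a Rellich-type estimate: the operators $\tfrac12+\NP^*$ and $\SL$ are "positive" in compatible ways, which should give $\|\varphi_\gamma\|_{L^p}\lesssim \|h\|$ uniformly in $\gamma\in(0,1]$. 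We expect this uniform $L^p$ bound to be the \textbf{main obstacle}. Our plan there is a resolvent/sectoriality argument: write the equation as $(\SL^{-1}(\tfrac12+\NP^*) + \gamma^{-1})$-type and use that $\SL^{-1}(\tfrac12+\NP^*)$ is, via the Dirichlet-to-Neumann map, a nonnegative self-adjoint operator on the relevant space; its resolvent is then uniformly bounded. We will extend this to $L^p$ by Stein interpolation or by $L^p$ estimates for the Robin problem on Lipschitz domains in the same range of $p$.

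Finally, we transfer to the domain. Layer potential mapping properties (Jerison--Kenig) say $\SL\colon L^p(\partial D)\to B^{1+1/p,p}_2(U\setminus\overline D)$ for $p\le2$, and into $H^{1+1/p,p}$ for $p\ge2$ in the admissible range. These give the stated regularity of $u^\gamma$, and uniqueness follows from the energy identity. For convergence, $\varphi_\gamma\to\varphi_0$ in $H^{-1,p}(\partial D)$ (weak-type) yields $\SL\varphi_\gamma\to\SL\varphi_0$ in $B^{1/p,p}_2$ (respectively $H^{1/p,p}$), which is one derivative lower. Interpolating with the uniform bound also gives convergence in intermediate spaces. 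In the $C^1$ case, $\NP^*$ is compact, and the invertibility range extends to all $p\in(1,\infty)$, so $\Ge_*=1$.
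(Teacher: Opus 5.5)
Your reformulation is legitimate and reduces to the same operator as the paper. Since $\Lambda_+\mathcal{S}_{\partial D}=-(\frac12 I+\mathcal{K}_{\partial D}^*)$, your operator is $\mathcal{S}_{\partial D}-\gamma(\frac12 I+\mathcal{K}_{\partial D}^*)=(I+\gamma\Lambda_+)\mathcal{S}_{\partial D}$. Subtracting the equation at $\gamma=0$ gives
\[
\mathcal{S}_{\partial D}[\varphi_\gamma-\varphi_0]=\gamma\,\partial_\nu u^\gamma|_+ +\sum_{j=1}^N (c_j^\gamma-c_j^0)1_{\partial D_j} \quad \text{on } \partial D .
\]
Here $\partial_\nu u^\gamma|_+$ is exactly the paper's density $\varphi^\gamma$, which solves $(I+\gamma\Lambda_+)[\varphi^\gamma]=\varphi^0+\sum_j a^\gamma_j e_j$.

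So convergence in $H^{1/p,p}(U\setminus\overline{D})$ hinges on $\gamma\,\partial_\nu u^\gamma|_+\to 0$ in $L^p(\partial D)$. The uniform $L^p$ bound you want on your density is in substance $\sup_{\gamma>0}\|(I+\gamma\Lambda_+)^{-1}\|_{L^p\to L^p}<\infty$.

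For $p\le 2$ a corrected energy estimate suffices. It gives $\gamma\|\partial_\nu u^\gamma|_+\|_{L^2}=O(\gamma^{1/2})$; moreover $H^{-1,2}(\partial D)\subset H^{-1,p}(\partial D)$, and $c^\gamma\to c^0$ by invertibility of $\Capac^0_D$.

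For $p\in(2,2/(1-\Ge_*))$, however, there is a genuine gap exactly at the step you flag as the main obstacle, and neither proposed tool closes it:
\begin{itemize}
\item Riesz--Thorin or Stein interpolation needs a second endpoint with a $\gamma$-uniform bound. Self-adjointness and nonnegativity of $\Lambda_+$ give only $p=2$.
\item The $L^p$ theory of the Robin problem on Lipschitz domains has constants depending on the Robin coefficient $1/\gamma$. It therefore gives no uniformity as $\gamma\to 0$.
\end{itemize}

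The missing idea is the $L^\infty$-contractivity (sub-Markov property) of $(I+\gamma\Lambda_+)^{-1}$, i.e.\ a weak maximum principle for the exterior Robin problem. The paper proves it through the Beurling--Deny/Ouhabaz criterion. The closed form $b(\varphi,\psi)=\langle\Lambda_+[\varphi],\psi\rangle$ on $B^{1/2,2}(\partial D)$ is stable under $\varphi\mapsto\varphi^+$ and $\varphi\mapsto\varphi\wedge 1$, and satisfies $b(\varphi,\varphi-(\varphi\wedge 1)^+)\ge 0$. This is shown by truncating the harmonic extension and applying Green's formula in $D^+$; this is where $d\ge 3$ enters.

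The argument then runs as follows:
\begin{itemize}
\item Self-adjointness gives $L^1$-contractivity.
\item Interpolation gives $\|e^{-t\Lambda_+}\|_{L^p\to L^p}\le 1$ for all $p$.
\item Hence $\|(I+\gamma\Lambda_+)^{-1}\|_{L^p\to L^p}\le 1$, once the $L^p$-generator is identified with $-\Lambda_+$ on $H^{1,p}(\partial D)$ via the isomorphism $I+\gamma\Lambda_+\colon H^{1,p}(\partial D)\to L^p(\partial D)$.
\end{itemize}

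Smaller points:
\begin{itemize}
\item The energy identity as you state it is not right. Pairing with $\varphi$ produces $\langle\mathcal{S}_{\partial D}[\varphi],\varphi\rangle$, which is the energy over all of $\Rbb^d$, and $\langle(\frac12 I+\mathcal{K}^*_{\partial D})[\varphi],\varphi\rangle$, which is not $\|\partial_\nu u|_+\|^2_{L^2}$. The right test function is $(\frac12 I+\mathcal{K}^*_{\partial D})[\varphi]=\partial_\nu\mathcal{S}_{\partial D}[\varphi]|_+$. Against it the constants $c_j$ drop out, because $\int_{\partial D_j}\varphi\,d\sigma=0$.
\item After obtaining $\nabla u=0$ in $D^+$, you still need the linear independence of $\{1_{\partial D_k}\}\cup\{1_{\partial D^+_j}\}$ to conclude $c=0$. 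Here the $D^+_j$ are the bounded components of $D^+$.
\item Solvability at fixed $\gamma$ in $L^p$ for $p>2$ needs a Fredholm argument on $L^p$. It is routine: injectivity follows from $L^p\subset L^2$.
\end{itemize}
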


The above result only includes the case of dimension higher than 2. It is because its proof heavily relies on mapping properties of layer potentials on various $L^p$-Sobolev spaces and such properties are not available in two dimensions if $\p D$ is merely Lipschitz to the best of our knowledge. In two dimensions, we restrict ourselves to $L^2$-Sobolev spaces and show that the solution uniquely exists in $H^{s+1/2, 2}(U\setminus \overline{D})$ for any $s \in (0,1)$.

We are particularly interested in convergence of the gradients in the uniform norm in relation to blow-up of the gradient of the solution to \eqref{IPB} with $\Gg=0$. Actually this work is motivated by this question as mentioned earlier.
In order to deal with this question, we require regularity of domains stronger than $C^1$, say $C^{k,\theta}$ boundary for some integer $k \ge 1$ and $\theta \in (0,1]$. Under this assumption, we are able to include the two-dimensional case since various mapping properties are known even for the two-dimensional case.

The following theorem yields quantitative estimates for convergence of $u^\gamma$ to $u^0$. 

\begin{theo}
    \label{theo:existence}
    Let $D\subset \Rbb^d$ ($d\geq 2$) be a bounded open set with $C^{k,\theta}$ boundary for some integer $k \ge 1$ and $\theta \in (0,1]$. Let $U$ be an open ball containing $\overline{D}$. The following holds.
\begin{enumerate}[label=\textnormal{(\roman*)}]
    \item The solution $u^\gamma$ to the problem \eqref{IPB} belongs to $B^{s+1/p, p}(U\setminus \overline{D})$ ($p\in (1, \infty]$) and $H^{s+1/p, p}(U\setminus \overline{D})$ ($p\in (1, \infty)$) for any $s\in (1, k+\theta)$.

    \item \label{enum:asympt}There exists a unique harmonic function $v_l$ in $D^+$ ($l=0, 1, \ldots, k-1$) such that $v_0=u^0$, $v_l|_{U\setminus \overline{D}}$ belongs to $B^{s+1/p-l, p}(U\setminus \overline{D})$ for all $p\in [1, \infty]$ and $H^{s+1/p-l, p}(U\setminus \overline{D})$ for all $p\in (1, \infty)$, with the property that, for any $s\in (k, k+\theta)$ and $K=0, 1, \ldots, k-1$, both 
    \[
        \left\|u^\gamma-\sum_{l=0}^K \gamma^l v_l\right\|_{H^{s+1/p-1-K, p}(U\setminus \overline{D})} \quad (p\in (1, \infty))
    \]
    and 
    \[
        \left\|u^\gamma-\sum_{l=0}^K \gamma^l v_l\right\|_{B^{s+1/p-1-K, p}(U\setminus \overline{D})} \quad (p\in (1, \infty])
    \]
    are of order $o(\gamma^K)$ as $\gamma\to +0$.
        \end{enumerate}
\end{theo}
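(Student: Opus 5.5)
The plan is to reduce \eqref{IPB} to an equation on $\partial D$ involving the exterior Dirichlet-to-Neumann map, and to derive both parts of the theorem from a resolvent estimate for that map that is uniform in $\gamma$.

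\textbf{Reduction to the boundary.} Inside $D_j$ the solution is the constant $c_j$. Put $w:=u-h$ in $D^+$. Then $w$ is harmonic in $D^+$ and $w(x)=O(|x|^{1-d})$ at infinity. Let $f:=w|_+$ and let $\Lambda$ be the exterior Dirichlet-to-Neumann map, so $\partial_\nu w|_+=-\Lambda f$. In dimension $2$ one works modulo the usual constant/logarithmic correction, using single layer potentials with mean-zero densities. By Green's formula $\Lambda$ is self-adjoint and nonnegative on $L^2(\partial D)$.

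Conditions two and three of \eqref{IPB} then become
\[
(I+\gamma\Lambda)f=-h+\gamma\,\partial_\nu h+\sum_j c_j\chi_{\partial D_j},\qquad \int_{\partial D_j}(\partial_\nu h-\Lambda f)\,d\sigma=0 .
\]
This is a finite-rank augmentation of $I+\gamma\Lambda$: the $N$ unknowns $c_j$ are paired with the $N$ flux conditions. Uniqueness follows from an energy identity. For data $0$, multiply by $\Lambda f$ and integrate to get $\int|\nabla w|^2+\gamma\|\Lambda f\|^2_{L^2}=0$, using the zero-flux conditions to kill the $c_j$ terms. Hence $w=0$.

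\textbf{Part (i).} On a $C^{k,\theta}$ boundary, $\Lambda$ is a first-order elliptic operator, so it can be written through layer potentials, e.g.\ $\Lambda=-(\tfrac12 I+\NP^*)\SL^{-1}$ modulo finite rank when $d=2$. The earlier mapping properties of $\SL$ and $\NP^*$ on $B^{t,p}(\partial D)$ and $H^{t,p}(\partial D)$ then apply for $t<k+\theta$. Consequently $I+\gamma\Lambda$ is Fredholm of index zero from $H^{t,p}$ to $H^{t-1,p}$, and injectivity gives invertibility.

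The exterior Dirichlet problem maps $B^{t,p}(\partial D)$ into $B^{t+1/p,p}(U\setminus\overline D)$, and likewise for the $H$ scale. Combining this with the above gives the regularity in (i), since $h$ is smooth. The same steps handle the $H^{s+1/p,p}$ statements.

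\textbf{Part (ii), expansion.} Define the $v_l$ recursively.
\begin{itemize}
\item $v_0=u^0$.
\item For $l\geq1$, $v_l$ is the harmonic function in $D^+$ decaying like $O(|x|^{1-d})$, with boundary values $v_l|_+=\partial_\nu v_{l-1}|_+ + c^{(l)}_j$ on $\partial D_j$ (for $l=1$ use $\partial_\nu u^0$) and zero flux through each $\partial D_j$. Here $c^{(l)}_j$ are the constants fixed by the flux conditions.
\end{itemize}
Each step costs exactly one derivative, since $\partial_\nu$ maps the trace space of regularity $s+1/p-l+1$ in the domain to boundary regularity $s-l$. This gives the stated regularity of $v_l$ for $l\le k-1$.

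A direct computation shows that $r_K:=u^\gamma-\sum_{l\le K}\gamma^l v_l$ solves the homogeneous-at-infinity version of the boundary problem. Its boundary datum is $\gamma^{K+1}\partial_\nu v_K|_+$, up to constants. Solving it, one finds
\[
r_K|_+=\gamma^K\big[(I+\gamma\Lambda)^{-1}-I\big]g_K ,
\]
where $g_K$ is (essentially) $v_{K}|_+$ modulo constants. Here $g_K\in B^{t,p}(\partial D)$ with $t=s-1-K$.

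The key claims are the following.
\begin{itemize}
\item $(I+\gamma\Lambda)^{-1}$ is bounded on $B^{t,p}(\partial D)$ and $H^{t,p}(\partial D)$ uniformly in $\gamma\in(0,1]$.
\item $\gamma\Lambda(I+\gamma\Lambda)^{-1}\to0$ strongly.
\end{itemize}
Strong convergence follows from uniform boundedness together with convergence on a dense smoother subspace, where the bound is $O(\gamma)$. For $p=\infty$ one argues by interpolation, or uses density in the little-Hölder space. This yields $r_K|_+=o(\gamma^K)$ in $B^{t,p}$. Lifting by the exterior Dirichlet problem gives the $o(\gamma^K)$ bound in $B^{s+1/p-1-K,p}(U\setminus\overline D)$.

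\textbf{Main obstacle.} The hard step is the uniform resolvent bound for $\Lambda$ in the $L^p$-based spaces with $p\neq2$. In $L^2$ it is immediate from the spectral theorem. For general $p$ I would show that $\Lambda$ is sectorial on $L^p(\partial D)$ with angle less than $\pi$. The first attempt is to view $\Lambda$ as a classical elliptic pseudodifferential operator of order one with principal symbol $|\xi|$ (on a $C^{k,\theta}$ surface, via a parametrix built from layer potentials). A Mikhlin-type estimate then gives a parameter-elliptic parametrix for $\lambda+\Lambda$ with $\lambda\ge1$. The compact error terms are absorbed for large $\lambda=\gamma^{-1}$ by a Neumann-series argument, and the remaining bounded range of $\gamma$ is covered by the invertibility already proved in (i). Interpolation then transfers the bound to the whole $B^{t,p}$ and $H^{t,p}$ scales.
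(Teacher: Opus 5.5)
Your overall architecture is sound, and it differs from the paper mainly in bookkeeping.

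\textbf{How your route compares.} You work with the Dirichlet trace $(u^\gamma-h)|_+$. You define $v_l$ recursively by $v_l|_+=\partial_\nu v_{l-1}|_++c^{(l)}_j$ with zero fluxes, and you estimate $r_K$ through the same Robin-type problem with datum $\gamma^{K+1}\partial_\nu v_K|_+$. The paper instead expands the Neumann density $\varphi^\gamma$ in $u^\gamma=h+\SL[\varphi^\gamma]-\gamma\DL[\varphi^\gamma]$ (Propositions \ref{prop:perturbation}, \ref{lemm:matrix:inv}, Lemma \ref{lemm:bie:solvable}) and sets $v_l=\SL[\psi_l]-\DL[\psi_{l-1}]$. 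Your route, completed, actually gives $O(\gamma^{K+1})$ with no density argument, because $\partial_\nu v_K|_+\in B^{s-1-K,p}(\partial D)$ and
\[
r_K|_+=(I+\gamma\Lambda_+)^{-1}\bigl[\gamma^{K+1}\partial_\nu v_K|_++\textstyle\sum_j d^\gamma_j 1_{\partial D_j}\bigr].
\]

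\textbf{Smaller omissions.} The constants you set aside ``up to constants'' must be controlled. The flux conditions give $\Capac^\gamma_D d^\gamma=-\gamma^{K+1}\bigl(\int_{\partial D_i}\Lambda_+(I+\gamma\Lambda_+)^{-1}[\partial_\nu v_K|_+]\,\df\sigma\bigr)_{i=1}^N$. So you need $(\Capac^\gamma_D)^{-1}$ to stay bounded as $\gamma\to+0$, which follows from Proposition \ref{prop:rcap:conti} and Theorem \ref{positive:definite}. Likewise, ``hence $w=0$'' needs the linear independence of $\{1_{\partial D_k}\}\cup\{1_{\partial D_j^+}\}$ when $D^+$ has bounded components. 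Your energy identity also needs a regularity bootstrap when $p<2$.

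\textbf{The genuine gap.} It is in the step you flag yourself: a bound on $(I+\gamma\Lambda_+)^{-1}$ on $B^{t,p}(\partial D)$, uniform for small $\gamma$, for $t$ up to nearly $k-1+\theta$. For $K=0$ you need $t=s-1\in(k-1,k-1+\theta)$, and this is exactly what drives Corollary \ref{coro:stress}. Your plan does not reach this range, for three reasons.
\begin{itemize}
\item Interpolation only fills in between spaces where the bound is already proved, so $L^p$ sectoriality gives nothing at positive order.
\item Compactness of the parametrix error does not make it small as $\lambda=\gamma^{-1}\to\infty$. You need errors of strictly lower order in the parameter-dependent calculus, with norm $O(\lambda^{-\delta})$.
\item On a $C^{k,\theta}$ surface, $\Lambda_+$ is not a classical pseudodifferential operator: its symbol is only $C^{k-1,\theta}$ in $x$. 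The composition errors would therefore have to be controlled on $B^{t,p}$ precisely near $t=k-1+\theta$, the delicate endpoint your sketch does not address.
\end{itemize}

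\textbf{What the paper does instead.} It takes the low range from Escher--Seiler and then climbs one unit of regularity at a time, using
\[
[I+\gamma\Lambda_+,\SL^{-1}]=-\gamma\,\SL^{-1}(\NP-\NP^*)\SL^{-1}.
\]
$\SL^{-1}$ is an isomorphism lowering the order by one. When $\partial D$ is $C^{k,\theta}$ with $k\ge 2$, both $\NP$ and $\NP^*$ gain a full order (Theorem \ref{theo:layer:Sobolev}(iii), proved in the appendix). Hence the commutator contributes only $O(\gamma)\|\psi^\gamma\|_{B^{s-1,p}}$ and can be absorbed for small $\gamma$. Negative orders then follow by duality.

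\textbf{The case $p=\infty$.} Neither interpolation nor little-H\"older density helps unless you already have resolvent bounds on $B^{t,\infty}$, which your plan does not provide. The paper avoids this: it proves the expansion at finite $q$ with a slightly larger smoothness index and then uses $B^{r,q}(\partial D)\hookrightarrow B^{r-(d-1)/q,\infty}(\partial D)$.
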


As consequences of Theorem \ref{theo:existence}, we obtain two corollaries. The first one is regarding the convergence of the gradient of the solution.
\begin{coro}\label{coro:stress}
    Let $U\subset \Rbb^d$ be an open ball containing $\overline{D}$. 
    \begin{enumerate}[label=\textnormal{(\roman*)}]
        \item\label{1theta} If $\partial D$ is $C^{1, \theta}$-smooth for some $\theta\in (0, 1]$, then it holds that
        \begin{equation}\label{eq:stress:conv}
            \lim_{\gamma\to +0} \| \nabla u^\gamma-\nabla u^0\|_{L^p (U\setminus \overline{D})}=0
        \end{equation}
        for any $p\in (1, 1/(1-\theta))$.
        \item\label{2theta} If $\p D$ is $C^{2,\theta}$-smooth for some $\theta\in (0, 1]$, then it holds that
        \begin{equation}\label{eq:stress:conv2}
            \lim_{\gamma\to +0} \| \nabla u^\gamma-\nabla u^0\|_{L^\infty (U\setminus \overline{D})}=0.
        \end{equation}
    \end{enumerate}
\end{coro}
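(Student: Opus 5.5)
Both parts follow from Theorem \ref{theo:existence}\ref{enum:asympt} with $K=0$, combined with Sobolev embeddings on the Lipschitz (indeed $C^{k,\theta}$) domain $U\setminus\overline{D}$. With $K=0$, Theorem \ref{theo:existence} says that for any $s\in(k,k+\theta)$,
\[
\|u^\gamma-u^0\|_{H^{s+1/p-1,p}(U\setminus\overline{D})}\to0 \quad (p\in(1,\infty)),
\]
and similarly in $B^{s+1/p-1,p}(U\setminus\overline{D})$ for $p\in(1,\infty]$. What remains is to choose $s$ and $p$ so that the Sobolev/Besov space controls the first derivatives in $L^p$ or in $L^\infty$.

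For part \ref{1theta}, take $k=1$. The smoothness index is then $\sigma:=s+1/p-1$ with $s\in(1,1+\theta)$, so $\sigma$ ranges over $(1/p,\,\theta+1/p)$. We need $\sigma\ge1$, i.e. $s\ge 2-1/p$. Since $s<1+\theta$, such an $s$ exists precisely when $2-1/p<1+\theta$, that is, $1/p>1-\theta$, or equivalently $p<1/(1-\theta)$ (every $p>1$ is allowed when $\theta=1$). Fixing such $p$ and $s$ with $s+1/p-1\ge1$, the embedding $H^{\sigma,p}(U\setminus\overline{D})\hookrightarrow H^{1,p}(U\setminus\overline{D})$ gives $\|\nabla(u^\gamma-u^0)\|_{L^p(U\setminus\overline{D})}\to0$, which is \eqref{eq:stress:conv}. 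This embedding is immediate from the definition of $H^{s,p}$ by restriction and $H^{\sigma,p}(\Rbb^d)\hookrightarrow H^{1,p}(\Rbb^d)$ for $\sigma\ge1$.

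For part \ref{2theta}, take $k=2$ and use the Besov statement with $p=\infty$. Then $s\in(2,2+\theta)$ and the index is $\sigma=s-1\in(1,1+\theta)$, so $u^\gamma\to u^0$ in $B^{\sigma,\infty}_\infty(U\setminus\overline{D})$ with $\sigma>1$. Write $\sigma=1+\eta$ with $\eta\in(0,1)$. By the difference characterization, a function $f\in B^{\sigma,\infty}_\infty(\Rbb^d)$ has $\nabla f\in B^{\eta,\infty}_\infty(\Rbb^d)\subset L^\infty$, with $\|\nabla f\|_{L^\infty}\lesssim\|f\|_{B^{\sigma,\infty}_\infty}$; this is the $C^{1,\eta}$ Hölder norm. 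Applying this to an extension of $u^\gamma-u^0$ and taking the infimum over extensions gives \eqref{eq:stress:conv2}.

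The only point needing care is the bookkeeping in the quotient norms. The gradient of the restriction equals the restriction of the gradient of the extension, so the $L^p$ and $L^\infty$ bounds on $U\setminus\overline{D}$ follow with constants independent of $\gamma$. In part \ref{2theta}, one could alternatively take $p<\infty$ large and use Morrey's embedding $H^{\sigma,p}\hookrightarrow C^1$, which needs $\sigma-d/p>1$. The $p=\infty$ Besov route avoids that dimensional restriction, which is why I would use it.
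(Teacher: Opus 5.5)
Your proposal is correct and follows the paper's argument: apply Theorem \ref{theo:existence}\ref{enum:asympt} with $K=0$ ($k=1$ for part (i), and $k=2$ with $p=\infty$ for part (ii)). You then choose $s$ so that the index $s+1/p-1$ reaches $1$, which is possible exactly when $p<1/(1-\theta)$, and embed into $H^{1,p}$ or into $C^{1,\eta}$. The only cosmetic difference is that in part (i) you use the Bessel-potential scale, where $\sigma\ge 1$ suffices, whereas the paper bounds $\|\nabla(u^\gamma-u^0)\|_{L^p}$ by the $B^{s+1/p-1,p}$ norm, which implicitly requires choosing $s$ with $s+1/p-1>1$ strictly.
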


For instance, the assertion \ref{1theta} is proved from Theorem \ref{theo:existence} \ref{enum:asympt} with $k=1$ and $K=0$ as follows. Since $1<s<1+\theta$, we have $1/p < s+1/p-1 < \theta + 1/p$. Thus, if $p < 1/(1-\theta)$, then $1< \theta + 1/p$, and hence
$$
\| \nabla u^\gamma-\nabla u^0\|_{L^p (U\setminus \overline{D})} \le \left\|u^\gamma-u^0 \right\|_{B^{s+1/p-1, p}(U\setminus \overline{D})} \to 0
$$
as $\Gg \to 0$.

% We see from \eqref{eq:convergence} that
% $$
% |\nabla R^\gamma_0(x)| \le C_\Gg (1+|x|)^{-d}, \quad x \in \Rbb^d\setminus \overline{U},
% $$
% where $C_\Gg=o(1)$ as $\Gg \to 0$. Thus, $\| \nabla R^\gamma_0 \|_{L^p(\Rbb^d\setminus \overline{U})} \to 0$ as $\Gg \to 0$ for all $p \in (1, \infty]$. 

As mentioned earlier, if $D$ consists of two locally strictly convex domains whose distance is $\Ge$, then for some $h$ the quantity $\|\nabla u^0\|_{L^\infty (D^+)}$ may be arbitrarily large as $\Ge$ tends to $0$. Corollary \ref{coro:stress} shows that $\lim_{\gamma\to +0} \| \nabla u^\gamma\|_{L^\infty (D^+)}$ can be arbitrarily large. Figure \ref{fg:two:disks} describes schematically the current understanding on dependency on $\Gg$ and $\Ge$ of $\|\nabla u^\gamma\|_{L^\infty (D^+)}$ when $D$ consists of two disks ($d=2$). The left downward arrow shows the blow-up of $\nabla u^\gamma$ when $\Gg=0$; the top arrow directed toward left indicates convergence of $\nabla u^\gamma$ to $\nabla u^0$ as $\Gg$ tends to $0$ (Corollary \ref{coro:stress}); the right downward arrow indicates the finiteness of $\nabla u^\gamma$ regardless of $\Ge$ when $\Gg$ stays away from $0$ which is proved in \cite{DYZ26, FJKLfse}. Knowing these results, we have a natural question: how $\nabla u^\gamma$ behaves as both $\Gg$ and $\Ge$ tend to $0$? Quite recently, this question seems to be solved in \cite{DLZ2510} including the higher dimensional case: for two disks on the plane, $\|\nabla u^\gamma\|_{L^\infty (D^+)}$ behaves like $(\gamma+\Ge)^{-1/2}$. The remaining intriguing question is the convergence of $u^\gamma$ to $u^0$ independent of $\Ge$. The aforementioned paper \cite{DLZ2510} proves that $u^\gamma$ converges to $u^0$ weakly in $H^{1, 2}$ and strongly in $L^2$ as $\gamma \to +0$ (on bounded sets in $D^+$). 

%It would be quite interesting to investigate how $\nabla u^\gamma$ behaves as both $\Gg$ and $\Ge$ tend to $0$.

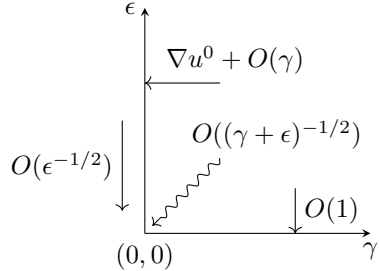
\begin{figure}[htb]
\centering
\begin{minipage}{0.5\textwidth}
    \begin{tikzpicture}[scale=1]
        \draw[->, >=stealth] (0, 0)--(3, 0);
        \draw[->, >=stealth] (0, 0)--(0, 3);
        \node (xaxis) at (3, 0) [below] {$\gamma$};
        \node (yaxis) at (0, 3) [left] {$\Ge$};
        \node (o) at (0, 0) [below] {$(0, 0)$};
        \draw[->] (-0.3, 1.5)--(-0.3, 0.3);
        \draw[->] (1, 2)--(0, 2);
        \draw[->] (2, 0.6)--(2, 0);
        \node (PB) at (-0.3, 0.9) [left] {$O(\Ge^{-1/2})$};
        \node (G0) at (1.2, 2) [above] {$\nabla u^0+O(\gamma)$};
        \node (E0) at (2, 0.3) [right] {$O(1)$};
        \node (GE) at (0.5, 1) [above right] {$O((\gamma+\Ge)^{-1/2})$};
        \draw[decorate, decoration={snake, amplitude=0.5mm, segment length=2mm}] (1, 1)--(0.2, 0.2);
        \draw[->] (0.2, 0.2)--(0.1, 0.1);
    \end{tikzpicture}
\end{minipage}
    \begin{minipage}{0.4\textwidth}
        
        \caption{$(\gamma, \Ge)$-dependency of the gradient estimates for two disks, where $\Ge$ denotes the distance between two disks.}
    \label{fg:two:disks}
    \end{minipage}
\end{figure}

The second corollary is an asymptotic expansion of the solution in terms of $\gamma$ on the smooth boundary: 

\begin{coro}
    \label{coro:asymptotic:exp}
    If $\partial D$ is $C^\infty$ in addition to the assumptions of Theorem \ref{theo:existence} for $D$, then the derivatives $\partial^\alpha v_k$ of the harmonic function $v_k$ in Theorem \ref{theo:existence} admits a continuous extension to $\overline{D^+}$ for any multi-index $\alpha$. Furthermore, the solution $u^\gamma$ admit the asymptotic expansion
    \[
        u^\gamma\sim \sum_{l=0}^\infty \gamma^l v_l \quad (\gamma\to +0)
    \]
    in the sense that
    \[
        \left\| \partial^\alpha \left(u^\gamma-\sum_{l=0}^K \gamma^l v_l\right)\right\|_{L^\infty (U\setminus \overline{D})}=O(\gamma^{K+1})
    \]
    as $\gamma\to +0$ for all $\alpha\in \Nbb_0^d$, $K\in \Nbb_0$ and any open ball $U\subset \Rbb^d$ including $\overline{D}$.
\end{coro}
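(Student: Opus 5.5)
The plan is to deduce Corollary \ref{coro:asymptotic:exp} from Theorem \ref{theo:existence} \ref{enum:asympt}. Since $\partial D$ is $C^\infty$, it is $C^{k,\theta}$ for every $k\ge 1$ and every $\theta\in(0,1]$, so the theorem can be applied with $k$ as large as we like. The first point to check is that the harmonic functions $v_l$ do not depend on $k$. They are characterized uniquely in Theorem \ref{theo:existence} \ref{enum:asympt}: $v_0=u^0$, and each subsequent $v_l$ is determined by the expansion itself, since the coefficient of $\gamma^l$ in an asymptotic expansion is unique. The expansion obtained for one value of $k$ is a truncation of the expansion obtained for any larger value of $k$. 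Hence there is a single sequence $(v_l)_{l\ge 0}$ with $v_l|_{U\setminus\overline D}\in B^{s+1/p-l,p}(U\setminus\overline D)$ for every $s<k+\theta$ and every $k$, with $p=\infty$ allowed.

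\emph{Regularity up to the boundary.} Take $p=\infty$. Then $v_l\in B^{m,\infty}_\infty(U\setminus\overline D)$ for every $m>0$, because $s-l$ can be made arbitrarily large. For non-integer $m$, $B^{m,\infty}_\infty$ on a Lipschitz domain is the H\"older space $C^{\lfloor m\rfloor, m-\lfloor m\rfloor}$ up to the boundary; this follows from the difference characterization in \cite{Dispa03} together with the extension from $\Rbb^d$. Consequently every $\partial^\alpha v_l$ is uniformly H\"older continuous on $U\setminus\overline D$ and extends continuously to $\overline{U\setminus\overline D}$. Since $U$ is an arbitrary ball containing $\overline D$, and $v_l$ is harmonic, hence smooth, in $D^+$ away from $\partial D$, this gives the continuous extension of $\partial^\alpha v_l$ to $\overline{D^+}$.

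\emph{The $O(\gamma^{K+1})$ estimate.} Fix $\alpha$ and $K$. Apply Theorem \ref{theo:existence} \ref{enum:asympt} with $p=\infty$, with $k\ge K+2$ large enough that $s-1-(K+1)>|\alpha|$, and with truncation index $K+1\le k-1$. This gives
\[
\Big\|u^\gamma-\sum_{l=0}^{K+1}\gamma^l v_l\Big\|_{B^{s-2-K,\infty}(U\setminus\overline D)}=o(\gamma^{K+1}).
\]
Because $s-2-K>|\alpha|$, the embedding $B^{s-2-K,\infty}_\infty\hookrightarrow C^{|\alpha|}$ turns this into an $L^\infty$ bound on $\partial^\alpha\bigl(u^\gamma-\sum_{l\le K+1}\gamma^l v_l\bigr)$. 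The extra term $\gamma^{K+1}\partial^\alpha v_{K+1}$ is $O(\gamma^{K+1})$ in $L^\infty$ by the regularity shown above. Adding the two bounds gives
\[
\Big\|\partial^\alpha\Big(u^\gamma-\sum_{l=0}^K\gamma^l v_l\Big)\Big\|_{L^\infty(U\setminus\overline D)}=O(\gamma^{K+1}).
\]

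\emph{Main obstacle.} The delicate step is the independence of $v_l$ from $k$ and $\theta$, and its compatibility with the uniqueness statement of the theorem. I would handle it by fixing $v_l$ through the smallest admissible $k=l+1$ and observing that the $o(\gamma^K)$ property for a larger $k$ also holds in the weaker norm corresponding to the smaller $k$. Uniqueness of asymptotic coefficients then forces the two choices of $v_l$ to agree. A second, minor technicality is the identification $B^{m,\infty}_\infty(U\setminus\overline D)\subset C^{|\alpha|}(\overline{U\setminus\overline D})$ for $m>|\alpha|$ non-integer. This follows by restricting the embedding for $\Rbb^d$ to the domain.
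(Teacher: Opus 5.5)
Your proposal is correct and follows the same route as the paper, which simply invokes Theorem \ref{theo:existence} \ref{enum:asympt} with $p=\infty$ and arbitrarily large $k$, noting that $v_l\in B^{s,\infty}(U\setminus\overline{D})$ for all $s$. Your additional steps are the details that one-line argument leaves implicit: the independence of the $v_l$ from $k$, and expanding to order $K+1$ so that the $o(\gamma^{K+1})$ remainder plus $\gamma^{K+1}\partial^\alpha v_{K+1}$ gives $O(\gamma^{K+1})$.
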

The above corollary is also immediately proved by Theorem \ref{theo:existence} \ref{enum:asympt} since $v_l\in B^{s, \infty}(U\setminus \overline{D})$ for all $s\geq 1$.

We outline the organization of this paper. Section \ref{sect:Lipschitz} is devoted to the proof of Theorem \ref{theo:L2:sol:approx}. We employ the layer potential technique in an essential way for constructing the solution to \eqref{IPB}. So we collect basic facts on the layer potential operators in Section \ref{subs:layer:mapping}. Next, we investigate the Dirichlet-to-Neumann mapping for the exterior domain by using the layer potential operators in Section \ref{subs:dtn}. Finally, we associate a capacitance-type matrix with the imperfect interface problem \eqref{IPB}, which we call the resistive capacitance matrix and prove that it is invertible for any $\gamma>0$ in Section \ref{subs:capacitance}. Based on these preliminaries, we prove Theorem \ref{theo:L2:sol:approx} in Section \ref{subs:proof:Lip} by reducing the problem \eqref{IPB} to the boundary integral equation involving the Dirichlet-to-Neumann mapping. The invertibility of the resistive capacitance matrix plays a crucial role in solving the boundary integral equation. We also prove that the solution $u^\gamma$ to \eqref{IPB} belongs to $H^{s+1/p, p} (U\setminus \overline{D})\cap B^{s+1/p, p}(U\setminus \overline{D})$ for any open ball $U\subset \Rbb^d$ including $\overline{D}$ and appropriate $(s, p)\in (0, 1) \times (1, \infty)$, where $p$ can be arbitrarily large if $s$ is sufficiently small (Theorem \ref{theo:sol:membership}). Next, we prove Theorem \ref{theo:existence} in Section \ref{sect:smooth}. In particular, an expansion of the resolvent in terms of $\gamma$ (Proposition \ref{prop:perturbation}) plays a significant role in the proof of the main theorem. We end this paper with a short discussion (Section \ref{sect:discussion}). Appendix \ref{sect:pf:smoothing} provides a proof of smoothing properties of a certain boundary integral operator (Theorem \ref{theo:layer:Sobolev} \ref{enum:np:smoothing:1}), which is employed in Section \ref{sect:smooth}.

% \subsection*{Notations}

% Hereinafter, $D$ always denotes a bounded open subset with Lipschitz boundary. When $d=2$, we further assume that $\capac_D>1$.

%we make the following global assumption for any open subset $D\subset \Rbb^d$.
%\begin{assu}\label{assu:domain}
%    $D\subset \Rbb^d$ ($d\geq 2$) is a bounded open subset with compact Lipschitz boundary.  %\end{assu}

\section{Lipschitz case: Proof of Theorem \ref{theo:L2:sol:approx}}
\label{sect:Lipschitz}

In order to prove Theorem \ref{theo:L2:sol:approx}, we first recall basic facts on the layer potentials. We then introduce the Dirichlet-to-Neumann map associated with the exterior problem and construct a resistive variant of the capacitance matrix which plays a crucial role in this paper. Throughout this section, we assume that $\partial D$ is Lipschitz.

\subsection{Layer potential operators}\label{subs:layer:mapping}

We denote by $H^{s, p}(\partial D)$ and $B^{s, p}(\partial D)$ the fractional Sobolev space and the Besov space on $\partial D$ respectively, which are defined in a standard way using local graph representation of $\partial D$ and the delocalization by a partition of unity on $\partial D$. Since $\partial D$ is a closed manifold (whence it does not have a boundary), Sobolev and Besov spaces on it inherit the interpolation properties from $H^{s, p}(\Rbb^{d-1})$ and $B^{s, p}(\Rbb^{d-1})$ as follows: for $s_0, s_1\in [-1, 1]$, $s_0\neq s_1$, $s=(1-\theta)s_0+\theta s_1$, $1/p=(1-\theta)/p_0+\theta /p_1$ and $\theta\in (0, 1)$
\begin{itemize}
    \item $[H^{s_0, p_0}(\partial D), H^{s_1, p_1}(\partial D)]_\theta=H^{s, p}(\partial D)$ ($p_0, p_1\in (1, \infty)$),
    \item $(B^{s_0, p}(\partial D), B^{s_1, p}(\partial D))_{\theta, p}=B^{s, p}(\partial D)$ ($p\in [1, \infty]$).
\end{itemize}
If $\partial D$ is $C^{k, \theta}$, then the range of $s_0$ and $s_1$ extends to $(-k-\theta, k+\theta)$.

For a function space $\Xcal=H^{s, p}, B^{s, p}, L^p$, we add the tilde symbol to represent the subspace
\[
    \widetilde{\Xcal}(\partial D):=\left\{ \varphi\in \Xcal(\partial D) \,\middle|\, \int_{\partial D_j} \varphi\, \df \sigma=0, \   j=1, \ldots, N\right\}.
\]
As we only consider the fractional Sobolev spaces and Besov spaces with exponent not less than $-1$ throughout this paper, the spaces $\widetilde{\Xcal}(\partial D)$ becomes a closed subspace of $\Xcal (\partial D)$. 

Let $\Gamma (x)$ be the fundamental solution to the Laplace operator:
\begin{equation}
    \label{eq:fundamental:solution}
    \Gamma (x):=
    \begin{dcases}
        \frac{1}{2\pi}\log |x| & \text{if } d=2, \\
        -\frac{1}{(d-2)\omega_d |x|^{d-2}} & \text{if } d\geq 3,
    \end{dcases}
\end{equation}
where $\omega_d$ is the area of the unit sphere in $\Rbb^d$. Then, for an appropriate function $\varphi$ on $\partial D$, the single and double layer potentials of $\varphi$ are respectively defined by
\begin{equation}\label{eq:single:layer}
    {\SL}[\varphi](x):=\int_{\partial D} \Gamma (x-y)\varphi (y)\, \df \sigma (y) \quad (x\in \Rbb^d)
\end{equation}
and
\begin{align*}
    {\DL}[\varphi](x):=&\,\int_{\partial D} ((\nv_y\cdot \nabla_y)\Gamma (x-y))\varphi (y)\, \df \sigma (y) \\
    =&\,-\frac{1}{\omega_d}\int_{\partial D}\frac{(x-y)\cdot \nv_y}{|x-y|^d}\varphi (y)\, \df \sigma (y) \quad (x\in \Rbb^d\setminus \partial D),
\end{align*}
where $\nv_y$ is the outward unit normal vector at $y\in \partial D$. The Neumann--Poincar\'e operator (abbreviated to NP operator) is a boundary integral operator defined by
\[
    {\NP}[\varphi](x):=-\frac{1}{\omega_d}\pv\int_{\partial D}\frac{(x-y)\cdot \nv_y}{|x-y|^d}\varphi (y)\, \df \sigma (y) \quad (x\in \partial D)
\]
for an appropriate function $\varphi$ on $\partial D$. We denote the $L^2$-adjoint of $\NP$ by $\NP^*$, which is also called the NP operator.

Boundary integral operators such as the NP operator and the boundary value of single layer potential $\varphi\mapsto {\SL}[\varphi]|_{\partial D}$, which we still denote by ${\SL}$, have the following mapping properties. Although we only require the mapping properties between fractional Sobolev spaces for the proof of Theorem \ref{theo:L2:sol:approx}, we simultaneously work on the Besov spaces for later use. 

\begin{theo}\label{theo:layer:bdd:Lip}
    Let $D\subset \Rbb^d$ ($d\geq 2$) be a bounded open set with the Lipschitz boundary and $p\in (1, \infty)$. The following linear operators are bounded.
    \begin{enumerate}[label=\textnormal{(\alph*)}]
        \item \label{enum:sl:bdd:Lip} $\SL: H^{s-1, p}(\partial  D)\to H^{s, p}(\partial  D)$ for $s\in [0, 1]$ and $\SL: B^{s-1, p}(\partial  D)\to B^{s, p}(\partial  D)$ for $s\in (0, 1)$.
        \item \label{enum:NP:bdd:Lip}$\NP: H^{s, p}(\partial  D)\to H^{s, p}(\partial  D)$ for $s\in [0, 1]$ and $\NP: B^{s, p}(\partial  D)\to B^{s, p}(\partial  D)$ for $s\in (0, 1)$ 
        \item \label{enum:NPs:bdd:Lip}$\NP^*: H^{s-1, p}(\partial D)\to H^{s-1, p}(\partial D)$ for $s\in [0, 1]$ and $\NP^*: B^{s-1, p}(\partial D)\to B^{s-1, p}(\partial D)$ for $s\in (0, 1)$
    \end{enumerate}
\end{theo}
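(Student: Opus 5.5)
The plan is to reduce everything to the endpoint cases $s=0$ and $s=1$ in the Sobolev scale and then interpolate. This is essentially the route of Coifman--McIntosh--Meyer, Verchota, Dahlberg--Kenig and Jerison--Kenig. The Besov statements for $s\in(0,1)$ then follow by real interpolation $(\cdot,\cdot)_{s,p}$ between the same two endpoints. This is legitimate because, for $s_0=-1$, $s_1=0$ (respectively $s_0=0$, $s_1=1$), the interpolation identities for $H^{s,p}(\partial D)$ and $B^{s,p}(\partial D)$ recalled above hold on the Lipschitz surface $\partial D$. Concretely, $(H^{-1,p},L^p)_{s,p}=B^{s-1,p}$ and $(L^p,H^{1,p})_{s,p}=B^{s,p}$.

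\emph{Step 1: the $L^p$ endpoint for $\NP$ and $\NP^*$.} By the Coifman--McIntosh--Meyer theorem, the principal value singular integral with kernel $(x-y)\cdot\nu_y/|x-y|^d$ is bounded on $L^p(\partial D)$ for every $p\in(1,\infty)$ and every Lipschitz $\partial D$. Hence $\NP$ and $\NP^*$ are bounded on $L^p(\partial D)$. This gives (b) at $s=0$ and (c) at $s=1$.

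\emph{Step 2: the $H^{1,p}$ endpoint for $\SL$ and $\NP$.} For $\SL: L^p\to H^{1,p}$, note that $\SL[\varphi]$ is locally integrable on $\partial D$. Its tangential gradient is the boundary trace of $\nabla\SL[\varphi]$, and this trace is given by Cauchy-type singular integrals (the Riesz transforms along $\partial D$ together with $\pm\frac12\nu\varphi$). These are again bounded on $L^p$ by Coifman--McIntosh--Meyer, which gives (a) at $s=1$. For $\NP: H^{1,p}\to H^{1,p}$, I would use the standard commutation identity. Write $\partial_{\tau_{ij}}=\nu_i\partial_j-\nu_j\partial_i$ for the tangential derivatives. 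Then
\[
\partial_{\tau_{ij}}\DL[\varphi]=\text{(combination of } \partial_k\SL[\partial_{\tau_{lm}}\varphi]\text{)},
\]
so the tangential derivatives of $\NP\varphi$ are expressed through boundary traces of $\nabla\SL$ applied to the tangential derivatives of $\varphi$. These are bounded on $L^p$ by the same singular integral theory. This gives (b) at $s=1$.

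\emph{Step 3: the negative endpoint by duality.} The operator $\SL$ is symmetric, and $\NP^*$ is the $L^2$-adjoint of $\NP$. Also $H^{-1,p}(\partial D)=(H^{1,p'}(\partial D))^*$ and $B^{s-1,p}=(B^{1-s,p'})^*$. Dualizing Step 2 with $p'$ in place of $p$ therefore gives $\SL: H^{-1,p}\to L^p$ (that is, (a) at $s=0$) and $\NP^*: H^{-1,p}\to H^{-1,p}$ (that is, (c) at $s=0$). Complex interpolation between the endpoints then yields all $s\in[0,1]$ in (a)--(c), and real interpolation yields the Besov versions for $s\in(0,1)$.

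The main obstacle is Step 2 for $p$ far from $2$. There one needs uniform $L^p$ boundedness of the boundary traces of $\nabla\SL$ and the tangential-derivative identity on a merely Lipschitz surface. The $L^p$ boundedness is the deep input (Coifman--McIntosh--Meyer). The identity itself should first be checked for smooth $\varphi$ via integration by parts on $\partial D$ and then extended by density. Only boundedness is claimed, not invertibility, so no restriction on $p$ near $2$ arises here.
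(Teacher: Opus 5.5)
Your proposal is correct and takes the same route as the paper. Both arguments establish the endpoint bounds $\SL\colon L^p\to H^{1,p}$ and $\NP$ bounded on $L^p$ and $H^{1,p}$, then obtain the $H^{-1,p}$ endpoints for $\SL$ and $\NP^*$ by duality, and finally interpolate (complex interpolation for the $H^{s,p}$ scale, real interpolation for the Besov scale). The one difference is that the paper simply cites the endpoint bounds (Verchota, Lemma~1.8, and M.~Mitrea, Theorem~5.1), whereas you sketch them from Coifman--McIntosh--Meyer and the tangential-derivative identity the paper records as Lemma~\ref{lemm:reduction:ds}.
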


\begin{proof}
The boundedness of $\SL: L^p (\partial  D)\to H^{1, p}(\partial  D)=H^{1, p}(\partial D)$ is proved in \cite[Lemma 1.8]{Verchota84}. Then the assertion \ref{enum:sl:bdd:Lip} follows after applying standard arguments of the duality and the interpolation. The boundedness of $\NP: H^{k, p}(\partial  D)\to H^{k, p}(\partial  D)$ for $k=0, 1$ can be found in \cite[Theorem 5.1]{Mitrea94} and the interpolation again yields \ref{enum:NP:bdd:Lip}. \ref{enum:NPs:bdd:Lip} is proved by the duality.
\end{proof}

We also recall the invertibility and Fredholm property of the layer potentials. We need a notation: for $\Ge\in (0, 1]$, let $\Rscr_\Ge$ be the interior of the hexagon $\mathrm{OABCDE}$ where $\mathrm{O,A,B,C,D,E}$ are the planar points $(0, 0)$, $(\Ge, 0)$, $(1, (1-\Ge)/2)$, $(1, 1)$, $(1-\Ge, 1)$, $(0, (1+\Ge)/2)$, respectively, as shown in Figure \ref{fg:hexagon}. 

\begin{figure}[H]
        \centering
        \begin{minipage}{0.5\textwidth}
        \tikzmath{\a=0.5;}
        \begin{tikzpicture}[scale=1.2]
            \filldraw[fill=black!20, dotted] (0, 0) node [below left] {O}--(\a, 0)--(2, {1-\a})--(2, 2)--({2-\a}, 2)--(0, {1+\a})--cycle;
            \draw[->, >=stealth] (0, 0)--(2.5, 0) node [below] {$s$};
            \draw[->, >=stealth] (0, 0)--(0, 2.5) node [left] {$1/p$};
            \node (10) at (2, 0) [below] {$1$};
            \node (01) at (0, 2) [left] {$1$};
            \node (1/20-) at (\a, 0) [below] {$\Ge$};
            \node (1/20-) at ({2-\a}, 0) [below] {$1-\Ge$};
            \node (01/2-) at (0, {1-\a}) [left] {$(1-\Ge)/2$};
            \node (01/2+) at (0, {1+\a}) [left] {$(1+\Ge)/2$};
            \draw[dashed] (2, 0)--(2, 2)--(0, 2);
            \draw[dashed] ({2-\a}, 0)--({2-\a}, 2);
            \draw[dashed] (0, {1-\a})--(2, {1-\a});
            \coordinate (0) at (2, 1.9);
            \coordinate (1) at (1.1, 1.6);
            \coordinate (12) at (2, 1.6);
            \coordinate (2) at (1, {1.6-0.3*0.9/0.9});
            \coordinate (23) at (2, {1.6-0.3*0.9/0.9});
            \coordinate (3) at (1, {1.6-0.3*0.9/0.9*2});
            \node (R) at (1, 1) {$\Rscr_\Ge$};
        \end{tikzpicture}
    \end{minipage}
    \begin{minipage}{0.4\textwidth}
        
        \caption{The hexagon $\Rscr_\Ge$.}
        \label{fg:hexagon}
    \end{minipage}
    \end{figure}
The following theorem is well-known: the assertions \ref{enum:sl:inv:Lip}--\ref{enum:NP:inv:Lip} are proved in {\cite[Theorem 4.17]{Dahlberg-Kenig87} and \cite[Section 8]{FMM98}}, and \ref{enum:sl:inv:Lip:end}--\ref{enum:NP:inv:Lip:end} are proved in the proofs of \ref{enum:sl:inv:Lip}--\ref{enum:NP:inv:Lip}.

\begin{theo}\label{theo:layer:Lip}
    Let $D\subset \Rbb^d$ ($d\geq 2$) be a bounded open set with the Lipschitz boundary. If $d\geq 3$, then there exists $\Ge \in (0, 1]$ (depending on $D$) such that the following statements are valid for $(s, 1/p)\in \Rscr_\Ge$.
    \begin{enumerate}[label=\textnormal{(\alph*)}]
        \item\label{enum:sl:inv:Lip} $\SL: B^{s-1, p}(\partial  D)\to B^{s, p}(\partial  D)$ is an isomorphism.
        \item\label{enum:NP:Fredholm:Lip} $\pm 1/2I+\NP: B^{s, p}(\partial  D)\to B^{s, p}(\partial  D)$ and  $\, \pm 1/2I+\NP^*: B^{s-1, p}(\partial D)\to B^{s-1, p}(\partial D)$ are bounded Fredholm operators of index zero.
        \item\label{enum:NP:inv:Lip} $-1/2I+\NP^*: \widetilde{B}^{s-1, p}(\partial D) \to \widetilde{B}^{s-1, p}(\partial D)$ is an isomorphism.
    \end{enumerate}
    Let $\Ge_*=\Ge_*(D)$ be the supremum of such $\Ge$. Then the following assertions hold for any $p\in (1, 2/(1-\Ge_*))$. 
    \begin{enumerate}[label=\textnormal{(\alph*)}]
        \setcounter{enumi}{3}
        \item \label{enum:sl:inv:Lip:end} $\SL: L^p (\partial  D)\to H^{1, p} (\partial  D)$ is an isomorphism.
        \item\label{enum:NP:Fredholm:Lip:end} $\pm 1/2I+\NP: H^{1, p}(\partial  D)\to H^{1, p}(\partial  D)$ and  $\, \pm 1/2I+\NP^*: H^{-1, p}(\partial D)\to H^{-1, p}(\partial D)$ are bounded Fredholm operators of index zero.
        \item\label{enum:NP:inv:Lip:end} $-1/2I+\NP^*: \widetilde{L}^p (\partial D) \to \widetilde{L}^p(\partial D)$ is an isomorphism.
    \end{enumerate}
    
    If $\partial D$ is $C^{1, 0}$, then $\Ge_* (D)=1$.

    When $d=2$ and $\capac_D>1$, \ref{enum:sl:inv:Lip}--\ref{enum:NP:inv:Lip} hold for $p=2$ and $s\in (0, 1)$, and \ref{enum:sl:inv:Lip:end}--\ref{enum:NP:inv:Lip:end} for $p=2$.
\end{theo}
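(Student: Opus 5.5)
The plan is to assemble Theorem~\ref{theo:layer:Lip} from endpoint results on the three edges of the closed unit square of indices $(s,1/p)$, and then fill in the hexagon $\Rscr_\Ge$ by interpolation together with a stability argument for isomorphisms on interpolation scales. All operators are already known to be bounded on the whole scale by Theorem~\ref{theo:layer:bdd:Lip}, so only invertibility and the Fredholm property need work.

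\textbf{Step 1: the $L^2$ endpoints (Verchota).} The Rellich identities for harmonic functions in $D$ and $D^+$ give the two-sided comparison $\|\p_\nv u|_\pm\|_{L^2(\p D)}\approx\|\nabla_{\tan}u\|_{L^2(\p D)}$ modulo compact terms. Applied to $u=\SL[\varphi]$, and using the jump relations $\p_\nv\SL[\varphi]|_\pm=(\pm\tfrac12 I+\NP^*)\varphi$, this yields:
\begin{itemize}
\item $\pm\tfrac12 I+\NP^*$ are Fredholm of index zero on $L^2(\p D)$, by a continuity argument in the Lipschitz character;
\item $-\tfrac12 I+\NP^*$ is injective on $\widetilde L^2(\p D)$, since the jump of the normal derivative vanishes and $\widetilde L^2$ excludes the constants-per-component kernel;
\item $\SL:L^2\to H^{1,2}$ is an isomorphism for $d\ge3$ (for $d=2$ under $\capac_D>1$).
\end{itemize}
Duality gives $\pm\tfrac12 I+\NP$ on $L^2$. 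The $H^{1,2}$ and $H^{-1,2}$ statements follow from the intertwining $\SL\NP^*=\NP\SL$.

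\textbf{Step 2: extending the $L^2$ endpoints (Dahlberg--Kenig).} Dahlberg--Kenig extend the Neumann and regularity solvability from $L^2$ to $L^p$ for $p\in(1,2+\Ge)$, with atomic Hardy space $H^1_{\rm at}$ at the endpoint $p=1$. This gives items (d)--(f) on the segment $s=0$ (for $\NP^*$) and $s=1$ (for $\SL$ and $\NP$) for $1/p\in((1-\Ge)/2,1]$. Duality transfers these to the $H^{-1,p}$ and $H^{1,p}$ versions on the complementary range of $p$. Concretely, $\SL:L^p\to H^{1,p}$ and its adjoint $\SL:H^{-1,p'}\to L^{p'}$ are isomorphisms whenever $1/p\in((1-\Ge)/2,1)$ or, symmetrically, $1/p$ lies on the edge through $(0,(1+\Ge)/2)$.

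\textbf{Step 3: filling in the hexagon (the main obstacle).} Interpolation of the bounded inverses alone does not suffice, because the inverses are only known on disjoint edge pieces. The tool I would use is the Kalton--Mitrea stability theorem: if an operator is bounded on a complex (or real) interpolation scale and invertible at one interior point, then it is invertible on an open neighbourhood of that point in the scale. A Fredholm version of the same result handles (b).

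I would run the argument as follows.
\begin{enumerate}
\item Use the $L^2$ point, and more generally the segment $1/p=1/2$, where invertibility on Besov/Sobolev spaces $B^{s,2}$ for $s\in[0,1]$ follows by interpolating the $s=0$ and $s=1$ endpoints of Step~1.
\item Propagate invertibility by the open-set stability together with connectedness and an a priori uniqueness argument. Uniqueness holds on all of $\Rscr_\Ge$, because harmonic functions with nontangential maximal control at a higher integrability lie in the $L^2$ class by interior regularity (the standard Fabes--Mendez--Mitrea argument).
\end{enumerate}
The precise shape of the hexagon comes from interpolating the edge endpoints $(\Ge,0)$, $(1,(1-\Ge)/2)$ and their duals. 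In this way one recovers exactly the region of Fabes--Mendez--Mitrea, Section~8, and $\Ge_*$ is then defined as the supremum of the admissible $\Ge$. Real interpolation $(\cdot,\cdot)_{\theta,p}$ is used to produce the Besov statements (a)--(c).

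\textbf{Step 4: the $C^1$ and two-dimensional cases.} For $C^1$ boundaries, $\NP$ is compact on $L^p$ for every $p\in(1,\infty)$ (Fabes--Jodeit--Rivière), so Fredholm theory plus uniqueness gives all of $\Rscr_1$, that is, $\Ge_*=1$. For $d=2$, the only failure is the invertibility of $\SL$, whose kernel is tied to logarithmic capacity; the assumption $\capac_D>1$ removes this kernel at $p=2$, and Step~1 then applies verbatim.
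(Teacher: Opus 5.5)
Your Steps 1, 2 and 4 are a fair summary of the literature that the paper simply cites (Dahlberg--Kenig, Thm.~4.17, and Fabes--Mendez--Mitrea, \S 8). Step 3, however, carries the real content of the hexagon, and it does not work. \v{S}neiberg/Kalton--Mitrea stability, and its Fredholm variant, only shows that the set $G$ of points $(s,1/p)$ at which $\SL:B^{s-1,p}(\partial D)\to B^{s,p}(\partial D)$ (or $-\frac12 I+\NP^*$ on $\widetilde B^{s-1,p}(\partial D)$) is invertible is \emph{open}. To conclude $G\supset\Rscr_\Ge$ by connectedness you also need $G$ to be relatively closed. That means uniform bounds for the inverses as you approach a point of $\Rscr_\Ge$. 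Uniqueness gives injectivity only, and on a Lipschitz boundary $\NP$ is not compact, so nothing supplies closed range or a Fredholm property a priori.

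The argument in fact proves too much. Injectivity of $\SL$ holds on the whole half-square $\{1/p\le 1/2\}$: embed $B^{s-1,p}$ into $B^{s-1-\delta,2}$, where $(s-\delta,1/2)\in\Rscr_\Ge$. Also, $\SL$ is bounded there by Theorem~\ref{theo:layer:bdd:Lip}. So the same reasoning would give invertibility of $\SL:B^{s-1,p}\to B^{s,p}$ for $s$ close to $1$ and $p$ large. Combined with Theorem~\ref{theo:layer:mapping:Besov} and Sobolev embedding, the harmonic extension of every smooth boundary datum would then be $C^{s-(d-1)/p}$ up to the boundary, with exponent close to $1$. This is false for Lipschitz domains with a thin reentrant conical point. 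So the edges $AB$ and $DE$ of $\Rscr_\Ge$ must come from analytic endpoint estimates, not from topology.

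That endpoint input is what your proposal is missing. The $L^p$ and Hardy-space results of Step~2, together with their duals, give invertibility only on the edges $BC$ and $OE$. Interpolating those yields only the quadrilateral $OBCE$. The triangles $OAB$ and $CDE$ require invertibility near the vertex $A=(\Ge,0)$, i.e.\ on the H\"older--Zygmund spaces $B^{s,\infty}(\partial D)$ for small $s>0$. This is boundary H\"older regularity for the Dirichlet problem in Lipschitz domains, as in Jerison--Kenig, and by duality it also gives the vertex $D=(1-\Ge,1)$.

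Once invertibility is known near all six vertices, ordinary interpolation of the inverses fills the convex hull, which is exactly $\Rscr_\Ge$. The inverses are compatible there because of uniqueness. This is the route behind the references the paper cites. So your remark that interpolation of the inverses does not suffice is mistaken; you use exactly such interpolation yourself on the line $1/p=1/2$. What is missing is the supply of endpoints.

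Two smaller points. Your uniqueness argument via ``higher integrability'' covers only $p\ge 2$; for $1/p>1/2$, near $C$ and $D$, you need a separate argument, e.g.\ duality against existence in the dual range, or the Hardy-space uniqueness of Dahlberg--Kenig. Also, in the paper's parametrization $\NP^*$ acting on $L^p$ sits on the edge $s=1$, not $s=0$.
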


In the statement of the above theorem, $\capac_D$ denotes the logarithmic capacity. While referring to \cite[\S 2.3.5]{Ammari-Kang07} for its definition, we mention that the condition $\capac_D>1$ guarantees that the single layer potential is invertible. We emphasize that the condition $\capac_D>1$ is mild; if this condition fails to hold for some $D$, then we may dilate $D$ so that the condition holds (see \cite[4.10. Remark]{Verchota84}). 

Concerning \ref{enum:NP:inv:Lip} and \ref{enum:NP:inv:Lip:end}, we remind that the tilde symbol $\widetilde{\Xcal}(\partial D)$ stands for the subspace of a function space $\Xcal (\partial D)$ consisting of $\varphi\in \Xcal (\partial D)$ such that $\int_{\partial D_j} \varphi \, d\sigma=0$ for any connected component $D_j$ of $D$. 

In what follows, we use the notation 
\[
    \Rscr (D):=\Rscr_{\Ge*},
\]
where $\Ge_*=\Ge_*(D)$ is as defined in Theorem \ref{theo:layer:Lip}. We will also use the quantity $\Ge_*$ itself.

We will employ the following jump relations (see, for example, \cite{MMP94} for proofs):
\begin{align}
    \partial_\nv {\SL}[\varphi]|_\pm &= \left(\pm \frac{1}{2}I+\NP^*\right) [\varphi] \quad (\varphi\in B^{s-1, p}(\partial D)), \label{eq:sl:jump} \\
    {\DL}[\psi]|_\pm &=\left(\mp \frac{1}{2}I+\NP\right) [\psi] \quad (\psi\in B^{s, p}(\partial D)) \label{eq:dl:jump}
\end{align}
where $s\in (0, 1)$ and $p\in (1, \infty)$.

Let $\nu=(\nu_1, \ldots, \nu_d)$ be the unit normal vector on $\p D$ as before. For $1 \le j<k \le d$, let $\tau_{jk}= (0, \ldots, -\nu_k (\text{$j$-th position}), \ldots, \nu_j (\text{$k$-th position}), \ldots, 0)$. Then, $\tau_{jk}$ is tangential to $\p D$. So a tangential derivative on $\p D$ is defined by
\begin{equation}\label{900}
\partial_{\tau_{jk}}:= \tau_{jk} \cdot \nabla = \nv_j \partial_{x_k}-\nv_k \partial_{x_j}.
\end{equation}
Note that the following identity holds for $\varphi, \psi \in H^{1,p}(\p D)$:
\begin{equation}\label{eq:tan:dual}
    \int_{\partial D} \partial_{\tau_{jk}} \varphi \psi \, \df \sigma
    =\int_{\partial D} \varphi \partial_{\tau_{jk}} \psi \, \df \sigma.
\end{equation}
It is worth mentioning that this formula can be proved by extending $\varphi, \psi$ to $D$ and applying the divergence theorem. If $\nabla^{\tan}$ denotes the tangential gradient, namely,
$$
\nabla^{\tan} u(x)=\nabla u(x)-\partial_\nv u(x)\nv(x), \quad x \in \p D,
$$
then we have
\begin{equation}\label{910}
\nabla^{\tan}=-\left( \sum_{j=1}^d \nv_j \partial_{\tau_{1j}}, \ldots, \sum_{j=1}^d \nv_j \partial_{\tau_{dj}} \right).
\end{equation}

\begin{lemm}
\label{lemm:reduction:ds}
Let $D\subset \Rbb^d$ be a bounded open set with the Lipschitz boundary and let $\varphi\in C^{0, 1}(\partial D)$. Then 
\begin{equation}
    \label{eq:sl:dl:id}
        \partial_{x_i}{\DL}[\varphi](x)=\sum_{j=1}^d \partial_{x_j}{\SL}[\partial_{\tau_{ij}}\varphi](x)
\end{equation}
for $x\in D$. 
\end{lemm}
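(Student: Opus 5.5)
We prove the identity by differentiating under the integral sign. For $x\in D$ fixed, the kernels are smooth in $y\in\partial D$, so no principal values appear. We then convert $\partial_{x_i}$ acting on $\Gamma(x-y)$ into $y$-derivatives and use the integration-by-parts formula \eqref{eq:tan:dual} for the tangential derivatives $\partial_{\tau_{ij}}$. Since $\varphi$ is Lipschitz, $\partial_{\tau_{ij}}\varphi\in L^\infty(\partial D)$, so the single layer potential on the right-hand side is well defined and smooth in $D$.

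\textbf{Rewriting the double layer.} Since $\Gamma$ depends on $x-y$, we have $\partial_{x_i}\Gamma(x-y)=-\partial_{y_i}\Gamma(x-y)$. Hence
\[
\partial_{x_i}{\DL}[\varphi](x)=\int_{\partial D}\sum_{j=1}^d \nv_j(y)\,\partial_{x_i}\partial_{y_j}\Gamma(x-y)\,\varphi(y)\,\df\sigma(y)
=-\int_{\partial D}\sum_{j=1}^d \nv_j(y)\,\partial_{y_i}\partial_{y_j}\Gamma(x-y)\,\varphi(y)\,\df\sigma(y).
\]
On the other side,
\[
\partial_{x_j}{\SL}[\partial_{\tau_{ij}}\varphi](x)=-\int_{\partial D}\partial_{y_j}\Gamma(x-y)\,\partial_{\tau_{ij}}\varphi(y)\,\df\sigma(y).
\]

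\textbf{Integration by parts.} We apply \eqref{eq:tan:dual} with $\psi(y)=\partial_{y_j}\Gamma(x-y)$. This $\psi$ is smooth near $\partial D$ because $x\notin\partial D$, and it extends smoothly into $D$ after cutting off near $x$; this is harmless since only boundary values enter \eqref{eq:tan:dual}. The result is
\[
\int_{\partial D}\psi\,\partial_{\tau_{ij}}\varphi\,\df\sigma=\int_{\partial D}\varphi\,\partial_{\tau_{ij}}\psi\,\df\sigma.
\]
By \eqref{900},
\[
\partial_{\tau_{ij}}\psi=\nv_i\,\partial_{y_j}^2\Gamma(x-y)-\nv_j\,\partial_{y_i}\partial_{y_j}\Gamma(x-y).
\]
Summing over $j$, the first term becomes $\nv_i\,\Delta_y\Gamma(x-y)$, which vanishes since $y\neq x$. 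Therefore
\[
\sum_{j=1}^d\partial_{x_j}{\SL}[\partial_{\tau_{ij}}\varphi](x)=\int_{\partial D}\varphi\sum_{j=1}^d\nv_j\,\partial_{y_i}\partial_{y_j}\Gamma(x-y)\,\df\sigma(y).
\]

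\textbf{Sign bookkeeping.} This last expression must agree with the first display, which carries a minus sign, so the overall signs have to be checked carefully. The sign of \eqref{eq:tan:dual} as stated, with a plus and no minus, is the delicate point. A genuine integration by parts of a tangential derivative should produce a minus sign. Taking the correct form
\[
\int_{\partial D}\partial_{\tau_{ij}}\varphi\,\psi\,\df\sigma=-\int_{\partial D}\varphi\,\partial_{\tau_{ij}}\psi\,\df\sigma,
\]
the computation above acquires a minus sign and matches $\partial_{x_i}{\DL}[\varphi](x)$ exactly. I would verify this form of \eqref{eq:tan:dual} directly via the divergence theorem in $D$. The field $\nabla\cdot(\Phi\Psi(e_k\,\cdot)-\ldots)$ argument shows that $\int_{\partial D}\partial_{\tau_{jk}}(\Phi\Psi)\,\df\sigma=\int_D(\partial_k\partial_j-\partial_j\partial_k)(\Phi\Psi)\,dx=0$, which gives the minus form. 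This sign is the main point needing care.

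The remaining justification is routine. Differentiation under the integral is allowed because $\operatorname{dist}(x,\partial D)>0$. The Lipschitz regularity of $\varphi$, and of the extension used in the divergence theorem on a Lipschitz domain, is enough for \eqref{eq:tan:dual}.
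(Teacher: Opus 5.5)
Your proposal is correct and is essentially the paper's computation run in reverse: the paper starts from $\partial_{x_i}{\DL}[\varphi]$, writes $\nv_j\partial_{y_i}\partial_{y_j}\Gamma(x-y)=\nv_i\partial_{y_j}^2\Gamma(x-y)-\partial_{\tau_{ij}}\partial_{y_j}\Gamma(x-y)$, drops the first term by harmonicity of $\Gamma$, and then integrates by parts tangentially, which are the same ingredients you use. You are also right about the sign: \eqref{eq:tan:dual} should read $\int_{\partial D}\partial_{\tau_{jk}}\varphi\,\psi\,d\sigma=-\int_{\partial D}\varphi\,\partial_{\tau_{jk}}\psi\,d\sigma$, and this antisymmetric form is what the paper's proof actually uses, since its final step moves $\partial_{\tau_{ij}}$ onto $\varphi$ while turning $\partial_{y_j}\Gamma(x-y)$ into $\partial_{x_j}\Gamma(x-y)=-\partial_{y_j}\Gamma(x-y)$.
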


\begin{proof}
    We obtain through direct calculations
    \begin{align*}
        \partial_{x_i}{\DL}[\varphi](x)&=\sum_{j=1}^d \int_{\partial D} \nv_j (y)\partial_{x_i}\partial_{y_j}\Gamma (x-y) \varphi(y)\, \df \sigma (y) \\
        &=-\sum_{j=1}^d\int_{\partial D} \nv_j (y)\partial_{y_i}\partial_{y_j}\Gamma (x-y) \varphi(y)\, \df \sigma (y) \\
        &=-\sum_{j=1}^d\int_{\partial D} \nv_i (y)\partial_{y_j}^2\Gamma (x-y) \varphi(y)\, \df \sigma (y) \\
        &\quad +\sum_{j=1}^d\int_{\partial D} \partial_{\tau_{ij}}\partial_{y_j}\Gamma (x-y) \varphi(y)\, \df \sigma (y) .
    \end{align*}
    Since $\lap \Gamma (x-y)=0$ for $x\neq y$, it follows from \eqref{eq:tan:dual} that
    \begin{align*}
        \partial_{x_i}{\DL}[\varphi](x)&=\sum_{j=1}^d\int_{\partial D} \partial_{x_j}\Gamma (x-y) \partial_{\tau_{ij}}\varphi(y)\, \df \sigma (y) =\sum_{j=1}^d \partial_{x_j}{\SL}[\partial_{\tau_{ij}}\varphi](x). 
    \end{align*}
    This completes the proof. 
\end{proof}

We also need the following self-adjoint realization and an elementary spectral bound of NP operators. 

\begin{theo}
    \label{theo:NP:energy}
    Let $D\subset \Rbb^d$ ($d\geq 2$) be a bounded open set with the Lipschitz boundary. Then,
    \begin{enumerate}[label=\textnormal{(\alph*)}] %[label=\textnormal{(\roman*)}]
        \item \label{enum:Plemelj} the following Plemelj symmetrization principle holds for $p \in (1, \infty)$ and $\varphi\in H^{-1, p}(\partial D)$:
        \begin{equation}
            \label{eq:Plemelj}
            \SL\NP^*[\varphi]=\NP{\SL}[\varphi].
        \end{equation}

    \end{enumerate}

    If in addition we assume $\capac_D>1$ when $d=2$ (no further assumption is needed when $d\geq 3$), then the following holds.
    \begin{enumerate}[label=\textnormal{(\alph*)}]
        \setcounter{enumi}{1}
        \item \label{enum:inner:product}The bilinear form
        \begin{equation}\label{eq:inner:product}
        \jbk{\cdot, \cdot}_*:=-\jbk{\cdot, \SL^{-1}[\cdot]}
        \end{equation}
        is an inner product on $B^{1/2, 2}(\partial D)$ whose induced norm is equivalent to the natural norm on $B^{1/2, 2}(\partial D)$. Here $\jbk{\cdot, \cdot}$ is the dual pairing between $B^{1/2, 2}(\partial D)$ and $B^{-1/2, 2}(\partial D)$.
        \item \label{enum:NP:sa} The operator $\NP$ is a self-adjoint operator on $B^{1/2, 2}(\partial D)$ with the inner product \eqref{eq:inner:product} and its spectrum is included in $[-1/2, 1/2]$.
    \end{enumerate}
\end{theo}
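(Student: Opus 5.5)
The plan is to prove \ref{enum:Plemelj} by Green's representation formula and a density argument, and then to obtain \ref{enum:inner:product} and \ref{enum:NP:sa} from energy identities for single layer potentials.

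\textbf{Part (a).} First take $\varphi\in L^p(\partial D)$ (or a smoother dense class). Put $u:=\SL[\varphi]$, which is harmonic in $D$. With the sign convention for $\DL$ used here, Green's representation formula in $D$ reads
\[
u(x)=\SL[\partial_\nv u|_-](x)-\DL[u|_{\partial D}](x),\qquad x\in D.
\]
Now let $x$ tend to $\partial D$ from inside and use the jump relations \eqref{eq:sl:jump} and \eqref{eq:dl:jump}. These give $\partial_\nv u|_-=(-\tfrac12 I+\NP^*)[\varphi]$ and $\DL[\psi]|_-=(\tfrac12 I+\NP)[\psi]$, so
\[
\SL[\varphi]=-\tfrac12\SL[\varphi]+\SL\NP^*[\varphi]-\tfrac12\SL[\varphi]-\NP\SL[\varphi].
\]
This is exactly \eqref{eq:Plemelj}. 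Both sides are bounded from $H^{-1,p}(\partial D)$ to $H^{0,p}(\partial D)$ by Theorem \ref{theo:layer:bdd:Lip}. Density of $L^p$ in $H^{-1,p}$ therefore extends the identity to all $\varphi\in H^{-1,p}(\partial D)$.

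\textbf{Part (b).} For $\varphi\in B^{-1/2,2}(\partial D)$ set $u=\SL[\varphi]$. Green's identity in $D$ and in $D^+$, together with the jump relation $\partial_\nv u|_- - \partial_\nv u|_+=-\varphi$, gives
\[
-\jbk{\varphi,\SL[\varphi]}=\int_{\Rbb^d}|\nabla u|^2\,dx .
\]
When $d\geq 3$ the boundary term at infinity vanishes because $u=O(|x|^{2-d})$. When $d=2$ one must handle the logarithmic term: split off the total charge $\int_{\partial D}\varphi\,d\sigma$ and use $\capac_D>1$. I expect this two-dimensional bookkeeping to be the main technical obstacle, and I would follow \cite{Verchota84} and \cite[\S 2.3.5]{Ammari-Kang07} for it.

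Next show the energy is coercive: $\int_{\Rbb^d}|\nabla u|^2\ge c\|\varphi\|^2_{B^{-1/2,2}}$. Argue by contradiction (compactness), or bound $\varphi=\partial_\nv u|_- -\partial_\nv u|_+$ in $B^{-1/2,2}$ by the Dirichlet energy through the weak definition of the conormal derivative.

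By Theorem \ref{theo:layer:Lip} with $p=2$ and $s=1/2$, $\SL:B^{-1/2,2}\to B^{1/2,2}$ is an isomorphism. Hence for $\psi=\SL[\varphi]$,
\[
\jbk{\psi,\psi}_*=-\jbk{\SL^{-1}\psi,\psi}\simeq\|\varphi\|^2_{B^{-1/2,2}}\simeq\|\psi\|^2_{B^{1/2,2}}.
\]
Symmetry of $\jbk{\cdot,\cdot}_*$ follows from the symmetry of the kernel $\Gamma(x-y)$.

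\textbf{Part (c).} Self-adjointness comes from \eqref{eq:Plemelj} rewritten as $\NP^*\SL^{-1}=\SL^{-1}\NP$:
\[
\jbk{\NP\psi,\psi'}_*=-\jbk{\NP\psi,\SL^{-1}\psi'}=-\jbk{\psi,\NP^*\SL^{-1}\psi'}=-\jbk{\psi,\SL^{-1}\NP\psi'}=\jbk{\psi,\NP\psi'}_*.
\]
For the spectral bound, write $\psi=\SL[\varphi]$ with $u=\SL[\varphi]$, and use the two separate energies
\[
E_-=\int_D|\nabla u|^2=\jbk{\psi,(-\tfrac12I+\NP^*)\varphi}\ge0,\qquad E_+=\int_{D^+}|\nabla u|^2=-\jbk{\psi,(\tfrac12I+\NP^*)\varphi}\ge0,
\]
where $E_+$ again needs the $d=2$ care. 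Since $\jbk{\NP\psi,\psi}_*=-\jbk{\psi,\NP^*\varphi}$ and $\|\psi\|_*^2=-\jbk{\psi,\varphi}$, these two inequalities become
\[
-\tfrac12\|\psi\|_*^2\le\jbk{\NP\psi,\psi}_*\le\tfrac12\|\psi\|_*^2 .
\]
So the numerical range of the self-adjoint operator $\NP$ lies in $[-1/2,1/2]$, and therefore so does its spectrum.
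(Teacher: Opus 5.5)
Your approach (Green's representation formula for (a), the energy identity plus the weak-conormal-derivative bound for (b), and the one-sided energies $E_\pm\ge 0$ for the numerical range in (c)) is the standard self-contained argument behind the results the paper only cites \cite{Steinbach-Wendland01, KPS07, KKLSY16, Chang-Lee08}. For $d\ge 3$, parts (b) and (c) work as you describe.

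\textbf{Part (a) has a sign error that breaks the computation as written.} Use the paper's conventions, $\Delta\Gamma=\delta_0$ and ${\DL}[\psi](x)=\int_{\partial D}\partial_{\nv_y}\Gamma(x-y)\psi(y)\,d\sigma(y)$. Green's second identity with $v(y)=\Gamma(x-y)$ then gives
\[
u(x)={\DL}[u|_{\partial D}](x)-{\SL}[\partial_\nv u|_-](x),\qquad x\in D,
\]
not ${\SL}[\partial_\nv u|_-]-{\DL}[u|_{\partial D}]$. You can test this with $u\equiv 1$: ${\DL}[1]=1$ in $D$, equivalently ${\NP}[1]=1/2$. The signs you wrote are those of the exterior representation formula. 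As a result, the line you display simplifies to $2{\SL}[\varphi]=\SL\NP^*[\varphi]-\NP{\SL}[\varphi]$, which is not \eqref{eq:Plemelj}: the two terms $\frac12{\SL}[\varphi]$ add instead of cancelling. With the correct sign, the interior limit reads ${\SL}[\varphi]=(\tfrac12 I+\NP){\SL}[\varphi]-\SL(-\tfrac12 I+\NP^*)[\varphi]$. This is exactly $\NP{\SL}[\varphi]=\SL\NP^*[\varphi]$, and your density argument then completes (a).

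\textbf{The case $d=2$ needs more than bookkeeping.} The inequality $E_+\ge 0$ in (c) is not available, because $\int_{D^+}|\nabla {\SL}[\varphi]|^2=\infty$ whenever $c:=\int_{\partial D}\varphi\,d\sigma\neq 0$. So $-\jbk{\psi,(\tfrac12 I+\NP^*)\varphi}$ is no longer an energy. Write $\varphi=c\varphi_0+\varphi'$ with $\int_{\partial D}\varphi'\,d\sigma=0$ and ${\SL}[\varphi_0]\equiv c_0$ on $\partial D$. Using $\NP^*[\varphi_0]=\frac12\varphi_0$ and ${\NP}[1]=\frac12$, all cross terms vanish and
\[
-\jbk{\psi,(\tfrac12 I+\NP^*)\varphi}=\int_{D^+}|\nabla {\SL}[\varphi']|^2\,dx-c^2c_0,\qquad -\jbk{\varphi,{\SL}[\varphi]}=\int_{\Rbb^2}|\nabla {\SL}[\varphi']|^2\,dx-c^2c_0 .
\]
Thus both the lower spectral bound in (c) and the positivity and coercivity in (b) depend on $c_0<0$. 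For a disk of radius $R$, $c_0=\frac{1}{2\pi}\log R$, which is negative exactly in the case $R<1$ used in the paper. This is precisely where the two-dimensional hypothesis enters, and it is the point your sketch leaves open.
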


\begin{proof}
Proofs of \ref{enum:Plemelj} and \ref{enum:inner:product} can be found in \cite{Steinbach-Wendland01}, \cite{KKLSY16, KPS07, Steinbach-Wendland01}, respectively. The proof of the self-adjointness-part of \ref{enum:NP:sa} is also found in aforementioned references, and that of the assertion on spectrum can be found in \cite{Chang-Lee08}.
\end{proof}

% with the convention $\jbk{a\varphi, b\psi}=a\overline{b}\jbk{\varphi, \psi}$ for all $a, b\in \Cbb$, $\varphi\in B^{1/2, 2}(\partial D)$ and $\psi\in B^{-1/2, 2}(\partial D)$

\subsection{Dirichlet-to-Neumann map}\label{subs:dtn}

Let $D\subset \Rbb^d$ ($d\geq 2$) be a bounded open set with the Lipschitz boundary and let $(s, 1/p)\in \Rscr_{\Ge_*( D)}$. For $\varphi\in B^{s, p}(\partial D)$, we define
\begin{equation}\label{eq:DtN}
    \Lambda_+[\varphi]=-\partial_\nv u \quad \text{on } \partial D,
\end{equation}
where $u: D^+\to \Rbb$ is the unique solution to the exterior Dirichlet problem
\begin{equation}\leqnomode
    \label{Dirichlet}\tag{D}
    \begin{cases}
        \lap u=0 & \text{in } D^+, \\
        u=\varphi & \text{on } \partial D, \\
        u(x)=-b\Gamma (x)+O(|x|^{-d+1}) & \text{as } |x|\to \infty \text{ for some } b\in \Rbb
    \end{cases}
\end{equation}
such that $u\in B^{s+1/p, p} (U\setminus \overline{D})\cap H^{s+1/p, p} (U\setminus \overline{D})$ for any open ball $U\subset \Rbb^d$ including $\overline{D}$, where $\Gamma (x)$ is the fundamental solution \eqref{eq:fundamental:solution}. The operator $\Lambda_+$ is called Dirichlet-to-Neumann map associated with the exterior domain $D^+$, or exterior DtN map for short. We put the minus sign in \eqref{eq:DtN} since the normal vector $\nu$ is outward to $D$ (inward to $D^+$).
Since the solution $u$ is given by $u(x)={\SL}[\SL^{-1} [\varphi]](x)$ for $x\in D^+$, the jump relation \eqref{eq:sl:jump} shows that
\begin{equation}\label{eq:DtN:potential}
\Lambda_+=-\left(\frac{1}{2}I+\NP^*\right) \SL^{-1} =-\SL^{-1} \left(\frac{1}{2}I+\NP\right).
\end{equation}
By the relation \eqref{eq:DtN:potential} and Theorem \ref{theo:layer:Lip}, we naturally consider $\Lambda_+$ as the bounded operators from $H^{1, p}(\partial D)$ into $L^p (\partial D)$ for $p\in (1, 2/(1-\Ge_*))$.

\begin{theo}
    \label{theo:inv}
    Let $D\subset \Rbb^d$ ($d\geq 2$) be a bounded open set with the Lipschitz boundary and $\gamma>0$. If $d \ge 3$, the operators 
    \[
        I+\gamma \Lambda_+: B^{s, p}(\partial D) \longrightarrow B^{s-1, p} (\partial  D) \quad ((s, 1/p)\in \Rscr (D))
    \]
    and
    \[
        I+\gamma \Lambda_+: H^{1, p}(\partial D) \longrightarrow L^p (\partial  D) \quad (p\in (1, 2/(1-\Ge_*)))
    \]
    are isomorphisms. 

    If $d=2$ and $\capac_D>1$, then the same conclusion holds for $p=2$.
\end{theo}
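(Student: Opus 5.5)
The plan is to write $I+\gamma\Lambda_+$ as an isomorphism plus a compact operator, which makes it Fredholm of index zero, and then to prove injectivity by an energy argument at the $L^2$ level.

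\emph{Step 1: $\Lambda_+$ is Fredholm of index zero.} By \eqref{eq:DtN:potential}, $\Lambda_+=-\SL^{-1}(\frac12 I+\NP)$. Theorem \ref{theo:layer:Lip} \ref{enum:sl:inv:Lip} says that $\SL^{-1}:B^{s,p}(\partial D)\to B^{s-1,p}(\partial D)$ is an isomorphism. Theorem \ref{theo:layer:Lip} \ref{enum:NP:Fredholm:Lip} says that $\frac12 I+\NP$ is Fredholm of index zero on $B^{s,p}(\partial D)$. Hence $\Lambda_+:B^{s,p}(\partial D)\to B^{s-1,p}(\partial D)$ is Fredholm of index zero for $(s,1/p)\in\Rscr(D)$. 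The endpoint case $H^{1,p}\to L^p$ is identical using \ref{enum:sl:inv:Lip:end} and \ref{enum:NP:Fredholm:Lip:end}, and for $d=2$ the same holds with $p=2$.

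\emph{Step 2: compact perturbation.} The identity in $I+\gamma\Lambda_+$ is really the inclusion $B^{s,p}(\partial D)\hookrightarrow B^{s-1,p}(\partial D)$, respectively $H^{1,p}\hookrightarrow L^p$. This inclusion is compact because $\partial D$ is compact and the smoothness drops by one. Therefore $I+\gamma\Lambda_+$ is a compact perturbation of the index-zero Fredholm operator $\gamma\Lambda_+$. It is thus Fredholm of index zero, and it suffices to prove that its kernel is trivial.

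\emph{Step 3: the kernel is independent of $(s,p)$.} Suppose $\varphi\in B^{s,p}(\partial D)$ and $\varphi+\gamma\Lambda_+\varphi=0$. The idea is to bootstrap $\varphi$ to $B^{1/2,2}(\partial D)$. From $\Lambda_+\varphi=-\gamma^{-1}\varphi$ the right-hand side has the same regularity as $\varphi$, which is one derivative better than the target space of $\Lambda_+$. Modulo the finite-dimensional kernel and cokernel issues of $\frac12 I+\NP$, this regularity can be inverted: write $(\frac12 I+\NP)\varphi=\gamma^{-1}\SL\varphi$, which lies in $B^{s+1,p}$ and hence, by Sobolev/Besov embeddings on $\partial D$, in $B^{s',p'}$ for a nearby point $(s',1/p')\in\Rscr(D)$ closer to $(1/2,1/2)$. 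Then apply Fredholmness of $\frac12 I+\NP$ at that point. The kernel and cokernel of $\frac12 I+\NP$ do not depend on $(s,p)$ in $\Rscr(D)$, since this is the standard consequence of the density and compatibility of these spaces together with the index-zero property. Moving in finitely many steps inside the hexagon, we reach $\varphi\in B^{1/2,2}(\partial D)$. \textbf{This bootstrapping is the step I expect to be the main obstacle.} I would first try the cleaner argument that the kernel of a family of compatible Fredholm operators on an interpolation scale is constant (a Šneĭberg-type stability argument), and use the explicit embedding iteration only as a fallback.

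\emph{Step 4: energy identity.} For $\varphi\in B^{1/2,2}(\partial D)$, Theorem \ref{theo:NP:energy} gives
\[
\jbk{\Lambda_+\varphi,\varphi}=-\jbk{\varphi,\SL^{-1}(\tfrac12 I+\NP)\varphi}=\jbk{(\tfrac12 I+\NP)\varphi,\varphi}_*\ \ge 0,
\]
since $\NP$ is self-adjoint for $\jbk{\cdot,\cdot}_*$ with spectrum in $[-1/2,1/2]$. This avoids any boundary term at infinity, even when $d=2$ under $\capac_D>1$. Pairing the kernel equation with $\varphi$ then yields
\[
0=\|\varphi\|_{L^2(\partial D)}^2+\gamma\jbk{\Lambda_+\varphi,\varphi}\ \ge\ \|\varphi\|_{L^2(\partial D)}^2 ,
\]
so $\varphi=0$. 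Hence $I+\gamma\Lambda_+$ is injective, and by Step 2 it is an isomorphism in every stated setting.
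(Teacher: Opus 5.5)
Your proposal is correct and follows essentially the paper's proof. Both argue Fredholmness of index zero from the compact inclusion $B^{s,p}(\partial D)\hookrightarrow B^{s-1,p}(\partial D)$ (the paper writes $I+\gamma\Lambda_+=-\SL^{-1}(\gamma(\frac12 I+\NP)-\SL)$ with $\SL$ compact on $B^{s,p}(\partial D)$), then get injectivity from the energy identity on $B^{1/2,2}(\partial D)$ together with a bootstrap through $\Rscr(D)$ driven by $(\frac12 I+\NP)[\varphi]=\gamma^{-1}{\SL}[\varphi]$ and Sobolev embeddings.

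Two small points.
\begin{enumerate}
\item On a Lipschitz boundary the gain is ${\SL}[\varphi]\in H^{1,p}(\partial D)$ (from $\varphi\in L^p(\partial D)$), not $B^{s+1,p}$, which is not defined there.
\item Your kernel-invariance claim, which the paper leaves implicit, is elementary for a dense embedding $Y\hookrightarrow X$ with $T$ Fredholm of index zero on both. Writing $Y=T(Y)\oplus F$ gives $X=\overline{T(Y)}+F\subset T(X)+F$, hence $\dim\ker_X T\le\dim F=\dim\ker_Y T$; the same argument works with distinct domain and target spaces. Applied directly to $I+\gamma\Lambda_+$, this takes you from any point of $\Rscr(D)$ to the row $1/p=1/2$ and then to $B^{1/2,2}(\partial D)$, without the zig-zag bootstrap.
\end{enumerate}
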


\begin{proof}
    Let $(s, 1/p)\in \Rscr (D)$ and $\gamma>0$ ($p=2$ if $d=2$). Since the imbedding $B^{s, p}(\p D)\to B^{s-1, p}(\p D)$ is compact, $\SL$ is compact on $B^{s, p}(\p D)$.  By Theorem \ref{theo:layer:Lip} \ref{enum:NP:Fredholm:Lip}, the operator $\gamma (1/2I+\NP)-\SL$ is a bounded Fredholm operator on $B^{s, p}(\partial D)$ of index zero. It then follows from the relation
    \begin{equation}\label{eq:rel1}
    I+\gamma \Lambda_+=-\SL^{-1}\left(\gamma\left(\frac{1}{2}I+\NP\right)-\SL\right),
    \end{equation}
    which is an immediate consequence of \eqref{eq:DtN:potential}, that $I+\gamma \Lambda_+$ is a Fredholm operator from $B^{s, p}(\partial D)$ into $B^{s-1, p}(\partial D)$ of index zero. Thus it suffices to prove that $(I+\gamma \Lambda_+)[\varphi]=0$ and $\varphi\in B^{s, p}(\partial D)$ imply $\varphi=0$.

    The simplest case is when $s=1/2$ and $p=2$. In fact, if $\varphi\in B^{1/2, 2}(\partial D)$ satisfies $(I+\gamma \Lambda_+)[\varphi]=0$, then, by Theorem \ref{theo:NP:energy}, it holds that
    \begin{align*}
        0&=-\jbk{\varphi, (I+\gamma \Lambda_+)[\varphi]}_{B^{1/2, 2}\text{-}B^{-1/2, 2}} \\
        &=\gamma \jbk{\left(\frac{1}{2}I+\NP\right)[\varphi], \varphi}_*+\| \varphi\|_{L^2}^2\geq \|\varphi\|_{L^2}^2.
    \end{align*}
    Thus $\varphi$ must be zero on $\partial D$.

    In order to prove the injectivity of $I+\gamma \Lambda_+: B^{s, p}(\partial D)\to B^{s-1, p}(\partial D)$, it suffices to prove that $\varphi\in L^p (\partial D)$ for some $p\in (1, 2]$ implies $\varphi=0$, since $B^{s, p}(\partial D) \subset L^p (\partial D)$ if $s>0$ and $L^p (\partial D)\subset L^2 (\partial D)$ if $p>2$. Thanks to the relation \eqref{eq:rel1}, we have
    \begin{equation}\label{eq:rel2}
        \left(\frac{1}{2}I+\NP\right)[\varphi]=\gamma^{-1}{\SL}[\varphi].
    \end{equation}
    By Theorem \ref{theo:layer:Lip} \ref{enum:sl:inv:Lip}, we have ${\SL}[\varphi]\in H^{1, p}(\partial D)$. Here, we emphasize that $\frac{1}{2}I+\NP$ may not be invertible if $D^+$ has a bounded component \cite{Verchota84}.

    We can take $(s_1, 1/p_1)\in \Rscr ( D)$ such that
    \[
        \begin{dcases}
            \frac{1}{2} \leq s_1 \leq 1, \\
            s_1-\frac{d-1}{p_1}\leq 1-\frac{d-1}{p}.
        \end{dcases}
    \]
    Then, by the Sobolev embedding theorem, we have $\varphi\in B^{s_1, p_1}(\partial D)$. Since
    \[
        \left(\frac{1}{2}I+\NP\right)[\varphi]=\gamma^{-1}{\SL}[\varphi]\in H^{1, p_1}(\partial D),
    \]
    the same argument proves that there exists $(s_2, 1/p_2)\in \Rscr ( D)$ such that
    \[
        \begin{dcases}
            \frac{1}{2}\leq s_2\leq 1, \\
            s_2-\frac{d-1}{p_2}\leq 1-\frac{d-1}{p_1}.
        \end{dcases}
    \]
    We can repeat this scheme until we have $\varphi\in B^{1/2, 2}(\partial  D)$, as shown in Figure \ref{fg:scheme}.
    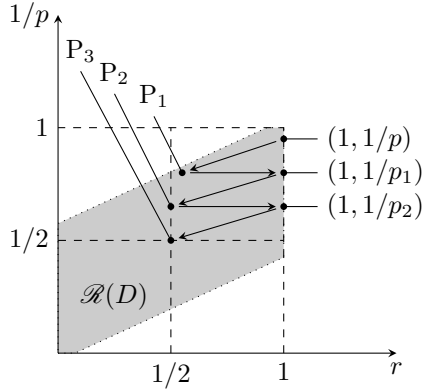
\begin{figure}[htb]
        \centering
        \begin{minipage}{0.5\textwidth}
        \tikzmath{\a=0.15;}
        \begin{tikzpicture}[scale=1.5]
            \filldraw[fill=black!20, dotted] (0, 0)--(\a, 0)--(2, {1-\a})--(2, 2)--({2-\a}, 2)--(0, {1+\a})--cycle;
            \draw[->, >=stealth] (0, 0)--(3, 0);
            \draw[->, >=stealth] (0, 0)--(0, 3);
            \node (x) at (3, 0) [below] {$r$};
            \node (y) at (0, 3) [left] {$1/p$};
            \node (10) at (2, 0) [below] {$1$};
            \node (01) at (0, 2) [left] {$1$};
            \node (1/20) at (1, 0) [below] {$1/2$};
            \node (01/2) at (0, 1) [left] {$1/2$};
            \draw[dashed] (2, 0)--(2, 2)--(0, 2);
            \draw[dashed] (1, 0)--(1, 2);
            \draw[dashed] (0, 1)--(2, 1);
            %\draw[thick] (2, {1-\a})--(2, 2);
            %\filldraw[fill=white] (2, {1-\a}) circle [radius=0.03];
            %\filldraw[fill=white] (2, 2) circle [radius=0.03];
            %\draw[thick] (0, {1+\a})--(0, 0);
            %\filldraw[fill=white] (0, {1+\a}) circle [radius=0.03];
            %\filldraw[fill=white] (0, 0) circle [radius=0.03];
            \coordinate (0) at (2, 1.9);
            \coordinate (1) at (1.1, 1.6);
            \coordinate (12) at (2, 1.6);
            \coordinate (2) at (1, {1.6-0.3*0.9/0.9});
            \coordinate (23) at (2, {1.6-0.3*0.9/0.9});
            \coordinate (3) at (1, {1.6-0.3*0.9/0.9*2});
            \fill[black] (0) circle [radius=0.03];
            \fill[black] (1) circle [radius=0.03];
            \fill[black] (12) circle [radius=0.03];
            \fill[black] (23) circle [radius=0.03];
            \fill[black] (3) circle [radius=0.03];
            \fill[black] (2) circle [radius=0.03];
            \draw[->, >=stealth, shorten <=3pt, shorten >=3pt] (0)--(1);
            \draw[->, >=stealth, shorten <=3pt, shorten >=3pt] (1)--(12);
            \draw[->, >=stealth, shorten <=3pt, shorten >=3pt] (12)--(2);
            \draw[->, >=stealth, shorten <=3pt, shorten >=3pt] (2)--(23);
            \draw[->, >=stealth, shorten <=3pt, shorten >=3pt] (23)--(3);
            \coordinate (rp0) at (2.3, 1.9);
            \coordinate (rp12) at (2.3, 1.6);
            \coordinate (rp23) at (2.3, {1.6-0.3*0.9/0.9});
            \coordinate (rp1) at (0.85, 2.1);
            \coordinate (rp2) at (0.5, 2.3);
            \coordinate (rp3) at (0.2, 2.5);
            \node (rp0n) at (rp0) [right] {$(1, 1/p)$};
            \node (rp12n) at (rp12) [right] {$(1, 1/p_1)$};
            \node (rp23n) at (rp23) [right] {$(1, 1/p_2)$};
            \node (rp1n) at (rp1) [above] {$\mathrm{P}_1$};
            \node (rp2n) at (rp2) [above] {$\mathrm{P}_2$};
            \node (rp3n) at (rp3) [above] {$\mathrm{P}_3$};
            \draw (0)--(rp0);
            \draw (12)--(rp12);
            \draw (23)--(rp23);
            \draw (1)--(rp1);
            \draw (2)--(rp2);
            \draw (3)--(rp3);
            \node (R) at (0.5, 0.5) {$\Rscr ( D)$};
        \end{tikzpicture}
    \end{minipage}
    \begin{minipage}{0.4\textwidth}
        
        \caption{Construction of the sequence $\mathrm{P}_j= (s_j, 1/p_j)$ when $p\in (1, 2]$.}
        \label{fg:scheme}
    \end{minipage}
    \end{figure}

    In conclusion, we can construct a sequence $\{(s_j, 1/p_j)\}_{j=1}^N$ such that $s_N\geq 1/2$, $p_N\geq 2$ and $p_{j+1}\geq p_j+\delta$ for some $\delta>0$ depending only on $s$, $p$, and $ D$ by the above procedure. Hence we obtain $\varphi=0$.

    The isomorphic property of $I+\gamma \Lambda_+: H^{1, p}(\partial D) \longrightarrow L^p (\partial  D)$ for $p\in (1, 2/(1-\Ge_*))$ is proved similarly by using Theorem \ref{theo:layer:Lip} \ref{enum:sl:inv:Lip:end}--\ref{enum:NP:inv:Lip:end}. 
    \end{proof}

    As a corollary of Theorem \ref{theo:inv}, we obtain the closedness of the exterior DtN map as an unbounded operator.

    \begin{coro}
        \label{coro:DtN:closed}
        For any $p\in (1, 2/(1-\Ge_*))$, the operator $\Lambda_+$ with domain $H^{1, p}(\partial D)$ is a closed operator on $L^p (\partial D)$. 
    \end{coro}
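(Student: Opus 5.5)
The plan is to deduce closedness from Theorem \ref{theo:inv}: a shift of $\Lambda_+$ by a multiple of the identity is boundedly invertible, and an operator with a bounded inverse is closed. Fix $p\in (1, 2/(1-\Ge_*))$ and any $\gamma>0$, for instance $\gamma=1$. By Theorem \ref{theo:inv}, the operator
\[
    T:=I+\gamma \Lambda_+: H^{1, p}(\partial D)\to L^p(\partial D)
\]
is an isomorphism. In particular $T^{-1}: L^p(\partial D)\to H^{1,p}(\partial D)$ is bounded.

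Next, take a sequence $\varphi_n\in H^{1,p}(\partial D)$ with $\varphi_n\to\varphi$ in $L^p(\partial D)$ and $\Lambda_+[\varphi_n]\to \psi$ in $L^p(\partial D)$. Then
\[
    T[\varphi_n]=\varphi_n+\gamma\Lambda_+[\varphi_n]\to \varphi+\gamma\psi \quad \text{in } L^p(\partial D).
\]
Boundedness of $T^{-1}$ gives $\varphi_n\to \eta:=T^{-1}[\varphi+\gamma\psi]$ in $H^{1,p}(\partial D)$. Since $H^{1,p}(\partial D)\hookrightarrow L^p(\partial D)$ continuously, $\varphi_n\to\eta$ also in $L^p$, so uniqueness of limits gives $\varphi=\eta\in H^{1,p}(\partial D)$.

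It remains to identify $\Lambda_+[\varphi]$. The map $\Lambda_+: H^{1,p}(\partial D)\to L^p(\partial D)$ is bounded, by \eqref{eq:DtN:potential} together with Theorem \ref{theo:layer:Lip} \ref{enum:sl:inv:Lip:end} and the boundedness of $\NP$ on $H^{1,p}$ from Theorem \ref{theo:layer:bdd:Lip}. Hence $\Lambda_+[\varphi_n]\to\Lambda_+[\varphi]$ in $L^p$, so $\Lambda_+[\varphi]=\psi$, and the graph of $\Lambda_+$ is closed.

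The only point needing care is the boundedness of $\Lambda_+$ from $H^{1,p}$ to $L^p$ in the stated range of $p$. This rests on the $L^p$ endpoint invertibility of $\SL$, which is exactly why the restriction $p<2/(1-\Ge_*)$ appears. Alternatively, one can avoid this step by writing $\gamma\Lambda_+[\varphi]=T[\varphi]-\varphi$ and using continuity of $T$. Everything else is the standard argument that an operator with a bounded inverse is closed.
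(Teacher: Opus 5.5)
Your proposal is correct and follows essentially the same argument as the paper. Like the paper, you use the isomorphism $I+\gamma\Lambda_+: H^{1,p}(\partial D)\to L^p(\partial D)$ from Theorem \ref{theo:inv} (the paper takes $\gamma=1$) to upgrade the $L^p$-convergence of $\varphi_n$ and $\Lambda_+[\varphi_n]$ to convergence in $H^{1,p}(\partial D)$, and then identify $\Lambda_+[\varphi]=\psi$ by continuity.
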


    \begin{proof}
        If $\varphi_j\in H^{1, p}(\partial D)$ converges to $\varphi$ in $L^p (\partial D)$ and $\Lambda_+[\varphi_j]$ converges to $\psi$ in $L^p (\partial D)$, then, since $\{(I+\Lambda_+)[\varphi_j]\}_{j=1}^\infty$ forms a Cauchy sequence in $L^p (\partial D)$, the sequence $\{\varphi_j\}_{j=1}^\infty$ is Cauchy in $H^{1, p}(\partial D)$ by Theorem \ref{theo:inv}. Thus $\varphi_j\to \varphi$ in $H^{1, p}(\partial D)$ and $(I+\Lambda_+)[\varphi]=\varphi+\psi$, which implies $\Lambda_+[\varphi]=\psi$.
    \end{proof}

    \subsection{Uniform resolvent estimate on \texorpdfstring{$L^p$}{Lp}-spaces}

Corollary \ref{coro:DtN:closed} implies the closedness of $I+\gamma \Lambda_+$ on $L^p (\partial D)$ for each $\gamma>0$. Thus the resolvent $(I+\gamma \Lambda_+)^{-1}$ is bounded on $L^p (\partial D)$ for $\gamma>0$.
Furthermore, the following uniform resolvent estimate is valid:

\begin{theo}
    \label{theo:unif:res:Lip}
    Let $D$ be a bounded open set with the Lipschitz boundary in $\Rbb^d$ with $d\geq 2$. 
    
    If $d \geq 3$, let $\Ge_*=\Ge_*(D)$ be the number defined in Theorem \ref{theo:layer:Lip}. For each $p\in (1, 2/(1-\Ge_*))$, it holds that
    \begin{equation}\label{eq:unif:res:Lip}
        \| (I+\gamma \Lambda_+)^{-1}\|_{L^p (\partial  D)\to L^p (\partial  D)}\leq 1
    \end{equation}
    for all $\gamma\geq 0$. In particular,
    \[
        \lim_{\gamma\to +0} \| (I+\gamma \Lambda_+)^{-1}[\varphi]-\varphi\|_{L^p (\partial D)}=0
    \]
    for any $\varphi\in L^p (\partial D)$.

    If $d=2$ and $\capac_D>1$, the same conclusion holds for $p=2$.
\end{theo}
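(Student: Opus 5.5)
The plan is to show that $\Lambda_+$, with domain $H^{1,p}(\partial D)$, is \emph{accretive} on $L^p(\partial D)$, i.e.
\[
\int_{\partial D} \Lambda_+[\varphi]\,|\varphi|^{p-2}\varphi \, \df\sigma \;\ge\; 0 \qquad (\varphi\in H^{1,p}(\partial D)).
\]
Granting this, let $\varphi\in H^{1,p}(\partial D)$ and $\psi:=(I+\gamma\Lambda_+)[\varphi]$; such $\varphi$ exist for every $\psi\in L^p$ by Theorem \ref{theo:inv}. Pairing $\psi$ with $|\varphi|^{p-2}\varphi$ and applying H\"older gives
\[
\|\varphi\|_{L^p}^p\le \int_{\partial D}\psi|\varphi|^{p-2}\varphi\,\df\sigma\le \|\psi\|_{L^p}\|\varphi\|_{L^p}^{p-1}.
\]
This yields \eqref{eq:unif:res:Lip}; the case $\gamma=0$ is trivial.

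The convergence statement then follows by a density argument. For $\varphi\in H^{1,p}(\partial D)$ we have
\[
(I+\gamma\Lambda_+)^{-1}[\varphi]-\varphi=-\gamma(I+\gamma\Lambda_+)^{-1}\Lambda_+[\varphi],
\]
whose $L^p$-norm is at most $\gamma\|\Lambda_+[\varphi]\|_{L^p}\to 0$. Since $H^{1,p}(\partial D)$ is dense in $L^p(\partial D)$ and the operators are uniformly bounded by $1$, the convergence extends to every $\varphi\in L^p(\partial D)$ by a standard $\epsilon/3$ argument.

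For accretivity when $d\ge 3$, let $u=\SL[\SL^{-1}[\varphi]]$ be the exterior solution, so $\Lambda_+[\varphi]=-\partial_\nv u|_+$. Since $\nv$ points into $D^+$, apply the divergence theorem on $D^+\cap B_R$ to the field $|u|^{p-2}u\nabla u$. Because $u$ is harmonic,
\[
\mathrm{div}\,(|u|^{p-2}u\nabla u)=(p-1)|u|^{p-2}|\nabla u|^2\ge 0,
\]
and this gives
\[
\int_{\partial D}(-\partial_\nv u)|u|^{p-2}u\,\df\sigma=(p-1)\int_{D^+\cap B_R}|u|^{p-2}|\nabla u|^2\,dx-\int_{|x|=R}|u|^{p-2}u\,\partial_r u\,\df\sigma.
\]
For $d\ge3$ we have $u=O(|x|^{2-d})$ and $\nabla u=O(|x|^{1-d})$, so the sphere term is $O(R^{(2-d)(p-1)})\to0$. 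Hence the left side is nonnegative.

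Two technical points need care, and I expect this justification to be the main obstacle.
\begin{itemize}
\item For $p<2$ the weight $|u|^{p-2}$ is singular where $u=0$. I would replace $|u|^{p-2}u$ by $(u^2+\delta)^{(p-2)/2}u$. Its gradient is still a nonnegative multiple of $\nabla u$, so the volume term stays $\ge0$, and I would let $\delta\to0$ by dominated convergence on the boundary.
\item The integration by parts on the Lipschitz domain must be justified. I would do this by approximating $D^+$ from inside by smooth domains (Verchota's approximation), using nontangential maximal function control of $u$ and $\nabla u$ in $L^p$ (available since $p<2/(1-\Ge_*)$) to pass to the limit. Alternatively, I would prove the inequality for smooth $\varphi$ and extend by density and continuity of $\Lambda_+:H^{1,p}\to L^p$.
\end{itemize}

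For $d=2$ with $p=2$, the logarithmic term at infinity spoils the computation above. Instead, as in the proof of Theorem \ref{theo:inv}, I would write
\[
\jbk{\Lambda_+[\varphi],\varphi}=\jbk{(\tfrac12 I+\NP)[\varphi],\varphi}_*\ge 0,
\]
which holds by Theorem \ref{theo:NP:energy}, since $\NP$ is self-adjoint in $\jbk{\cdot,\cdot}_*$ with spectrum in $[-1/2,1/2]$. The same H\"older step then concludes.
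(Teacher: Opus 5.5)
Your argument is correct, but it takes a different route from the paper.

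\textbf{The paper's route.} The paper goes through semigroup theory in five steps:
\begin{enumerate}
\item The form $b(\varphi,\psi)=\jbk{\Lambda_+[\varphi],\psi}$ on $B^{1/2,2}(\partial D)$ is shown to be closed and coercive on $L^2(\partial D)$ (Lemma \ref{lemm:DtN:bilinear}).
\item The Beurling--Deny/Ouhabaz criterion of Theorem \ref{theo:Dirichlet:form} is checked by an exterior Green identity for the truncation $(u\wedge 1)^+$. Hence $e^{-t\Lambda_+}$ is sub-Markovian (Lemma \ref{lemm:DtN:contractive}).
\item Self-adjointness, interpolation and duality make the semigroup contractive on every $L^p$, $1\le p\le\infty$ (Lemma \ref{lemm:contractive:strong}).
\item Hille--Yosida bounds the resolvent of the $L^p$-generator $\Acal_p$.
\item A separate argument for $p\le 2$ and for $p>2$ identifies $(I-\gamma\Acal_p)^{-1}$ with $(I+\gamma\Lambda_+)^{-1}$, using Theorem \ref{theo:inv}.
\end{enumerate}

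\textbf{Your route.} You prove $L^p$-accretivity of $\Lambda_+$ on $H^{1,p}(\partial D)$ directly from $\operatorname{div}(|u|^{p-2}u\nabla u)=(p-1)|u|^{p-2}|\nabla u|^2$. You then combine it with the surjectivity in Theorem \ref{theo:inv}. This is a Lumer--Phillips-type argument that skips the form/semigroup machinery and the generator identification. It works directly on the spaces where $\Lambda_+$ is realized by layer potentials.

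\textbf{What each buys.} The paper's route gives more structure: a sub-Markovian semigroup that is contractive on all $L^p$, including $p=1,\infty$. It also needs Green's identity only for $H^{1,2}$ functions. Yours is shorter but needs the identity for the nonlinear field $G(u)\nabla u$.

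\textbf{Settling your technical point.} Your second option handles this without nontangential estimates.
\begin{itemize}
\item For Lipschitz $\varphi$, $u$ is bounded by the maximum principle and lies in $H^{1,2}_{\mathrm{loc}}(\overline{D^+})$, with $\partial_\nv u|_+=(\frac12 I+\NP^*)\SL^{-1}[\varphi]\in L^2(\partial D)$.
\item Therefore $G(u)\in H^{1,2}_{\mathrm{loc}}$ with trace $G(\varphi)$. For $p<2$ your regularized $G$ is even globally Lipschitz, so boundedness is not needed there.
\item The same weak Green identity as in Lemma \ref{lemm:DtN:contractive} then applies on $D^+\cap B_R$.
\item To extend to all $\varphi\in H^{1,p}(\partial D)$, use density in $H^{1,p}(\partial D)$, boundedness of $\Lambda_+:H^{1,p}\to L^p$, and continuity of $f\mapsto|f|^{p-2}f$ from $L^p$ into $L^{p/(p-1)}$.
\end{itemize}

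\textbf{The remaining parts.} Your treatment of $d=2$, $p=2$ uses nonnegativity of $\jbk{(\frac12 I+\NP)[\varphi],\varphi}_*$. This, and your density proof of the limit statement, make explicit what the paper leaves implicit. The paper's written proof goes through Lemma \ref{lemm:DtN:contractive}, which uses $d\ge 3$. For $d=2$, $p=2$ only nonnegativity of the form is needed, exactly as you do.
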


We will prove Theorem \ref{theo:unif:res:Lip} by using a theory of semigroups acting on $L^p$-spaces. This idea naturally arises from the following well-known theorem \cite{Yosida74}: if a $C_0$-semigroup $\{ \Tcal (t)\}_{t\geq 0}$ acting on a Banach space $\Xcal$ satisfies $\|\Tcal (t)\|\leq M\e^{\omega t}$ for all $t\geq 0$ with the constants $M>0$ and $\omega\geq 0$ independent of $t\geq 0$, then the infinitesimal generator
    \begin{equation}\label{eq:infinitesimal}
        \Acal [\varphi]:=\lim_{t\to +0} \frac{\Tcal (t)[\varphi]-\varphi}{t} \quad \text{in } \Xcal,
    \end{equation}
whose domain is the linear subspace of $\Xcal$ consisting of all $\varphi\in \Xcal$ such that the right hand side of \eqref{eq:infinitesimal} exists, satisfies the following properties. 
\begin{itemize}
    \item $\Acal$ is a densely defined closed operator on $\Xcal$. 
    \item $\lambda I-\Acal$ is invertible for all $\lambda>\omega$ and $\|(\lambda I-\Acal)^{-n}\|\leq M(\lambda - \omega)^{-n}$ for all $\lambda>\omega$ and $n\in \Nbb$.
\end{itemize}
In fact, an analogue of Theorem \ref{theo:unif:res:Lip} for \textit{interior} DtN maps is essentially proved by \cite{terElst-Ouhabaz14} in terms of the associated semigroup (including the case when $d=2$). Our proof for the \textit{exterior} DtN maps follows that of \cite{terElst-Ouhabaz14} except that we require an additional argument in order to employ the Green theorem in exterior domains. In this step, however, we require the assumption $d\geq 3$. 

In what follows, we recall a few facts from semigroup theory for our use. 

\begin{defi}
    Let $X$ be a measure space and $L^2 (X)$ be the (real or complex) $L^2$-space on $X$. A bounded linear operator $\Tcal$ on $L^2 (X)$ is said to be $L^\infty$-contractive if $\varphi\in L^2 (X)\cap L^\infty (X)$, then $\Tcal [\varphi]\in L^\infty (X)$ and $\|\Tcal [\varphi]\|_{L^\infty}\leq \|\varphi\|_{L^\infty}$.

    An operator semigroup $\{ \Ucal (t)\}_{t\geq 0}$ (not necessarily $C_0$ or holomorphic in $t$) is said to be $L^\infty$-contractive if $\Ucal (t)$ is $L^\infty$-contractive for each $t\geq 0$.
\end{defi}

\begin{lemm}\label{lemm:contractive:strong}
    Let $X$ be a finite measure space. If a $C_0$-semigroup $\{\Ucal (t)\}_{t\geq 0}$ of self-adjoint operators on $L^2 (X)$ is $L^\infty$-contractive and satisfies $\|\Ucal (t)\|_{L^2\to L^2}\leq 1$ for all $t\geq 0$, then $\{\Ucal (t)\}_{t\geq 0}$ is a $C_0$-semigroup on $L^p (X)$ for any $p\in [1, \infty]$ and satisfies $\|\Ucal (t)\|_{L^p\to L^p}\leq 1$ for all $p\in [1, \infty]$ and $t\geq 0$. 
\end{lemm}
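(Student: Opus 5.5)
The plan is to treat $p\in[1,2]$ and $p\in[2,\infty]$ by interpolation and duality, using two facts: self-adjointness of $\Ucal(t)$ on $L^2(X)$, and the finiteness of the measure of $X$, which gives $L^\infty(X)\cap L^2(X)=L^\infty(X)$, dense in every $L^p(X)$ with $p<\infty$.

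\textbf{Step 1: the endpoint $p=\infty$.} Since $\mu(X)<\infty$, we have $L^\infty(X)\subset L^2(X)$. The $L^\infty$-contractivity hypothesis then says exactly that $\Ucal(t)$ restricts to an operator on $L^\infty(X)$ with $\|\Ucal(t)\|_{L^\infty\to L^\infty}\le 1$. The semigroup law holds there automatically, because it holds on $L^2$.

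\textbf{Step 2: the endpoint $p=1$, by duality.} For $\varphi\in L^2\cap L^1$ and $\psi\in L^\infty$, self-adjointness gives
\[
\Bigl|\int_X \Ucal(t)[\varphi]\,\overline{\psi}\Bigr|=\Bigl|\int_X \varphi\,\overline{\Ucal(t)[\psi]}\Bigr|\le \|\varphi\|_{L^1}\|\psi\|_{L^\infty}.
\]
Taking the supremum over $\psi$ yields $\|\Ucal(t)[\varphi]\|_{L^1}\le\|\varphi\|_{L^1}$. Since $L^2$ is dense in $L^1$ when $\mu(X)<\infty$, $\Ucal(t)$ extends uniquely to a contraction on $L^1$.

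\textbf{Step 3: intermediate $p$.} By Riesz--Thorin interpolation between $p=1$ and $p=\infty$, we get $\|\Ucal(t)\|_{L^p\to L^p}\le 1$ for all $p\in[1,\infty]$. (Alternatively, interpolate between $L^2$ and the endpoints.) The extensions agree on $L^2\cap L^p$, so the semigroup property passes to $L^p$ by density for $p<\infty$; for $p=\infty$ it is immediate.

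\textbf{Step 4: strong continuity, the main obstacle.} First take $p\in[1,2]$. For $\varphi\in L^2$, Hölder's inequality on the finite measure space gives
\[
\|\Ucal(t)\varphi-\varphi\|_{L^p}\le \mu(X)^{1/p-1/2}\,\|\Ucal(t)\varphi-\varphi\|_{L^2}\to 0 .
\]
Density of $L^2$ in $L^p$ and the uniform bound $\le 1$ then give strong continuity on $L^p$ by an $\varepsilon/3$ argument.

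Now take $p\in(2,\infty)$. For $\varphi\in L^\infty$, Hölder interpolation gives
\[
\|\Ucal(t)\varphi-\varphi\|_{L^p}\le \|\Ucal(t)\varphi-\varphi\|_{L^2}^{2/p}\,\bigl(2\|\varphi\|_{L^\infty}\bigr)^{1-2/p}\to 0 .
\]
Again, density of $L^\infty$ in $L^p$ and uniform boundedness finish the argument.

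For $p=\infty$, strong continuity generally fails. There the statement should be read as the semigroup being bounded by $1$, or continuous in the weak$^*$ sense: $\int_X \Ucal(t)[\varphi]\,\psi\to\int_X\varphi\,\psi$ for $\psi\in L^1$, which follows from the $L^1$ case via self-adjointness. I expect this $p=\infty$ endpoint to need that caveat, while everything else is routine.
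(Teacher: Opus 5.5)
Your argument is correct for every $p\in[1,\infty)$, and your caveat at $p=\infty$ is justified. As stated, the lemma is false there, and the paper's proof contains the corresponding error.

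For $p\in[2,\infty]$ the paper argues as follows. It pairs with $\psi\in L^q(X)$, $q=p/(p-1)$, and uses $\jbk{\Ucal(t)[\varphi],\psi}=\jbk{\varphi,\Ucal(t)[\psi]}$ together with strong continuity on $L^q$ to get $\Ucal(t)[\varphi]\to\varphi$ weakly. It then invokes Yosida's theorem that a weakly continuous semigroup is $C_0$. This is fine for $p<\infty$. At $p=\infty$, however, it identifies $L^1(X)$ with the dual of $L^\infty(X)$, which is false, so it only yields the weak$^*$ continuity you describe.

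A concrete counterexample satisfying all the hypotheses is $\Ucal(t)[f]=\e^{-t/x}f$ on $X=(0,1)$. It is a self-adjoint $C_0$-semigroup on $L^2$ that is both $L^2$- and $L^\infty$-contractive, yet $\|\Ucal(t)[1]-1\|_{L^\infty}=1$ for every $t>0$. The paper's later use of the lemma (the uniform resolvent estimate) only needs finite $p$, so nothing downstream breaks.

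Beyond that, your route differs only mildly from the paper's.

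\begin{itemize}
\item \emph{Norm bounds.} You dualize first to reach $p=1$ and then interpolate between $L^1$ and $L^\infty$. The paper interpolates between $L^2$ and $L^\infty$ and then dualizes. The two orders are interchangeable.
\item \emph{Strong continuity for $p\in(2,\infty)$.} You replace the weak-continuity-plus-Yosida argument with the elementary estimate $\|g\|_{L^p}\le\|g\|_{L^2}^{2/p}\|g\|_{L^\infty}^{1-2/p}$ on the dense subspace $L^\infty(X)$, combined with the uniform bound $\|\Ucal(t)\|_{L^p\to L^p}\le 1$. This is more self-contained. It also shows plainly where $p=\infty$ drops out, since the exponent $2/p$ degenerates; the paper's appeal to a general theorem hides that point.
\end{itemize}
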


\begin{proof}
    By interpolation, we have $\|\Ucal (t)\|_{L^p\to L^p}\leq 1$ for $p\in [2, \infty]$ and $t\geq 0$. Since $\Ucal (t)$ is self-adjoint on $L^2 (X)$, a duality argument extends the estimate 
    \begin{equation}\label{eq:semigroup:estimate:g}
        \|\Ucal (t)\|_{L^p\to L^p}\leq 1
    \end{equation}
    for $p\in [1, \infty]$ and $t\geq 0$. Thus $\{\Ucal (t)\}_{t\geq 0}$ is an operator semigroup on $L^p (\partial D)$: $\Ucal (0)=I$ and $\Ucal (t)\Ucal (s)=\Ucal (t+s)$ on $L^p (\partial D)$ for any $t, s\geq 0$. 
    
    Next, we prove the strong continuity of the semigroup, i.e., $\|\Ucal (t)[\varphi]-\varphi\|_{L^p}\to 0$ as $t\to +0$ for any $\varphi\in L^p (X)$. For $p\in [1, 2]$, we can prove this convergence by approximating $L^p(X)$-function by $L^2(X)$-functions in the $L^p$-norm topology. When $p\in [2, \infty]$, we first prove that $\Ucal (t)[\varphi]\to \varphi$ as $t\to +0$ weakly in $L^p (X)$. We begin with the relation 
    \[
        \jbk{\Ucal (t)[\varphi], \psi}=\jbk{\varphi, \Ucal (t)[\psi]}
    \]
    for any $\varphi, \psi\in L^2 (X)$. By the inequality \eqref{eq:semigroup:estimate:g}, the above relation also holds for $\varphi\in L^p (X)$ and $\psi\in L^q (X)$, where $q=p/(p-1)$ is the H\"older conjugate of $p$. Now let $\varphi\in L^p (X)$ and $\psi\in L^q (X)$. Since $q=p/(p-1)\in [1, 2]$ by the assumption that $p\in [2, \infty]$, we can employ the fact that $\Ucal (t)[\psi]\to \psi$ strongly in $L^q(X)$ to obtain $\jbk{\Ucal (t)[\varphi], \psi}\to \jbk{\varphi, \psi}$ as $t\to +0$. Since $L^q (X)$ corresponds to the continuous dual of $L^p (X)$ thanks to the assumption $p\in [2, \infty]\, (\subset (1, \infty])$, we obtain the weak convergence $\Ucal (t)[\varphi]\to \varphi$ in $L^p (X)$ as $t\to +0$. Now we invoke a theorem in \cite[p.233 in Chapter IX]{Yosida74}: an operator semigroup $\{\Tcal (t)\}_{t\geq 0}$ on a Banach space $\Xcal$ is a $C_0$-semigroup if and only if $\lim_{t\to +0}\Tcal (t)[x]=x$ weakly in $\Xcal$ for each $x\in \Xcal$. This theorem proves that $\{\Ucal (t)\}_{t\geq 0}$ is a $C_0$-semigroup on $L^p (X)$ for any $p\in [1, \infty]$. 
\end{proof}

% \begin{defi}
%     Let $(X, \mu)$ be a measure space and $L^2 (X, \mu)$ be the \textit{real} $L^2$-space associated with $(X, \mu)$. A bounded linear operator $\Tcal$ on $L^2 (X, \mu)$ is said to be sub-Markovian if the following conditions are satisfied.
%     \begin{enumerate}[label=\textnormal{(\roman*)}]
%         \item (Positivity) If $\varphi\geq 0$, then $\Tcal [\varphi]\geq 0$.
%         \item ($L^\infty$-contractivity) If $\varphi\in L^2 (X, \mu)\cap L^\infty (X, \mu)$, then $\Tcal [\varphi]\in L^\infty (X, \mu)$ and $\|\Tcal [\varphi]\|_{L^\infty}\leq \|\varphi\|_{L^\infty}$.
%     \end{enumerate}

%     An operator semigroup $\{ \Tcal (t)\}_{t\geq 0}$ (not necessarily $C_0$ or holomorphic) is said to be sub-Markovian if $\Tcal (t)$ is sub-Markovian for each $t\geq 0$.
% \end{defi}

\begin{defi}\label{defi:coercive}
    Let $\Hcal$ be a \textit{real} Hilbert space (for simplicity). A bilinear form $b: \dom (b)\times \dom (b)\to \Rbb$ with the domain  $\dom (b)\subset \Hcal$ is a coercive closed form if the following statements are valid.
\begin{enumerate}[label=\textnormal{(\alph*)}]
    \item \label{enum:coercive:dense}$\dom (b)$ is dense in $\Hcal$.
    \item \label{enum:coercive:nonneg}(Non-negativity) $b(\varphi, \varphi)\geq 0$ for any $\varphi\in \dom (b)$.
    \item \label{enum:coercive:closed}(Closedness) Define $\|\varphi\|_b:=(b(\varphi, \varphi)+\|\varphi\|_\Hcal^2)^{1/2}$ for $\varphi\in \dom (b)$. Then $(\dom (b), \|\cdot\|_b)$ is complete.
    \item \label{enum:coercive:wsc}(Weak sector condition) There exists $C>0$ such that $|b(\varphi, \psi)|\leq C\|\varphi\|_b \|\psi\|_b$ for all $\varphi, \psi\in \dom (b)$.
\end{enumerate}
\end{defi}

It is well-known that, if $b$ is a coercive closed form, then the linear operator
\[
    \begin{cases}
        \dom (\Bcal)=\{ \varphi\in \dom (b) \mid \exists \psi\in \Hcal \text{ s.t. } \forall \eta\in \Hcal, \ b(\varphi, \eta)=\jbk{\psi, \eta}_\Hcal\}, \\
        \Bcal [\varphi]=\psi
    \end{cases}
\]
is a densely defined closed linear operator on $\Hcal$ such that the operator $-\Bcal$ generates a $C_0$-semigroup $\{ \e^{-t \Bcal}\}_{t\geq 0}$ (actually, holomorphic semigroup) on $\Hcal$. Furthermore, if $b$ is symmetric: $b(\varphi, \psi)=b(\psi, \varphi)$ for all $\varphi, \psi\in \dom (b)$, then $\Bcal$ is self-adjoint. (See \cite[Chapter IX]{Yosida74}.)

For a real-valued measurable function $\varphi: X\to \Rbb$, we define $\varphi^+(x):=\max \{ \varphi (x), 0\}$ and $(\varphi \wedge 1)(x):=\min \{\varphi (x), 1\}$.

The following Beurling--Deny-type criterion is known:

\begin{theo}[{\cite[Corollary 2.7]{Ouhabaz96}}]
    \label{theo:Dirichlet:form}
    Let $\Bcal$ be the linear operator associated with the coercive closed form $b$ on real $L^2 (X)$ for some $\sigma$-finite measure space $X$. Then the following two statements are equivalent.
    \begin{enumerate}[label=\textnormal{(\roman*)}]
        \item \label{enum:subMarkov}$\{\e^{-t\Bcal}\}_{t\geq 0}$ is $L^\infty$-contractive and $\e^{-t\Bcal}[\varphi]\geq 0$ if $\varphi\geq 0$.
        \item \label{enum:BD}$\dom (b)$ is closed under the operations $\varphi\mapsto \varphi^+, \varphi\wedge 1$, and
        \begin{equation}\label{eq:BD}
            b(\varphi, \varphi-(\varphi\wedge 1)^+)\geq 0
        \end{equation}
        for all $\varphi\in \dom (b)$.
    \end{enumerate}
\end{theo}
Operator semigroups satisfying \ref{enum:subMarkov} are known as sub-Markovian semigroups. 

We use Theorem \ref{theo:Dirichlet:form} to prove Theorem \ref{theo:unif:res:Lip}. We begin with the following lemma.

\begin{lemm}
    \label{lemm:DtN:bilinear}
    We define the bilinear form $b(\varphi, \psi):=\jbk{\Lambda_+[\varphi], \psi}$ for $\varphi, \psi\in \dom (b):=B^{1/2, 2}(\partial D)$. Then $b$ is a coercive closed form on $L^2 (\partial D)$. 
\end{lemm}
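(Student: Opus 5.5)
Since $B^{1/2,2}(\partial D)$ is dense in $L^2(\partial D)$, condition \ref{enum:coercive:dense} of Definition \ref{defi:coercive} is free, and the work is in \ref{enum:coercive:nonneg}--\ref{enum:coercive:wsc}. The tool throughout will be the representation \eqref{eq:DtN:potential}, $\Lambda_+=-\SL^{-1}(\frac12 I+\NP)$, together with the inner product $\jbk{\cdot,\cdot}_*=-\jbk{\cdot,\SL^{-1}[\cdot]}$ of Theorem \ref{theo:NP:energy}. The starting identity is
\[
    b(\varphi,\psi)=\jbk{\Lambda_+[\varphi],\psi}=-\jbk{\psi,\SL^{-1}(\tfrac12 I+\NP)[\varphi]}=\jbk{\psi,(\tfrac12 I+\NP)[\varphi]}_*,
\]
valid for $\varphi,\psi\in B^{1/2,2}(\partial D)$. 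By the self-adjointness of $\NP$ with respect to $\jbk{\cdot,\cdot}_*$ (Theorem \ref{theo:NP:energy} \ref{enum:NP:sa}), this shows that $b$ is symmetric.

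For non-negativity, the spectrum of $\NP$ lies in $[-1/2,1/2]$, so $\frac12 I+\NP$ is a non-negative self-adjoint operator on $(B^{1/2,2},\jbk{\cdot,\cdot}_*)$, and hence $b(\varphi,\varphi)\ge 0$. This is the same computation used in the proof of Theorem \ref{theo:inv}.

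The weak sector condition follows from boundedness. Since $\Lambda_+:B^{1/2,2}\to B^{-1/2,2}$ is bounded by Theorem \ref{theo:layer:Lip} \ref{enum:sl:inv:Lip} and Theorem \ref{theo:layer:bdd:Lip}, we get $|b(\varphi,\psi)|\le C\|\varphi\|_{B^{1/2,2}}\|\psi\|_{B^{1/2,2}}$. It therefore suffices to show $\|\varphi\|_{B^{1/2,2}}\le C\|\varphi\|_b$, and this equivalence of norms also gives closedness \ref{enum:coercive:closed}, since $B^{1/2,2}(\partial D)$ is complete and the reverse bound $\|\varphi\|_b\le C\|\varphi\|_{B^{1/2,2}}$ is immediate from the boundedness just stated.

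The main obstacle is the lower bound $\|\varphi\|_{B^{1/2,2}}^2\le C(b(\varphi,\varphi)+\|\varphi\|_{L^2}^2)$, because $\frac12 I+\NP$ may have a kernel (when $D^+$ has bounded components), so $b$ alone is not coercive. I would use Theorem \ref{theo:inv} with $\gamma=1$, $s=1/2$, $p=2$: the map $I+\Lambda_+:B^{1/2,2}\to B^{-1/2,2}$ is an isomorphism. Its inverse gives
\[
    \|\varphi\|_{B^{1/2,2}}\le C\|(I+\Lambda_+)\varphi\|_{B^{-1/2,2}}=C\sup_{\|\psi\|_{B^{1/2,2}}\le1}\bigl(\jbk{\varphi,\psi}_{L^2}+b(\varphi,\psi)\bigr).
\]
Because $b$ is a symmetric non-negative form, Cauchy--Schwarz gives $|b(\varphi,\psi)|\le b(\varphi,\varphi)^{1/2}b(\psi,\psi)^{1/2}\le C b(\varphi,\varphi)^{1/2}\|\psi\|_{B^{1/2,2}}$. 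Also $|\jbk{\varphi,\psi}_{L^2}|\le \|\varphi\|_{L^2}\|\psi\|_{L^2}\le C\|\varphi\|_{L^2}\|\psi\|_{B^{1/2,2}}$. Combining these yields $\|\varphi\|_{B^{1/2,2}}\le C\|\varphi\|_b$, which finishes \ref{enum:coercive:closed} and \ref{enum:coercive:wsc}.

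The case $d=2$ with $\capac_D>1$ is identical, since all the cited results hold for $p=2$ there.
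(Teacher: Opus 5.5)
Your proof is correct. For density, non-negativity, symmetry and the upper bound $\|\varphi\|_b\lesssim\|\varphi\|_{B^{1/2,2}}$ it matches the paper's. The difference lies in the key lower bound $\|\varphi\|_{B^{1/2,2}}\lesssim\|\varphi\|_b$.

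\textbf{The paper's route.} It argues by contradiction using the $\jbk{\cdot,\cdot}_*$-orthogonal splitting $B^{1/2,2}(\partial D)=\Ker(\frac12 I+\NP)\oplus\Ker(\frac12 I+\NP)^\perp$. Because $\frac12 I+\NP$ is self-adjoint and Fredholm, it satisfies $\jbk{(\frac12 I+\NP)\psi,\psi}_*\ge\delta\|\psi\|_*^2$ on the complement. On the finite-dimensional kernel, the $L^2$-norm controls the $B^{1/2,2}$-norm. This shows explicitly that the $L^2$ term in $\|\cdot\|_b$ compensates exactly for the kernel, which is nontrivial when $D^+$ has bounded components.

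\textbf{Your route.} You invoke the isomorphism $I+\Lambda_+:B^{1/2,2}\to B^{-1/2,2}$ from Theorem \ref{theo:inv} with $\gamma=1$. That theorem is proved earlier and independently of this lemma, so there is no circularity. You then turn the inverse bound into coercivity via the Cauchy--Schwarz inequality for the symmetric non-negative form $b$. This is the abstract principle that a bounded, symmetric, non-negative form whose shifted operator is an isomorphism onto the dual is coercive.

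\textbf{Comparison.} Your argument is shorter and avoids both the spectral gap on the complement and the finite-dimensionality of the kernel. The cost is that it leans on Theorem \ref{theo:inv}, whose Fredholm step rests on the same Theorem \ref{theo:layer:Lip} \ref{enum:NP:Fredholm:Lip}. The paper's argument is self-contained given Theorem \ref{theo:NP:energy}, and it shows precisely why $b$ alone is not coercive.

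One cosmetic point: $\|f\|_{B^{-1/2,2}}$ equals $\sup_{\|\psi\|_{B^{1/2,2}}\le 1}|\jbk{f,\psi}|$ only up to a norm-equivalence constant. This is harmless, since it is absorbed into your $C$.
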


\begin{proof}
    We prove properties \ref{enum:coercive:dense}--\ref{enum:coercive:wsc} in Definition \ref{defi:coercive}. \ref{enum:coercive:dense} is immediate from the density of $B^{1/2, 2}(\partial D)$ in $L^2 (\partial D)$. By \eqref{eq:DtN:potential} and Theorem \ref{theo:NP:energy}, we obtain 
    \[
        b(\varphi, \varphi)=-\jbk{\left(\frac{1}{2}I+\NP^*\right)\SL^{-1}[\varphi], \varphi}=\jbk{\left(\frac{1}{2}I+\NP\right)[\varphi], \varphi}_*\geq 0
    \]
    for any $\varphi\in B^{1/2, 2}(\partial D)$. Thus \ref{enum:coercive:nonneg} is proved. 
    
    To prove \ref{enum:coercive:closed} and \ref{enum:coercive:wsc}, it suffices to prove that $\|\cdot\|_b$ is equivalent to the Besov (or Sobolev) norm on $B^{1/2, 2}(\partial D)$. It is easy to prove that 
    \[
        \|\varphi\|_b^2=\jbk{(I+\Lambda_+)[\varphi], \varphi}\leq \|(I+\Lambda_+)[\varphi]\|_{B^{-1/2, 2}}\|\varphi\|_{B^{1/2, 2}}\lesssim \|\varphi\|_{B^{1/2,2}}^2
    \]
    for any $\varphi\in B^{1/2, 2}(\partial D)$. We prove the reverse estimate by contradiction argument. Suppose that there exists a sequence $\{\varphi_j\}_{j=1}^\infty\subset B^{1/2, 2}(\partial D)$ such that $\|\varphi_j\|_*=1$ for all $j$ and $\|\varphi_j\|_b\to 0$ as $j\to \infty$. Here, $\|\cdot\|_*$ is the norm appeared in Theorem \ref{theo:NP:energy} \ref{enum:inner:product}, which is equivalent to the Besov norm on $B^{1/2, 2}(\partial D)$. We first note that the following orthogonal decomposition holds:
    \[
        B^{1/2, 2}(\partial D)=\Ker \left(\frac{1}{2}I+\NP\right)\oplus \Ker \left(\frac{1}{2}I+\NP\right)^\perp=:\Ncal \oplus \Mcal
    \]
    with respect to the inner product $\jbk{\cdot, \cdot}_*$ on $B^{1/2, 2}(\partial D)$. We decompose $\varphi_j=\varphi^0_j+\psi_j\in \Ncal \oplus \Mcal$ accordingly. Furthermore, since $1/2I+\NP$ is self-adjoint and Fredholm on $B^{1/2, 2}(\partial D)$ (Theorem \ref{theo:layer:Lip} \ref{enum:NP:Fredholm:Lip}), it is invertible on $\Mcal$. In particular, there exists $\delta>0$ such that 
    \[
        \jbk{\left(\frac{1}{2}I+\NP\right)[\psi], \psi}_*\geq \delta \|\psi\|_*^2
    \]
    for all $\psi\in \Mcal$. Thus we have 
    \begin{align*}
        \|\varphi_j\|_{L^2}^2+\|\psi_j\|_*^2
        &\lesssim \|\varphi_j\|_{L^2}^2+\jbk{\left(\frac{1}{2}I+\NP\right)[\psi_j], \psi_j}_* \\
        &=\|\varphi_j\|_{L^2}^2+\jbk{\left(\frac{1}{2}I+\NP\right)[\varphi_j], \varphi_j}_* \\
        &=\|\varphi_j\|_b^2\to 0 \quad (j\to \infty).
    \end{align*}
    Hence $\lim_{j\to \infty}\varphi_j=0$ in $L^2 (\partial D)$ and $\lim_{j\to \infty} \psi_j=0$ in $B^{1/2, 2}(\partial D)$. Then $\varphi^0_j=\varphi_j-\psi_j\to 0$ ($j\to \infty$) in $L^2 (\partial D)$. Since $\Ncal=\Ker (1/2I+\NP)$ is finite-dimensional, the norms $\|\cdot\|_{L^2}$ and $\|\cdot\|_*$ are equivalent on $\Ncal$. Hence, the convergence $\lim_{j\to \infty} \varphi^0_j=0$ holds in the strong topology of $B^{1/2, 2}(\partial D)$. Thus we have $\varphi_j\to 0$ in $B^{1/2, 2}(\partial D)$, which contradicts to $\|\varphi_j\|_*=1$. 
\end{proof}

The next step is to prove the $L^\infty$-contractivity of the semigroup generated by $-\Lambda_+$. 

\begin{lemm}
    \label{lemm:DtN:contractive}
    The semigroup $\{\Ucal (t)=\e^{-t\Lambda_+}\}_{t\geq 0}$ generated by $-\Lambda_+: B^{1/2, 2}(\partial D)\to B^{-1/2, 2}(\partial D)$ is $L^\infty$-contractive. (In fact, it is sub-Markovian.)
\end{lemm}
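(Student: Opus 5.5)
The plan is to verify the Beurling--Deny criterion, Theorem \ref{theo:Dirichlet:form} \ref{enum:BD}, for the coercive closed form $b(\varphi,\psi)=\jbk{\Lambda_+[\varphi],\psi}$ of Lemma \ref{lemm:DtN:bilinear}. By that theorem, this yields both $L^\infty$-contractivity and positivity of $\e^{-t\Lambda_+}$. The form is symmetric because of \eqref{eq:DtN:potential} and the self-adjointness in Theorem \ref{theo:NP:energy}, so $\Lambda_+$ is self-adjoint.

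The first step is the lattice property of the domain. $\dom(b)=B^{1/2,2}(\partial D)$ has the Gagliardo-type characterization
\[
\iint_{\partial D\times\partial D}\frac{|\varphi(x)-\varphi(y)|^2}{|x-y|^{d}}\,\df\sigma(x)\,\df\sigma(y)+\|\varphi\|_{L^2}^2 .
\]
The maps $\varphi\mapsto\varphi^+$ and $\varphi\mapsto \varphi\wedge 1$ are $1$-Lipschitz on $\Rbb$, so they do not increase this quantity. Hence $\dom(b)$ is closed under both operations.

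The second step, and the core of the argument, is the inequality \eqref{eq:BD}. I will express $b$ as an exterior Dirichlet energy via Green's theorem in $D^+$. Let $u$ solve \eqref{Dirichlet} with data $\varphi$. Since $d\ge3$ and $u=\SL[\SL^{-1}\varphi]$, we have $u=O(|x|^{2-d})$ and $\nabla u=O(|x|^{1-d})$. Let $w$ be any function on $D^+$ with trace $\psi$ and finite Dirichlet energy that decays at infinity. Integrating over $B_R\setminus\overline D$ and letting $R\to\infty$ gives
\[
b(\varphi,\psi)=-\int_{\partial D}\p_\nv u|_+\,\psi\,\df\sigma=\int_{D^+}\nabla u\cdot\nabla w\,dx ,
\]
because the boundary term on $\partial B_R$ is bounded by $O(R^{d-1}R^{1-d}R^{2-d})\to0$. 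This is exactly where $d\ge3$ enters: in two dimensions the single layer potential need not decay.

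I then choose $w=u-(u\wedge1)^+$, which has trace $\varphi-(\varphi\wedge1)^+$. Then $w=(u-1)^+$ where $u\ge1$, $w=0$ where $0\le u\le1$, and $w=u$ (i.e.\ $u^-$ with sign) where $u<0$, so
\[
\nabla w=\nabla u\,\mathbf{1}_{\{u>1\}\cup\{u<0\}},
\]
and therefore
\[
\int_{D^+}\nabla u\cdot\nabla w\,dx=\int_{\{u>1\}\cup\{u<0\}}|\nabla u|^2\,dx\ge0 .
\]

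The main obstacle is justifying the use of $w$ as a test function. The needed facts are that $w\in H^{1,2}_{\mathrm{loc}}(\overline{D^+})$ with the stated trace, which holds by the chain rule for Lipschitz truncations, and that $w$ decays at infinity. Decay holds because $u\to0$ at infinity, so $w=u^-$ there (in fact $w=-u^-$ near infinity once $|u|<1$), and this inherits $O(|x|^{2-d})$ decay. The limit $R\to\infty$ is therefore legitimate. I would first establish the Green identity for smooth compactly supported perturbations near $\partial D$ and then pass to the limit by density in $B^{1/2,2}(\partial D)$, using the $H^{1,2}$ regularity of $u$ near $\partial D$ provided by the membership in \eqref{Dirichlet} with $s=1/2$, $p=2$. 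With \eqref{eq:BD} established, Theorem \ref{theo:Dirichlet:form} gives the claim.
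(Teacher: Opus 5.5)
Your proof is correct. Its outer structure matches the paper's: the Beurling--Deny criterion of Theorem \ref{theo:Dirichlet:form}, combined with the Green identity in $D^+$, which is where $d\ge3$ enters. The core computation, however, is different and more direct.

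\textbf{Lattice property.} The paper extends $\varphi$ to $H^{1,2}(D^+)$, truncates, and takes the trace. You use the intrinsic Gagliardo seminorm on $\partial D$. Both are standard.

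\textbf{The inequality \eqref{eq:BD}.} The paper splits
$b(\varphi,\varphi-(\varphi\wedge1)^+)=b((\varphi\wedge1)^+,\varphi-(\varphi\wedge1)^+)+b(\varphi-(\varphi\wedge1)^+,\varphi-(\varphi\wedge1)^+)$.
It then introduces two auxiliary functions: the harmonic extension $u_1$ of $(\varphi\wedge1)^+$, and the zero-trace function $u_0=(u\wedge1)^+-u_1$. Two applications of the exterior Green formula show that the first term equals $\|\nabla u_0\|_{L^2(D^+)}^2$.

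You instead observe that $b(\varphi,\psi)=\int_{D^+}\nabla u\cdot\nabla w\,dx$ for any decaying finite-energy extension $w$ of $\psi$. This holds because $u$ is harmonic, so the pairing sees only the trace of $w$. You then insert the non-harmonic test function $w=u-(u\wedge1)^+$. This yields $\int_{\{u>1\}\cup\{u<0\}}|\nabla u|^2\,dx\ge0$ immediately, with no auxiliary harmonic functions.

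\textbf{What each route buys.} The paper's route establishes the formally stronger inequality $b((\varphi\wedge1)^+,\varphi-(\varphi\wedge1)^+)\ge0$. That form would also serve a version of the invariance criterion phrased with the projection in the first slot. Your route gives exactly the inequality \eqref{eq:BD} required in Theorem \ref{theo:Dirichlet:form}, with less machinery.

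\textbf{Justification.} Your observation that $w=-u^-$ near infinity gives $|w|\le|u|=O(|x|^{2-d})$ and $|\nabla w|\le|\nabla u|$. This is precisely what is needed to discard the boundary term on $\partial B_R$ and to make $\int_{D^+}\nabla u\cdot\nabla w$ absolutely convergent. The paper leaves this point implicit.
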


\begin{proof}
    It suffices to prove that $b$ satisfies the condition \ref{enum:BD} in Theorem \ref{theo:Dirichlet:form}. We follow the argument in \cite[Theorem 2.3]{Ouhabaz96}. We first prove that $\varphi\in \dom (b)=B^{1/2, 2}(\partial D)$ implies $\varphi^+, \varphi\wedge 1\in B^{1/2, 2}(\partial D)$. Take an extension operator $\Ecal: B^{1/2, 2}(\partial D)\to H^{1, 2}(D^+)$, that is, a bounded operator satisfying $\Ecal[\cdot]|_+=I$, such that $\supp \Ecal [\varphi]$ is contained in some compact set independent of $\varphi\in B^{1/2, 2}(\partial D)$. Then $(\Ecal [\varphi])^+\in H^{1, 2}(D^+)$ and 
    \begin{equation}\label{eq:truncate:der}
        \nabla (\Ecal [\varphi])^+ (x)=
        \begin{cases}
            \nabla \Ecal [\varphi](x) & \text{if } \Ecal [\varphi](x)>0, \\
            0 & \text{if } \Ecal [\varphi](x)\leq 0.
        \end{cases}
    \end{equation}
    (See \cite[Lemma 7.6]{Gilbarg-Trudinger01}.) Thus, by the trace theorem, we have $\varphi^+=(\Ecal [\varphi])^+|_+\in B^{1/2, 2}(\partial D)$. $\varphi\wedge 1\in B^{1/2, 2}(\partial D)$ is similarly proved by using $\varphi\wedge 1=\varphi-(\varphi-1)^+$. 
    
    We prove \eqref{eq:BD} next. Since
    \[
        b(\varphi, \varphi-(\varphi\wedge 1)^+)=b((\varphi\wedge 1)^+, \varphi-(\varphi\wedge 1)^+)+b(\varphi-(\varphi\wedge 1)^+, \varphi-(\varphi\wedge 1)^+),
    \]
    it suffices to prove that
    \[
        b((\varphi\wedge 1)^+, \varphi-(\varphi\wedge 1)^+)\geq 0
    \]
    for all $\varphi\in B^{1/2, 2}(\partial D)$.

    Let $u$ be the unique harmonic function on $D^+$ such that $u|_+=\varphi$ and $u=O(|x|^{-d+2})$ as $|x|\to \infty$. Analogously, let $u_1$ be the unique harmonic function on $D^+$ such that $u_1|_+=(\varphi\wedge 1)^+$ and $u_1=O(|x|^{-d+2})$ as $|x|\to \infty$. We define $u_0=(u\wedge 1)^+-u_1$, which satisfies $u_0|_+=0$ and $|\nabla u_0|=O(|x|^{-d+1})$ as $|x|\to \infty$ by \eqref{eq:truncate:der}.

    Since $d\geq 3$ and $|\nabla u|, |\nabla u_1|=O(|x|^{-d+1})$ as $|x|\to \infty$, we can apply the Green theorem in the exterior domain to obtain
    \begin{align*}
        &b ((\varphi\wedge 1)^+, \varphi-(\varphi\wedge 1)^+) \\
        &=b(u_1|_+, (u-u_1)|_+)
        =\jbk{\nabla u_1, \nabla (u-u_1)}_{L^2 (D^+)} \\
        &=\jbk{\nabla (u_0+u_1), \nabla (u-u_1)}_{L^2 (D^+)}-\jbk{\nabla u_0, \nabla (u-u_1)}_{L^2 (D^+)} \\
        &=\underbrace{\jbk{\nabla (u\wedge 1)^+, \nabla (u-(u\wedge 1)^+)}_{L^2 (D^+)}}_{=0}+\|\nabla u_0\|_{L^2 (D^+)}^2+\jbk{\nabla u_1, \nabla u_0}_{L^2(D^+)} \\
        &\quad -\jbk{\nabla u_0, \nabla (u-u_1)}_{L^2 (D^+)} \\
        &=\|\nabla u_0\|_{L^2 (D^+)}^2+\jbk{\nabla u_0, \nabla (-u+2u_1)}_{L^2(D^+)}.
    \end{align*}
    Here we used the fact that the supports of $\nabla (u\wedge 1)^+$ and $\nabla (u-(u\wedge 1)^+)$ are disjoint due to \eqref{eq:truncate:der}.

    We once again employ the Green theorem in the exterior domain to obtain
    \begin{align*}
        \jbk{\nabla u_0, \nabla (-u+2u_1)}_{L^2 (D^+)}
        &=-\jbk{ u_0|_+, \partial_\nv (-u+2u_0)|_+}_{\partial D}=0.
    \end{align*}
    Thus we obtain
    \[
        b ((\varphi\wedge 1)^+, \varphi-(\varphi\wedge 1)^+)
        =\|\nabla u_0\|_{L^2 (D^+)}^2\geq 0
    \]
    as desired.
\end{proof}

\begin{proof}[Proof of Theorem \ref{theo:unif:res:Lip}]
    By Lemma \ref{lemm:DtN:contractive}, the operator semigroup $\{\Ucal (t)=\e^{-t\Lambda_+}\}_{t\geq 0}$ is $L^\infty$-contractive. Furthermore, $\Ucal(t)$ is self-adjoint on $L^2 (\partial D)$ since $\Lambda_+$ is self-adjoint on $L^2 (\partial D)$. Thus, by Lemma \ref{lemm:contractive:strong}, $\{\Ucal (t)\}_{t\geq 0}$ is a $C_0$-semigroup on $L^p (\partial D)$ satisfying $\|\Ucal (t)\|_{L^p\to L^p}\leq 1$ for all $p\in [1, \infty]$ and $t\geq 0$. Then we obtain the infinitesimal generator
    \begin{equation*}
        %\label{eq:infinitesimal}
        \Acal_p [\varphi]:=\lim_{t\to +0} \frac{\Ucal (t)[\varphi]-\varphi}{t} \quad \text{in } L^p (\partial D).
    \end{equation*}
    Since $\|\Ucal (t)\|_{L^p\to L^p}\leq 1$, $\Acal_p$ satisfies the estimate $\|(\lambda -\Acal_p)^{-1}\|_{L^p \to L^p }\leq \lambda^{-1}$ for any $\lambda>0$. We substitute $\lambda=\gamma^{-1}$ and obtain
    \begin{equation}\label{eq:uniform:Ap}
        \|(I -\gamma\Acal_p)^{-1}\|_{L^p\to L^p}\leq 1
    \end{equation}
    for any $\gamma>0$. 

    According to \eqref{eq:uniform:Ap}, it suffices to prove that $(I-\gamma \Acal_p)^{-1}[\varphi]=(I+\gamma \Lambda_+)^{-1}[\varphi]$ for any $p\in (1, 2/(1-\Ge_*))$ and $\varphi\in L^p (\partial D)$ in order to complete the proof. The case when $p=2$ is obvious from the definition of infinitesimal generator of $C_0$-semigroup. 
    
    We first consider the extension to the case when $p\in (1, 2]$. We prove that the domain of $\Acal_p$ contains $H^{1, p}(\partial D)$ and is identical to $-\Lambda_+$ there. Let $\psi\in H^{1, p}(\partial D)$ and approximate it by the sequence $\psi_j\in H^{1, 2}(\partial D)$ in $H^{1, p}(\partial D)$-topology. Then, since $\Acal_p [\psi_j]=-\Lambda_+ [\psi_j]\to -\Lambda_+[\psi]$ in $L^p (\partial D)$ as $j\to \infty$ by Theorem \ref{theo:layer:Lip} and \eqref{eq:DtN:potential}. Thus, the closedness of $\Acal_p$ on $L^p (\partial D)$ implies that $\psi$ lies in the domain of $\Acal_p$ and $\Acal_p [\psi]=-\Lambda_+ [\psi]$. Now, we set $\psi=(I+\gamma \Lambda_+)^{-1}[\varphi]\in H^{1, p}(\partial D)$ for $\varphi\in L^p (\partial D)$. Then $\psi\in H^{1, p}(\partial D)$ by Theorem \ref{theo:inv} and we have 
    \[
        (I-\gamma \Acal_p)(I+\gamma \Lambda_+)^{-1}[\varphi]=(I+\gamma \Lambda_+)(I+\gamma \Lambda_+)^{-1}[\varphi]=\varphi. 
    \]
    Hence we have $(I-\gamma \Acal_p)^{-1}[\varphi]=(I+\gamma \Lambda_+)^{-1}[\varphi]$. 

    Next, we prove the case when $p\in (2, 2/(1-\Ge_*))$. Let $\varphi\in L^p (\partial D)$ lie in the domain of $\Acal_p$. Then, by definition of infinitesimal generator and $\Acal_2[\psi]=-\Lambda_+[\psi]$ for $\psi\in H^{1, 2}(\partial D)$, we obtain 
    \begin{align*}
        \jbk{\Acal_p [\varphi], \psi}&=\lim_{t\to +0}\frac{\jbk{\Ucal (t)[\varphi], \psi}-\jbk{\varphi, \psi}}{t}=\lim_{t\to +0}\frac{\jbk{\varphi, \Ucal (t)[\psi]}-\jbk{\varphi, \psi}}{t} \\
        &=-\jbk{\varphi, \Lambda_+[\psi]}. 
    \end{align*}
    Thus,  
    \begin{align*}
        \jbk{(I-\gamma \Acal_p)[\varphi], \psi}=\jbk{\varphi, (I+\gamma \Lambda_+)[\psi]}
    \end{align*}
    for any $\psi\in H^{1, 2}(\partial D)$. Now we replace $\psi\in H^{1, 2}(\partial D)$ to $(I+\gamma \Lambda_+)^{-1}[\psi]$ with $\psi\in L^2 (\partial D)$. Then we have 
    \begin{align*}
        \jbk{(I+\gamma \Lambda_+)^{-1}(I-\gamma \Acal_p)[\varphi], \psi}
        &=\jbk{(I-\gamma \Acal_p)[\varphi], (I+\gamma \Lambda_+)^{-1}[\psi]} \\
        &=\jbk{\varphi, (I+\gamma \Lambda_+)(I+\gamma \Lambda_+)^{-1}[\psi]} \\
        &=\jbk{\varphi, \psi}.
    \end{align*}
    Since $\psi\in L^2 (\partial D)$ is arbitrary and $L^2 (\partial D)$ is dense in $L^{p/(p-1)}(\partial D)$, we have 
    \[
        (I+\gamma \Lambda_+)^{-1}(I-\gamma \Acal_p)[\varphi]=\varphi
    \]
    for any $\varphi\in L^p (\partial D)$ which lies in the domain of $\Acal_p$. Thus $(I-\gamma \Acal_p)^{-1}=(I+\gamma \Lambda_+)^{-1}$ on $L^p (\partial D)$. This completes the proof. 
\end{proof}

\subsection{Resistive capacitance matrix}\label{subs:capacitance}

For any subset $S\subset \Rbb^d$, $1_S$ denotes the indicator function of $S$. For each connected component $D_j$ of $D$, we set
\beq\label{ej}
    e_j:=-\SL^{-1}[1_{\partial D_j}]\in B^{-1/2, 2}(\partial D)
\eeq
on $\partial D$. By virtue of Theorem \ref{theo:inv}, we can define for $\gamma \ge 0$ the matrix
\begin{equation}\label{eq:matrix}
    \Capac^\gamma_D=(C^\gamma_{ij})_{i, j=1}^N:=\left(\int_{\partial D_i}(I+\gamma \Lambda_+)^{-1}[e_j]\, \df \sigma\right)_{i, j=1}^N.
\end{equation}
We call $\Capac_D^\gamma$ the resistive capacitance matrix since $\Capac_D^0$ is known as a capacitance matrix (see \cite{Smolic-Klajn21} and references therein) and $\gamma$ represents the resistance as mentioned before. It is known that if $D$ is three- or higher-dimensional, then $\Capac_D^0$ is always positive; in two dimensions, it is (well-defined and) positive if and only if $\capac_D>1$ (see \cite[Lemma 1]{Feppon-Ammari22}). It is helpful to remind that the condition $\capac_D>1$ is a standing hypothesis of this paper in two dimensions.

By virtue of Theorem \ref{theo:unif:res:Lip}, we obtain the following proposition.
\begin{prop}
    \label{prop:rcap:conti}
    Let $D$ be a bounded open set with the Lipschitz boundary in $\Rbb^d$ $(d\geq 2)$. We further assume $\capac_D>1$ when $d=2$. Then, $|C^\gamma_{ij}-C^0_{ij}|\to 0$ as $\gamma\to +0$.
\end{prop}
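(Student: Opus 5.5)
The plan is to write $C^\gamma_{ij}-C^0_{ij}=\int_{\partial D_i}\bigl((I+\gamma\Lambda_+)^{-1}[e_j]-e_j\bigr)\,\df\sigma$, which holds because $(I+0\cdot\Lambda_+)^{-1}=I$, so $C^0_{ij}=\int_{\partial D_i}e_j\,\df\sigma$. The convergence then follows from the strong convergence statement of Theorem \ref{theo:unif:res:Lip}, provided we know that $e_j$ lies in some $L^p(\partial D)$ with $p$ in the admissible range. The integral over $\partial D_i$ is a bounded linear functional on $L^p(\partial D)$, since $\partial D$ has finite measure and $1_{\partial D_i}\in L^{p'}(\partial D)$. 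Hence
\[
|C^\gamma_{ij}-C^0_{ij}|\le |\partial D_i|^{1-1/p}\,\|(I+\gamma\Lambda_+)^{-1}[e_j]-e_j\|_{L^p(\partial D)}\to 0 \quad (\gamma\to+0).
\]

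The only real step is to check that $e_j=-\SL^{-1}[1_{\partial D_j}]$ belongs to $L^p(\partial D)$ for some $p\in(1,2/(1-\Ge_*))$, and for $p=2$ when $d=2$. The function $1_{\partial D_j}$ is locally constant on $\partial D$: it equals $1$ on $\partial D_j$ and $0$ on the other components. So it lies in $H^{1,p}(\partial D)$ for every $p$, because the components of $\partial D_j$ are closed and separated from the rest of $\partial D$.

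For $d\ge3$, Theorem \ref{theo:layer:Lip} \ref{enum:sl:inv:Lip:end} says that $\SL:L^p(\partial D)\to H^{1,p}(\partial D)$ is an isomorphism for $p\in(1,2/(1-\Ge_*))$. This gives $e_j\in L^p(\partial D)$; we may take $p=2$, for instance. For $d=2$ with $\capac_D>1$, the same item holds for $p=2$, so $e_j\in L^2(\partial D)$.

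One point needs a little care: the element $e_j$ defined in \eqref{ej} as a member of $B^{-1/2,2}(\partial D)$ must coincide with the $L^p$ preimage. This follows from the injectivity of $\SL$ on $B^{-1/2,2}(\partial D)$ (Theorem \ref{theo:layer:Lip} \ref{enum:sl:inv:Lip} with $(s,1/p)=(1/2,1/2)$), together with $L^2\subset B^{-1/2,2}$.

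I also need that the resolvent in \eqref{eq:matrix} is the same operator as the one estimated in Theorem \ref{theo:unif:res:Lip}. Both are inverses of $I+\gamma\Lambda_+$ on nested spaces, so they agree on $L^2$ by uniqueness in Theorem \ref{theo:inv}.

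I expect no substantial obstacle beyond verifying this regularity of $e_j$ and the consistency of the inverses; everything else is a direct application of Theorem \ref{theo:unif:res:Lip}.
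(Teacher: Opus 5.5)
Your proposal is correct and takes the same route as the paper, which deduces Proposition \ref{prop:rcap:conti} directly from the strong convergence $(I+\gamma\Lambda_+)^{-1}[e_j]\to e_j$ in Theorem \ref{theo:unif:res:Lip}. You fill in details the paper leaves implicit: $e_j\in L^2(\partial D)$ via Theorem \ref{theo:layer:Lip}\ref{enum:sl:inv:Lip:end}, the two inverses of $I+\gamma\Lambda_+$ agree, and $\varphi\mapsto\int_{\partial D_i}\varphi\,\df\sigma$ is bounded on $L^p(\partial D)$.
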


We now prove invertibility of $\Capac^\gamma_D$ for $\gamma >0$ which plays a crucial role in the proof of Theorem \ref{theo:existence}. For that purpose we consider the following exterior Robin boundary value problem with $\gamma >0$: for any $f\in B^{-1/2, 2}(\partial D)$,
    \begin{equation}\leqnomode
    \label{R}\tag{R}
    \begin{dcases}
    \lap u=0 & \text{in } D^+, \\
    u|_+ - \Gg \p_\Gv u|_+=f & \text{on } \partial D,  \\
    u (x)=-b\Gamma (x)+O(|x|^{-d+1}) & \text{as } |x|\to \infty \text{ for some } b\in \Rbb.
    \end{dcases}
    \end{equation}
    In view of Theorem \ref{theo:inv}, the problem \eqref{R} is equivalent to the exterior Dirichlet problem \eqref{Dirichlet} with $\varphi=(I+\gamma \Lambda_+)^{-1}[f] \in B^{1/2, 2}(\p D)$, and hence the problem \eqref{R} admits a unique solution such that $u|_{U\setminus \overline{D}}\in H^{1, 2} (U\setminus \overline{D})$ for any open ball $U\subset \Rbb^d$ including $\overline{D}$.

    Let $\varphi_0\in B^{-1/2, 2}(\partial D)$ be the unique function such that ${\SL}[\varphi_0]$ is constant on $\partial D$ and $\int_{\partial D}\varphi_0\, \df \sigma=1$ (see for example \cite[p.39]{Ammari-Kang07} for existence of such a function). Then the solution $u$ to \eqref{R} admits the representation 
    \begin{equation}\label{eq:Robin:sol:explicit}
        u={\SL}[-b\varphi_0+\psi] \quad\text{in } D^+,
    \end{equation}
    with some $\psi \in B^{-1/2, 2} (\p D)$ satisfying $\int_{\partial D}\psi\, \df \sigma=0$. In fact, we just set $\psi: = \SL^{-1}[(u+ b{\SL}[\varphi_0])|_+]$. Then, we have
    $$
    {\SL}[\psi]= u + b {\SL}[\varphi_0] \quad\text{in } D^+
    $$
    and hence ${\SL}[\psi] = O(|x|^{-d+1})$ as $|x|\to \infty$, or equivalently $\int_{\partial D}\psi\, \df \sigma=0$.

\begin{theo}\label{positive:definite}
The resistive capacitance matrix $\Capac^\gamma_D$ is real, symmetric and positive-definite for all $\Gg \geq 0$ and $d \geq 2$. 	
\end{theo}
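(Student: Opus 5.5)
Let $u_j$ denote the solution of \eqref{R} with boundary datum $f=1_{\partial D_j}$. The plan is to express each entry $C^\gamma_{ij}$ as an energy pairing of these solutions; symmetry and positivity will then follow from Green's identity in the exterior domain. By Theorem \ref{theo:inv}, $\varphi_j:=(I+\gamma\Lambda_+)^{-1}[1_{\partial D_j}]\in B^{1/2,2}(\partial D)$, and $u_j$ is the exterior Dirichlet solution \eqref{Dirichlet} with datum $\varphi_j$.

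Since $e_j=-\SL^{-1}[1_{\partial D_j}]$, we need to relate $(I+\gamma\Lambda_+)^{-1}[e_j]$ to $u_j$. From \eqref{eq:DtN:potential}, $\Lambda_+$ commutes with $\SL$ in the sense $\SL\Lambda_+ = -\SL(\tfrac12 I+\NP^*)\SL^{-1} = -(\tfrac12 I+\NP)$, using the Plemelj relation \eqref{eq:Plemelj}. Likewise $\Lambda_+\SL=-(\tfrac12 I+\NP^*)$, so that $\SL\Lambda_+ \SL^{-1}=(\Lambda_+)^*$ formally. I would aim for a cleaner route: pair with $\SL$ and use self-adjointness. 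Concretely,
\[
C^\gamma_{ij}=\int_{\partial D}1_{\partial D_i}(I+\gamma\Lambda_+)^{-1}[e_j]\,d\sigma=-\big\langle \SL[e_i],(I+\gamma\Lambda_+)^{-1}[e_j]\big\rangle .
\]
I would then show that $\SL(I+\gamma\Lambda_+)^{-1}=(I+\gamma\Lambda_+^{*})^{-1}\SL$, with $\Lambda_+^*=-(\tfrac12I+\NP)\SL^{-1}$ the dual of $\Lambda_+$. This intertwining follows from $\SL\Lambda_+=\Lambda_+^*\SL$, which is a rewriting of the Plemelj principle. Consequently
\[
C^\gamma_{ij}=\langle 1_{\partial D_i},(I+\gamma\Lambda_+)^{-1}[e_j]\rangle=-\langle (I+\gamma\Lambda_+^*)^{-1}\cdots\rangle .
\]
The simplest concrete target is the formula
\[
C^\gamma_{ij}=\langle e_i,\ \SL(I+\gamma\Lambda_+)^{-1}[e_j]\rangle\cdot(-1),
\]
which, after the intertwining, becomes a form of the type $\langle e_i, \SL e_j\rangle$ twisted by the self-adjoint (in $\langle\cdot,\cdot\rangle_*$) operator $\NP$.

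A more robust alternative is to work directly with $u_j$. Write $\phi_i:=(I+\gamma\Lambda_+)^{-1}[1_{\partial D_i}]$. I would aim to prove
\[
C^\gamma_{ij}=\gamma^{-1}\!\int_{\partial D}(1_{\partial D_i}-\phi_i)\,\phi_j\,d\sigma\cdot(\text{const})+\dots
\]
By the Robin condition, $\Lambda_+\phi_j=\gamma^{-1}(1_{\partial D_j}-\phi_j)$. The exterior Green theorem for $d\ge3$, as used in Lemma \ref{lemm:DtN:contractive}, gives
\[
\langle\Lambda_+\phi_i,\phi_j\rangle=\int_{D^+}\nabla u_i\cdot\nabla u_j\,dx .
\]
In $d=2$ there is an extra flux-at-infinity term, and this is where the representation \eqref{eq:Robin:sol:explicit} with $\varphi_0$ enters. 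Positivity then reduces to the identity
\[
\sum_{i,j}\xi_i\xi_j C^\gamma_{ij}=\int_{D^+}|\nabla U|^2dx+\gamma^{-1}\!\!\int_{\partial D}\!|F-\Phi|^2d\sigma\;(+\text{term at }\infty)>0
\]
for $F=\sum\xi_j1_{\partial D_j}$, $\Phi=(I+\gamma\Lambda_+)^{-1}F$, and $U$ its harmonic extension. Equality forces $\nabla U\equiv0$ and $\Phi=F$, so $U$ equals the constant $\xi_j$ on $\partial D_j$; decay at infinity then gives $\xi=0$. Reality is immediate because all operators are real.

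The main obstacle is the precise bookkeeping linking $e_j=-\SL^{-1}1_{\partial D_j}$ to the Robin solutions. The hardest part is the behaviour at infinity in $d=2$, where $\int e_j\ne0$ produces logarithmic growth and the Green identity picks up the term $b^2$ times a capacity-type constant. That constant must be positive exactly because $\capac_D>1$, and I expect to use $\varphi_0$ and the known positivity of $\Capac^0_D$ (Feppon--Ammari) to control it. The case $\gamma=0$ is that known result.
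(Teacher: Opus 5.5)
\paragraph{Verdict.} There is a genuine gap. Your strategy (Robin solutions $u_j$, an energy identity, then the equality case) is the paper's. The quadratic identity you aim for is also correct for $d\ge 3$: it is exactly the paper's expansion of $c\cdot\Capac^\gamma_D c$, because $\gamma\|\partial_\nu u|_+\|_{L^2(\partial D)}^2=\gamma^{-1}\|F-\Phi\|_{L^2(\partial D)}^2$. However, you never derive it. The step you call the main obstacle, expressing $(I+\gamma\Lambda_+)^{-1}[e_j]$ through the Robin solutions, is left open; the formula with ``$\cdot(\text{const})+\dots$'' is only a placeholder. That step is the only bridge from the definition of $C^\gamma_{ij}$ to an energy, so without it neither positivity nor symmetry follows. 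A quadratic-form identity would in any case only control the symmetric part of $\Capac^\gamma_D$.

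\paragraph{The missing step.} $\Scal_{\partial D}[e_j]$ equals $-1$ in $D_j$ and $0$ in the other components of $D$. The interior jump relation then gives $\Kcal_{\partial D}^*[e_j]=\tfrac12 e_j$, and therefore
\[
\Lambda_+[1_{\partial D_j}]=-(\tfrac12 I+\Kcal_{\partial D}^*)\Scal_{\partial D}^{-1}[1_{\partial D_j}]=e_j .
\]
Since $\Lambda_+[\phi_j]=\gamma^{-1}(1_{\partial D_j}-\phi_j)\in B^{1/2,2}(\partial D)$, you get $(I+\gamma\Lambda_+)\Lambda_+[\phi_j]=\Lambda_+[1_{\partial D_j}]=e_j$. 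Hence $(I+\gamma\Lambda_+)^{-1}[e_j]=\Lambda_+[\phi_j]=-\partial_\nu u_j|_+$, which is the paper's key identity. Inserting $1_{\partial D_i}=\phi_i+\gamma\Lambda_+[\phi_i]$ into $C^\gamma_{ij}=\int_{\partial D_i}\Lambda_+[\phi_j]\,d\sigma$ gives
\[
C^\gamma_{ij}=\langle \Lambda_+[\phi_j],\phi_i\rangle+\gamma^{-1}\langle 1_{\partial D_i}-\phi_i,\,1_{\partial D_j}-\phi_j\rangle .
\]
This is symmetric because $\Lambda_+$ is $L^2$-symmetric, and it yields your identity. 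Your intertwining detour misidentifies the dual. By the Plemelj principle the $L^2$-dual of $\Lambda_+$ is $\Lambda_+$ itself. The operator $-(\tfrac12 I+\Kcal_{\partial D})\Scal_{\partial D}^{-1}=\Scal_{\partial D}\Lambda_+\Scal_{\partial D}^{-1}$ is merely a conjugate.

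\paragraph{The case $d=2$.} When $b\neq 0$, $\nabla U\notin L^2(D^+)$, so ``$\int_{D^+}|\nabla U|^2$ plus a term at infinity'' is not meaningful as written. You need the paper's decomposition $u=\Scal_{\partial D}[-b\varphi_0+\psi]$ with $\int_{\partial D}\psi\,d\sigma=0$. In every dimension this gives $-\langle\partial_\nu u|_+,u|_+\rangle=-b^2c_0+\|\nabla \Scal_{\partial D}[\psi]\|_{L^2(D^+)}^2$, where $c_0$ is the value of $\Scal_{\partial D}[\varphi_0]$ on $\partial D$ (the paper writes $-c_0$ as $\|\varphi_0\|_*^2$). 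Your idea of getting $c_0<0$ from the positivity of $\Capac^0_D$ works, since $\mathbf 1\cdot\Capac^0_D\mathbf 1=-\int_{\partial D}\Scal_{\partial D}^{-1}[1]\,d\sigma=-1/c_0$. In the equality case you must first extract $b\cdot\xi=0$ before any decay is available.

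\paragraph{The equality case.} Decay only forces $U=0$ on the unbounded component of $D^+$. $U$ may be a nonzero constant on bounded components of $D^+$, for example when an inclusion sits inside the cavity of a shell-shaped component. To conclude $\xi=0$ you still need the paper's fact that $\{1_{\partial D_k}\}_k\cup\{1_{\partial D^+_j}\}_j$ is linearly independent, or an equivalent connectivity argument through adjacent components.
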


\begin{proof}
    It is enough to deal with the case when $\Gg >0$. For $j=1,2, \ldots, N$, let $u_j$ be the unique solution to \eqref{R} for $f = 1_{\p D_j}$. We set $\varphi_j:=\partial_\nv u_j |_+\in B^{-1/2, 2}(\partial D)$ so that $\Lambda_+[u_j|_+]=-\varphi_j$. Let $e_j$ be the function defined in \eqref{ej}. Since ${\SL}[e_j]=-1_{\partial D_j}$ as a function on $\p D$, ${\SL}[e_j]$ as a function on $\Rbb^d$ is constant on each connected component $D_j$ of $D$. Thus, we infer from the jump formula \eqref{eq:sl:jump} that $\NP^*[e_j]=(1/2)e_j$. We then infer from the representation \eqref{eq:DtN:potential} of $\Lambda_+$ that $\Lambda_+[1_{\partial D_j}]=e_j$. In addition, by the second condition of \eqref{R}, we have $\gamma \varphi_j=u_j|_+-1_{\partial D_j}\in B^{1/2, 2}(\partial D)$. Since $\gamma >0$, we have $\Lambda_+[\varphi_j]\in B^{-1/2, 2}(\partial D)$ and
    \begin{equation}\label{eq:phi:DtN:higher}
        (I+\gamma \Lambda_+)[\varphi_j]
        =\varphi_j+\Lambda_+[u_j|_+-1_{\partial D_j}]=-e_j.
    \end{equation}
    It then follows that
    \begin{align}
        C^\gamma_{ij}&=\int_{\partial D_i} (I+\gamma \Lambda_+)^{-1}[e_j]\, \df \sigma
        =-\int_{\partial D_i} \varphi_j \, \df \sigma \notag \\
        &= -\int_{\p D}  \varphi_j  (u_i|_+ - \Gg \Gvf_i)  \, \df \sigma \notag \\
        &= -\int_{\p D} \left. \p_\Gv u_j \right|_+ u_i|_+ \,\df \sigma + \Gg \int_{\p D} \Gvf_j \Gvf_i \, \df \sigma. \label{eq:Cap:symmetric:d-1}
    \end{align}

    Let $b_j$ be the constant appearing in the third line in \eqref{R}, namely, the constant such that $u_j (x)+b_j\Gamma (x)=O(|x|^{-d+1})$ as $|x|\to \infty$. Then, by \eqref{eq:Robin:sol:explicit}, there is $\psi_j \in B^{-1/2, 2} (\partial D)$ with $\int_{\partial D}\psi_j \, \df \sigma=0$ such that
    $$
        u_j={\SL}[-b_j\varphi_0+\psi_j].
    $$
    It thus follows from the jump formula \eqref{eq:sl:jump} that
    \beq\label{1000}
        \int_{\p D} \left. \p_\Gv u_j \right|_+ u_i|_+ \,\df \sigma
        =\jbk{\left(\frac{1}{2}I+\NP^*\right) [-b_j\varphi_0+\psi_j], {\SL}[-b_i\varphi_0+\psi_i]} .
    \eeq
    Let $c_0$ be the constant such that ${\SL}[\varphi_0]= c_0$ on $\partial D$.
    Since $\Kcal_{\p D}^* [\Gvf_0] = (1/2)\varphi_0$ and $\Scal_{\p D}$ is self-adjoint with respect to $L^2$-inner product, we have
		\begin{align*}
		\jbk{\left(\frac{1}{2}I+\NP^*\right)[-b_j\Gvf_0], \Scal_{\p D}[\Gy_i]} &= \jbk{-b_j \Scal_{\p D} [\Gvf_0], \Gy_i} \\
		&= -b_j c_0\jbk{1, \Gy_i} \\
		&= 0.
		\end{align*}
Since ${\NP}[1]=1/2$ (see \cite[Theorem 4.1]{Mitrea97}), we have
    	\begin{align*}
    	\jbk{\left(\frac{1}{2}I+\NP^*\right)[\Gy_j], \Scal_{\p D}[-b_i \Gvf_0]} &= \jbk{\Gy_j, -b_i c_0 \left(\frac{1}{2}I+\NP\right) [1]} \\
    	&= -b_i c_0 \jbk{\Gy_j, 1 } \\
    	&= 0.
    	\end{align*}
    It then follows from \eqref{1000} and \eqref{eq:Plemelj} that
    \begin{align*}
        \int_{\p D} \left. \p_\Gv u_j \right|_+ u_i|_+ \,\df \sigma
        &=-b_j b_i \|\varphi_0\|_*^2
        -\jbk{\left(\frac{1}{2}I+\NP^*\right)[\psi_j], {\SL}[\psi_i]} \\
        &=-b_j b_i \|\varphi_0\|_*^2
        -\jbk{\left(\frac{1}{2}I+\NP\right){\SL}[\psi_j], {\SL}[\psi_i]}_*,
    \end{align*}
    where $\jbk{\cdot, \cdot}_*$ is the inner product defined in \eqref{eq:inner:product}.
    Thus we have
    \begin{equation}
        \label{eq:Cap_symmetric:d}
        C^\gamma_{ij}=b_j b_i \|\varphi_0\|_*^2
        +\jbk{\left(\frac{1}{2}I+\NP\right){\SL}[\psi_j], {\SL}[\psi_i]}_*+\Gg \int_{\p D} \Gvf_j \Gvf_i \, \df \sigma.
    \end{equation}
    This formula immediately shows that $\Capac^\gamma_D$ is symmetric since $\NP$ is self-adjoint with respect to  $\jbk{\cdot, \cdot}_*$.

    In order to prove the positivity of $\Capac^\gamma_D$, let $c = (c_1, \ldots, c_N)^T \in \Cbb^N$, and let
    $$
    b := (b_1, \ldots, b_N)^T, \quad \Gy := \sum_{k=1}^{N} c_k \Gy_k, \quad \Gvf := \sum_{k=1}^{N} c_k \Gvf_k, \quad u := \sum_{k=1}^{N} c_k u_k.
    $$
    We infer from \eqref{eq:Cap_symmetric:d} that
    \begin{equation}\label{three}
    c \cdot \Capac^\gamma_D c =|c\cdot b|^2 \|\varphi_0\|_*^2
    +\jbk{\left(\frac{1}{2}I+\NP\right){\SL}[\psi], {\SL}[\psi]}_*+\Gg \| \Gvf\|_{L^2 (\partial D)}^2.
    \end{equation}
    The second term on the right-hand side of the above identity is non-negative by Theorem \ref{theo:NP:energy} \ref{enum:NP:sa}, and hence
    \begin{equation*}
    c \cdot \Capac^\gamma_D c \geq 0.
    \end{equation*}

    Suppose that $c\cdot \Capac^\gamma_D c=0$. Then each term on the right-hand side of \eqref{three} is zero.
    Since $c\cdot b=0$, we have
    \[
        u=\sum_{k=1}^N c_k{\SL}[-b_k \varphi_0+\psi_k]={\SL}[c\cdot b \varphi_0+\psi]={\SL}[\psi] \quad \text{on } D^+.
    \]
    Since $\int_{\partial D}\psi\, \df \sigma=0$, we have $u(x)=O(|x|^{-d+1})$ and $|\nabla u(x)|=O(|x|^{-d})$ as $|x|\to \infty$. Thus, by Green theorem in the exterior domain, we obtain
    \[
        \| \nabla u\|_{L^2 (D^+)}^2 =-\jbk{\partial_\nv u|_+, u|_+} =\jbk{\left(\frac{1}{2}I+\NP\right){\SL}[\psi], {\SL}[\psi]}_* =0.
    \]
    Hence $u$ is constant in each connected component of $D^+$. In particular, we have $\p_\Gv u|_+=0$ on $\p D$. By the second condition of \eqref{R}, we have
    \begin{equation}\label{eq:u:span}
        u|_+=\sum_{k=1}^N c_k 1_{\partial D_k}.
    \end{equation}

    Let $D_j^{+}$, $j=1, \ldots, N^+$, be the bounded connected components of $D^+$. Then $u$ is constant on each $\p D_j^+$. Moreover, since $u(x)=O(|x|^{-d+1})$ as $|x|\to \infty$, $u=0$ in the unbounded connected component of $D^+$. Thus we have
    $$
    	u|_+=\sum_{j=1}^{N^{+}} c_j^{+} 1_{\partial D_j^{+}}
    $$
    for some constants $c_j^+$, $j=1, \ldots, N^+$. This together with \eqref{eq:u:span} yields
    $$
    \sum_{k=1}^N c_k 1_{\partial D_k} = \sum_{j=1}^{N^{+}} c_j^{+} 1_{\partial D_j^{+}}.
    $$
    It is known that the set ${\{1_{\partial D_k}\}}_{k=1}^{N} \cup {\{1_{\partial D_j^{+}}\}}_{j=1}^{N^+}$ is linearly independent
    (see \cite[(2.5)]{Mitrea97}), we infer that $c=0$.
\end{proof}

We conclude this subsection by showing an invariance of $\Capac^\gamma_D$ (when it is a scalar) under certain excisions of the domain. Although we will not use it in proving Theorem \ref{theo:L2:sol:approx}, we include it in this paper since it is of independent interest.

\begin{prop}
    \label{prop:included}
    Let $D, E\subset \Rbb^d$ ($d\geq 2$) be bounded connected open sets with compact Lipschitz boundaries such that
    \beq\label{4000}
    E\subset D \quad\mbox{and}\quad \partial D\subset \partial E.
    \eeq
    We further assume $\capac_D>1$ and $\capac_E>1$ when $d=2$. Then
    \beq
    \Capac^\gamma_D=\Capac^\gamma_E
    \eeq
    for all $\gamma\geq 0$.
\end{prop}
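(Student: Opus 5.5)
The plan is to compare the exterior Robin problems \eqref{R} for $D$ and for $E$ directly, using the representation of $C^\gamma$ obtained in the proof of Theorem \ref{positive:definite}. Since $D$ and $E$ are connected, $N=1$ and both matrices are scalars. For $\gamma>0$, let $u_D$ (resp.\ $u_E$) denote the unique solution of \eqref{R} on $D^+$ (resp.\ $E^+$) with $f=1_{\partial D}$ (resp.\ $f=1_{\partial E}$). By the first line of \eqref{eq:Cap:symmetric:d-1},
\[
C^\gamma_D=-\int_{\partial D}\partial_\nu u_D|_+\,\df\sigma, \qquad C^\gamma_E=-\int_{\partial E}\partial_\nu u_E|_+\,\df\sigma .
\]
So it suffices to identify $u_E$ explicitly in terms of $u_D$.

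\textbf{Step 1 (topology).} Set $H:=D\setminus\overline{E}$. From $E\subset D$ and $\partial D\subset\partial E\subset\overline{E}$ we get $\overline{D}\setminus\overline{E}=H$. Hence
\[
E^+=\Rbb^d\setminus\overline{E}=D^+\cup H,
\]
a disjoint union of two open sets. I will verify that $\partial H\subset\partial E$ and $\partial E\setminus\partial D\subset\partial H$, so that $\partial E=\partial D\cup\partial H$ with $\partial H\cap \partial D$ of measure zero (or empty). I will also check that at points of $\partial D$ the outward normal to $E$ coincides with the outward normal to $D$: near such points $E^+$ coincides with $D^+$, since $H$ stays away from $\partial D$ locally. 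I expect this point-set bookkeeping, using only the Lipschitz regularity of $\partial D$ and $\partial E$, to be the main technical obstacle, although it is elementary. The cleanest route is probably to note that locally near a point of $\partial D$ the set $E$ lies on the same side as $D$.

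\textbf{Step 2 (candidate solution).} Define $v:=u_D$ on $D^+$ and $v:=1$ on $H$. Then $v$ is harmonic in $E^+$ and has the required behaviour at infinity, since the unbounded part lies in $D^+$. On $\partial D$ we have $v|_+-\gamma\partial_\nu v|_+=u_D|_+-\gamma\partial_\nu u_D|_+=1$. On $\partial H$ we have $v|_+=1$ and $\partial_\nu v|_+=0$. Thus $v$ satisfies the Robin condition with $f=1_{\partial E}$. Moreover $v|_{U\setminus\overline{E}}\in H^{1,2}$ for balls $U\supset\overline E$, because $u_D$ has this regularity on $U\setminus\overline D$. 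By the uniqueness of \eqref{R}, which follows from Theorem \ref{theo:inv} applied to $E$, we conclude $u_E=v$.

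\textbf{Step 3 (conclusion).} Since $\partial_\nu u_E|_+=0$ on $\partial H$, we obtain
\[
C^\gamma_E=-\int_{\partial D}\partial_\nu u_D|_+\,\df\sigma=C^\gamma_D
\]
for every $\gamma>0$. For $\gamma=0$, I will pass to the limit using Proposition \ref{prop:rcap:conti}, which applies to both $D$ and $E$. Alternatively, the same argument can be repeated with the exterior Dirichlet problem, in which case $-\int\partial_\nu u|_+$ is exactly $\int e$.
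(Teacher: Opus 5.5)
Your proof is correct, but it works on the opposite side of the interface from the paper's.

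The paper argues in the interior with layer potentials. For $\psi=-(I+\gamma\Lambda^D_+)^{-1}\SL^{-1}[1_{\partial D}]$, it uses the double-layer jump relation and the Plemelj identity to see that $-{\SL}[\psi]+\gamma{\DL}[\psi]$ has interior trace $1$ on $\partial D$. Hence this function is identically $1$ in $D$, in particular on $E$ and on $\partial E\setminus\partial D\subset D$, where these potentials are smooth. So the zero extension $\psi 1_{\partial D}$ satisfies $-\SL[\partial E](I+\gamma\Lambda^E_+)[\psi 1_{\partial D}]=1_{\partial E}$, is therefore the corresponding density for $E$, and one integrates.

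You argue in the exterior instead. You extend the Robin solution for $D$ by the constant $1$ into the cavity $H$, invoke uniqueness of \eqref{R} on $E^+=D^+\cup H$, and read off $C^\gamma$ as a total flux via \eqref{eq:Cap:symmetric:d-1}.

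The paper's $\psi$ is exactly $-\partial_\nv u_D|_+$, so both proofs establish the same identity $\partial_\nv u_E|_+=1_{\partial D}\,\partial_\nv u_D|_+$. Your route makes the mechanism transparent: the cavity sits at the equipotential value and carries no flux. It also needs neither double layer potentials nor the symmetrization principle. The paper's route is a short operator computation that covers $\gamma=0$ at the same time.

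For $\gamma=0$, your second option is preferable to the limit argument, since it is self-contained. That option is Step 2 verbatim for the exterior Dirichlet problem, i.e.\ \eqref{R} with $\gamma=0$.

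In Step 1, what you need, and what is true, is $\overline{H}\cap\partial D=\emptyset$, not merely that the intersection has measure zero. A quick argument:
take a coordinate cylinder $Z$ about $x_0\in\partial D$ in which $\partial E$ is a Lipschitz graph. Then $Z\setminus\overline{E}$ is connected and is the disjoint union of the open sets $D^+\cap Z\neq\emptyset$ and $H\cap Z$. Hence $H\cap Z=\emptyset$ and $E\cap Z=D\cap Z$, which gives the agreement of normals and traces on $\partial D$. The paper uses this fact tacitly when it identifies $\NP[\partial E]$ and the interior trace from $E$ on $\partial D$ with those of $D$.
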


\begin{proof}
    We denote the exterior DtN maps for $D$ and $E$ by $\Lambda^D_+$ and $\Lambda^E_+$ respectively. We set
    $\psi:=-(I+\gamma\Lambda^D_+)^{-1}\SL^{-1}[1_{\partial D}]$.
    Then, by the jump relation \eqref{eq:dl:jump} and the Plemelj symmetrization principle \eqref{eq:Plemelj}, we obtain
    \begin{align*}
        1_{\partial D}
        &=-\SL (I+\gamma\Lambda^D_+)[\psi]
        =-{\SL}[\psi]+\gamma \left(\frac{1}{2}I+\NP\right)[\psi] \\
        &=(-{\SL}[\psi]+\gamma {\DL}[\psi])|_-
    \end{align*}
    on $\partial D$. Then, by the uniqueness of the solution to the Dirichlet problem, it holds that
    \[
        -{\SL}[\psi]+\gamma {\DL}[\psi]=1
    \]
    in $D$, and hence in $E$. On the other hand, since
    \begin{align*}
        -\SL[\partial E] (I+\gamma\Lambda^E_+)[\psi 1_{\partial D}]
        &=\left(-\SL[\partial E]+\gamma \left(\frac{1}{2}I+\NP[\partial E]\right)\right)[\psi 1_{\partial D}] \\
        &=\begin{cases}
            (-{\SL}[\psi]+\gamma {\DL}[\psi])|_- & \text{on } \partial D, \\
            -{\SL}[\psi]+\gamma {\DL}[\psi] & \text{on } D\cap \partial E
        \end{cases} \\
        &=1
    \end{align*}
    on $\partial E$ by the assumptions, we have
    \[
        -(I+\gamma\Lambda^E_+)^{-1}\SL[\partial E]^{-1}[1_{\partial E}]=\psi 1_{\partial D}=-(I+\gamma\Lambda^D_+)^{-1}\SL^{-1}[1_{\partial D}]1_{\partial D}.
    \]
    We integrate the above equality on $\partial E$ to obtain the conclusion.
\end{proof}

    A typical example of pairs of domains satisfying \eqref{4000} is an annulus and a ball, and their resistive capacitance matrices are the same according to Proposition \ref{prop:included}. They are scalars and can be computed easily. Let $D\subset \Rbb^d$ ($d\geq 2$) be a ball of radius $R>0$. We further assume that $R<1$ when $d=2$ so that $\capac_D>1$. If $d\geq 3$, a direct calculation by using \eqref{eq:Plemelj} yields
    \begin{align*}
        -\SL (I+\gamma\Lambda_+)[1_{\partial D}]
        =\left(-\SL+\gamma \left(\frac{1}{2}I+\NP\right)\right)[1_{\partial D}]
        =\frac{R^{d-2}}{d-2}+\gamma
    \end{align*}
    on $\partial D$. Thus, we have
    \begin{align*}
        -(I+\gamma\Lambda_+)^{-1} \SL^{-1} [1_{\partial D}]
        =\frac{d-2}{R^{d-2}+(d-2)\gamma},
    \end{align*}
    and hence
    \[
         \Capac^\gamma_D=\frac{(d-2) \omega_d R^{d-1}}{R^{d-2}+(d-2)\gamma},
    \]
    where $\omega_d$ is the surface area of the unit sphere in $\Rbb^d$. If $d=2$, a similar calculation proves
    \begin{equation}\label{eq:capac:disk}
        \Capac^\gamma_D=\frac{2\pi R}{-R\log R+\gamma}.
    \end{equation}

\subsection{Proof of Theorem \ref{theo:L2:sol:approx}}\label{subs:proof:Lip}

We begin with the following representation formula.
\begin{lemm}
    \label{lemm:sol:rep}
    Let $D \subset \Rbb^d$ $(d\geq 2)$ be a bounded open set with the Lipschitz boundary and $U$ be an open ball containing $\overline{D}$. If the solution $u^\gamma$ to \eqref{IPB} with $\gamma>0$ belongs to $H^{s+1/p, p} (U\setminus \overline{D})$ for some $(s, 1/p)\in \Rscr (D)$,  then there exists $\varphi^\gamma\in \widetilde{B}^{s, p}(\partial D)$ such that
    \begin{equation}
        \label{eq:solution:try}
        u^\gamma (x) = h(x)+{\SL}[\varphi^\gamma](x)-\gamma {\DL}[\varphi^\gamma](x), \quad x\in \Rbb^d\setminus \partial D.
    \end{equation}
\end{lemm}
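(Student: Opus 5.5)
We want to show that any solution $u^\gamma$ of \eqref{IPB} with the stated regularity has the form \eqref{eq:solution:try}. The idea is to read off the density from the exterior Neumann data: set $w:=u^\gamma-h$ and define $\varphi^\gamma:=\partial_\nv u^\gamma|_+$ on $\partial D$. The jump relations \eqref{eq:sl:jump}--\eqref{eq:dl:jump} should then show that the candidate
\[
v:={\SL}[\varphi^\gamma]-\gamma{\DL}[\varphi^\gamma]
\]
has exactly the same transmission data as $w$. A uniqueness argument then identifies $w$ with $v$.

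\textbf{Regularity and mean-zero conditions of the density.} From $u^\gamma\in H^{s+1/p,p}(U\setminus\overline D)$ and the harmonicity of $u^\gamma$ near $\partial D$, the trace $u^\gamma|_+$ lies in $B^{s,p}(\partial D)$ and the normal derivative $\partial_\nv u^\gamma|_+$ lies in $B^{s-1,p}(\partial D)$. The second condition of \eqref{IPB} gives
\[
\gamma\varphi^\gamma=u^\gamma|_+-\mathrm{const}_j \quad\text{on } \partial D_j.
\]
Since $\gamma>0$, this upgrades $\varphi^\gamma$ to $B^{s,p}(\partial D)$. The third condition of \eqref{IPB} says exactly that $\int_{\partial D_j}\varphi^\gamma\,\df\sigma=0$ for every $j$. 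Hence $\varphi^\gamma\in\widetilde{B}^{s,p}(\partial D)$.

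\textbf{Transmission data of $v$.} Applying the jump relations gives
\begin{align*}
v|_+&={\SL}[\varphi^\gamma]-\gamma\left(-\tfrac12 I+\NP\right)[\varphi^\gamma],\\
v|_-&={\SL}[\varphi^\gamma]-\gamma\left(\tfrac12 I+\NP\right)[\varphi^\gamma],\\
\partial_\nv v|_+&=\left(\tfrac12 I+\NP^*\right)[\varphi^\gamma]-\gamma\,\partial_\nv{\DL}[\varphi^\gamma].
\end{align*}
Here $\partial_\nv{\DL}$ has no jump, so $\partial_\nv v|_+-\partial_\nv v|_-=\varphi^\gamma$ and $v|_+-v|_-=\gamma\varphi^\gamma$. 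At infinity, $v=O(|x|^{2-d})$ in general; because $\int_{\partial D}\varphi^\gamma\,\df\sigma=0$, we get ${\SL}[\varphi^\gamma]=O(|x|^{1-d})$, and ${\DL}$ is always $O(|x|^{1-d})$.

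\textbf{Uniqueness via the difference $z:=w-v$.} Since $h$ is harmonic across $\partial D$, $w$ satisfies $w|_+-w|_-=\gamma\,\partial_\nv u^\gamma|_+=\gamma\varphi^\gamma$, and $z$ is harmonic off $\partial D$. Across $\partial D$, $z$ has zero jump in the trace, and zero jump in the normal derivative provided $\partial_\nv w|_-=\partial_\nv w|_+-\varphi^\gamma=-\partial_\nv h$ from inside, i.e. provided $\partial_\nv u^\gamma|_-=0$. This holds because $u^\gamma|_-$ is constant on each $\partial D_j$, so $u^\gamma$ is constant in each $D_j$ by uniqueness for the interior Dirichlet problem. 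So $z$ is harmonic across $\partial D$ (by the Green representation in the $B^{s,p}$ setting), hence entire, and $z=O(|x|^{1-d})\to0$ at infinity; by Liouville, $z\equiv0$.

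\textbf{Main obstacle.} The delicate point is the low-regularity justification: that the trace, the normal derivative and the jump relations make sense for $u^\gamma$ in this class, and that a function with vanishing Cauchy-data jumps is harmonic across a Lipschitz $\partial D$. I would handle this via the Green representation formula $z={\SL}[\text{jump of }\partial_\nv z]-{\DL}[\text{jump of }z]$, which holds in the nontangential-limit framework of \cite{MMP94}, together with \eqref{eq:sl:jump}--\eqref{eq:dl:jump} valid for $(s,1/p)$ in the admissible range.
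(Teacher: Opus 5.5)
Your proposal is correct and follows essentially the same route as the paper. The paper likewise sets $\varphi^\gamma:=\partial_\nv u^\gamma|_+$ and places it in $\widetilde{B}^{s,p}(\partial D)$ via the trace theorem and the second and third conditions of \eqref{IPB}. It then observes that $u^\gamma-h-{\SL}[\varphi^\gamma]+\gamma{\DL}[\varphi^\gamma]$ solves the homogeneous transmission problem \eqref{T} with decay at infinity, and hence vanishes. Your explicit remark that $\partial_\nv u^\gamma|_-=0$, because $u^\gamma$ is constant in each $D_j$, fills in a step the paper leaves implicit.
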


\begin{proof}
Let $\varphi^\gamma:=\partial_\nv u^\gamma|_+$. Since $\partial (U \setminus \overline{D})$ is Lipschitz, $s\in (0, 1)$ and $p, q\in (1, \infty)$ (and $(s, 1/p)\in \Rscr (D)$), the trace from $H^{s+1/p, p} (U\setminus \overline{D})$ into $B^{s, p}(\partial (U \setminus \overline{D}))$ is bounded (cf. \cite{Bergh-Loefstroem76}). Since $\Gg >0$, it follows from the second condition of \eqref{IPB} that $\varphi^\gamma\in B^{s, p}(\partial D)$, and from the third  condition that $\varphi^\gamma\in \widetilde{B}^{s, p}(\partial D)$.

Let $w=u^\gamma-h-{\SL}[\varphi^\gamma]+\gamma {\DL}[\varphi^\gamma]$ in $\Rbb^d\setminus \partial D$. Then, by jump formulas \eqref{eq:sl:jump} and \eqref{eq:dl:jump}, $w$ is the solution to
the transmission problem
\begin{equation}\leqnomode\label{T}\tag{T}
    \begin{dcases}
        \lap w=0 & \text{in } \Rbb^d\setminus \partial D, \\
        \p_\nv w|_+- \p_\nv w|_-=0 & \text{on } \partial D,  \\
        w|_+-w|_-=0 & \text{on } \partial D,  \\
        w (x)=O(|x|^{-d+1}) & \text{as } |x|\to \infty.
    \end{dcases}
\end{equation}
We emphasize that $\p_\nv {\DL}[\varphi^\gamma]$ does not have a jump across $\p D$. We then immediately infer from \eqref{T} that $\nabla w=0$ and hence $w=0$ in $\Rbb^d$.
\end{proof}

We seek a solution $u^\Gg$ to \eqref{IPB} in the form of \eqref{eq:solution:try} which amounts to deriving an integral equation for $\varphi^\gamma$. If $\int_{\p D_j} \varphi^\Gg \, \df \Gs =0$ for all $j=1, \ldots, N$, then $u^\Gg$ defined by \eqref{eq:solution:try} satisfies all the conditions of \eqref{IPB} except the one on the second line. We now derive an integral equation for $\varphi^\Gg$ so that the second condition is satisfied. It suffices to consider the condition that $u^\Gg$ locally constant in $D$ since if this condition is satisfied, then the other condition, namely, $u^\gamma|_+-\gamma \p_\nv u^\gamma|_+=u^\gamma|_-$ is automatically satisfied. In fact, if \eqref{eq:solution:try} holds and $u^\gamma$ is locally constant in $D$, then we have
\begin{align*}
    \partial_\nv u^\gamma|_+&=\partial_\nv u^\gamma|_+-\partial_\nv u^\gamma|_-=\varphi^\gamma,
\end{align*}
hence
\begin{align*}
    u^\gamma|_+-\gamma \partial_\nv u^\gamma |_+
    &=u^\gamma|_--\gamma ({\DL}[\varphi^\gamma]|_+-{\DL}[\varphi^\gamma]|_-)-\gamma \partial_\nv u^\gamma|_+ \\
    &=u^\gamma|_-+\gamma \varphi^\gamma-\gamma \partial_\nv u^\gamma|_+=u^\gamma|_-.
\end{align*}

We now derive the condition for $u^\Gg$ to be locally constant in $D$.
We first observe that the following formula holds for $\psi\in B^{s, p} (\partial D)$, $(s, 1/p)\in \Rscr (D)$:
\begin{equation}
    \label{eq:normal:DL}
    {\DL}[\psi](x)=
        -\SL \Lambda_+ [\psi](x), \quad x\in D,
\end{equation}
where $\Lambda_+$ is the exterior DtN map defined by \eqref{eq:DtN}.
In fact, one can see from the jump formula \eqref{eq:dl:jump} for the double layer potential and Plemelj's formula \eqref{eq:Plemelj} that
$$
{\DL}[\psi]|_- = \left(\frac{1}{2}I+\NP\right)[\psi] =
        \SL \left(\frac{1}{2}I+\NP^*\right)\SL^{-1}[\psi].
$$
The formula \eqref{eq:normal:DL} follows from \eqref{eq:DtN:potential}.

Substituting \eqref{eq:normal:DL} into \eqref{eq:solution:try} yields
\begin{equation}\label{eq:sol:DtN}
u^\gamma(x) = h(x) + {\SL}(I + \Gg \Lambda_+ ) [\varphi^\gamma](x), \quad x \in D.
\end{equation}
Since $u^\Gg$ is locally constant in $D$, we have $\p_\nu u^\Gg|_-=0$ on $\p D$, which amounts to
\begin{equation}
    \label{eq:imperfect:integral}
        \left(\frac{1}{2}I-\NP^*\right)(I+\gamma \Lambda_+) [\varphi^\gamma]=\p_\nv h \quad\mbox{on } \p D.
\end{equation}
This is the integral equation for $\varphi^\Gg$ such that the function $u^\Gg$ defined by \eqref{eq:solution:try} becomes the solution to \eqref{IPB}. If $\Gg=0$, the equation becomes
\begin{equation}
    \label{eq:phi0:Lip}
        \left(\frac{1}{2}I-\NP^*\right) [\varphi^0]=\p_\nv h \quad\mbox{on } \p D,
\end{equation}
and the solution takes the form
\begin{equation}
        \label{eq:solution:try:zero}
        u^0 (x) = h(x)+{\SL}[\varphi^0](x),
    \end{equation}
which is well-known integral representation of the solution to \eqref{IPB} with $\Gg=0$ (see, for example, \cite{Ammari-Kang07}).

\begin{lemm}
    \label{lemm:bie:solvable:Lip}
    Let $D=\bigcup_{i=1}^N D_i \subset \Rbb^d$ ($d\geq 2$) be a bounded open set with the Lipschitz boundary and let $\gamma>0$.

    If $d \geq 3$, then the equation \eqref{eq:imperfect:integral} has a unique solution $\varphi^\gamma$ in 
    \[
        \varphi^\gamma \in \bigcap_{\substack{\Ge_*<s< 1 \\ 1<p<2/(s-\Ge_*)}} \widetilde{B}^{s, p}(\partial D) \cap \bigcap_{1<p<2/(1-\Ge_*)} \widetilde{W}^{1, p}(\partial D).
    \]

    Furthermore, as $\gamma\to +0$, $\varphi^\gamma$ converges to $\varphi^0$ in $L^p (\partial D)$ for any $p\in (1, 2/(1-\Ge_*))$.

    The same conclusion holds for $p=2$ when $d=2$ and $\capac_D>1$.
\end{lemm}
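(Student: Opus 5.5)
The plan is to decouple \eqref{eq:imperfect:integral} into two steps. First we solve the perfect-bonding equation \eqref{eq:phi0:Lip}. Then we invert $I+\gamma\Lambda_+$ and correct the mean values using the resistive capacitance matrix.

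\textbf{Solving the perfect-bonding part.} By Theorem \ref{theo:layer:Lip}, $\frac12 I-\NP^*$ is Fredholm of index zero on $B^{s-1,p}(\partial D)$ and on $L^p(\partial D)$. Its cokernel-type obstruction is described by $\Ker(\frac12 I-\NP)=\lspan\{1_{\partial D_j}\}$. Since $\int_{\partial D_j}\partial_\nv h\,\df\sigma=0$, the right-hand side lies in the range. Hence there is $\eta\in L^p(\partial D)$ with $(\frac12 I-\NP^*)[\eta]=\partial_\nv h$ for all $p\in(1,2/(1-\Ge_*))$. The kernel of $\frac12 I-\NP^*$ is spanned by the $e_j$ of \eqref{ej}, since $\NP^*[e_j]=\frac12 e_j$. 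We also use that $-\frac12 I+\NP^*$ is an isomorphism on $\widetilde{L}^p$ by Theorem \ref{theo:layer:Lip} \ref{enum:NP:inv:Lip:end}. From these facts, the general solution of $(\frac12 I-\NP^*)[\xi]=\partial_\nv h$ in $L^p$ is $\xi=\eta+\sum_j a_j e_j$, and $\eta$ can be taken in $\widetilde{L}^p$ and unique there.

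\textbf{Inverting $I+\gamma\Lambda_+$ and fixing the constants.} Equation \eqref{eq:imperfect:integral} is equivalent to
\[
(I+\gamma\Lambda_+)[\varphi^\gamma]=\eta+\sum_j a_j e_j .
\]
By Theorem \ref{theo:inv}, this gives
\[
\varphi^\gamma=(I+\gamma\Lambda_+)^{-1}[\eta]+\sum_j a_j(I+\gamma\Lambda_+)^{-1}[e_j]\in H^{1,p}(\partial D),
\]
and the $a_j$ are determined by the constraints $\int_{\partial D_i}\varphi^\gamma\,\df\sigma=0$. These constraints form the linear system $\Capac^\gamma_D a=-\bigl(\int_{\partial D_i}(I+\gamma\Lambda_+)^{-1}[\eta]\,\df\sigma\bigr)_i$. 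Theorem \ref{positive:definite} says $\Capac^\gamma_D$ is invertible, so $a$ exists uniquely. Uniqueness of $\varphi^\gamma$ follows by running the argument backward with zero data. Because $e_j\in B^{s-1,p}$ for $(s,1/p)\in\Rscr(D)$ by Theorem \ref{theo:layer:Lip} \ref{enum:sl:inv:Lip}, applying the Besov part of Theorem \ref{theo:inv} gives membership in $\widetilde B^{s,p}(\partial D)$ for the stated range. Here one also needs $\eta\in B^{s-1,p}$, which follows from $\eta\in L^p\subset B^{s-1,p}$, together with the Sobolev embedding used to pass between exponents in the region.

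\textbf{Convergence.} For $\gamma=0$, the same construction gives $\varphi^0=\eta+\sum_j a^0_je_j$ with $\Capac^0_D a^0=-(\int_{\partial D_i}\eta\,\df\sigma)_i=0$. Hence $a^0=0$ and $\varphi^0=\eta$. By Theorem \ref{theo:unif:res:Lip}, $(I+\gamma\Lambda_+)^{-1}[\eta]\to\eta$ in $L^p$. Provided $e_j\in L^p$, the same theorem gives $(I+\gamma\Lambda_+)^{-1}[e_j]\to e_j$. Proposition \ref{prop:rcap:conti} together with positivity of $\Capac^0_D$ gives $(\Capac^\gamma_D)^{-1}$ bounded and the right-hand sides tending to $0$, so $a^\gamma\to0$. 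Therefore $\varphi^\gamma\to\varphi^0$ in $L^p$.

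\textbf{Main obstacle.} I expect the hard part to be the regularity of $e_j$. It is only known in $B^{-1/2,2}$ a priori, yet $L^p$ convergence needs $e_j\in L^p$. I would get this from $e_j=-\SL^{-1}[1_{\partial D_j}]$ with $1_{\partial D_j}\in H^{1,p}$ and Theorem \ref{theo:layer:Lip} \ref{enum:sl:inv:Lip:end}, which gives $e_j\in L^p$ for all $p<2/(1-\Ge_*)$. In $d=2$, everything is restricted to $p=2$ under $\capac_D>1$.
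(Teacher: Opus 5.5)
Your proposal is correct and follows the paper's proof essentially step for step. The shared steps are: $\Ker(\frac12 I-\NP^*)=\lspan\{e_j\}_{j=1}^N$; reduction of \eqref{eq:imperfect:integral} to $(I+\gamma\Lambda_+)[\varphi^\gamma]=\varphi^0+\sum_j a^\gamma_j e_j$; choice of $a^\gamma$ through the invertible matrix $\Capac^\gamma_D$; uniqueness from zero data; and convergence via Theorem~\ref{theo:unif:res:Lip} and Proposition~\ref{prop:rcap:conti}. You also usefully make explicit that $e_j\in L^p(\partial D)$ (from Theorem~\ref{theo:layer:Lip}~\ref{enum:sl:inv:Lip:end}), which the paper uses tacitly to control $a^\gamma_j(I+\gamma\Lambda_+)^{-1}[e_j]$.

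One point needs tidying. You transport $\eta\in L^{p_0}$ into $B^{s-1,p}(\partial D)$ by Sobolev embedding, but for $d\ge 4$ this can miss points of $\Rscr(D)$ with $1/p$ close to $(s-\Ge_*)/2$. Instead, solve \eqref{eq:phi0:Lip} directly in $\widetilde{B}^{s-1,p}(\partial D)$ by Theorem~\ref{theo:layer:Lip}~\ref{enum:NP:inv:Lip}, using $\partial_\nv h\in L^\infty(\partial D)$, as the paper does. Then identify that solution with $\eta$ by uniqueness in a common larger space, such as $\widetilde{B}^{s-1,\min(p,p_0)}(\partial D)$ for a suitable $p_0<2/(1-\Ge_*)$.
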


\begin{proof}
    We only give proofs for the three-dimensional case since those for the two-dimensional case are exactly the same. 

    We first prove the existence of solution $\varphi^\gamma$ in $\widetilde{B}^{s, p} (\partial D)$ for $(s, 1/p)\in \Rscr (D)$. Thanks to the fact $\Ker ( -1/2I+\NP)= \lspan \{ 1_{\p D_j} \}_{j=1}^N$ and the relation $\SL^{-1}\NP=\NP^*\SL^{-1}$ (by \eqref{eq:Plemelj}), we see that
    \begin{equation}\label{eq:1/2-K:ker}
        \Ker \left( -\frac{1}{2}I+\NP^*\right)
        =\lspan \{ e_j \}_{j=1}^N,
    \end{equation}
    where $e_j$ is defined by \eqref{ej}.

    Since $\p D$ is Lipschitz, the function $\p_\nv h$ belongs to $L^\infty (\partial D)$ and $\int_{\p D} \p_\nv h=0$. Thus $\p_\nv h \in  \widetilde{B}^{s-1, p} (\partial D)$ (since $s \le 1$).  It thus follows from Theorem \ref{theo:layer:Lip} \ref{enum:NP:inv:Lip} that there is a unique $\varphi^0\in  \widetilde{B}^{s-1, p} (\partial D)$ satisfying \eqref{eq:phi0:Lip}.
Thus, the equation \eqref{eq:imperfect:integral} is reduced to
\begin{equation}
    \label{eq:phi:dtn}
    (I+\gamma \Lambda_+)[\varphi^\gamma]=\varphi^0+\sum_{j=1}^N a^\gamma_j e_j
\end{equation}
for some $a^\gamma =(a^\gamma_j)_{j=1}^N\in \Rbb^{N}$. Since $e_j\in B^{s-1, p} (\partial D)$ by Theorem \ref{theo:layer:Lip} and $I+\gamma\Lambda_+: B^{s, p}(\partial D)\to B^{s-1, p}(\partial D)$ is invertible for all $\gamma>0$ by Theorem \ref{theo:inv}, the equation \eqref{eq:phi:dtn} becomes
\begin{equation}\label{eq:phig:sol:Lip}
    \varphi^\gamma=(I+\gamma \Lambda_+)^{-1}[\varphi^0]+\sum_{j=1}^N a^\gamma_j (I+\gamma \Lambda_+)^{-1}[e_j].
\end{equation}
We emphasize that $\varphi^\gamma\in B^{s, p}(\partial D)$.

In order to find $\varphi^\gamma$ in $\widetilde{B}^{s, p}(\partial D)$, it suffices to find $a^\gamma=(a^\gamma_j)_{j=1}^N$ such that
$$
    \int_{\partial D_i}(I+\gamma \Lambda_+)^{-1}[\varphi^0]\, \df \sigma+\sum_{j=1}^N C^\gamma_{ij} a^\gamma_j =0 ,
$$
or equivalently
\begin{equation}\label{eq:constraint}
    \int_{\partial D_i}(I+\gamma \Lambda_+)^{-1}[\varphi^0]\, \df \sigma + (\Capac^\gamma_D a^\gamma)_i =0 \quad  i=1, \ldots, N,
\end{equation}
where $\Capac^\gamma_D=(C^\gamma_{ij})_{i, j=1}^N$ is the resistive capacitance matrix defined by \eqref{eq:matrix}. The equation \eqref{eq:constraint} is solvable for any $\gamma>0$ by Theorem \ref{positive:definite}. Thus we find a solution $\varphi^\gamma$ in the space 
\[
    \bigcap_{(s, 1/p)\in \Rscr (D)} \widetilde{B}^{s, p}(\partial D)=\bigcap_{\substack{\Ge_*<s< 1 \\ 1<p<2/(s-\Ge_*)}} \widetilde{B}^{s, p}(\partial D),
\]
where the above equality follows from $\widetilde{B}^{s_0, p}(\partial D)\subset \widetilde{B}^{s_1, p}(\partial D)$ for $0<s_1\leq s_0<1$. 

Existence of $\varphi^\gamma$ in $H^{1, p}(\partial D)$ for any $p\in (1, 2/(1-\Ge_*))$ is proved similarly. 

In order to prove the uniqueness of the solution in $\widetilde{B}^{s, p}(\partial D)$ for some $(s, 1/p)\in \Rscr (D)$, suppose that $h=0$. Then $\varphi^0=0$, and the equation \eqref{eq:constraint} becomes
\[
    \Capac^\gamma_D a^\gamma=0.
\]
Since $\Capac^\gamma_D$ is invertible, we obtain $a^\gamma_j=0$. Thus $\varphi^\gamma=0$.

We now prove $\lim_{\gamma\to +0}\varphi^\gamma=\varphi^0$ in $L^p (\partial D)$. By Theorem \ref{theo:unif:res:Lip}, we have
\begin{equation}
    \label{eq:potential:exp:wip:Lip}
    \lim_{\gamma\to +0} (I+\gamma \Lambda_+)^{-1}[\varphi^0]=\varphi^0 \text{ in } L^p (\partial D).
\end{equation}
Denoting $(\Capac^\gamma_D)^{-1}=(C_\gamma^{ij})_{i, j=1}^N$, it follows from Proposition \eqref{prop:rcap:conti} that
\[
    \lim_{\gamma\to +0} C_\gamma^{ij}=C_0^{ij}.
\]
Thus we obtain
\begin{equation}\label{eq:a:lim:Lip}
\begin{aligned}
    a^\gamma_i&=-\sum_{j=1}^N C^{ij}_\gamma \int_{\partial D_j}(I+\gamma \Lambda_+)^{-1}[\varphi^0]\, \df \sigma
    \to -\sum_{j=1}^N C_0^{ij} \int_{\partial D_j}\varphi^0\, \df \sigma=0
\end{aligned}
\end{equation}
as $\gamma\to +0$. Combining \eqref{eq:phig:sol:Lip}, \eqref{eq:potential:exp:wip:Lip} and \eqref{eq:a:lim:Lip}, we obtain the desired conclusion.
\end{proof}

In the course of proof above, we obtain the following result which is worth to be formulated as a theorem. Actually,     a combination of \eqref{eq:sol:DtN}, \eqref{eq:phi:dtn} and \eqref{eq:a:lim:Lip} yields it since $u^0= h+{\SL}[\varphi^0]$.

\begin{theo}
    \label{theo:Lp:sol:g}
    Let $D=\bigcup_{i=1}^N D_i \subset \Rbb^d$ ($d\geq 2$) be a bounded open set with Lipschitz boundary and let $\varphi^0$ be the solution to \eqref{eq:imperfect:integral} with $\Gg=0$. The locally constant function $u^\gamma|_D$ and $a^\gamma=(a^\gamma_i)_{i=1}^N$ in \eqref{eq:constraint} satisfies the relation
    \begin{equation}
        \label{eq:const}
        u^\gamma|_{D_i}-u^0|_{D_i}=-a^\gamma_i=\sum_{j=1}^N C_\gamma^{ij} \int_{\partial D_j} (I+\gamma \Lambda_+)^{-1} [\varphi^0]\df \sigma,
    \end{equation}
    where $(C_\gamma^{ij})_{i, j=1}^N=(\Capac^\gamma_D)^{-1}$.
\end{theo}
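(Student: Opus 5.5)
We compare the two representations of the solution inside $D$. By \eqref{eq:sol:DtN}, for $x\in D$ we have $u^\gamma(x)=h(x)+{\SL}(I+\gamma\Lambda_+)[\varphi^\gamma](x)$. By \eqref{eq:solution:try:zero}, $u^0(x)=h(x)+{\SL}[\varphi^0](x)$. Substituting \eqref{eq:phi:dtn}, i.e. $(I+\gamma\Lambda_+)[\varphi^\gamma]=\varphi^0+\sum_j a^\gamma_j e_j$, the plan is to obtain
\[
u^\gamma(x)-u^0(x)=\sum_{j=1}^N a^\gamma_j\,{\SL}[e_j](x),\qquad x\in D.
\]

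The next step evaluates ${\SL}[e_j]$ inside $D$. By \eqref{ej}, its trace on $\partial D$ is $-1_{\partial D_j}$. Since ${\SL}[e_j]$ is harmonic in each component $D_i$ and, by Theorem~\ref{theo:layer:bdd:Lip}, has trace in the appropriate Sobolev space, uniqueness for the interior Dirichlet problem gives ${\SL}[e_j]=-\delta_{ij}$ on $D_i$. Hence $u^\gamma|_{D_i}-u^0|_{D_i}=-a^\gamma_i$, which is the first equality in \eqref{eq:const}.

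For the second equality, solve \eqref{eq:constraint} using the invertibility of $\Capac^\gamma_D$ (Theorem~\ref{positive:definite}):
\[
a^\gamma_i=-\sum_j C^{ij}_\gamma\int_{\partial D_j}(I+\gamma\Lambda_+)^{-1}[\varphi^0]\,\df\sigma,
\]
exactly as in \eqref{eq:a:lim:Lip}. Negating this gives the formula.

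The main point needing care is to justify the use of \eqref{eq:sol:DtN} for the solution $u^\gamma$ actually constructed in Lemma~\ref{lemm:bie:solvable:Lip}. That requires $\varphi^\gamma\in B^{s,p}(\partial D)$ for some $(s,1/p)\in\Rscr(D)$, which Lemma~\ref{lemm:bie:solvable:Lip} provides, so that \eqref{eq:normal:DL} applies. We must also check that the interior Dirichlet uniqueness is available in the class containing ${\SL}[e_j]$, with $e_j\in B^{-1/2,2}(\partial D)$ so that ${\SL}[e_j]\in H^{1,2}(D)$. Both follow from results already stated, so the argument is essentially bookkeeping.
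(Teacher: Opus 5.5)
Your proposal is correct and is essentially the paper's own argument. The paper obtains \eqref{eq:const} by combining \eqref{eq:sol:DtN}, \eqref{eq:phi:dtn} and the expression for $a^\gamma_i$ in the first line of \eqref{eq:a:lim:Lip}, which comes from solving \eqref{eq:constraint} with $(\Capac^\gamma_D)^{-1}$; it uses $u^0=h+{\SL}[\varphi^0]$ and ${\SL}[e_j]=-\delta_{ij}$ on $D_i$, exactly as you do, and your remarks on the function spaces needed for \eqref{eq:normal:DL} and for interior Dirichlet uniqueness only make explicit what the paper leaves implicit.
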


In order to lift the convergence of $\varphi^\gamma$ in Lemma \ref{lemm:bie:solvable:Lip} to that of $u^\gamma$, we employ the following mapping properties of layer potentials, whose proofs can be found in \cite[Theorems 7.4, 8.5 and 8.7]{Mitrea-Taylor00}.

\begin{theo}
    \label{theo:layer:mapping:Lip}
    Let $D\subset \Rbb^d$ ($d\geq 3$) be a bounded open set with the Lipschitz boundary. Then the following linear operators are bounded.
    \begin{enumerate}[label=\textnormal{(\alph*)}]
        \item \label{enum:sl:int:Lip:pot} $\SL: H^{s-1, p} (\partial D)\to H^{s+1/p, p} (D)$ for $(s, p)\in [0, 1]\times [2, \infty)$;
        \item \label{enum:dl:int:Lip:pot} $\DL: H^{s, p} (\partial D)\to H^{s+1/p, p}(D)$ for $(s, p)\in [0, 1] \times [2, \infty)$.
        \item \label{enum:sl:int:Lip:BBesov:2}$\SL: H^{s-1, p} (\partial D)\to B^{s+1/p, p}_2 (D)$ for $(s, p)\in [0, 1]\times (1, 2]$;
        \item \label{enum:dl:int:Lip:BBesov:2}$\DL: H^{s, p} (\partial D)\to B^{s+1/p, p}_2 (D)$ for $(s, p)\in [0, 1]\times (1, 2]$.
    \end{enumerate}
\end{theo}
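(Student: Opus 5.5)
The plan is to prove the four mapping properties at the endpoints $s=1$ and $s=0$, and then to obtain the intermediate range $s\in(0,1)$ by complex interpolation in $s$ with $p$ fixed. On the boundary, $[H^{-1,p}(\partial D),L^p(\partial D)]_s=H^{s-1,p}(\partial D)$ and $[L^p(\partial D),H^{1,p}(\partial D)]_s=H^{s,p}(\partial D)$. On the domain side, $[H^{1/p,p}(D),H^{1+1/p,p}(D)]_s=H^{s+1/p,p}(D)$ by the interpolation property of $H^{s,p}(D)$ recalled in the introduction. Since the spaces $B^{\cdot,p}_2(D)$ are stable under complex interpolation in the smoothness index, the Besov targets in \ref{enum:sl:int:Lip:BBesov:2}--\ref{enum:dl:int:Lip:BBesov:2} interpolate in the same way. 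So everything reduces to four endpoint estimates for the harmonic functions $\SL[f]$ and $\DL[g]$ in $D$.

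The bridge from boundary data to the interior Sobolev/Besov scale will be a Dahlberg/Jerison--Kenig type lemma for harmonic functions in Lipschitz domains. If $w$ is harmonic in $D$ and its nontangential maximal function satisfies $N(w)\in L^p(\partial D)$, then $w\in H^{1/p,p}(D)$ when $p\geq 2$ and $w\in B^{1/p,p}_2(D)$ when $1<p\leq 2$. The same holds one level up, with $w\in H^{1+1/p,p}(D)$ or $B^{1+1/p,p}_2(D)$, when $N(\nabla w)\in L^p(\partial D)$. I would prove this through the weighted characterization of these spaces for harmonic functions, namely $\int_D \operatorname{dist}(x,\partial D)^{p(1-1/p)}|\nabla w|^p\,dx$ for $p\geq 2$, and its square-function analogue for $p\le 2$. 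That weighted quantity is in turn controlled by the area integral $A(w)$, and $\|A(w)\|_{L^p}\lesssim\|N(w)\|_{L^p}$ by Dahlberg's area-integral/nontangential comparison. The split at $p=2$ is exactly where the $L^p$ norm of the square function can be dominated by the mixed weighted norm in one direction and not the other, and this is why the targets differ between the two ranges. I expect this lemma to be the main obstacle. I would first try to adapt the argument of Jerison--Kenig for the Dirichlet problem, which handles the $N(w)$ case and is then applied to $\nabla w$.

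The endpoint estimates themselves are then classical. For $s=1$ in \ref{enum:sl:int:Lip:pot} and \ref{enum:sl:int:Lip:BBesov:2}, we have $\|N(\nabla\SL[f])\|_{L^p}\lesssim\|f\|_{L^p}$ by Coifman--McIntosh--Meyer and Verchota. For $s=1$ in the double-layer cases, Lemma \ref{lemm:reduction:ds} gives $\partial_{x_i}\DL[g]=\sum_j\partial_{x_j}\SL[\partial_{\tau_{ij}}g]$, first for Lipschitz $g$ and then by density. The previous bound then yields $\|N(\nabla\DL[g])\|_{L^p}\lesssim\|g\|_{H^{1,p}}$. For $s=0$, we have $\|N(\DL[g])\|_{L^p}\lesssim\|g\|_{L^p}$, again by the boundedness of the Cauchy-type maximal singular integrals. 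For $\SL$ on $H^{-1,p}(\partial D)$, I would write $f=f_0+\sum_{j,k}\partial_{\tau_{jk}}g_{jk}$ with $g_{jk},f_0\in L^p$ and norms controlled by $\|f\|_{H^{-1,p}}$. Using \eqref{eq:tan:dual}, $\SL[\partial_{\tau_{jk}}g]$ becomes an integral of $\partial_{\tau_{jk}}^y\Gamma(x-y)$ against $g$, whose kernel is of the same Calder\'on--Zygmund type as the double layer. This gives $\|N(\SL[f])\|_{L^p}\lesssim\|f\|_{H^{-1,p}}$.

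Feeding these four endpoint bounds into the lemma gives \ref{enum:sl:int:Lip:pot}--\ref{enum:dl:int:Lip:BBesov:2} at $s\in\{0,1\}$, and interpolation finishes the proof. The restriction $d\geq 3$ is used only to avoid the logarithmic kernel, so that the decomposition of $H^{-1,p}$ data and the estimates carry no additional constant terms.
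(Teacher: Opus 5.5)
The paper gives no proof of its own here; it simply quotes Theorems 7.4, 8.5 and 8.7 of Mitrea--Taylor. Your outline is essentially the machinery those cited results rest on, so the route is sound. It has three parts: nontangential maximal bounds at $s\in\{0,1\}$ (Coifman--McIntosh--Meyer/Verchota, Lemma~\ref{lemm:reduction:ds} for $\nabla\DL$, and the tangential-derivative decomposition of $H^{-1,p}(\partial D)$ data for $\SL$); a Dahlberg/Jerison--Kenig lemma converting $N(w)\in L^p$ or $N(\nabla w)\in L^p$ into interior regularity; and complex interpolation in $s$.

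What your route adds over the bare citation is an explanation of why the target changes at $p=2$. Let $g_k$ be the contribution of the layer $\{\mathrm{dist}(x,\partial D)\approx 2^{-k}\}$. Then $A(w)\approx(\sum_k g_k^2)^{1/2}$, while $\|g_k\|_{L^p(\partial D)}\approx\|\mathrm{dist}(\cdot,\partial D)^{1-1/p}\nabla w\|_{L^p(\text{layer }k)}$. Hence $\|A(w)\|_{L^p}$ dominates the $\ell^p$-sum of these layer norms when $p\ge 2$. When $p\le 2$ it dominates their $\ell^2$-sum, by the reverse Minkowski inequality in $L^{p/2}$.

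The one step you should state more carefully is the $p>2$ half of your key lemma. Finiteness of $\int_D \mathrm{dist}(x,\partial D)^{p-1}|\nabla w|^p\,dx$, together with $w\in L^p(D)$, is Jerison--Kenig's characterization of $B^{1/p,p}_p(D)$ for harmonic $w$. For $p>2$ one only has $H^{1/p,p}(D)\subsetneq B^{1/p,p}_p(D)$ in general.

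To reach the target $H^{s+1/p,p}(D)$ you therefore need an additional fact: on harmonic functions, the Bessel-potential scale coincides with the $B_p$ scale at non-integer smoothness. This can be obtained from a Strichartz-type square-function characterization of $H^{\alpha,p}$, and results of this kind, independence of the fine index of Triebel--Lizorkin norms for harmonic functions, are due to Kalton--Mayboroda--Mitrea. Without it, your argument gives only $B^{s+1/p,p}_p(D)$ for $p>2$.

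A minor point: the identity $[B^{1/p,p}_2(D),B^{1+1/p,p}_2(D)]_s=B^{s+1/p,p}_2(D)$ is not among the facts recalled in the introduction, which lists only real interpolation for Besov spaces. It is nonetheless standard, via a universal extension operator.
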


We are now ready to prove Theorem \ref{theo:L2:sol:approx}.

\begin{proof}[Proof of Theorem \ref{theo:L2:sol:approx}]
To prove the uniqueness of the solution, suppose that the solution $u^\gamma$ is a solution to \eqref{IPB} with $h=0$ which belongs to $H^{s+1/p, p} (U\setminus \overline{D})$ for some $(s, 1/p)\in \Rscr (D)$. Then, by Lemma \ref{lemm:sol:rep}, $u^\gamma$ is represented as \eqref{eq:solution:try} for some $\varphi^\gamma\in \widetilde{B}^{s, p}(\partial D)$ with $h=0$. However, Lemma \ref{lemm:bie:solvable:Lip} asserts $\varphi^\gamma=0$. Thus we have $u^\gamma=0$, and hence the solution is unique.

    Next, we deal with the existence. Let $\varphi^\gamma$ be the unique solution to \eqref{eq:imperfect:integral}, whose solvability is guaranteed by Lemma \ref{lemm:bie:solvable:Lip}. Then the function $u^\gamma$ determined by \eqref{eq:solution:try} solves the problem \eqref{IPB}. It follows from Theorem \ref{theo:layer:mapping:Lip} \ref{enum:sl:int:Lip:pot} and \ref{enum:dl:int:Lip:pot} that $u^\gamma|_{U\setminus \overline{D}}\in H^{s+1/p, p} (U\setminus \overline{D})$ for $p\in [2, 2/(1-\Ge_*))$. This proves the existence part.

By Lemma \ref{lemm:bie:solvable:Lip}, $\varphi^\gamma$ converges to $\varphi^0$ in $L^p(\p D)$ for $1< p < 2/(1-\Ge_*)$, as $\gamma\to +0$. Since $u^0=h+{\SL}[\varphi^0]$, we infer that $u^\gamma$ converges to $u^0$ as $\Gg \to 0$ in $B^{1/p, p}_2 (U\setminus \overline{D})$ for any $p\in (1, 2]$ and in $H^{1/p, p} (U\setminus \overline{D})$ for any $p\in [2, 2/(1-\Ge_*))$ by Theorem \ref{theo:layer:mapping:Lip}. 
 \end{proof}

We conclude this section by proving the small resistance limit near infinity, which is also valid in the two-dimensional case.

\begin{theo}
    \label{theo:conv:infinity}
    If $D\subset \Rbb^d$ ($d\geq 2$) is a bounded open set with Lipschitz boundary, then
    \[
        \|(1+|x|)^{d-1+|\alpha|}\partial^\alpha (u^\gamma-u^0)\|_{L^\infty (\Rbb^d\setminus \overline{U})}\to 0
    \]
    as $\gamma\to +0$ for any multi-index $\alpha\in \Nbb_0^d$ and any open ball $U\subset \Rbb^d$ containing $\overline{D}$.
\end{theo}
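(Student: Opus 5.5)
We prove the statement through the layer-potential representation. By \eqref{eq:solution:try} and \eqref{eq:solution:try:zero}, in $\Rbb^d\setminus \overline{U}$ we have
\[
    u^\gamma-u^0={\SL}[\varphi^\gamma-\varphi^0]-\gamma {\DL}[\varphi^\gamma].
\]
Here $\varphi^\gamma$ is given by \eqref{eq:phig:sol:Lip}, and $\varphi^0$ solves \eqref{eq:phi0:Lip}. Since $\operatorname{dist}(\partial D, \Rbb^d\setminus U)=:\delta>0$, the kernels $\Gamma(x-y)$ and $\nv_y\cdot\nabla_y\Gamma(x-y)$ are smooth there, so everything reduces to estimating boundary densities in a weak norm. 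We will use $L^p(\partial D)$ with some fixed $p\in(1,2/(1-\Ge_*))$, which includes $p=2$ when $d=2$.

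\textbf{Decay from vanishing mean.} Both $\varphi^\gamma$ and $\varphi^0$ have zero integral over each $\partial D_j$: the first lies in $\widetilde{B}^{s,p}(\partial D)$ by Lemma \ref{lemm:bie:solvable:Lip}, and the second in $\widetilde{B}^{s-1,p}$. Therefore $\psi^\gamma:=\varphi^\gamma-\varphi^0$ has total integral zero. For $|x|$ large, Taylor expand $\partial^\alpha_x\Gamma(x-y)=\partial^\alpha\Gamma(x)+O(|y|\,|x|^{-d+1-|\alpha|})$, with the bound uniform for $y\in\partial D$. The zeroth-order term integrates against $\psi^\gamma$ to zero, which gives
\[
    |\partial^\alpha {\SL}[\psi^\gamma](x)|\le C_\alpha |x|^{-d+1-|\alpha|}\|\psi^\gamma\|_{L^1(\partial D)}.
\]
On a bounded region $\{x\notin U,\ |x|\le R\}$ we use instead the trivial bound $C\|\psi^\gamma\|_{L^1}$. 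The double-layer kernel $\nv_y\cdot\nabla_y\Gamma(x-y)$ already decays like $|x|^{-d+1}$, and each derivative gains one more power. Hence
\[
    |\partial^\alpha{\DL}[\varphi^\gamma](x)|\le C_\alpha(1+|x|)^{-d+1-|\alpha|}\|\varphi^\gamma\|_{L^1(\partial D)}.
\]
This holds in every dimension $d\ge 2$; the logarithm in $d=2$ disappears after one derivative or through the mean-zero cancellation.

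\textbf{Conclusion.} Combining the two estimates,
\[
    \|(1+|x|)^{d-1+|\alpha|}\partial^\alpha(u^\gamma-u^0)\|_{L^\infty(\Rbb^d\setminus\overline U)}\le C_{\alpha,U}\bigl(\|\varphi^\gamma-\varphi^0\|_{L^1}+\gamma\|\varphi^\gamma\|_{L^1}\bigr).
\]
Lemma \ref{lemm:bie:solvable:Lip} gives $\varphi^\gamma\to\varphi^0$ in $L^p(\partial D)$, hence in $L^1(\partial D)$ since $\partial D$ is compact. Consequently $\|\varphi^\gamma\|_{L^1}$ stays bounded, and the right-hand side tends to zero.

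The step needing care is that $\varphi^0$ may lie only in $\widetilde{B}^{s-1,p}$, i.e.\ it may not be a function. In that case the $L^1$ pairing is replaced by the duality pairing with the smooth kernel restricted to $\partial D$, whose $B^{1-s,p'}(\partial D)$ norm is bounded by the same decay rates. In fact, for $(s,1/p)$ near $(1,\cdot)$ in the range covered by Lemma \ref{lemm:bie:solvable:Lip} (and by Theorem \ref{theo:layer:Lip} \ref{enum:NP:inv:Lip:end}, using $\partial_\nv h\in L^\infty$), $\varphi^0$ lies in $\widetilde{L}^p(\partial D)$. So the $L^p$ convergence statement of Lemma \ref{lemm:bie:solvable:Lip} applies directly, and the argument above works with $L^1$ norms.
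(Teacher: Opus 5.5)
Your proof is correct and is essentially the paper's argument. The paper likewise writes $u^\gamma-u^0={\SL}[\varphi^\gamma-\varphi^0]-\gamma {\DL}[\varphi^\gamma]$ outside $U$ and invokes weighted $L^\infty$ decay bounds for $\SL$ on mean-zero densities and for $\DL$. You derive these bounds explicitly via the Taylor expansion of the kernel, whereas the paper only says they are easily proved. Both then conclude from the convergence $\varphi^\gamma\to\varphi^0$ in Lemma \ref{lemm:bie:solvable:Lip}, the paper using $L^2$ where you use $L^p\subset L^1$.

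The one point you leave implicit is that for $d=2$ this lemma requires $\capac_D>1$. The paper arranges this by dilating $D$, which only rescales $\gamma$ by a fixed factor and the weights by constants, so the limit is unaffected.
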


\begin{proof}
    It is easily proved that the following mappings are bounded for any multi-index $\alpha\in \Nbb_0^d$ and any open ball $U\subset \Rbb^d$ containing $\overline{D}$: 
    \begin{equation}
        \label{eq:layer:potential:end}
\begin{aligned}
        &(1+|x|)^{d-1+|\alpha|}\partial^\alpha {\SL}[\cdot]|_{\Rbb^d\setminus \overline{U}}: \widetilde{L}^2 (\partial D)\to L^\infty (\Rbb^d\setminus \overline{U}), \\
        &(1+|x|)^{d-1+|\alpha|}\partial^\alpha {\DL}[\cdot]|_{\Rbb^d\setminus \overline{U}}: L^2(\partial D)\to L^\infty (\Rbb^d\setminus \overline{U}).
    \end{aligned}
    \end{equation}
    Since $\lim_{\gamma\to +0}\varphi^\gamma=\varphi^0$ in $L^2 (\partial D)$ by Lemma \ref{lemm:bie:solvable:Lip}, where $D$ is dilated when $d=2$ in a way that $\capac_D>1$, the above mapping properties for layer potentials prove the conclusion. 
\end{proof}

\subsection{Regularity and integrability of the solution}

The following theorem is known \cite{FMM98}.

\begin{theo}
    \label{theo:layer:mapping:Besov}
    Let $D\subset \Rbb^d$ ($d\geq 2$) be a bounded open set with Lipschitz boundary. Then, for any $s\in (0, 1)$, the following linear operators are bounded. 
    \begin{enumerate}[label=\textnormal{(\alph*)}]
        \item \label{enum:sl:Besov:Besov}${\SL}: B^{s-1, p} (\partial D) \longrightarrow B^{s+1/p, p} (D)$ $(p \in [1, \infty])$;
        \item \label{enum:dl:Besov:Besov}${\DL}: B^{s, p} (\partial D) \longrightarrow B^{s+1/p, p} (D)$ $(p \in [1, \infty])$.
        \item \label{enum:sl:p:Besov:Bessel}${\SL}: B^{s-1, p} (\partial D) \longrightarrow H^{s+1/p, p} (D)$ $(p \in (1, \infty))$;
        \item \label{enum:dl:p:Besov:Bessel}${\DL}: B^{s, p} (\partial D) \longrightarrow H^{s+1/p, p} (D)$ $(p \in (1, \infty))$.
    \end{enumerate}
\end{theo}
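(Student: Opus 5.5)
This result is quoted from \cite{FMM98}, so the plan is to reconstruct how it follows from the layer-potential machinery rather than to give a new argument. The strategy is to prove two pairs of endpoint estimates and then interpolate in the smoothness index $s$, with the integrability $p$ kept fixed throughout. This is why the full range $p\in[1,\infty]$ can be covered in the Besov case.

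The endpoint estimates should come from a localized, weighted description of the potentials. The basic fact I would use is a characterization of $B^{\alpha,p}(D)$ for $\alpha\in(0,1)$ and, more generally, for $\alpha=1+\beta$ with $\beta\in(0,1)$, applied to harmonic functions $u$ in the Lipschitz domain $D$. Up to lower-order terms, the characterization reads
\[
\|u\|_{B^{1+\beta,p}(D)}\approx \Bigl(\int_D \mathrm{dist}(x,\partial D)^{p(1-\beta)-1}|\nabla^2 u(x)|^p\,dx\Bigr)^{1/p}+\|u\|_{L^p(D)},
\]
valid for $\beta\in(0,1)$ and $p\in[1,\infty]$, with the usual modification when $p=\infty$. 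In our setting $\beta=s+1/p-1$; when $s+1/p<1$ one works instead with a one-derivative version of the same estimate. With this in hand, the task reduces to estimating $\nabla^2\SL[\varphi]$ and $\nabla^2\DL[\psi]$ against powers of the distance to the boundary.

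The double layer is treated first. I would control $\nabla\DL[\psi]$ through Lemma \ref{lemm:reduction:ds}: that identity rewrites $\nabla\DL[\psi]$ as a sum of $\nabla\SL[\partial_{\tau_{ij}}\psi]$ terms, which reduces \ref{enum:dl:Besov:Besov} to \ref{enum:sl:Besov:Besov}, since $\partial_{\tau_{ij}}$ maps $B^{s,p}(\partial D)$ into $B^{s-1,p}(\partial D)$ by duality and \eqref{eq:tan:dual}. For the single layer, the kernel estimate $|\nabla_x^2\Gamma(x-y)|\lesssim|x-y|^{-d}$ together with a Whitney decomposition of $D$ gives the $p=1$ and $p=\infty$ weighted bounds for densities in atomic or Hölder-dual classes. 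Combined with the known $L^p$ results (Theorem \ref{theo:layer:mapping:Lip}, Theorem \ref{theo:layer:bdd:Lip}), real interpolation $(\cdot,\cdot)_{\theta,p}$ in $s$ then yields \ref{enum:sl:Besov:Besov} for all $s\in(0,1)$.

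The Bessel-potential statements \ref{enum:sl:p:Besov:Bessel}--\ref{enum:dl:p:Besov:Bessel} should follow from the Besov ones. For harmonic functions the $B^{\alpha,p}(D)$ and $H^{\alpha,p}(D)$ norms are both equivalent to the same weighted square-function or nontangential quantity, in the style of Jerison--Kenig. Equivalently, one can use the embedding of $B^{\alpha,p}_{\min(p,2)}$ into $H^{\alpha,p}$ and then interpolate once more in $s$ to absorb the loss in the third index.

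I expect the main obstacle to be the endpoints $p\le1$ and $p=\infty$ on a merely Lipschitz boundary. There the singular-integral estimates fail, so the atomic (Hardy-space) and Hölder cases have to be handled by hand, and the weighted characterization above must be verified on Lipschitz domains; this is exactly the work carried out in \cite{FMM98}.
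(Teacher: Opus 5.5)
The paper gives no argument for this theorem; it simply quotes \cite{FMM98}. Much of your outline is sound. For $1<p<\infty$ and $d\geq 3$, real interpolation $(\cdot,\cdot)_{s,p}$ between the $s=0$ and $s=1$ cases of Theorem \ref{theo:layer:mapping:Lip} does give \ref{enum:sl:Besov:Besov} and \ref{enum:dl:Besov:Besov}. Lemma \ref{lemm:reduction:ds} does reduce the double layer to the single layer.

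\textbf{The genuine gap} is the passage to \ref{enum:sl:p:Besov:Bessel}--\ref{enum:dl:p:Besov:Bessel} for $2<p<\infty$. For $1<p\leq 2$ nothing beyond $B^{\alpha,p}(D)\hookrightarrow H^{\alpha,p}(D)$ is needed. For $p>2$ you must land in $B^{s+1/p,p}_2(D)$ while keeping the full source $B^{s-1,p}(\partial D)$, and your ``equivalent'' route cannot do this. Interpolating \ref{enum:sl:Besov:Besov} with $(\cdot,\cdot)_{\theta,2}$ changes the third index on the source and the target alike. It therefore only gives $\SL\colon B^{s-1,p}_2(\partial D)\to B^{s+1/p,p}_2(D)$, and $B^{s-1,p}_2(\partial D)\subsetneq B^{s-1,p}(\partial D)$ when $p>2$. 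Theorem \ref{theo:layer:mapping:Lip} does not help either, since its source $H^{s-1,p}(\partial D)$ is also strictly smaller.

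What is actually required is a property of harmonic functions that no abstract interpolation supplies. Let $u$ be harmonic in $D$ with $u\in B^{\alpha,p}(D)$, $\alpha=s+1/p$; equivalently, $\mathrm{dist}(x,\partial D)^{2-\alpha}|\nabla^2u|\in L^p(D)$. Then $u$ must lie in the Triebel--Lizorkin space $F^{\alpha,p}_q(D)$ for every $q$, in particular in $H^{\alpha,p}(D)=F^{\alpha,p}_2(D)$. Your first sentence in that paragraph gestures at this, but it is the whole content of \ref{enum:sl:p:Besov:Bessel}--\ref{enum:dl:p:Besov:Bessel} for $p>2$, and you only assert it.

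\textbf{The interpolation scheme.} Your claim that interpolating in $s$ at fixed $p$ ``is why the full range $p\in[1,\infty]$ can be covered'' is not right. Interpolation in $s$ only transports estimates already known at the same $p$ for two values of $s$. The $L^p$ results you invoke exist only for $1<p<\infty$, and Theorem \ref{theo:layer:mapping:Lip} is stated only for $d\geq 3$. So $p\in\{1,\infty\}$ and $d=2$ rest entirely on the direct Whitney/atomic weighted bounds, which you do not carry out. If you had those at $p=1$ and $p=\infty$ for every $s\in(0,1)$, real interpolation in $p$ at fixed $s$ would then give all $1<p<\infty$ and all $d\geq 2$ at once.

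\textbf{The weight exponent.} The weight in your characterization is off by one power. For harmonic $u$, membership in $B^{1+\beta,p}(D)$ corresponds to $\int_D \mathrm{dist}(x,\partial D)^{p(1-\beta)}|\nabla^2 u|^p\,dx<\infty$. The weight $\mathrm{dist}(x,\partial D)^{p(1-\beta)-1}$ you wrote corresponds to $B^{1+\beta+1/p,p}(D)$, i.e.\ boundary smoothness $1+\beta$.
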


Combining the above theorem with the solution representation \eqref{eq:solution:try} and Lemma \ref{lemm:bie:solvable:Lip}, we immediately obtain the following membership of the solution to \eqref{IPB}. 

\begin{theo}
    \label{theo:sol:membership}
    If $D\subset \Rbb^d$ ($d\geq 3$) is a bounded open set with Lipschitz boundary, then the solution $u^\gamma$ to \eqref{IPB} satisfies 
    \[
        u^\gamma\in \bigcap_{\substack{\Ge_*<s< 1 \\ 1<p<2/(s-\Ge_*)}} (B^{s+1/p, p}(U\setminus \overline{D})\cap H^{s+1/p, p}(U\setminus \overline{D}))
    \]
    for any $\gamma>0$ and any open ball $U\subset \Rbb^d$ including $\overline{D}$.
\end{theo}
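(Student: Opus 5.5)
The plan is to combine the representation formula \eqref{eq:solution:try} with the regularity of the density from Lemma \ref{lemm:bie:solvable:Lip} and the potential mapping properties of Theorem \ref{theo:layer:mapping:Besov}. Fix $\gamma>0$ and a pair $(s,p)$ with $\Ge_*<s<1$ and $1<p<2/(s-\Ge_*)$. One checks that this is exactly the set of pairs with $(s,1/p)\in\Rscr(D)$ and $s>\Ge_*$, so that the index set in the statement coincides with the one in Lemma \ref{lemm:bie:solvable:Lip}.

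\textbf{Step 1: regularity of the density.} By Lemma \ref{lemm:bie:solvable:Lip} the density satisfies $\varphi^\gamma\in\widetilde{B}^{s,p}(\partial D)$. Since $B^{s,p}(\partial D)\hookrightarrow B^{s-1,p}(\partial D)$, we also have $\varphi^\gamma\in B^{s-1,p}(\partial D)$.

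\textbf{Step 2: regularity of the potentials.} Theorem \ref{theo:layer:mapping:Besov} \ref{enum:sl:Besov:Besov} and \ref{enum:sl:p:Besov:Bessel} give
\[
{\SL}[\varphi^\gamma]\in B^{s+1/p,p}\cap H^{s+1/p,p}.
\]
Parts \ref{enum:dl:Besov:Besov} and \ref{enum:dl:p:Besov:Bessel} give the same for $\gamma{\DL}[\varphi^\gamma]$.

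Theorem \ref{theo:layer:mapping:Besov} is formulated on $D$, whereas we need the exterior piece $U\setminus\overline D$. I would apply it to the Lipschitz domain $U\setminus\overline D$. On the part $\partial D$ of its boundary, the layer potentials over $\partial D$ are the ones in the theorem, with the normal orientation reversed; this affects ${\DL}$ only by a sign. The remaining boundary component $\partial U$ is far from $\partial D$, and there the potentials are smooth. I expect this transfer from $D$ to $U\setminus\overline D$ to be the only point needing care. If it cannot be cited directly, I would use a cutoff argument: the potentials are real-analytic away from $\partial D$, so only a neighborhood of $\partial D$ in $D^+$ matters. In that neighborhood the standard Lipschitz-domain results of \cite{FMM98} apply equally to the exterior side, since they are stated for both sides of the boundary.

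\textbf{Step 3: the term $h$.} The function $h$ is harmonic on $\Rbb^d$, hence smooth on $\overline U$. Therefore $h|_{U\setminus\overline D}$ lies in every Besov and Bessel potential space on $U\setminus\overline D$.

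\textbf{Conclusion.} Summing the three contributions in \eqref{eq:solution:try} gives
\[
u^\gamma|_{U\setminus\overline D}\in B^{s+1/p,p}(U\setminus\overline D)\cap H^{s+1/p,p}(U\setminus\overline D).
\]
Intersecting over all admissible $(s,p)$ yields the claim.
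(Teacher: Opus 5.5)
Your proposal is correct and is essentially the paper's argument. The paper obtains the theorem in one line by combining the representation \eqref{eq:solution:try}, the density regularity from Lemma \ref{lemm:bie:solvable:Lip}, and Theorem \ref{theo:layer:mapping:Besov}; your remark on passing from $D$ to $U\setminus\overline D$ fills in the step the paper leaves implicit.

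One harmless slip: when $\Ge_*<1/2$, the index set $\{\Ge_*<s<1,\ 1<p<2/(s-\Ge_*)\}$ is strictly larger than $\Rscr(D)\cap\{s>\Ge_*\}$ (it also contains points with $1/p\geq (1+\Ge_*+s)/2$). This does not affect the proof, because Lemma \ref{lemm:bie:solvable:Lip} is stated over exactly that index set and Theorem \ref{theo:layer:mapping:Besov} only requires $s\in(0,1)$.
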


\section{Smooth case: Proof of Theorem \ref{theo:existence}}\label{sect:smooth}

\subsection{Layer potentials on smooth boundaries}

We recall or prove mapping properties of the layer potentials when the boundary is smooth, say $C^{k, \theta}$ for some $k\in \Nbb$ and $\theta\in (0, 1]$. 

We first prove identities involving derivatives of the layer potential.

\begin{lemm}
    \label{lemm:reduction}
    Let $D\subset \Rbb^d$ be a bounded open set with the Lipschitz boundary and let $\varphi\in C^{0, 1}(\partial D)$. Then
    \begin{align}
        \label{eq:red:sl}
        \partial_{x_i}{\SL}[\varphi](x)&={\SL}\left[\Lambda_+ [\nv_i \varphi]+\sum_{j=1}^d \partial_{\tau_{ij}}(\nv_j \varphi)\right](x)
    \end{align}
    for $x\in D$.
\end{lemm}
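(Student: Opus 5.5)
The plan is a direct computation with the kernel $\Gamma(x-y)$: split the derivative $\partial_{y_i}$ at the boundary into a normal part and tangential parts. The normal part produces a double layer potential, which I then convert with \eqref{eq:normal:DL}. The tangential parts are moved onto the density using \eqref{eq:tan:dual}.

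\emph{Step 1 (differentiate under the integral).} Fix $x\in D$. Then $y\mapsto \Gamma(x-y)$ is smooth near $\partial D$, and
\[
\partial_{x_i}{\SL}[\varphi](x)=-\int_{\partial D}\partial_{y_i}\Gamma(x-y)\,\varphi(y)\,\df\sigma(y).
\]

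\emph{Step 2 (split $\partial_{y_i}$).} Use $|\nu|=1$ and the definition \eqref{900}, which gives $\nu_j\partial_i-\nu_i\partial_j=-\partial_{\tau_{ij}}$. Hence, for smooth functions near $\partial D$,
\[
\partial_{y_i}=\nu_i\partial_\nu-\sum_{j=1}^d\nu_j\partial_{\tau_{ij}} .
\]
Inserting this into Step 1 gives two terms:
\[
\partial_{x_i}{\SL}[\varphi](x)=-{\DL}[\nu_i\varphi](x)+\sum_{j=1}^d\int_{\partial D}\partial_{\tau_{ij}}\Gamma(x-\cdot)\,(\nu_j\varphi)\,\df\sigma .
\]
For the second term I apply \eqref{eq:tan:dual} with the pair $\Gamma(x-\cdot)$ and $\nu_j\varphi$. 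The sign convention stated there moves $\partial_{\tau_{ij}}$ onto $\nu_j\varphi$, so the sum becomes ${\SL}\bigl[\sum_j\partial_{\tau_{ij}}(\nu_j\varphi)\bigr](x)$. This is the same manipulation as in the proof of Lemma \ref{lemm:reduction:ds}, and I would check its sign against that proof. Since $\nu_j\varphi$ is only $L^\infty$, the term $\partial_{\tau_{ij}}(\nu_j\varphi)$ is read in $H^{-1,p}(\partial D)$, and the pairing in \eqref{eq:tan:dual} is extended by density. The single layer potential of an $H^{-1,p}$ density is still defined pointwise in $D$, because $\Gamma(x-\cdot)$ is smooth on $\partial D$.

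\emph{Step 3 (double layer term).} By \eqref{eq:normal:DL}, $-{\DL}[\nu_i\varphi](x)={\SL}\Lambda_+[\nu_i\varphi](x)$ for $x\in D$. Adding this to Step 2 gives \eqref{eq:red:sl}.

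The main obstacle is justifying Step 3. Formula \eqref{eq:normal:DL} was derived for $\psi\in B^{s,p}(\partial D)$, but for merely Lipschitz $\partial D$ the function $\psi=\nu_i\varphi$ is only in $L^\infty\subset L^p$ and need not be in any $B^{s,p}$. I would handle this by approximation in $L^p(\partial D)$, $p\in(1,2/(1-\Ge_*))$, taking $\psi_n\in B^{s,p}(\partial D)$ with $\psi_n\to\psi$ in $L^p$.

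\begin{itemize}
\item Left side: for fixed $x\in D$, the map $\psi\mapsto{\DL}[\psi](x)$ is continuous on $L^1(\partial D)$.
\item Right side: write $\Lambda_+=-\SL^{-1}(\tfrac12 I+\NP)$ as in \eqref{eq:DtN:potential}. Here $\NP$ is bounded on $L^p$ (Theorem \ref{theo:layer:bdd:Lip}). By duality from Theorem \ref{theo:layer:Lip} \ref{enum:sl:inv:Lip:end}, $\SL^{-1}$ maps $L^p$ into $H^{-1,p}$. Finally, for $g\in H^{-1,p}(\partial D)$, the value ${\SL}[g](x)=\langle g,\Gamma(x-\cdot)\rangle$ is continuous in $g$.
\end{itemize}

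Both sides are therefore continuous in $\psi\in L^p$, and the identity passes to the limit. This density argument also makes the meaning of both sides of \eqref{eq:red:sl} precise.
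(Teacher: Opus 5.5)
Your route is the paper's route step for step: split $\partial_{y_i}=\nv_i\partial_\nv-\sum_j\nv_j\partial_{\tau_{ij}}$, move $\partial_{\tau_{ij}}$ onto $\nv_j\varphi$ by \eqref{eq:tan:dual}, and replace $-{\DL}$ by ${\SL}\Lambda_+$ in $D$. Your $L^p$-density justification of Step 3 for merely Lipschitz $\partial D$ is a real improvement, since the paper applies \eqref{eq:normal:DL} to $\nv_i\varphi\in L^\infty$ without comment.

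\textbf{The gap.} It is exactly the sign you postponed checking: \eqref{eq:tan:dual} is false as printed. Extend $\varphi\psi$ into $D$. The divergence theorem gives $\int_{\partial D}\partial_{\tau_{jk}}(\varphi\psi)\,\df\sigma=\int_D(\partial_{x_j}\partial_{x_k}-\partial_{x_k}\partial_{x_j})(\varphi\psi)\,\df x=0$. So $\partial_{\tau_{jk}}$ is skew-adjoint:
\[
\int_{\partial D}\partial_{\tau_{jk}}\varphi\,\psi\,\df\sigma=-\int_{\partial D}\varphi\,\partial_{\tau_{jk}}\psi\,\df\sigma .
\]
On the unit circle, for example, $\partial_{\tau_{12}}=\partial_\theta$. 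Hence your Step 2 really yields $-{\DL}[\nv_i\varphi](x)-\sum_j{\SL}[\partial_{\tau_{ij}}(\nv_j\varphi)](x)$, and your argument proves
\[
    \partial_{x_i}{\SL}[\varphi](x)={\SL}\Bigl[\Lambda_+[\nv_i\varphi]-\sum_{j=1}^d\partial_{\tau_{ij}}(\nv_j\varphi)\Bigr](x), \quad x\in D,
\]
not \eqref{eq:red:sl}. With the $+$ sign the statement is false, and the paper's own proof contains the same slip.

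Concretely, take $D$ the unit ball in $\Rbb^3$ and $\varphi=1$. Then ${\SL}[1]\equiv-1$ in $D$, so the left side vanishes. On the other hand $\sum_j\partial_{\tau_{ij}}\nv_j=2\nv_i$, $\Lambda_+[\nv_i]=2\nv_i$ and ${\SL}[\nv_i](x)=-x_i/3$. So the right side of \eqref{eq:red:sl} equals $4{\SL}[\nv_i](x)=-4x_i/3\neq 0$, while the corrected formula gives $0$.

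Comparing with Lemma \ref{lemm:reduction:ds} would not have exposed this. Its stated formula is correct, but its proof uses the same wrong \eqref{eq:tan:dual} together with a second slip: it writes $\partial_{y_j}\Gamma(x-y)$ as $+\partial_{x_j}\Gamma(x-y)$, and the two errors cancel. The sign is harmless for the only application, Theorem \ref{theo:layer:mapping:smooth}, which needs only that $\Pcal_j$ maps $B^{m+\beta,p}(\partial D)$ boundedly into $B^{m-1+\beta,p}(\partial D)$. So the repair is simply to flip the sign of the tangential term.

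One small point in Step 3: by duality, Theorem \ref{theo:layer:Lip} \ref{enum:sl:inv:Lip:end} gives $\SL^{-1}:L^p(\partial D)\to H^{-1,p}(\partial D)$ only for $p>2/(1+\Ge_*)$, not on all of $(1,2/(1-\Ge_*))$. Since $\nv_i\varphi\in L^\infty$, just take $p=2$.
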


\begin{proof}
    We first observe from \eqref{910} that the following holds:
    $$
    \p_{y_i}\Gamma (x-y)=(\partial_{\nv_y}\Gamma (x-y))\nv_i(y)-\sum_j \nv_j (y)\partial_{\tau_{ij}(y)} \Gamma (x-y).
    $$
Thus we have
    \begin{align}
        \partial_{x_i} {\SL}[\varphi](x)
        &=-\int_{\partial D} \partial_{y_i} \Gamma (x-y)\varphi (y)\, \df \sigma (y) \nonumber\\
        &=-{\DL}[\nv_i \varphi](x)+\sum_{j=1}^d {\SL}[\partial_{\tau_{ij}}(\nv_j \varphi)](x). \nonumber
    \end{align}
One can see from the jump formula \eqref{eq:sl:jump} and \eqref{eq:DtN:potential} that ${\DL}=- {\SL} \Lambda_+$ in $D$. Thus \eqref{eq:red:sl} follows.
\end{proof}

Assume that $D\subset \Rbb^d$ is a bounded open set with compact $C^{k, \theta}$-boundary $\partial D$ for some $k\in \Nbb$ and $\theta\in (0, 1]$. We recall the following properties of boundary integral operators.

\begin{theo}
    \label{theo:layer:Sobolev}
    Assume that $\partial D$ is $C^{k, \theta}$ for some $k\in \Nbb$ and $\theta\in (0, 1]$. Set $\overline{s}:=k+ \theta$. Then the following holds for any $p \in (1,\infty)$. If $d=2$, then we further assume that $\capac_D>1$. 
    \begin{enumerate}[label=\textnormal{(\roman*)}]
        \item \label{enum:sl:invertible} The single layer potential $\SL$ is an isomorphism from $B^{s-1, p}(\partial D)$ onto $B^{s, p}(\partial D)$ for any $s\in (-\overline{s}+1, \overline{s})$.
        \item \label{enum:np:smoothing}If $\overline{s}\in (1, 2]$ and $\delta\in [0, \theta)$, then the mappings
        \begin{align*}
            &\NP: B^{s, p}(\partial D)\longrightarrow B^{s+\delta, p}(\partial D), \\
            &\NP^*: B^{s-1, p}(\partial D)\longrightarrow B^{s-1+\delta, p}(\partial D)
        \end{align*}
        are bounded for any $s\in (-\overline{s}+1, \overline{s}-\delta)$.
        \item \label{enum:np:smoothing:1}If $\overline{s}>2$, then the mappings
        \begin{align*}
            &\NP: B^{s, p}(\partial D)\longrightarrow B^{s+1, p}(\partial D), \\
            &\NP^*: B^{s-1, p}(\partial D)\longrightarrow B^{s, p}(\partial D)
        \end{align*}
        are bounded for any $s\in (-\overline{s}+1, \overline{s}-1)$.
        \item \label{enum:np:Fredholm} The mapping
        \begin{align*}
            \frac{1}{2}I+\NP^*: B^{s-1, p}(\partial D)\longrightarrow B^{s-1, p}(\partial D)
        \end{align*}
        is a bounded Fredholm operator of index zero for any $s\in (-\overline{s}+1, \overline{s})$.
        \item \label{enum:np:inv}The mapping
        \[
            -\frac{1}{2}I+\NP^*: \widetilde{B}^{s-1, p}(\partial D) \longrightarrow \widetilde{B}^{s-1, p}(\partial D)
        \]
        is an isomorphism for any $s\in (-\overline{s}+1, \overline{s})$.
    \end{enumerate}
\end{theo}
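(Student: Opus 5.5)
This theorem collects mapping properties of boundary integral operators on $C^{k,\theta}$ boundaries. Items (i), (ii), (iv) and (v) are essentially classical, and I would prove them by a bootstrap from the Lipschitz theory (Theorem \ref{theo:layer:Lip}). Item (iii) is the genuinely new smoothing statement; the paper defers it to the appendix.

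\textbf{Smoothing, items (ii) and (iii).} I would start here, since the other parts use it.
\begin{itemize}
\item[(ii)] For $\overline{s}\in(1,2]$, the kernel of $\NP$ is $\frac{(x-y)\cdot\nv_y}{|x-y|^d}$. On a $C^{1,\theta}$ surface it is bounded by $C|x-y|^{-d+1+\theta}$, so $\NP$ is a weakly singular operator of order $-\theta$. The standard estimates for such kernels, either via local graph coordinates and a pseudodifferential or Calderón--Zygmund comparison with a kernel homogeneous of degree $-(d-1)+\theta$, give $B^{s,p}\to B^{s+\delta,p}$ for $\delta<\theta$. The upper limit $s<\overline{s}-\delta$ is where the regularity of the coordinate charts permits it. The statement for $\NP^*$ follows by duality, since the interval $(-\overline{s}+1,\overline{s}-\delta)$ is mapped to itself under $s\mapsto 1-s$ after the shift.
\item[(iii)] For $\overline{s}>2$, I would use a commutator argument based on Lemma \ref{lemm:reduction:ds}. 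Differentiating the double layer tangentially, $\partial_{\tau_{ij}}\NP[\varphi]$ equals $\NP[\partial_{\tau_{ij}}\varphi]$ plus terms whose kernels carry an extra derivative of $\nv$. These remainders are again weakly singular of order $-\theta$ or better, and the $C^{k,\theta}$ regularity with $k\ge2$ gives them an extra full order. This yields one full order of smoothing and can be iterated up to $s<\overline{s}-1$. This is the main obstacle, because one must track exactly which derivatives of $\nv$ appear.
\end{itemize}

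\textbf{Item (i).} Boundedness $\SL:B^{s-1,p}\to B^{s,p}$ on the full range follows by the same commutator identities. Lemma \ref{lemm:reduction} expresses tangential derivatives of $\SL$ via $\SL$ and $\NP$, and induction on the order gives it. Interpolation and duality then cover fractional and negative $s$. For invertibility, the kernel is trivial by the Lipschitz case for $d\ge3$ (and by $\capac_D>1$ for $d=2$), because a kernel element at any regularity lies in the Lipschitz range after embedding. For surjectivity, given $g\in B^{s,p}$, solve in the Lipschitz range and bootstrap the regularity of the density using the relation $\NP\SL=\SL\NP^*$ and the gain from (ii)/(iii). Concretely, the interior Neumann data give $\SL^{-1}g=(\NP^*-\frac12 I)$-type expressions, where $\NP^*$ gains regularity at each step.

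\textbf{Items (iv) and (v).} By (ii)/(iii), $\NP^*$ is compact on $B^{s-1,p}$, since it gains $\delta>0$ and the embedding is compact. Hence $\pm\frac12I+\NP^*$ is Fredholm of index zero. For (v), the kernel of $-\frac12I+\NP^*$ on $B^{s-1,p}$ coincides with its kernel in the Lipschitz range, again by bootstrapping via the smoothing property. That kernel is $\lspan\{e_j\}$ by \eqref{eq:1/2-K:ker}, and it meets $\widetilde{B}^{s-1,p}$ trivially, by Theorem \ref{theo:layer:Lip}\ref{enum:NP:inv:Lip}. Invariance of $\widetilde{B}^{s-1,p}$ follows from $\NP[1_{\partial D_j}]=\frac12 1_{\partial D_j}$ (paired with the density). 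Index zero on this invariant subspace of finite codimension then gives the isomorphism.
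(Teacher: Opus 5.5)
Your items (iv) and (v) follow the paper's argument. For (i) and (ii) the paper itself only cites Maz'ya--Shaposhnikova and Taylor and extends the range by duality and interpolation, so sketches there are acceptable.

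The genuine gap is in item (iii), the only part the paper actually proves. A commutator identity $\partial_\tau\NP[\varphi]=\NP[\partial_\tau\varphi]+\mathcal{C}[\varphi]$ cannot produce a full order of smoothing, however good $\mathcal{C}$ is. To conclude $\NP[\varphi]\in B^{s+1,p}(\partial D)$ from $\varphi\in B^{s,p}(\partial D)$, you must bound $\NP[\partial_\tau\varphi]$ in $B^{s,p}$ with $\partial_\tau\varphi\in B^{s-1,p}$. That is the same full-order statement one level lower. If your only input is the gain $\delta<\theta$ from (ii), or $\delta<1$ viewing a $C^{2,\theta}$ surface as $C^{1,1}$, the identity just returns a gain of $\delta$ at the higher level. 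Commutators propagate an order of smoothing; they do not create one.

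Size estimates do not close the gap either. On a $C^{2,\theta}$ surface the kernel is $O(|x-y|^{2-d})$, and Schur plus difference-quotient bounds as in Theorem \ref{theo:SIO:bdd:Sobolev:0} stop at order $1-\Ge$. The endpoint needs cancellation, because one tangential derivative of the kernel is a Calder\'on--Zygmund kernel of degree $-(d-1)$ on the $(d-1)$-dimensional surface. So your (iii) has no base case, and the real obstacle is not the bookkeeping of derivatives of $\nv$.

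The missing ingredient is the paper's low-regularity estimate, Proposition \ref{prop:Sobolev:low:order}.
\begin{itemize}
\item After localizing $\NP^*$ (the off-diagonal part has a $C^{k-1,\theta}$ kernel), each local piece is split as $\Qcal_\iota=\Pcal_\iota+\Rcal_\iota$.
\item $\Pcal_\iota$ has kernel $(t-s)\cdot\sff_\iota(t)(t-s)/|t-s|^{d}_{\fff_\iota(t)}$, times smooth cutoffs, built from the second fundamental form. It is a pseudodifferential operator of order $-1$ whose symbol is $C^{0,\theta}$ in $t$. Hence it is bounded $B^{s-1,p}\to B^{s,p}$ for $|s|<\theta$ by Marschall's theorem.
\item The remainder satisfies $\Rcal_\iota\in\Sigma^{1+\theta}_{-1}$ and $\Rcal_\iota\partial_t\in\Sigma^\theta_0$, which by duality give $\Rcal_\iota:H^{-1,p}\to B^{\theta-\Ge,p}$.
\end{itemize}
Only then is the commutator expansion $\partial_t^\alpha\Qcal_\iota=\Qcal_\iota\partial_t^\alpha+\sum_{\beta<\alpha}\binom{\alpha}{\beta}(\ad_\partial^{\alpha-\beta}\Qcal_\iota)\partial_t^\beta$ used. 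There the commutators need only lie in $\Sigma^\theta_0$, not gain a full order as you assert. A gain of $\theta-\Ge$ suffices because they act on lower derivatives $\partial_t^\beta h$. The $\NP$ statement is then deduced from the $\NP^*$ one via $\NP=\SL\NP^*\SL^{-1}$ and (i), followed by duality and interpolation, rather than from Lemma \ref{lemm:reduction:ds}.
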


\begin{proof}
\ref{enum:sl:invertible} is proved in \cite[Theorem 1.1]{Mazya-Shaposhnikova05} for $s\in (1, \overline{s})\setminus \Nbb$ and \ref{enum:np:smoothing} is proved in \cite[\S 3.4]{Taylor00} for $s\in (1, \overline{s}-\delta)$. We can extend the range of $s$ to that in \ref{enum:sl:invertible} and \ref{enum:np:smoothing} by utilizing the duality and the interpolation. It is likely that \ref{enum:np:smoothing:1} is also known. However, since the authors failed to find an appropriate reference, a proof is given in Appendix \ref{sect:pf:smoothing} for readers' convenience. \ref{enum:np:Fredholm} is an immediate consequence of \ref{enum:np:smoothing} since it implies that $\NP^*$ is compact on $B^{s-1, p}(\partial D)$. 

\ref{enum:np:inv} is proved as follows. We have the topological direct sum decomposition 
\[
    B^{s-1, p}(\partial D)=\lspan \{e_j\}_{j=1}^N \oplus \widetilde{B}^{s-1, p}(\partial D), \quad \psi=\sum_{j=1}^N a_j e_j+\left(\psi-\sum_{j=1}^N a_j e_j\right).
\]
Here $e_j=-\SL^{-1}[1_{\partial D_j}]$ and $a_j\in \Rbb$ is determined by the linear equation 
\[
    \int_{\partial D_j}\psi\, \df\sigma-\sum_{j=1}^N C^0_{ij}a_j=0,
\]
where $(C^0_{ij})_{i, j=1}^N$ is the capacitance matrix, which is invertible. (See Section \ref{subs:capacitance}.) Since 
\[
    \Ker \left(-\frac{1}{2}I+\NP^*: B^{s-1, p}(\partial D)\to B^{s-1, p}(\partial D)\right)=\lspan \{e_j\}_{j=1}^N
\]
and
\[
    \Ran \left(-\frac{1}{2}I+\NP^*: B^{s-1, p}(\partial D)\to B^{s-1, p}(\partial D)\right)\subset \widetilde{B}^{s-1, p}(\partial D),
\]
\ref{enum:np:Fredholm} immediately proves \ref{enum:np:inv}.
\end{proof} 

By Theorem \ref{theo:layer:Sobolev} \ref{enum:sl:invertible} and \ref{enum:np:smoothing}, we immediately obtain the following mapping property of the exterior DtN mapping: 

\begin{coro}\label{coro:DtN:smooth}
    Assume that $\partial D$ is $C^{k, \theta}$ for some $k\in \Nbb$ and $\theta\in (0, 1]$. Set $\overline{s}:=k+ \theta$. Then the exterior DtN map $\Lambda_+=-(1/2I+\NP^*)\SL^{-1}$ is a bounded operator from $B^{s, p}(\partial D)$ into $B^{s-1, p}(\partial D)$ for any $s\in (-\overline{s}+1, \overline{s})$ and $p \in (1,\infty)$.
\end{coro}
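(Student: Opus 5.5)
The plan is to compose the two factors in the representation $\Lambda_+=-(\frac12 I+\NP^*)\SL^{-1}$, which is \eqref{eq:DtN:potential}, and check that each factor has the right mapping property on the full range $s\in(-\overline{s}+1,\overline{s})$, $p\in(1,\infty)$. The first step is to fix such $s$ and $p$ and apply Theorem \ref{theo:layer:Sobolev} \ref{enum:sl:invertible}. It says $\SL: B^{s-1,p}(\partial D)\to B^{s,p}(\partial D)$ is an isomorphism on exactly this range of $s$. Hence $\SL^{-1}: B^{s,p}(\partial D)\to B^{s-1,p}(\partial D)$ is bounded by the bounded inverse theorem. In two dimensions we use the standing assumption $\capac_D>1$.

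The second step is to show that $\frac12 I+\NP^*$ is bounded on $B^{s-1,p}(\partial D)$ for the same range of $s$. Boundedness alone is enough here, and Theorem \ref{theo:layer:Sobolev} \ref{enum:np:Fredholm} gives it directly, since it asserts that $\frac12 I+\NP^*$ is a bounded Fredholm operator on $B^{s-1,p}(\partial D)$ for every $s\in(-\overline{s}+1,\overline{s})$. As an alternative route one can use \ref{enum:np:smoothing} with $\delta=0$ when $\overline{s}\le 2$, and \ref{enum:np:smoothing:1} followed by the embedding $B^{s,p}\hookrightarrow B^{s-1,p}$ when $\overline{s}>2$. The case $\overline{s}>2$ needs care, because \ref{enum:np:smoothing:1} only covers $s<\overline{s}-1$. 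For the top strip $s\in[\overline{s}-1,\overline{s})$ I would apply \ref{enum:np:smoothing} to the boundary viewed as $C^{1,\theta'}$, or interpolate. The cleaner choice is simply to cite \ref{enum:np:Fredholm}, which already states boundedness on the whole range.

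Composing the two factors gives $\Lambda_+: B^{s,p}(\partial D)\to B^{s-1,p}(\partial D)$ bounded. The only real obstacle is confirming that the endpoint ranges of $s$ in the cited parts of Theorem \ref{theo:layer:Sobolev} match exactly, with nothing lost at $s$ near $\overline{s}$. Since both \ref{enum:sl:invertible} and \ref{enum:np:Fredholm} are stated on the identical open interval $(-\overline{s}+1,\overline{s})$, the argument closes without further work.
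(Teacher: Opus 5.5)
Your proof is correct and follows the paper: $\Lambda_+$ is the composition of $\SL^{-1}: B^{s,p}(\partial D)\to B^{s-1,p}(\partial D)$, bounded by Theorem \ref{theo:layer:Sobolev} \ref{enum:sl:invertible}, with $\frac12 I+\NP^*$, bounded on $B^{s-1,p}(\partial D)$. The paper cites \ref{enum:np:smoothing} for the second factor, but \ref{enum:np:smoothing} is stated only for $\overline{s}\le 2$, so your citation of \ref{enum:np:Fredholm} is, if anything, the better one.

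Your fallback for the strip $s\in[\overline{s}-1,\overline{s})$ when $\overline{s}>2$ would not work as written. Regarding $\partial D$ as $C^{1,\theta'}$ only reaches $s<2$, and interpolation cannot go beyond the endpoints already covered. The fallback is not needed, but a working version is to apply \ref{enum:np:smoothing:1} at $s-1$. This gives $\NP^*: B^{s-2,p}(\partial D)\to B^{s-1,p}(\partial D)$, and composing with the embedding $B^{s-1,p}(\partial D)\hookrightarrow B^{s-2,p}(\partial D)$ covers that strip.
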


\begin{theo}
    \label{theo:layer:mapping:smooth}
    Suppose that $\partial D$ is $C^{k, \theta}$ for some integer $k \in \Nbb$ and $\theta \in (0,1]$. Then, for any $s\in (0, k+\theta)$, the following linear operators are bounded. 
    \begin{enumerate}[label=\textnormal{(\alph*)}]
        \item \label{enum:sl:p}${\SL}: B^{s-1, p} (\partial D) \longrightarrow B^{s+1/p, p} (D)$ $(p \in [1, \infty])$;
        \item \label{enum:dl:p}${\DL}: B^{s, p} (\partial D) \longrightarrow B^{s+1/p, p} (D)$ $(p \in [1, \infty])$.
        \item \label{enum:sl:p:Bessel}${\SL}: B^{s-1, p} (\partial D) \longrightarrow H^{s+1/p, p} (D)$ $(p \in (1, \infty))$;
        \item \label{enum:dl:p:Bessel}${\DL}: B^{s, p} (\partial D) \longrightarrow H^{s+1/p, p} (D)$ $(p \in (1, \infty))$.
    \end{enumerate}
\end{theo}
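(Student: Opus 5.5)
We argue by induction on the integer part of $s$, taking Theorem \ref{theo:layer:mapping:Besov} as the base case $s\in(0,1)$; it holds for Lipschitz, hence for $C^{k,\theta}$, boundaries. The derivative identities of Lemma \ref{lemm:reduction:ds} and Lemma \ref{lemm:reduction} let us trade one derivative of a layer potential for one tangential derivative of the density. Suppose \ref{enum:sl:p}--\ref{enum:dl:p:Bessel} are known for all $s\in(0,m)\cap(0,k+\theta)$, and let $s\in[m,m+1)\cap(0,k+\theta)$. For $B^{s+1/p,p}(D)$, and likewise $H^{s+1/p,p}(D)$, it suffices to bound $u$ in $B^{s-1+1/p,p}(D)$ together with $\partial_{x_i}u$ in $B^{s-1+1/p,p}(D)$ for every $i$. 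This reduction is the standard lifting property, valid on Lipschitz domains for $s-1+1/p>0$.

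For the single layer potential, Lemma \ref{lemm:reduction} gives
\[
\partial_{x_i}\SL[\varphi]=\SL\Big[\Lambda_+[\nu_i\varphi]+\sum_j\partial_{\tau_{ij}}(\nu_j\varphi)\Big]\quad\text{in } D,
\]
first for Lipschitz $\varphi$ and then, by density, for all $\varphi$. Take $\varphi\in B^{s-1,p}(\partial D)$. The coefficients $\nu_j$ lie in $C^{k-1,\theta}$, so multiplication by $\nu_j$ preserves $B^{s-1,p}(\partial D)$ as long as $|s-1|<k-1+\theta$. The tangential derivative $\partial_{\tau_{ij}}$ maps $B^{s-1,p}$ to $B^{s-2,p}$. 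By Corollary \ref{coro:DtN:smooth}, $\Lambda_+$ also maps $B^{s-1,p}$ to $B^{s-2,p}$, with $s-1\in(-\overline s+1,\overline s)$. The new density therefore lies in $B^{(s-1)-1,p}(\partial D)$, and the induction hypothesis with $s-1$ gives $\partial_{x_i}\SL[\varphi]\in B^{s-1+1/p,p}(D)$. When $s-1\le 0$ we are at the seam of the induction. There we split $s$ as $1+s'$ with $s'\in(0,1)$ and apply the base case directly.

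For the double layer potential, Lemma \ref{lemm:reduction:ds} gives $\partial_{x_i}\DL[\psi]=\sum_j\partial_{x_j}\SL[\partial_{\tau_{ij}}\psi]$. Here $\partial_{\tau_{ij}}\psi\in B^{s-1,p}(\partial D)$ when $\psi\in B^{s,p}$, again using that $\nu\in C^{k-1,\theta}$. The single layer part already proved at level $s$ places $\SL[\partial_{\tau_{ij}}\psi]$ in $B^{s+1/p,p}(D)$, so its first derivatives lie in $B^{s-1+1/p,p}(D)$, as required. Alternatively one can use $\DL=-\SL\Lambda_+$ in $D$ from \eqref{eq:normal:DL} together with Corollary \ref{coro:DtN:smooth}, which reduces \ref{enum:dl:p} and \ref{enum:dl:p:Bessel} at once to \ref{enum:sl:p} and \ref{enum:sl:p:Bessel}. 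I would use this second route, since it is cleaner.

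The main obstacle is the endpoint bookkeeping. First, for $s$ close to $k+\theta$ the coefficients $\nu_j$ are only $C^{k-1,\theta}$, so we must verify that multiplying by $\nu_j$ and applying $\partial_\tau$ stay within the admissible ranges $(-\overline s+1,\overline s)$. Second, the cases $p=1,\infty$ in \ref{enum:sl:p} and \ref{enum:dl:p} lie outside the range of Corollary \ref{coro:DtN:smooth}. For these I would instead interpolate in $s$: by real interpolation, $(B^{s_0,p},B^{s_1,p})_{\theta,p}=B^{s,p}$ on both the boundary and the domain, so it is enough to prove the estimates at nearby values $s_0<s<s_1$. If that fails, I would rely on the fact that $\Lambda_+$ is a classical pseudodifferential operator of order one whose symbol has $C^{k-1,\theta}$ regularity, and that such operators are bounded on $B^{s,p}$ for all $p\in[1,\infty]$.
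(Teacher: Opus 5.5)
For $1<p<\infty$ your argument is the paper's. Both use induction on the integer part of $s$ starting from Theorem~\ref{theo:layer:mapping:Besov}. Both use the identity of Lemma~\ref{lemm:reduction} to trade $\partial_{x_i}$ on ${\SL}[\varphi]$ for a density one order lower. Both then bound $u$ by $\|u\|_{L^p(D)}$ plus $\|\nabla u\|_{B^{s-1+1/p,p}(D)}$. Your preferred route to (b) and (d), through $\DL=-\SL\Lambda_+$ in $D$ and Corollary~\ref{coro:DtN:smooth}, is correct. It is slightly shorter than the paper's, which uses Lemma~\ref{lemm:reduction:ds} together with (a) at the same level $s$. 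But it is also confined to $1<p<\infty$.

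The genuine gap is the endpoint cases $p\in\{1,\infty\}$ of (a) and (b). Interpolating in $s$ does not reach them. The identity $(B^{s_0,p},B^{s_1,p})_{\vartheta,p}=B^{s,p}$ at fixed $p=1$ or $p=\infty$ presupposes the estimates at $s_0$ and $s_1$ for that same $p$. Those are blocked for the same reason: every inductive step invokes $\Lambda_+$, whose boundedness (Corollary~\ref{coro:DtN:smooth}, resting on Theorem~\ref{theo:layer:Sobolev}) is only available for $1<p<\infty$. The pseudodifferential fallback is asserted rather than proved. For a $C^{k,\theta}$ boundary the symbol of $\Lambda_+$ is rough in the base variable, and both that structure and an endpoint mapping theorem would have to be established. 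The paper takes $p=\infty$ from the Schauder-type literature (Miranda; Dondi--Lanza de Cristoforis). Its own $\Lambda_+$-based induction is in the same position as yours at $p=1$.

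The simplest repair is to stop one step earlier in the proof of Lemma~\ref{lemm:reduction}, before $\DL$ is replaced by $-\SL\Lambda_+$:
\[
\partial_{x_i}{\SL}[\varphi]=-{\DL}[\nu_i\varphi]+\sum_{j=1}^d{\SL}[\partial_{\tau_{ij}}(\nu_j\varphi)] \quad\text{in } D .
\]
For $\varphi\in B^{s-1,p}(\partial D)$, the right-hand side is controlled by (b) and (a) at level $s-1$. So (a) at level $s$ follows from (a) and (b) at level $s-1$. Then (b) at level $s$ follows from (a) at level $s$ via Lemma~\ref{lemm:reduction:ds}, exactly as in the paper. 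No $\Lambda_+$ appears, and the base case holds for all $p\in[1,\infty]$. The induction therefore runs for every $p\in[1,\infty]$, with the same multiplier and tangential-derivative bookkeeping as for $1<p<\infty$. For $p=\infty$, justify the identities by duality, since norm density fails.

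Interpolation in $s$ is the right tool for a different issue in your write-up. At $s=1$ there is no $s'\in(0,1)$ with $s=1+s'$, and Theorem~\ref{theo:layer:mapping:Besov} does not cover $s-1=0$. So the integer values of $s$ should be obtained by interpolation, as the paper does by fixing a fractional part $\beta\in(0,\theta)$.
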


\begin{proof}
    Proofs of \ref{enum:sl:p} and \ref{enum:dl:p} for the case when $p=\infty$ can be found in \cite{Dondi-LdC17, Miranda70} and the references therein. So we deal with the case when $p < \infty$ in what follows. We fix a decimal part $\beta\in (0, \theta)$ and prove the assertions \ref{enum:sl:p}--\ref{enum:dl:p:Bessel} for $s=m+\beta$ by induction in $m=0, \ldots, k$. The rest case, $k-1+\beta<s<k$ for example, follows from the interpolation. 
    
    The cases when $m=0$ are proved in Theorem \ref{theo:layer:mapping:Besov}. Suppose that the conclusion holds for $s=m+\beta$ for some $m=0, \ldots, k-1$. 

    We first prove \ref{enum:sl:p} for $s=m+1+\beta$. Let $\psi\in B^{m+\beta, p}(\partial D)$. We define the operator $\Pcal_j: B^{m+\beta, p}(\partial D)\to B^{m-1+\beta, p}(\partial D)$ by
    \begin{align*}
     \Pcal_j  [\psi]:=\Lambda_+ [\nv_i  \psi]+\sum_{j=1}^d \partial_{\tau_{ij}}(\nv_j  \psi),
    \end{align*}
    which already appeared in the right hand side of \eqref{eq:red:sl}. The mapping property of $\Pcal_j$ is proved as follows. Since $\partial D$ is $C^{k, \theta}$, $\nv$ has $C^{k-1, \theta}$-regularity. Thus the multiplication operator $\psi\mapsto \nv_j\psi$ is bounded on $B^{m+\beta, p}(\partial D)$. By \eqref{eq:DtN:potential} and Theorem \ref{theo:layer:Sobolev} \ref{enum:sl:invertible}--\ref{enum:np:smoothing:1} (in particular the boundedness of $\NP^*$ on $B^{m+\beta, p}(\partial D)$ and $\NP$ on $B^{m+1+\beta, p}(\partial D)$), we have the boundedness of $\Lambda_+: B^{m+\beta, p}(\partial D)\to B^{m-1+\beta, p}(\partial D)$. On the other hand, since the tangential vector $\tau_{jk}$ has the $C^{k-1, \theta}$-regularity as well, we have the boundedness of $\partial_{\tau_{jk}}: B^{m+\beta, p}(\partial D)\to B^{m-1+\beta, p}(\partial D)$. Hence $\Pcal_j$ has the desired mapping property. 

    Now, we obtain from \eqref{eq:red:sl} and the induction hypothesis that
    \begin{equation}\label{eq:intertwining}
        \begin{aligned}
            \|\partial_{x_j} {\SL} [\psi]\|_{B^{m+\beta+1/p, p}(D)}
            &=\|{\SL}\Pcal_j[\psi]\|_{B^{m+\beta+1/p, p}(D)} \\
            &\lesssim \|\Pcal_j [\psi]\|_{B^{m+\beta-1, p}(\partial D)} \\
            &\lesssim \|\psi\|_{B^{m+\beta, p}(\partial D)}.
        \end{aligned}
    \end{equation}
    Combining the above estimate with 
    \begin{equation}\label{eq:sl:trivial}
        \|{\SL}[\psi]\|_{L^p (D)}\leq \|{\SL}[\psi]\|_{B^{\beta+1/p, p} (D)}\lesssim \|\psi\|_{B^{\beta-1, p}(\partial D)}\leq \|\psi\|_{B^{m+\beta, p}(\partial D)},
    \end{equation}
    which immediately follows from the $m=0$ case, we obtain 
    \[
        \|{\SL}[\psi]\|_{B^{m+1+\beta+1/p, p}(D)}\lesssim \|\psi\|_{B^{m+\beta, p}(\partial D)}
        \] 
    as desired. 
    
    Next, we prove \ref{enum:dl:p} for $s=m+1+\beta$. Let $\varphi\in B^{m+1+\beta, p}(\partial D)$. We have from Lemmas \ref{lemm:reduction:ds}, \ref{lemm:reduction}, and the assertion \ref{enum:sl:p} that
    \begin{align*}
            \|\partial_{x_j} {\DL}[\varphi]\|_{B^{m+\beta+1/p, p}(D)}
            &\leq \sum_{k=1}^{d} \|\partial_{x_k}{\SL}[\partial_{\tau_{jk}}\varphi]\|_{B^{m+\beta+1/p, p}(D)} \\
            &\lesssim \sum_{k=1}^{d}\|{\SL}[\partial_{\tau_{jk}}\varphi]\|_{B^{m+1+\beta+1/p, p}(D)} \\
            &\lesssim \sum_{k=1}^{d}\|\partial_{\tau_{jk}}\varphi\|_{B^{m+\beta, p}(\partial D)}
            \lesssim \|\varphi\|_{B^{m+1+\beta, p}(\partial D)}.
    \end{align*}
    Combining the above estimate with $\|{\DL}[\varphi]\|_{L^p(D)}\lesssim \|\varphi\|_{B^{m+\beta}(\partial D)}$, which is proved similarly to \eqref{eq:sl:trivial}, we obtain the desired estimate. 

    We can similarly prove \ref{enum:sl:p:Bessel} and \ref{enum:dl:p:Bessel}.
\end{proof}

\subsection{Uniform resolvent estimate on Besov spaces}

On smooth boundaries, we can extend Theorem \ref{theo:inv} for Sobolev spaces of higher exponent: 

\begin{theo}
    \label{theo:DtN:iso:smooth}
    If $\partial D$ is $C^{k, \theta}$ for some $k\geq 1$ and $\theta\in (0, 1]$, then the mapping $I+\gamma \Lambda_+: B^{s, p}(\partial D)\to B^{s-1, p}(\partial D)$ is an isomorphism for any $\gamma>0$, $s\in (-k-\theta+1, k+\theta)$ and $p\in (1, \infty)$. 
\end{theo}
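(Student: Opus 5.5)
We follow the scheme of Theorem \ref{theo:inv}: first show that $I+\gamma\Lambda_+: B^{s,p}(\partial D)\to B^{s-1,p}(\partial D)$ is Fredholm of index zero for every $s\in(-k-\theta+1,k+\theta)$ and $p\in(1,\infty)$, and then show injectivity by bootstrapping any kernel element into $B^{1/2,2}(\partial D)$, where the energy argument of Theorem \ref{theo:inv} gives $\varphi=0$.

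\textbf{Fredholm property.} By \eqref{eq:rel1},
\[
I+\gamma\Lambda_+=-\SL^{-1}\Bigl(\gamma\bigl(\tfrac12 I+\NP\bigr)-\SL\Bigr).
\]
By Theorem \ref{theo:layer:Sobolev} \ref{enum:sl:invertible}, $\SL^{-1}:B^{s,p}(\partial D)\to B^{s-1,p}(\partial D)$ is an isomorphism in the full range of $s$. It therefore suffices to show that $\gamma(\tfrac12 I+\NP)-\SL$ is Fredholm of index zero on $B^{s,p}(\partial D)$.

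\begin{itemize}
\item $\SL$ maps $B^{s,p}$ into $B^{s+1,p}$ when $s+1<k+\theta$; in general, interpolating part \ref{enum:sl:invertible} gives a map into $B^{s+\epsilon,p}$ for small $\epsilon>0$. In either case $\SL$ is compact on $B^{s,p}(\partial D)$, since $\partial D$ is compact.
\item By Theorem \ref{theo:layer:Sobolev} \ref{enum:np:smoothing}--\ref{enum:np:smoothing:1}, $\NP$ gains $\delta>0$ derivatives, so it is compact on $B^{s,p}(\partial D)$. Hence $\tfrac12 I+\NP$ is a compact perturbation of $\tfrac12 I$.
\end{itemize}

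One technical point needs checking. Near the top endpoint $s\uparrow \overline s$, part \ref{enum:np:smoothing} requires $s<\overline s-\delta$. Taking $\delta$ small depending on $s$ resolves this, since $\theta>0$ and the only requirement is $\delta<\min(\theta,\overline s-s)$. The bottom endpoint follows from the stated range by duality.

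\textbf{Injectivity.} Let $\varphi\in B^{s,p}(\partial D)$ satisfy $(I+\gamma\Lambda_+)\varphi=0$. By \eqref{eq:rel2},
\[
\bigl(\tfrac12 I+\NP\bigr)\varphi=\gamma^{-1}\SL\varphi,
\qquad\text{that is}\qquad
\varphi=2\gamma^{-1}\SL\varphi-2\NP\varphi .
\]
In the smooth setting this identity gives a direct gain of regularity, since both terms on the right are smoothing. If $\varphi\in B^{r,q}$, then $\SL\varphi\in B^{r+1,q}$ (capped below $\overline s$) and $\NP\varphi\in B^{r+\delta,q}$, so $\varphi\in B^{r+\delta,q}$.

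\begin{itemize}
\item \textbf{Raising smoothness.} Iterating finitely many times lifts $\varphi$ into $B^{r,q}$ for some $r$ close to $\overline s>1$, with $q=p$ unchanged.
\item \textbf{Adjusting integrability.} The Sobolev embedding $B^{r,q}(\partial D)\hookrightarrow B^{r-(d-1)(1/q-1/q'),q'}(\partial D)$ trades smoothness for integrability. Alternating it with the regularity gain reaches $B^{1/2,2}(\partial D)$ when $p<2$. When $p>2$, one simply uses $B^{s,p}\subset B^{s',2}$ for $s'<s$ on the compact manifold $\partial D$, after first raising $s$ above $1/2$ if needed.
\end{itemize}

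Once $\varphi\in B^{1/2,2}(\partial D)$, the positivity identity in the proof of Theorem \ref{theo:inv} (Theorem \ref{theo:NP:energy}) gives $\|\varphi\|_{L^2}^2\le 0$, hence $\varphi=0$. A Fredholm operator of index zero with trivial kernel is an isomorphism, which completes the plan.

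The main obstacle is bookkeeping when $s$ is very negative, where $\varphi$ is a distribution. We must make sure each bootstrap step stays inside the admissible window $(-\overline s+1,\overline s)$ in which Theorem \ref{theo:layer:Sobolev} applies, and that the gain $\delta$ is uniform, so that finitely many steps suffice. We would handle this by fixing $\delta<\theta$ once and for all and noting that the window has length greater than $1$ because $\overline s>1$.
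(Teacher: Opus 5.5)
Your argument is correct, and its Fredholm step is the same as the paper's. The injectivity step takes a different route.

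\textbf{How the paper proves injectivity.} The paper splits the range of $s$.
\begin{itemize}
\item For $s\ge 0$ it quotes the fact from the proof of Theorem \ref{theo:inv}: a kernel element lying in some $L^p(\partial D)$ vanishes. That fact uses only the order-$\le 1$ mapping properties on $\Rscr(D)$, which is the full square for $C^1$ boundaries.
\item For $s<0$ it argues by duality. For $\psi\in B^{1-s,q}(\partial D)$ one has $0=\langle(I+\gamma\Lambda_+)\varphi,\psi\rangle=\langle\varphi,(I+\gamma\Lambda_+)\psi\rangle$. Since $1-s\in(1,k+\theta)$, the isomorphism already established at that level lets $(I+\gamma\Lambda_+)\psi$ exhaust $B^{-s,q}(\partial D)$, so $\varphi=0$.
\end{itemize}

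\textbf{How your route differs.} You read $\varphi=2\gamma^{-1}\SL\varphi-2\NP\varphi$ as a smoothing identity and bootstrap once through the whole window. You then trade smoothness for integrability until you reach $B^{1/2,2}(\partial D)$.

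\textbf{What each buys.} Your route treats every $s$ uniformly and never needs the isomorphism at the dual exponent. It does rely on the smoothing of $\NP$ at negative orders, and on the bookkeeping you describe: alternating embeddings and gains when $p<2$ so as never to leave $(-\overline s+1,\overline s)$. The paper's duality trick avoids all negative-order regularity, at the price of settling $s\ge 0$ first.

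\textbf{A minor point.} When $\overline s>2$, Theorem \ref{theo:layer:Sobolev} \ref{enum:np:smoothing}--\ref{enum:np:smoothing:1} as stated give no gain for $\NP$ on $B^{s,p}(\partial D)$ when $s$ is close to $\overline s$. Part \ref{enum:np:smoothing} is stated only for $\overline s\in(1,2]$, and part \ref{enum:np:smoothing:1} only for $s<\overline s-1$. So your remark about choosing $\delta$ small does not cover that case.
\begin{itemize}
\item For the Fredholm step, it is cleaner to cite Theorem \ref{theo:layer:Sobolev} \ref{enum:np:Fredholm} and conjugate via Plemelj, $\tfrac12 I+\NP=\SL(\tfrac12 I+\NP^*)\SL^{-1}$.
\item In the bootstrap no gain at that height is needed. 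Reaching any fixed level above $1/2$ at every integrability already suffices.
\end{itemize}
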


\begin{proof}
    It immediately follows from Theorem \ref{theo:layer:Sobolev} (and a similar argument to Theorem \ref{theo:inv}) that $I+\gamma \Lambda_+: B^{s, p}(\partial D)\to B^{s-1, p}(\partial D)$ is a bounded Fredholm operator of index zero for any $s\in (-k-\theta+1, k+\theta)$. Thus it suffices to prove that $\varphi\in B^{s, p}(\partial D)$ for some $s\in (-k-\theta+1, k+\theta)$ and $(I+\gamma\Lambda_+)[\varphi]=0$ imply $\varphi=0$. 
    
    When $s\in [0, k+\theta)$, we just recall from the proof of Theorem \ref{theo:inv} that $(I+\gamma \Lambda_+)[\varphi]=0$ and $\varphi \in L^p (\partial D)$ for some $p\in (1, \infty)$ imply $\varphi=0$, where we used the assumption that $\partial D$ is smoother than $C^1$. Thus $I+\gamma \Lambda_+: B^{s, p}(\partial D)\to B^{s-1, p}(\partial D)$ is an isomorphism if $s\in [0, k+\theta)$ and $p\in (1, \infty)$. On the other hand, when $s\in (-k-\theta+1, 0)$, the injectivity of $I+\gamma \Lambda_+: B^{s, p}(\partial D)\to B^{s-1, p}(\partial D)$ is proved as follows. Assume that $\varphi\in B^{s, p}(\partial D)$ satisfies $(I+\gamma \Lambda_+)[\varphi]=0$. Then, for any $\psi\in B^{1-s, q}(\partial D)$, where $q=p(p-1)$ is the H\"older conjugate of $p$, we have 
    \[
        0=\jbk{(I+\gamma \Lambda_+)[\varphi], \psi}=\jbk{\varphi, (I+\gamma \Lambda_+)[\psi]}.
    \]
    Since we already proved that $I+\gamma \Lambda_+: B^{1-s, q}(\partial D)\to B^{-s, q}(\partial D)$ is an isomorphism, we can replace $\psi\in B^{1-s, q}(\partial D)$ with any image $(I+\gamma \Lambda_+)^{-1}[\widetilde{\psi}]$ of $\widetilde{\psi}\in B^{-s, q}(\partial D)$ by the inverse $(I+\gamma \Lambda_+)^{-1}: B^{-s, q}(\partial D)\to B^{1-s, q}(\partial D)$. Thus we have $\langle\varphi, \widetilde{\psi}\rangle=0$ for any $\widetilde{\psi}\in B^{-s, q}(\partial D)$. Hence $\varphi=0$. 
\end{proof}

We obtain the following uniform resolvent estimate on higher-order Sobolev spaces for the exterior DtN map analogous to Theorem \ref{theo:unif:res:Lip}:

\begin{theo}
    \label{theo:unif:resolvent:smooth}
    Suppose that $\partial D$ is $C^{k, \theta}$ for some $k\in \Nbb$ and $\theta\in (0, 1]$. Then the estimate
    \begin{equation}\label{eq:unif:resolvent:init}
    \limsup_{\gamma\to +0}\|(I+\gamma \Lambda_+)^{-1}\|_{B^{s-1, p}(\partial D)\to B^{s-1, p}(\partial D)}<\infty
\end{equation}
is valid for all $s\in (-k-\theta+1, k+\theta)$ and $p\in (1, \infty)$. 
\end{theo}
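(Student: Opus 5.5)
The starting point is the contractive bound $\|(I+\gamma\Lambda_+)^{-1}\|_{L^p\to L^p}\le 1$ from Theorem \ref{theo:unif:res:Lip}, which holds for every $p\in(1,\infty)$ because $\Ge_*=1$ for $C^1$ boundaries. This covers $s=1$. We then push the bound up in $s$ by commuting tangential derivatives through the resolvent, and down in $s$ by duality. Interpolation fills in the intermediate exponents. Throughout, $\gamma\in(0,1]$ and all constants are independent of $\gamma$.

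\textbf{Upward step.} We induct on $s$ in steps of at most $\theta'<\theta$, working in the Besov scale $B^{s-1,p}(\partial D)$ with $s\in[1,k+\theta)$. Let $\psi=(I+\gamma\Lambda_+)^{-1}[f]$ and choose a tangential derivative $T$: a local vector field built from the $\tau_{ij}$, with coefficients of class $C^{k-1,\theta}$. Then
\[
(I+\gamma\Lambda_+)[T\psi]=Tf+\gamma[\Lambda_+,T]\psi .
\]
The key estimate to establish is that the commutator $[\Lambda_+,T]$ is of order one, that is, bounded $B^{\sigma,p}\to B^{\sigma-1,p}$ in the relevant range. We would prove this through the representation $\Lambda_+=-\SL^{-1}(\tfrac12 I+\NP)$ together with the smoothing properties of $\NP$ and $\SL$ in Theorem \ref{theo:layer:Sobolev}. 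Alternatively, we would use Lemma \ref{lemm:reduction}, which expresses derivatives of $\SL$ through $\SL[\Lambda_+\nu_i\cdot+\partial_\tau\cdot]$, and the principal symbol $|\xi|$ of $\Lambda_+$ commutes with that of $T$.

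Granting this, we get
\[
\|\psi\|_{B^{s-1+\theta',p}}\lesssim \|\psi\|_{B^{s-1,p}}+\sup_T\|T\psi\|_{B^{s-2+\theta',p}} .
\]
The term $T\psi$ is bounded by the inductive resolvent bound applied to $Tf+\gamma[\Lambda_+,T]\psi$. For the piece $\gamma[\Lambda_+,T]\psi$ we need $\gamma\|\psi\|_{B^{s-1+\theta',p}}$, and this is where the main difficulty lies. To handle it we would use the resolvent identity $\gamma\Lambda_+(I+\gamma\Lambda_+)^{-1}=I-(I+\gamma\Lambda_+)^{-1}$. This gives uniform control of $\gamma\Lambda_+\psi$ in the same space as $f$. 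Combined with the Fredholm structure $\Lambda_+=-\SL^{-1}(\tfrac12I+\NP)$, which is elliptic of order one modulo a finite-dimensional kernel, it yields $\gamma\|\psi\|_{B^{s,p}}\lesssim\|f\|_{B^{s-1,p}}+\gamma\|\psi\|_{B^{s-1,p}}$. This closes the induction. The same argument should work with a fractional step by interpolating the commutator bound.

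\textbf{Downward step and interpolation.} For $s\in(-k-\theta+1,1)$ we argue by duality. $\Lambda_+$ is symmetric with respect to the $L^2(\partial D)$ pairing, since its bilinear form is symmetric. Hence $(I+\gamma\Lambda_+)^{-1}$ on $B^{s-1,p}$ is the adjoint of the same resolvent on $B^{1-s,q}$, with $1/p+1/q=1$, and $(1-s)\in(-k-\theta+2,\,k+\theta)$. The range $2-s\in(1,k+\theta+1)$ needed for $B^{(2-s)-1,q}$ falls within the range already covered by the upward step, up to the endpoint, which is handled by real interpolation between $L^q$ and the top space. Uniform bounds in $\gamma$ are preserved by both duality and interpolation, which proves \eqref{eq:unif:resolvent:init} on the full range.

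\textbf{Main obstacle.} The hard part is the commutator and absorption step, specifically showing that $\gamma\|\psi\|$ at one higher order is controlled uniformly in $\gamma$. If the elliptic estimate for $\Lambda_+$ is awkward to obtain directly, I would first try to prove it in the form $\|\varphi\|_{B^{\sigma,p}}\lesssim\|\Lambda_+\varphi\|_{B^{\sigma-1,p}}+\|\varphi\|_{L^p}$. This follows from the invertibility of $\SL$ and the compactness of $\NP$ in Theorem \ref{theo:layer:Sobolev}: one writes $\varphi=2\SL[-\Lambda_+\varphi]\cdot$ plus a smoothing term $-2\NP\varphi$.
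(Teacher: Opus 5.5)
You take a genuinely different route from the paper: you commute tangential derivatives through the resolvent, starting from the $L^p$ contraction. As written, it has a genuine gap.

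\textbf{The central estimate is only asserted.} Everything rests on the claim that $[\Lambda_+,T]$ is of order one on a $C^{k,\theta}$ surface, uniformly over the Besov indices you need. Via $\Lambda_+=-\SL^{-1}(\frac12 I+\NP)$, this requires commutator bounds for $[\SL,T]$ and $[\NP,T]$ with vector fields whose coefficients are only $C^{k-1,\theta}$. That is a kernel analysis like the one the appendix carries out for $\ad_\partial^\alpha\Qcal_\iota$, now needed for $\SL$ as well. ``The principal symbols commute'' is a heuristic for smooth symbols, not a proof.

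\textbf{For $k=1$ the scheme has nothing to work with.} Then $T$ has merely $C^{0,\theta}$ coefficients, and a commutator gaining a derivative would need one more derivative of the geometry than is available. So nothing lifts you above the level $s-1=0$. Yet Proposition \ref{prop:perturbation} with $k=1$ uses exactly the levels $s-1\in(0,\theta)$. That base range is precisely what the paper does not prove by hand; it imports it from Escher--Seiler ($s\in(-\theta,1+\theta)$ if $k=1$, $s\in(-1,2)$ if $k\ge 2$).

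\textbf{The bookkeeping does not close.}
\begin{itemize}
\item Theorem \ref{theo:unif:res:Lip} bounds the resolvent on $L^p=H^{0,p}$, not on $B^{0,p}$ (these agree only for $p=2$). For $d=2$ it gives only $p=2$.
\item Your fractional step from level $s-1$ to $s-1+\theta'$ uses the resolvent bound at level $s-2+\theta'$. At the first step this is the negative level $\theta'-1$. You obtain that only by duality from level $1-\theta'>0$, i.e.\ from what you are proving, so the induction is circular.
\item The dual index $2-s$ ranges over $(1,k+\theta+1)$. This overshoots the upward range by an interval of length one, not an endpoint, and interpolation cannot reach beyond the top space. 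The paper's one-line duality remark for $s\in(-k-\theta+1,-1]$ has the same shortfall, so this is not where the two routes differ.
\end{itemize}

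\textbf{Your ``main obstacle'' is not one.} The term $\gamma\|[\Lambda_+,T]\psi\|_{B^{s-2+\theta',p}}\lesssim\gamma\|\psi\|_{B^{s-1+\theta',p}}$ sits at the level of the left-hand side. It is absorbed for small $\gamma$, with finiteness coming from Theorem \ref{theo:DtN:iso:smooth}. The resolvent-identity detour is unnecessary.

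\textbf{The idea you are missing.} The paper conjugates by $\SL^{-1}$ rather than by tangential fields, which avoids commutator calculus entirely. By \eqref{eq:Plemelj} and \eqref{eq:DtN:potential} the commutator is exact: $[I+\gamma\Lambda_+,\SL^{-1}]=-\gamma\SL^{-1}(\NP-\NP^*)\SL^{-1}$. Hence, for $\psi^\gamma=(I+\gamma\Lambda_+)^{-1}\varphi$,
\[
\SL^{-1}\psi^\gamma=(I+\gamma\Lambda_+)^{-1}\SL^{-1}\varphi+\gamma\,(I+\gamma\Lambda_+)^{-1}\SL^{-1}(\NP^*-\NP)\SL^{-1}\psi^\gamma .
\]
Two facts make this work. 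First, $\SL^{-1}:B^{s-1,p}(\partial D)\to B^{s-2,p}(\partial D)$ is an isomorphism. Second, $\NP$ and $\NP^*$ gain a full derivative once $k+\theta>2$ (Theorem \ref{theo:layer:Sobolev} \ref{enum:np:smoothing:1}, proved in the appendix). Together they transfer the uniform bound from $B^{s-2,p}$ to $B^{s-1,p}$, with an error $\lesssim\gamma\|\psi^\gamma\|_{B^{s-1,p}}$ that is absorbed for small $\gamma$. Iterating in unit steps from the base range yields $s\in[2,k+\theta)$.

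The only analytic input beyond the base range is this NP smoothing. The scheme does, however, need a uniform bound on a whole unit interval of $s$ to start from. The single point supplied by your $L^p$ contraction does not provide that.
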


\begin{proof}
    The analogous estimate to \eqref{eq:unif:resolvent:init} is proved in \cite[Theorem 6.10]{Escher-Seiler08} for the \textit{interior} DtN map and $s$ in the range $(-\theta, 1+\theta)$ if $k=1$ and $(-1, 2)$ if $k\geq 2$. Since exactly the same reasoning is applicable to the exterior DtN map, the estimate \eqref{eq:unif:resolvent:init}  is valid for $s$ in the range. To extend the range of $s$ to $(-k-\theta+1, k+\theta)$, we need to deal with the case when $k \geq 2$ and $s \in (-k-\Gt+1, -1] \cup [2, k+\Gt)$.

Suppose $k \geq 2$ and $s\in [2,k+\Gt)\cap [2, 3)$. Let $\varphi\in B^{s-1, p}(\partial D)$ and $\psi^\gamma=(I+\gamma \Lambda_+)^{-1}[\varphi]$. Then, we have from Theorem \ref{theo:layer:Sobolev} \ref{enum:sl:invertible}
\begin{align}
    \|\psi^\gamma\|_{B^{s-1, p}(\partial D)}
    &\lesssim \|\SL^{-1}(I+\gamma \Lambda_+)^{-1}[\varphi]\|_{B^{s-2, p}(\partial D)} \nonumber\\
    &\leq \|(I+\gamma \Lambda_+)^{-1}\SL^{-1}[\varphi]\|_{B^{s-2, p}(\partial D)} \nonumber \\
    &\qquad +\|[\SL^{-1}, (I+\gamma \Lambda_+)^{-1}][\varphi]\|_{B^{s-2, p}(\partial D)} \label{650}.
\end{align}
Here, $[ \, , \, ]$ denotes the commutator. By \eqref{eq:Plemelj} and \eqref{eq:DtN:potential}, we have
\[
    [I+\gamma \Lambda_+, \SL^{-1}]=-\gamma \SL^{-1}(\NP-\NP^*)\SL^{-1},
\]
and hence
\begin{align*}
    [\SL^{-1}, (I+\gamma \Lambda_+)^{-1}][\varphi]
    &=\gamma (I+\gamma \Lambda_+)^{-1}\SL^{-1}(\NP^*-\NP)\SL^{-1}(I+\gamma \Lambda_+)^{-1}[\varphi] \\
    &=\gamma (I+\gamma \Lambda_+)^{-1}\SL^{-1}(\NP^*-\NP)\SL^{-1}[\psi^\gamma].
\end{align*}
Observe that all the mappings in the following diagram are bounded regardless of $\Gg$ (small); the first and third by Theorem \ref{theo:layer:Sobolev} \ref{enum:sl:invertible}, the second by Theorem \ref{theo:layer:Sobolev} \ref{enum:np:smoothing:1}, and the last one by the known result mentioned at the beginning of this proof since $s-2 \in [0,1)$:
\begin{equation*}\label{eq:diag:2}
\begin{tikzcd}
    B^{s-1, p}(\partial D) \arrow[r, "\SL^{-1}"] & B^{s-2, p}(\partial D) \arrow[r, "\NP^*-\NP"]  &[15pt] B^{s-2, p}(\partial D) \\
    \phantom{B^{s-2, p}(\partial D)} \arrow[r, "\SL^{-1}"] & B^{s-2, p}(\partial D) \arrow[r, "(I+\gamma\Lambda_+)^{-1}"] & B^{s-2, p}(\partial D).
\end{tikzcd}
\end{equation*}
Thus we have
\begin{equation*}
    \label{eq:commutator:est}
    \|[\SL^{-1}, (I+\gamma \Lambda_+)^{-1}][\varphi]\|_{B^{s-2, p}(\partial D)}\lesssim \gamma \|\psi^\gamma\|_{B^{s-1, p}(\partial D)} .
\end{equation*}
It then follows from \eqref{650} that
\begin{align*}
    \|\psi^\gamma\|_{B^{s-1, p}(\partial D)}
    &\lesssim \|\SL^{-1}[\varphi]\|_{B^{s-2, p}(\partial D)}+\gamma \|\psi^\gamma\|_{B^{s-1, p}(\partial D)} \\
    &\lesssim \|\varphi\|_{B^{s-1, p}(\partial D)}+\gamma \|\psi^\gamma\|_{B^{s-1, p}(\partial D)}.
\end{align*}
For sufficiently small $\gamma$, the second term in the right hand side is absorbed to the left hand side. Thus we obtain
\begin{equation}
    \label{eq:unif:resolvent:2}
    \|\psi^\gamma\|_{B^{s-1, p}(\partial D)}\lesssim \|\varphi\|_{B^{s-1, p}(\partial D)}.
\end{equation}

So far we proved \eqref{eq:unif:resolvent:init} when $s\in [0, k+\theta)\cap [2,3)$, in particular, if $s \in [0,3)$ if $k+\Gt > 3$. So we can apply the same argument to the case when $s\in [0, k+\theta)\cap [3,4)$ if $k+\Gt > 3$.
By iterating this procedure, we obtain \eqref{eq:unif:resolvent:2} for $s\in [0, k+\theta)$. The case when $s \in (-k-\theta+1, -1]$ follows from the duality and the symmetry of $\Lambda_+$ in the sense that $\jbk{\Lambda_+[\varphi], \psi}=\jbk{\varphi, \Lambda_+ [\psi]}$ for all $\varphi\in B^{s, p}(\partial D)$ and $\psi\in B^{-s+1, p/(p-1)}(\partial D)$.
\end{proof}

\subsection{Expansion of operators on the boundary}

The following proposition plays a crucial role in proving Theorem \ref{theo:existence}.

\begin{prop}\label{prop:perturbation}
Assume that $\partial D$ is $C^{k, \theta}$ for some $k\in \Nbb$ and $\theta\in (0, 1]$, and let $s\in (k, k+\theta)$ and $p\in (1, \infty)$. It holds  for any $\varphi \in B^{s-1, p}(\partial D)$ and $K=0, 1, \ldots, k-1$ that
    \begin{align}
        (I+\gamma \Lambda_+)^{-1}[\varphi]=\sum_{l=0}^K (-\gamma\Lambda_+)^l [\varphi]+o (\gamma^K) \quad \text{in } B^{s-1-K, p}(\partial D) \label{eq:yosida}
    \end{align}
    as $\gamma\to +0$.
\end{prop}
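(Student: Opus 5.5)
The plan is to use the exact algebraic (Neumann-type) identity
\[
(I+\gamma\Lambda_+)^{-1}[\varphi]=\sum_{l=0}^K(-\gamma\Lambda_+)^l[\varphi]+(-\gamma)^{K+1}(I+\gamma\Lambda_+)^{-1}\Lambda_+^{K+1}[\varphi],
\]
which holds whenever $\Lambda_+^{K+1}[\varphi]$ makes sense. One can verify it by applying $I+\gamma\Lambda_+$ to both sides: the sum telescopes. The difficulty is regularity. Since $\varphi\in B^{s-1,p}(\partial D)$ and Corollary \ref{coro:DtN:smooth} lets $\Lambda_+$ lower the index by one, we have $\Lambda_+^{K+1}[\varphi]\in B^{s-2-K,p}(\partial D)$. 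The constraint $s-2-K>-k-\theta$ holds because $s>k\ge K+1$, so the whole range $s-1-l\in(-k-\theta+1,k+\theta)$ for $l\le K+1$ stays admissible. However, this remainder lies one derivative below the target space $B^{s-1-K,p}$. Hence the identity should not be used with $K+1$ directly. Instead I will write
\[
(I+\gamma\Lambda_+)^{-1}[\varphi]-\sum_{l=0}^K(-\gamma\Lambda_+)^l[\varphi]=(-\gamma)^{K}\bigl((I+\gamma\Lambda_+)^{-1}-I\bigr)\Lambda_+^{K}[\varphi].
\]
This comes from the same identity with $K$ replaced by $K-1$, together with $(I+\gamma\Lambda_+)^{-1}\psi=\psi-\gamma\Lambda_+(I+\gamma\Lambda_+)^{-1}\psi$.

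It therefore suffices to prove the strong convergence $(I+\gamma\Lambda_+)^{-1}\psi\to\psi$ in $B^{t,p}(\partial D)$ as $\gamma\to+0$, for every $\psi\in B^{t,p}(\partial D)$, where $t=s-1-K\in(-k-\theta+1,k+\theta-1)$ and $\psi=\Lambda_+^K[\varphi]$. I will argue by density together with the uniform bound. Theorem \ref{theo:unif:resolvent:smooth}, applied with $s$ replaced by $t+1\in(-k-\theta+2,k+\theta)$, gives $\sup_{0<\gamma<\gamma_0}\|(I+\gamma\Lambda_+)^{-1}\|_{B^{t,p}\to B^{t,p}}\le M$. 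For $\eta\in B^{t+1,p}(\partial D)$, which is dense in $B^{t,p}$ for $p<\infty$, we have
\[
(I+\gamma\Lambda_+)^{-1}\eta-\eta=-\gamma(I+\gamma\Lambda_+)^{-1}\Lambda_+\eta,
\]
whose $B^{t,p}$-norm is at most $\gamma M\|\Lambda_+\eta\|_{B^{t,p}}\lesssim\gamma M\|\eta\|_{B^{t+1,p}}$. Here I need $t+1<k+\theta$ for Corollary \ref{coro:DtN:smooth}, which holds since $t+1=s-K\le s<k+\theta$. A standard $\epsilon/3$ argument then gives strong convergence for general $\psi$.

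The main obstacle is checking that every index used lies in the admissible ranges of Theorem \ref{theo:unif:resolvent:smooth} and Corollary \ref{coro:DtN:smooth}. The lowest index is $t=s-1-K\ge s-k>0$, comfortably inside the range, and the highest is $t+1=s-K<k+\theta$. Thus all the ingredients apply, and the remainder is $\gamma^K\cdot o(1)=o(\gamma^K)$ in $B^{s-1-K,p}(\partial D)$. For $K=0$ the statement reduces exactly to the strong convergence with $\psi=\varphi$, $t=s-1$.
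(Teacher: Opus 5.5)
Your proposal is correct and follows essentially the same route as the paper. The paper uses the identity \eqref{idK} to reduce to $\gamma^K\bigl\|\bigl((I+\gamma\Lambda_+)^{-1}-I\bigr)\Lambda_+^K[\varphi]\bigr\|_{B^{s-1-K,p}}$, and then combines the uniform bound of Theorem~\ref{theo:unif:resolvent:smooth} on $B^{s-1-K,p}(\partial D)$ with a density argument. The only cosmetic difference is that it approximates by elements of $B^{s,p}(\partial D)$ rather than $B^{s-K,p}(\partial D)$.
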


\begin{proof}
Since
\begin{equation}\label{idK}
    (I+\gamma \Lambda_+)^{-1}-\sum_{l=0}^K (-\gamma\Lambda_+)^l = (I+\gamma \Lambda_+)^{-1}(-\gamma\Lambda_+)^K -(-\gamma\Lambda_+)^K
\end{equation}
    for $K=0, 1, \ldots, k-1$, we have for any $\varphi\in B^{s-1, p}(\partial D)$
    \begin{align*}
        \left\| (I+\gamma \Lambda_+)^{-1}[\varphi]-\sum_{l=0}^K (-\gamma\Lambda_+)^l [\varphi]\right\|
        = \gamma^K \left\| (I+\gamma \Lambda_+)^{-1}\Lambda_+^K [\varphi]-\Lambda_+^K [\varphi] \right\| .
    \end{align*}
    Here and in what follows in this proof, the norm is the $B^{s-1-K, p}(\partial D)$-norm. Let $C\in (0, \infty)$ be the left hand side of \eqref{eq:unif:resolvent:init} (in Theorem \ref{theo:unif:resolvent:smooth}) with $s$ replaced by $s-K$.

    If $\psi \in B^{s, p}(\partial D)$, then we have
    \begin{align*}
    & \| (I+\gamma \Lambda_+)^{-1}\Lambda_+^K [\varphi]-\Lambda_+^K [\varphi]\| \\
    & \le \| (I+\gamma \Lambda_+)^{-1}(\Lambda_+^K [\varphi]-\psi)\| + \| (I+\gamma \Lambda_+)^{-1} [\psi] -\psi \| + \| \psi-\Lambda_+^K [\varphi]\| \\
        & \le (C+1) \| \psi-\Lambda_+^K [\varphi]\| + \| (I+\gamma \Lambda_+)^{-1} [\psi] -\psi \| \\
        &= (C+1) \| \psi-\Lambda_+^K [\varphi]\| + \gamma \| (I+\gamma \Lambda_+)^{-1} \Lambda_+ [\psi] \|.
    \end{align*}
    Since $\Lambda_+[\psi]\in B^{s-1, p}(\partial D)$ (Corollary \ref{coro:DtN:smooth}), we infer from Theorem \ref{theo:unif:resolvent:smooth} that
    \begin{align*}
        \limsup_{\gamma\to +0}\gamma^{-K}\left\| (I+\gamma \Lambda_+)^{-1}[\varphi]-\sum_{l=0}^K (-\gamma\Lambda_+)^l [\varphi]\right\| \lesssim \|\psi - \Lambda_+^K [\varphi] \|
    \end{align*}
    for any $\psi \in B^{s, p}(\partial D)$. Since $B^{s, p}(\partial D)$ is dense in $B^{s-1-K, p}(\partial D)$, we have
    \begin{align*}
        \limsup_{\gamma\to +0}\gamma^{-K}\left\| (I+\gamma \Lambda_+)^{-1}[\varphi]-\sum_{l=0}^K (-\gamma\Lambda_+)^l [\varphi]\right\| =0,
    \end{align*}
    which yields \eqref{eq:yosida}.
\end{proof}

We can also expand the resistive capacitance matrix.

\begin{prop}\label{lemm:matrix:inv}
    If $\partial D$ is $C^{k, \theta}$ for some $k\in \Nbb$ and $\theta\in (0, 1]$, then
    \[
        C^\gamma_{ij}=\sum_{l=0}^{k-1} (-\gamma)^l \int_{\partial D_i} \Lambda_+^l [e_j]\, \df\sigma+o(\gamma^{k-1})
    \]
    as $\gamma\to +0$.
\end{prop}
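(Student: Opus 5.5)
The plan is to apply Proposition \ref{prop:perturbation} with $K=k-1$ to $\varphi=e_j$ and then integrate over $\partial D_i$. First I will check that $e_j=-\SL^{-1}[1_{\partial D_j}]$ has enough regularity. Since $1_{\partial D_j}$ is locally constant on $\partial D$, it lies in $B^{s,p}(\partial D)$ for every $s$. By Theorem \ref{theo:layer:Sobolev} \ref{enum:sl:invertible}, $\SL^{-1}$ maps $B^{s,p}(\partial D)$ isomorphically onto $B^{s-1,p}(\partial D)$ for $s\in(-\overline{s}+1,\overline{s})$, $\overline{s}=k+\theta$. Hence $e_j\in B^{s-1,p}(\partial D)$ for any fixed $s\in(k,k+\theta)$ and $p\in(1,\infty)$, and the hypothesis of Proposition \ref{prop:perturbation} is met.

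Proposition \ref{prop:perturbation} with $K=k-1$ then gives
\[
(I+\gamma\Lambda_+)^{-1}[e_j]=\sum_{l=0}^{k-1}(-\gamma)^l\Lambda_+^l[e_j]+o(\gamma^{k-1})\quad\text{in } B^{s-k,p}(\partial D).
\]
Every term is well defined: by Corollary \ref{coro:DtN:smooth}, $\Lambda_+^l[e_j]\in B^{s-1-l,p}(\partial D)\subset B^{s-k,p}(\partial D)$ for $l\le k-1$.

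Next I integrate over $\partial D_i$, which means pairing with $1_{\partial D_i}$. This pairing is a bounded linear functional on $B^{s-k,p}(\partial D)$. Indeed, $s-k\in(0,\theta)$ is positive, so $B^{s-k,p}(\partial D)\hookrightarrow L^p(\partial D)$ and integration against $1_{\partial D_i}\in L^{p'}$ is continuous. Even without positivity, the pairing with the smooth function $1_{\partial D_i}\in B^{k-s,p'}(\partial D)$ would be bounded by duality. Applying this functional to both sides, the remainder stays $o(\gamma^{k-1})$ and the left side becomes $C^\gamma_{ij}$ by definition \eqref{eq:matrix}. This yields the claimed expansion. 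The integrals $\int_{\partial D_i}\Lambda_+^l[e_j]\,\df\sigma$ make sense for the same reason.

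The only delicate point is confirming that the integral is continuous on the space where Proposition \ref{prop:perturbation} controls the remainder. Choosing $s\in(k,k+\theta)$ settles this, since it makes $s-k>0$. I would also note that $C^\gamma_{ij}$ is independent of the auxiliary choice of $(s,p)$, because $(I+\gamma\Lambda_+)^{-1}$ is the same operator on all these spaces: it is unique by the injectivity in Theorem \ref{theo:DtN:iso:smooth}.
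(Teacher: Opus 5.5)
Your proposal is correct and follows the paper's proof. Both arguments get the regularity of $e_j$ from Theorem \ref{theo:layer:Sobolev} \ref{enum:sl:invertible}, apply Proposition \ref{prop:perturbation} with $K=k-1$, and bound $\int_{\partial D_i}$ of the remainder by its $L^p$ norm (the paper takes $p=2$ and uses Cauchy--Schwarz); your explicit choice $s\in(k,k+\theta)$, which gives $B^{s-k,p}(\partial D)\hookrightarrow L^p(\partial D)$, is a slightly more careful version of the paper's remark that $e_j\in B^{k-1,2}(\partial D)$.
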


\begin{proof}
    Since $1_{\partial D_j}\in B^{k, 2}(\partial D)$, we have $e_j=-\SL^{-1}[1_{\partial D_j}]\in B^{k-1, 2} (\partial D)$ from Theorem \ref{theo:layer:Sobolev} \ref{enum:sl:invertible}. Thus, by the Cauchy-Schwarz inequality, we obtain
    \begin{align*}
        &\left|C^\gamma_{ij}-\sum_{l=0}^{k-1} (-\gamma)^l \int_{\partial D_i} \Lambda_+^l [e_j]\, \df\sigma\right| \\
        &\leq \int_{\partial D_i} \left|(I+\gamma \Lambda_+)^{-1}[e_j]-\sum_{l=0}^{k-1} (-\gamma\Lambda_+)^l [e_j]\right|\, \df \sigma \\
        &\leq |\partial D_i|^{1/2}\left\|(I+\gamma \Lambda_+)^{-1}[e_j]-\sum_{l=0}^{k-1} (-\gamma\Lambda_+)^l [e_j]\right\|_{L^2 (\partial D_i)},
    \end{align*}
    where $|\partial D_i|$ is the surface area of $\partial D_i$. Now we use Proposition \ref{prop:perturbation} to obtain the desired asymptotics as $\gamma\to +0$.
\end{proof}

\subsection{Proof of Theorem \ref{theo:existence}}\label{sect:general}

The following lemma is used for proof of Theorem \ref{theo:existence}. 

\begin{lemm}
    \label{lemm:bie:solvable}
    Assume that $\partial D$ is $C^{k, \theta}$ for some $k\in \Nbb$ and $\theta\in (0, 1]$, and let $s\in (k, k+\theta)$ and $p\in (1, \infty)$. The equation \eqref{eq:imperfect:integral} has a unique solution $\varphi^\gamma$ in $\widetilde{B}^{s, p}(\partial D)$ for any $\gamma>0$.

    Furthermore, there exists $\psi_l\in \widetilde{B}^{s-1-l,p}(\partial D)$ ($l=0, 1, \ldots, k-1$) with $\psi_0=\varphi^0$ such that
    \begin{equation}
        \label{eq:potential:limit}
        \varphi^\gamma=\sum_{l=0}^K \gamma^l \psi_l+o(\gamma^K) \quad \text{in } B^{s-1-K, p} (\partial D)
    \end{equation}
    as $\gamma\to +0$ for any $K=0, 1, \ldots, k-1$.
\end{lemm}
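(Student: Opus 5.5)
We follow the scheme of the proof of Lemma \ref{lemm:bie:solvable:Lip}, replacing the Lipschitz-boundary tools by their smooth counterparts.

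\textbf{Existence and uniqueness.} Since $\partial D$ is $C^{k,\theta}$, $\partial_\nv h\in B^{s-1,p}(\partial D)$ and $\int_{\partial D_j}\partial_\nv h\,\df\sigma=0$. Theorem \ref{theo:layer:Sobolev} \ref{enum:np:inv} then gives a unique $\varphi^0\in\widetilde{B}^{s-1,p}(\partial D)$ solving \eqref{eq:phi0:Lip}. The kernel identity \eqref{eq:1/2-K:ker} holds verbatim, and $e_j\in B^{k-1,2}(\partial D)$; in fact $e_j\in B^{s-1,p}(\partial D)$ by Theorem \ref{theo:layer:Sobolev} \ref{enum:sl:invertible}, because $1_{\partial D_j}$ is smooth on $\partial D$. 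Hence \eqref{eq:imperfect:integral} reduces again to \eqref{eq:phi:dtn}. Theorem \ref{theo:DtN:iso:smooth} says $I+\gamma\Lambda_+:B^{s,p}\to B^{s-1,p}$ is an isomorphism, so \eqref{eq:phig:sol:Lip} defines $\varphi^\gamma\in B^{s,p}(\partial D)$. The zero-mean constraint is \eqref{eq:constraint}, which is solvable uniquely because $\Capac^\gamma_D$ is positive definite (Theorem \ref{positive:definite}). Uniqueness follows as before: if $h=0$, then $\varphi^0=0$, so $a^\gamma=0$ and $\varphi^\gamma=0$.

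\textbf{Expansion.} Write $(C^{ij}_\gamma)=(\Capac^\gamma_D)^{-1}$, so that $a^\gamma_i=-\sum_j C^{ij}_\gamma\int_{\partial D_j}(I+\gamma\Lambda_+)^{-1}[\varphi^0]\,\df\sigma$.

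1. By Proposition \ref{lemm:matrix:inv}, $\Capac^\gamma_D=\sum_{l\le k-1}\gamma^l M_l+o(\gamma^{k-1})$ with $M_0=\Capac^0_D$ invertible. A Neumann series then gives $(\Capac^\gamma_D)^{-1}=\sum_{l\le K}\gamma^l N_l+o(\gamma^K)$.

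2. By Proposition \ref{prop:perturbation} applied to $\varphi^0\in B^{s-1,p}(\partial D)$,
\[
(I+\gamma\Lambda_+)^{-1}[\varphi^0]=\sum_{l\le K}(-\gamma\Lambda_+)^l[\varphi^0]+o(\gamma^K)
\]
in $B^{s-1-K,p}(\partial D)$. Integrate over $\partial D_j$. This pairing with $1_{\partial D_j}$ is continuous on $B^{s-1-K,p}$, because $1_{\partial D_j}$ is smooth on $\partial D$ and $s-1-K>-k-\theta+1$. It yields an expansion $a^\gamma=\sum_{l\le K}\gamma^l\alpha_l+o(\gamma^K)$ with $\alpha_0=0$, since $\varphi^0$ has zero mean on each $\partial D_j$.

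3. Apply Proposition \ref{prop:perturbation} to each $e_j\in B^{s-1,p}$ as well. Substituting all expansions into \eqref{eq:phig:sol:Lip} and collecting powers of $\gamma$ gives
\[
\psi_l=(-\Lambda_+)^l[\varphi^0]+\sum_{m+n=l}\sum_j(\alpha_m)_j(-\Lambda_+)^n[e_j].
\]
By Corollary \ref{coro:DtN:smooth}, $\psi_l\in B^{s-1-l,p}(\partial D)$, and $\psi_0=\varphi^0$.

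4. Each $\varphi^\gamma$ lies in $\widetilde{B}$, so $\int_{\partial D_i}$ of the expansion vanishes for every $\gamma$. Uniqueness of asymptotic coefficients then forces $\int_{\partial D_i}\psi_l\,\df\sigma=0$, i.e.\ $\psi_l\in\widetilde{B}^{s-1-l,p}(\partial D)$.

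\textbf{Main obstacle: bookkeeping of remainders.} The products of $o(\gamma^m)$ scalar remainders with vector terms must be controlled in the single norm $B^{s-1-K,p}$. This works because each lower-order term $(-\gamma\Lambda_+)^n[e_j]$ with $n\le K$ lies in $B^{s-1-n,p}\subset B^{s-1-K,p}$, and the scalar expansions are uniform. One must also check that the index ranges stay inside $(-k-\theta+1,k+\theta)$, which holds for $K\le k-1$.
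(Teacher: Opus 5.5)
Your proposal is correct and follows the paper's proof. You reduce existence and uniqueness, as in Lemma \ref{lemm:bie:solvable:Lip}, to \eqref{eq:phig:sol:Lip} plus the invertibility of $\Capac^\gamma_D$, and you get the expansion by combining Proposition \ref{prop:perturbation} with the expansion of $(\Capac^\gamma_D)^{-1}$ from Proposition \ref{lemm:matrix:inv}, which yields the same $\psi_l$ as \eqref{eq:psik}. Your Step 4, which integrates the expansion over each $\partial D_i$ to obtain $\int_{\partial D_i}\psi_l\,\df\sigma=0$, usefully supplies the zero-mean property that the paper claims but does not explicitly verify.
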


\begin{proof}
Since the proof for uniqueness and existence of $\varphi^\gamma$ is analogous to Lemma \ref{lemm:bie:solvable:Lip}, we only give a proof of \eqref{eq:potential:limit}. We expand $(I+\gamma\Lambda_+)^{-1}$ in \eqref{eq:phig:sol:Lip} according to Proposition \ref{prop:perturbation} to obtain
\begin{equation}
    \label{eq:potential:exp:wip}
    \varphi^\gamma=\sum_{l=0}^K (-\gamma\Lambda_+)^l \left[\varphi^0+\sum_{i=1}^N a^\gamma_i e_i\right]+o(\gamma^K) \quad \text{in } B^{s-1-K, p}(\partial D)
\end{equation}
as $\gamma\to +0$ for any $K=0, 1, \ldots, k-1$.
We denote $(\Capac^\gamma_D)^{-1}=(C_\gamma^{ij})_{i, j=1}^N$. By Proposition \ref{lemm:matrix:inv}, we can find $(A^{ij}_l)_{i, j=1}^N\in \Rbb^{N\times N}$ ($l=0, 1, \ldots, k-1$) such that
\[
    C^{ij}_\gamma=\sum_{l=0}^{k-1} \gamma^l A^{ij}_l +o(\gamma^{k-1}) \quad (\gamma\to +0).
\]
By \eqref{eq:constraint} and Proposition \ref{prop:perturbation}, we obtain
\begin{align*}
    a^\gamma_i&=-\sum_{j=1}^N C^{ij}_\gamma \int_{\partial D_j}(I+\gamma \Lambda_+)^{-1}[\varphi^0]\, \df \sigma \\
    &=-\sum_{j=1}^N\sum_{l, m=0}^{k-1} \gamma^{l+m} A^{ij}_l \int_{\partial D_j}(-\Lambda_+)^m [\varphi^0]\, \df \sigma+o(\gamma^{K})
\end{align*}
as $\gamma\to +0$. Combining this expansion with \eqref{eq:potential:exp:wip}, we obtain the desired expansion \eqref{eq:potential:limit} with the obvious definition of $\psi_l$:
\begin{equation}\label{eq:psik}
\begin{aligned}
    \psi_l:=&\,(-\Lambda_+)^l [\varphi^0] \\
    & \quad -\sum_{\substack{m, n\geq 0 \\ l+m\leq l}} \sum_{i, j=1}^N A^{ij}_{l-m-n} \left(\int_{\partial D_j} (-\Lambda_+)^n [\varphi^0]\, \df \sigma \right)(-\Lambda_+)^m [e_i] ,
\end{aligned}
\end{equation}
which is easily shown to belong to $B^{s-1-l, p}(\partial D)$.
\end{proof}

Now we prove Theorem \ref{theo:existence}.

\begin{proof}[Proof of Theorem \ref{theo:existence}]
    The proof of the uniqueness when $1<p<\infty$ is same as in Theorem \ref{theo:Lp:sol:g}. The uniqueness when $p=\infty$ is immediately proved by $H^{1, \infty}(U\setminus \overline{D})\subset H^{1, p}(U\setminus \overline{D})$ for any $1<p<\infty$ and any open ball $U\subset \Rbb^d$ including $\overline{D}$.

    Next, we prove \ref{enum:asympt}. We first deal with the case when $1<p<\infty$. Let $\varphi^\gamma\in \widetilde{B}^{s, p}(\partial D)$ be the unique solution to \eqref{eq:imperfect:integral}. Then the function $u^\gamma$ determined by \eqref{eq:solution:try} solves the problem \eqref{IPB} and it is proved by Theorem \ref{theo:layer:mapping:smooth} that $u^\gamma|_{U\setminus \overline{D}}\in B^{s+1/p, p}(U\setminus \overline{D})\cap H^{s+1/p, p}(U\setminus \overline{D})$ for any open ball $U\subset \Rbb^d$ including $\overline{D}$.

    Let $\psi_l$ be the functions which appeared in \eqref{eq:potential:limit}. Set
    \[
        \rho^\gamma_K:=\varphi^\gamma-\sum_{l=0}^K \gamma^l \psi_l.
    \]
    It then follows from \eqref{eq:solution:try} that
    \begin{equation}\label{6100}
        u^\gamma =\sum_{l=0}^K \gamma^l v_l-\gamma^{K+1}{\DL}[\psi_K]+{\SL}[\rho^\gamma_K]-\gamma {\DL}[\rho^\gamma_K]
    \end{equation}
    where
    \[
        v_0:=u^0= {\SL}[\psi_0]+h, \quad v_l:={\SL}[\psi_l]-{\DL}[\psi_{l-1}]
    \]
    for $l=1, 2, \ldots, K$.

    By \eqref{eq:potential:limit}, we have
    \beq\label{6001}
    \| \rho^\gamma_K \|_{B^{s+1/p-2-K, p}(\partial D)}=o(\gamma^K).
    \eeq
    Since $\psi_l\in \widetilde{B}^{s-1-l,p}(\partial D)$, it follows from Theorem \ref{theo:layer:mapping:smooth} \ref{enum:sl:p} and \ref{enum:dl:p} that
    \beq
    v_l \in B^{s+1/p-l, p}(U\setminus \overline{D})\cap H^{s+1/p-l, p}(U\setminus \overline{D})
    \eeq
    for any open ball $U\subset \Rbb^d$ including $\overline{D}$. Let $R^\gamma_K:=u^\gamma-\sum_{l=0}^K \gamma^l v_l$. Since 
    \begin{equation}
        \label{eq:remainder}
        R^\gamma_K= -\gamma^{K+1}{\DL}[\psi_K]+{\SL}[\rho^\gamma_K]-\gamma {\DL}[\rho^\gamma_K],
    \end{equation}
    it can be proved using \eqref{6001}, the fact that $\psi_K\in \widetilde{B}^{s-1-K,p}(\partial D)$, and Theorem \ref{theo:layer:mapping:smooth} that
    $$
    \left\|R^\gamma_K\right\|_{B^{s+1/p-1-K, p}(U\setminus \overline{D})}+\left\|R^\gamma_K\right\|_{H^{s+1/p-1-K, p}(U\setminus \overline{D})}=o(\gamma^K).
    $$

    When $p=\infty$, we take $r\in (k, k+\theta)$ and $q\in (1, \infty)$ such that $s<r-(d-1)/q<k+\theta$. Let $\varphi^\gamma \in \widetilde{B}^{r, q}(\partial D)$ be the solution to \eqref{eq:imperfect:integral}. By the Sobolev embedding $B^{r, q}(\partial D)\hookrightarrow B^{r-(d-1)/q, \infty}(\partial D)$, we obtain $\varphi^\gamma \in \widetilde{B}^{r-(d-1)/q, \infty} (\partial D)$. Hence, by Theorem \ref{theo:layer:mapping:smooth} \ref{enum:sl:p} and \ref{enum:dl:p}, we obtain $u^\gamma-h\in B^{r-(d-1)/q, \infty} (U\setminus \overline{D})\subset H^{k, \infty}(U\setminus \overline{D})$ for any open ball $U$ containing $\overline{D}$. By Lemma \ref{lemm:bie:solvable} and the same Sobolev embedding as before, it holds that
    \[
        \left\|\varphi^\gamma-\sum_{l=0}^K \gamma^l \psi_l\right\|_{B^{r-1-K-(d-1)/q, \infty}(\partial D)}=o(\gamma^K)
    \]
    as $\gamma\to +0$. Then we obtain 
    \[
        \left\|u^\gamma-\sum_{l=0}^K \gamma^l v_l\right\|_{B^{s-1-K, \infty}(U\setminus \overline{D})}=o(\gamma^K) \quad (\gamma\to +0)
    \]
    from Theorem \ref{theo:layer:mapping:smooth} and Lemma \ref{lemm:bie:solvable}.
\end{proof}

\begin{theo}
    \label{theo:smooth:infinity}
    For any open ball $U\subset \Rbb^d$, $K=0, 1\, \ldots, k-1$ and multi-index $\alpha\in \Nbb_0^d$, we have 
    \begin{equation}\label{eq:asymptotic:exp:infinity}
        \left\|(1+|x|)^{d+|\alpha|-1}\partial^\alpha \left(u^\gamma -\sum_{l=0}^K \gamma^l v_l\right)\right\|_{L^\infty (\Rbb^d\setminus \overline{U})}=o(\gamma^K)
    \end{equation}
    as $\gamma\to +0$. 
\end{theo}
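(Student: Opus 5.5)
The plan is to copy the proof of Theorem \ref{theo:conv:infinity}. The only new ingredient is the asymptotic expansion of the density from Lemma \ref{lemm:bie:solvable}, combined with the weighted far-field bounds \eqref{eq:layer:potential:end}. We may assume $\overline{U}\supset\overline{D}$; if not, enlarge $U$, since the region near $\overline{D}$ is handled by Theorem \ref{theo:existence}. Alternatively, take $U$ to be any ball containing $\overline{D}$, as in the preceding theorems.

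\textbf{Step 1: the remainder.} We use the representation \eqref{eq:remainder},
\[
R^\gamma_K=u^\gamma-\sum_{l=0}^K\gamma^l v_l=-\gamma^{K+1}\DL[\psi_K]+\SL[\rho^\gamma_K]-\gamma\DL[\rho^\gamma_K],
\]
which holds in all of $\Rbb^d\setminus\partial D$, in particular on $\Rbb^d\setminus\overline{U}$. By Lemma \ref{lemm:bie:solvable}, $\rho^\gamma_K=\varphi^\gamma-\sum_{l\le K}\gamma^l\psi_l$ satisfies $\|\rho^\gamma_K\|_{B^{s-1-K,p}(\partial D)}=o(\gamma^K)$. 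Each $\psi_l$ lies in $\widetilde{B}$, so it has mean zero on every $\partial D_j$; hence $\rho^\gamma_K$ has mean zero on every $\partial D_j$ as well.

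\textbf{Step 2: weighted bounds in negative-order spaces.} The main obstacle is that \eqref{eq:layer:potential:end} is stated for $L^2(\partial D)$ densities, while $\rho^\gamma_K$ is only controlled in $B^{s-1-K,p}(\partial D)$, where $s-1-K$ may be as small as $s-k\in(0,\theta)$. If $s-1-K>0$ this suffices, because $B^{s-1-K,p}\hookrightarrow L^p$ and $L^p$ is enough. The bounds \eqref{eq:layer:potential:end} in fact hold for densities in any $B^{t,p}(\partial D)$ with $t\ge -k-\theta+1$ (and $t$ in the admissible range). The reason is that for $x\in\Rbb^d\setminus\overline{U}$ and $y\in\partial D$ the kernels $(1+|x|)^{d-2+|\alpha|}\partial_x^\alpha\Gamma(x-y)$ and $(1+|x|)^{d-1+|\alpha|}\partial_x^\alpha\partial_{\nu_y}\Gamma(x-y)$ are bounded in $B^{-t,p'}(\partial D)$ in $y$, uniformly in $x$. (The normal $\nu$ is $C^{k-1,\theta}$, so pairing in the dual is legitimate for the orders in question.) For the single layer, the mean-zero condition gains one more power of $|x|^{-1}$. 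We subtract $\Gamma(x)$ and its derivatives, writing $\partial^\alpha\Gamma(x-y)-\partial^\alpha\Gamma(x)=O(|y|\,|x|^{-d+1-|\alpha|})$ together with its $y$-derivatives. The resulting function of $y$ is smooth, with norm $O(|x|^{-d+1-|\alpha|})$ in $B^{-t,p'}(\partial D)$ uniformly, and duality gives the weight $(1+|x|)^{d-1+|\alpha|}$. Alternatively, since the remainder is harmonic outside $\overline{D}$, one may bound it on a slightly larger sphere $\partial U'$ with $\overline{D}\subset U'\subset U$. There it is controlled in $C^m$ by Theorem \ref{theo:existence} and interior estimates, and one then propagates outward via the Kelvin transform or a Poisson-type representation. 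I would try the duality argument first, since it is direct.

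\textbf{Step 3: conclusion.} By Step 2, $\|(1+|x|)^{d-1+|\alpha|}\partial^\alpha\SL[\rho^\gamma_K]\|_{L^\infty}\lesssim\|\rho^\gamma_K\|=o(\gamma^K)$. Likewise $\gamma\DL[\rho^\gamma_K]$ is $o(\gamma^{K+1})$. Finally $\gamma^{K+1}\DL[\psi_K]$ is $O(\gamma^{K+1})$, since $\psi_K$ is a fixed element of $B^{s-1-K,p}(\partial D)$. Summing the three contributions gives \eqref{eq:asymptotic:exp:infinity}.
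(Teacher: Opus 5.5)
Your proposal is correct and is essentially the paper's proof: represent $R^\gamma_K$ by \eqref{eq:remainder}, use that $\psi_l$ and $\rho^\gamma_K$ have mean zero on each $\partial D_j$, and apply the far-field bounds \eqref{eq:layer:potential:end}. Since $s-1-K\geq s-k>0$, control of $\rho^\gamma_K$ in $B^{s-1-K,p}(\partial D)\hookrightarrow L^p(\partial D)$ already suffices, because the kernels are uniformly bounded for $x\notin U$, so even $L^1$ would do; the negative-order duality and Kelvin-transform alternatives in Step 2 are never needed.

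One correction: the theorem is meant for balls $U\supset\overline{D}$, as in Theorem \ref{theo:conv:infinity}. Your suggestion that the case $U\not\supset\overline{D}$ is covered by Theorem \ref{theo:existence} is not valid, since that theorem controls only finitely many derivatives near $\partial D$, whereas $\alpha$ here is arbitrary.
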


\begin{proof}
    Let $\psi_l$, $\rho^\gamma_K$ and $R^\gamma_K$ be the functions which appeared in the proof of Theorem \ref{theo:existence}. By Lemma \ref{lemm:bie:solvable}, $\int_{\partial D}\psi_l \, \df \sigma=\int_{\partial D}\rho^\gamma_K\, \df \sigma=0$. We combine this property with \eqref{eq:layer:potential:end} to obtain
    \[
    \|(1+|x|)^{d+|\alpha|-1}\partial^\alpha R^\gamma_K\|_{L^\infty (\Rbb^d\setminus \overline{U})}=o(\gamma^K).
    \]
    This proves \eqref{eq:asymptotic:exp:infinity}.
\end{proof}

%So far we completed the proof of the assertion \ref{enum:convergence}. The assertion \ref{enum:convergence:smooth} follows from $u^\gamma-h\in H^{r-(d-1)/q, \infty} (U\setminus \partial D)$ with $r-(d-1)/q>s$.

One also obtains the subprincipal term $v_1$ in Theorem \ref{theo:existence} when $\partial D$ is smoother than $C^2$:
\begin{theo}\label{theo:subprincipal}
    If $\partial D$ is $C^{2, \theta}$ for some $\theta\in (0, 1]$, then
    \begin{equation}\label{eq:subprincipal}
    v_1=
    \begin{dcases}
        {\SL}[\widetilde{\varphi}]-\sum_{i, j=1}^N C_0^{ij}\left(\int_{\partial D_j} \widetilde{\varphi}\, \df\sigma\right){\SL}[e_i] & \text{in } D^+, \\
        \sum_{i, j=1}^N C_0^{ij}\left(\int_{\partial D_j} \widetilde{\varphi}\, \df\sigma\right)1_{D_i} & \text{in } D,
    \end{dcases}
\end{equation}
where $\widetilde{\varphi}:=\SL^{-1}[\varphi^0]$.
\end{theo}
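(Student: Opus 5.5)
The plan is to take $k=2$ and $K=1$ in Lemma \ref{lemm:bie:solvable}, compute $\psi_1$ explicitly, and then evaluate $v_1={\SL}[\psi_1]-{\DL}[\psi_0]$ (with $\psi_0=\varphi^0$) separately in $D$ and in $D^+$. Rather than unwinding the general formula \eqref{eq:psik}, I will redo the first-order expansion directly. By \eqref{eq:phig:sol:Lip},
\[
\varphi^\gamma=(I+\gamma\Lambda_+)^{-1}\Big[\varphi^0+\sum_i a^\gamma_i e_i\Big].
\]
For $a^\gamma_i$ I use \eqref{eq:constraint}, Proposition \ref{prop:perturbation} with $K=1$, Proposition \ref{lemm:matrix:inv}, and the fact that $\int_{\partial D_j}\varphi^0\,\df\sigma=0$. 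These give
\[
a^\gamma_i=\gamma\sum_j C_0^{ij}\int_{\partial D_j}\Lambda_+[\varphi^0]\,\df\sigma+o(\gamma).
\]
Hence $a^\gamma=O(\gamma)$, and a second application of Proposition \ref{prop:perturbation} yields
\[
\psi_1=-\Lambda_+[\varphi^0]+\sum_i a_{1,i}e_i,\qquad a_{1,i}:=\sum_j C_0^{ij}\int_{\partial D_j}\Lambda_+[\varphi^0]\,\df\sigma.
\]
Since $\partial D$ is $C^{2,\theta}$, Theorem \ref{theo:layer:Sobolev} and Corollary \ref{coro:DtN:smooth} make all these objects, including $\widetilde\varphi={\SL}^{-1}[\varphi^0]$, well defined in the relevant Besov spaces.

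Next I convert the coefficients $a_{1,i}$ into integrals of $\widetilde\varphi$. By \eqref{eq:DtN:potential}, $\Lambda_+[\varphi^0]=-(\tfrac12 I+\NP^*)[\widetilde\varphi]$, so
\[
\int_{\partial D_j}\Lambda_+[\varphi^0]\,\df\sigma=-\Big\langle\widetilde\varphi,\big(\tfrac12 I+\NP\big)[1_{\partial D_j}]\Big\rangle .
\]
I then use the jump relation \eqref{eq:dl:jump} together with the fact that ${\DL}[1_{\partial D_j}]=1_{D_j}$ on $\Rbb^d\setminus\partial D$, which is Gauss's formula in the paper's sign convention, consistent with $\NP[1]=1/2$. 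These give $(\tfrac12 I+\NP)[1_{\partial D_j}]=1_{\partial D_j}$, and therefore $a_{1,i}=-\sum_j C_0^{ij}\int_{\partial D_j}\widetilde\varphi\,\df\sigma$. This identity is the step I expect to need the most care. The sign conventions for ${\DL}$ must be checked, and so must the claim that the interior trace of $1_{D_j}$ on $\partial D$ is exactly $1_{\partial D_j}$. The latter holds because the closures of the components $D_j$ have disjoint boundaries.

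In $D$, formula \eqref{eq:normal:DL} gives $-{\DL}[\varphi^0]={\SL}\Lambda_+[\varphi^0]$. This cancels the term $-{\SL}\Lambda_+[\varphi^0]$ coming from ${\SL}[\psi_1]$, so $v_1=\sum_i a_{1,i}{\SL}[e_i]$. Since ${\SL}[e_i]=-1_{D_i}$ in $D$, this is the second line of \eqref{eq:subprincipal}.

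In $D^+$ I have $v_1=w+\sum_i a_{1,i}{\SL}[e_i]$ with $w:=-{\SL}\Lambda_+[\varphi^0]-{\DL}[\varphi^0]$, and I claim $w={\SL}[\widetilde\varphi]$ there. I would prove this by uniqueness for the exterior Dirichlet problem. For the boundary values from outside, the jump relations and \eqref{eq:Plemelj} give
\[
-{\SL}\Lambda_+[\varphi^0]=\big(\tfrac12 I+\NP\big)[\varphi^0],\qquad -{\DL}[\varphi^0]|_+=\big(\tfrac12 I-\NP\big)[\varphi^0],
\]
so $w|_+=\varphi^0={\SL}[\widetilde\varphi]|_{\partial D}$. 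For the behaviour at infinity, the total density of $w-{\SL}[\widetilde\varphi]$ is $\int_{\partial D}\widetilde\varphi\,\df\sigma-\int_{\partial D}\widetilde\varphi\,\df\sigma=0$, using $\int_{\partial D}\Lambda_+[\varphi^0]\,\df\sigma=-\int_{\partial D}\widetilde\varphi\,\df\sigma$ from the previous step. The double layer term is already $O(|x|^{1-d})$, so $w-{\SL}[\widetilde\varphi]$ is harmonic in $D^+$, vanishes on $\partial D$, and is $O(|x|^{1-d})$ at infinity, also when $d=2$. Hence it is identically zero, and inserting $a_{1,i}$ gives the first line of \eqref{eq:subprincipal}.
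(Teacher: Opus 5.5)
Your proposal is correct and follows essentially the same route as the paper: compute $\psi_1=-\Lambda_+[\varphi^0]+\sum_{i,j}C_0^{ij}\bigl(\int_{\partial D_j}\Lambda_+[\varphi^0]\,\df\sigma\bigr)e_i$, convert $\int_{\partial D_j}\Lambda_+[\varphi^0]\,\df\sigma$ into $-\int_{\partial D_j}\widetilde{\varphi}\,\df\sigma$ via Gauss's theorem, and simplify $v_1={\SL}[\psi_1]-{\DL}[\varphi^0]$ using ${\SL}[e_i]=-1_{D_i}$ in $D$. The differences are minor: you rederive $\psi_1$ directly instead of reading it off \eqref{eq:psik}, and you use the dual identity $(\frac12 I+\NP)[1_{\partial D_j}]=1_{\partial D_j}$ instead of $\int_{\partial D_j}\partial_\nu{\SL}[\widetilde{\varphi}]|_-\,\df\sigma=0$. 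In $D^+$ you argue by exterior Dirichlet uniqueness together with the zero-total-density check, whereas the paper uses Green's representation formula ${\DL}[\varphi^0]-{\SL}(-\frac12 I+\NP^*)[\widetilde{\varphi}]={\SL}[\widetilde{\varphi}]1_D$ on $\Rbb^d\setminus\partial D$, which handles $D$ and $D^+$ at once.
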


\begin{proof}
    The potential $\psi_1$ in \eqref{eq:potential:limit} can be calculated as
\begin{equation}
    \label{eq:psi01}
    \begin{aligned}
        \psi_1&=-\Lambda_+[\varphi^0]+\sum_{i, j=1}^N C_0^{ij} \left(\int_{\partial D_j} \Lambda_+ [\varphi^0]\, \df\sigma\right)e_i
    \end{aligned}
\end{equation}
by using the explicit formula \eqref{eq:psik} and $\int_{\partial D_j}\varphi^0\, \df \sigma=0$. By the Gauss divergence theorem, we obtain
\begin{align*}
    \int_{\partial D_j} \Lambda_+ [\varphi^0]\, \df\sigma
    &=-\int_{\partial D_j} \left(\frac{1}{2}I+\NP^*\right) \SL^{-1}[\varphi^0]\, \df\sigma \\
    &=-\int_{\partial D_j} \SL^{-1}[\varphi^0]\, \df\sigma -\int_{\partial D_j} \left(-\frac{1}{2}I+\NP^*\right) \SL^{-1}[\varphi^0]\, \df\sigma \\
    &=-\int_{\partial D_j} \widetilde{\varphi}\, \df\sigma -\int_{\partial D_j} \partial_\nv \SL^{-1}[\varphi^0]|_-\, \df\sigma \\
    &=-\int_{\partial D_j} \widetilde{\varphi}\, \df\sigma.
\end{align*}
Then we substitute $\varphi^\gamma =\varphi^0+\gamma \psi_1+o(\gamma)$ to \eqref{eq:solution:try} and obtain
\[
    v_1=-\SL\Lambda_+[\varphi^0]-{\DL}[\varphi^0]-\sum_{i, j=1}^N C_0^{ij}\left(\int_{\partial D_j} \widetilde{\varphi}\, \df\sigma\right){\SL}[e_i]
\]
on $\Rbb^d\setminus \partial D$. Since
\[
    {\DL}[\varphi^0]-\SL\left(-\frac{1}{2}I+\NP^*\right)[\widetilde{\varphi}]={\SL}[\widetilde{\varphi}]1_D
\]
on $\Rbb^d\setminus \partial D$ and ${\SL}[e_i]=-1_{D_i}$ on $D$, we obtain the desired formula \eqref{eq:subprincipal}.
\end{proof}

% We have obtained the following theorem in the process of proving Theorem \ref{theo:existence}.

% \begin{theo}
%     \label{theo:additional}
%     If $\partial D$ is $C^{k, \theta}$ for some $k\geq 2$ and $\theta\in (0, 1]$, then $u^\gamma|_{D^+}$ with the boundary value $u^\gamma|_+$ belongs to $C^{k, \theta-\delta} (\overline{D^+})$ for all $\gamma>0$ and $\delta\in (0, \theta)$.
% \end{theo}

\section{Discussion}\label{sect:discussion}

In this paper, we proved the unique existence of the solution $u^\gamma$ to \eqref{IPB} with the regularity $B^{s+1/p, p}\cap H^{s+1/p}$ near $\partial D$ and asymptotics as $\gamma\to +0$ in $B^{s+1/p-1, p}(U \setminus \overline{D})\cap H^{s+1/p-1, p}(U \setminus \overline{D})$ for any open ball $U$ containing $\overline{D}$. Now it is natural to ask whether $u^\gamma$ converges to $u^0$, that is, the solution to \eqref{IPB} with $\Gg=0$, in $B^{s+1/p, p}(U \setminus \overline{D})\cap H^{s+1/p, p}(U \setminus \overline{D})$. 

% If it is affirmative, then it must hold that
% \[
%     (I+\gamma\Lambda_+) [\varphi^\gamma]\to \varphi^0 \quad \text{in } B^{\mu-1-1/p, p}(\partial D)
% \]
% as $\gamma\to +0$ \textcolor[rgb]{1.00,0.00,0.00}{(what is $\mu$?)}. The convergence in $B^{s-1, p}(\partial D)$ is proved by using Proposition \ref{prop:perturbation} \textcolor[rgb]{1.00,0.00,0.00}{(what does this mean?)}. However, since we are asking the convergence in $B^{s, p}(\partial D)$, we may need more precise analysis. 

We suspect that $u^\gamma$ may fail to converge to $u^0$ in $B^{s+1/p, p}(U\setminus\overline{D}) \cap H^{s+1/p, p}(U \setminus \overline{D})$. If it is the case, the problem \eqref{IPB} is a singular perturbation of \eqref{IPB} with $\Gg=0$ where $\Gg$ is the perturbation parameter. We expect that this problem is solved in the future studies.

\appendix

\section{Proof of Theorem \ref{theo:layer:Sobolev} \ref{enum:np:smoothing:1}}\label{sect:pf:smoothing}

Throughout this section, we denote by $\Bbb (\Xcal, \Ycal)$ the space of all bounded linear operators from a normed space $\Xcal$ into another normed space $\Ycal$. 

\subsection{Localization of NP operator}

We decompose the NP operator $\NP^*$ on $\partial D\subset \Rbb^d$ into several operators with different smoothing properties. The decomposition will be done stepwise. This subsection presents the first step to decompose $\NP^*$ into diagonal and off-diagonal terms. Fix a finite collection $\{ \Phi_\iota: V_\iota \to \partial D\}_{\iota=1}^N$ of local parametrization $\Phi_\iota: V_\iota \to \partial D$ with the $C^{k, \theta}$-smoothness such that $\bigcup_{\iota=1}^N \Phi_\iota (V_\iota)=\partial D$, where $V_\iota\in \Rbb^{d-1}$ is bounded open set in $\Rbb^{d-1}$. We take a partition of unity $\{\kappa_\iota\}_{\iota=1}^N\subset C^{k, \theta}(\partial D)$ on $\partial D$ subordinated to the open covering $\{\Phi_\iota (V_\iota)\}_{\iota=1}^N$. We also take functions $\chi_\iota\in C^{k, \theta}(\partial D)$ such that $\supp \chi_\iota\subset \Phi_\iota (V_\iota)$ and $\chi_\iota=1$ near $\supp \kappa_\iota$. Then we define
\begin{align}\label{eq:Ldiag}
    \Lcal_{\mathrm{diag}}[f]&:=\sum_{\iota=1}^N \kappa_\iota \NP^* [\chi_\iota f], \\
    \Lcal_{\mathrm{off}}&:=\NP^*-\Lcal_{\mathrm{diag}}. \notag
\end{align}

\begin{lemm}
    \label{lemm:off:diag}
    If $\partial D$ is $C^{k, \theta}$ for some $k\in \Nbb$ and $\theta\in [0, 1]$, then the integral kernel of $\Lcal_{\mathrm{off}}$ belongs to $C^{k-1, \theta}(\partial D\times \partial D)$.

    In particular, $\Lcal_\mathrm{off} \in \Bbb (L^1 (\partial D), C^{k-1, \theta}(\partial D))$.
\end{lemm}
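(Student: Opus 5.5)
The plan is to write out the kernel of $\Lcal_{\mathrm{off}}$ explicitly and show it is smooth away from the diagonal, with the partition of unity forcing it to vanish near the diagonal. Since $\sum_\iota \kappa_\iota = 1$, we can write
\[
\Lcal_{\mathrm{off}}[f](x)=\sum_{\iota=1}^N \kappa_\iota(x)\,\NP^*[(1-\chi_\iota)f](x),
\]
so its kernel is
\[
L(x,y)=\sum_{\iota=1}^N \kappa_\iota(x)\,(1-\chi_\iota(y))\,k(x,y),
\qquad
k(x,y)=\frac{1}{\omega_d}\frac{(x-y)\cdot\nv_x}{|x-y|^d}.
\]
Because $\chi_\iota=1$ on a neighbourhood of $\supp\kappa_\iota$, there is $\delta>0$ such that $\kappa_\iota(x)(1-\chi_\iota(y))\neq 0$ forces $|x-y|\geq \delta$. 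This holds for each of the finitely many $\iota$ by compactness: $\supp\kappa_\iota$ and $\supp(1-\chi_\iota)$ are disjoint compact sets. Hence each summand is supported in $\{|x-y|\ge\delta\}$. In particular the principal value is not needed, and $\Lcal_{\mathrm{off}}$ is a genuine integral operator.

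Next comes the regularity count. On $\{|x-y|\geq\delta\}$, the map $(x,y)\mapsto (x-y)/|x-y|^d$ is the restriction of a real-analytic function of $(x,y)\in\Rbb^d\times\Rbb^d$. Its composition with the $C^{k,\theta}$ parametrizations $\Phi_\iota$ is therefore $C^{k,\theta}$ in local coordinates. The normal $\nv_x$ is only $C^{k-1,\theta}$, since it involves first derivatives of the parametrization. The products $\kappa_\iota(x)(1-\chi_\iota(y))$ are $C^{k,\theta}$. Products of $C^{k-1,\theta}$ functions on the compact set $\partial D\times\partial D$ remain $C^{k-1,\theta}$, so $L\in C^{k-1,\theta}(\partial D\times\partial D)$. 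For $\theta=0$ the same argument works with $C^{k-1}$.

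For the "in particular" statement, we differentiate under the integral sign. For $f\in L^1(\partial D)$ and $|\alpha|\le k-1$ in local coordinates of $x$, we get $\partial_x^\alpha\Lcal_{\mathrm{off}}[f](x)=\int \partial_x^\alpha L(x,y)f(y)\,d\sigma(y)$, which is justified by dominated convergence since $\partial_x^\alpha L$ is bounded and continuous. This gives
\[
\|\partial^\alpha \Lcal_{\mathrm{off}} f\|_{L^\infty}\le \|L\|_{C^{k-1}}\|f\|_{L^1}.
\]
The Hölder seminorm of order $\theta$ of the top derivatives is estimated by
\[
\bigl|\partial^\alpha\Lcal_{\mathrm{off}} f(x)-\partial^\alpha\Lcal_{\mathrm{off}} f(x')\bigr|
\le \sup_y\bigl|\partial_x^\alpha L(x,y)-\partial_x^\alpha L(x',y)\bigr|\,\|f\|_{L^1}
\le C|x-y|^{\theta}\cdots
\]
more precisely by $C|x-x'|^{\theta}\|f\|_{L^1}$. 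Together these give $\Lcal_{\mathrm{off}}\in\Bbb(L^1(\partial D),C^{k-1,\theta}(\partial D))$.

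The only delicate point is the uniform separation $\delta$. If the $\chi_\iota$ are chosen with $\chi_\iota=1$ merely on $\supp\kappa_\iota$ rather than on a neighbourhood of it, the separation would fail. The hypothesis "near $\supp\kappa_\iota$" is exactly what supplies $\delta$, so this step is not really an obstacle.
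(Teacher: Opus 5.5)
Your proof is correct and is the same argument as the paper's. You rewrite $\Lcal_{\mathrm{off}}[f]=\sum_{\iota}\kappa_\iota\NP^*[(1-\chi_\iota)f]$, use $\supp\kappa_\iota\cap\supp(1-\chi_\iota)=\emptyset$ to see that the kernel vanishes near the diagonal, and read off $C^{k-1,\theta}$ regularity from the off-diagonal kernel of $\NP^*$, with the loss of one derivative coming from $\nv_x$. Beyond that you only spell out what the paper leaves implicit: the uniform separation $\delta$, and differentiation under the integral to get the $L^1\to C^{k-1,\theta}$ bound. The stray $|x-y|^{\theta}$ in your H\"older estimate should read $|x-x'|^{\theta}$, as you note.
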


\begin{proof}
    Since
    \[
        \Lcal_{\mathrm{off}}[f]=\sum_{\iota=1}^N \kappa_\iota \NP^* [(1-\chi_\iota)f]
    \]
    and $\supp \kappa_\iota \cap \supp (1-\chi_\iota)=\emptyset$, the integral kernel of $\Lcal_{\mathrm{off}}$ vanishes near the diagonal. Thus, the conclusion immediately follows from the $C^{k-1, \theta}$-regularity of the integral kernel of $\NP^*$ away from the diagonal.
\end{proof}

Let
\begin{equation}
    \label{eq:Q:defi}
    \begin{aligned}
    \Qcal_\iota [h](t):=&\,\kappa_\iota (\Phi_\iota (t))\NP^* [\chi_\iota (h\circ \Phi_\iota^{-1})](\Phi_\iota (t))
\end{aligned}
\end{equation}
so that 
\[
    \Lcal_{\mathrm{diag}}[f](x)=\sum_{V_\iota \ni x} \Qcal_\iota [f\circ \Phi_\iota](\Phi_\iota^{-1}(x))
\]
for $x\in \partial D$. We immediately see that 
\[
    \Qcal_\iota [h](t)=\frac{1}{\omega_d}\int_{V_\iota} Q_\iota (t, s)h(s)\,\df s,
\]
where 
\begin{align*}
        Q_\iota (t, s):=\frac{\nv_{\Phi_\iota (t)}\cdot (\Phi_\iota (t)-\Phi_\iota (s))}{|\Phi_\iota (t)-\Phi_\iota (s)|^d}\kappa_\iota (\Phi_\iota (t))\chi_\iota (\Phi_\iota (s))\sqrt{\det \fff_\iota (s)}.
\end{align*}

Since $Q_\iota$ is compactly supported in $V_\iota\times V_\iota$, we naturally define $Q_\iota=0$ outside $V_\iota \times V_\iota$. If $\partial D$ is $C^{1, \theta}$ for some $\theta\in (0, 1]$, then we have 
\[
    |Q_\iota (t, s)|\lesssim \frac{1}{|t-s|^{d-1-\theta}}
\]
for $(t, s)\in \Rbb^{d-1}$, and hence $\Qcal_\iota$ maps $C_\mathrm{c}^\infty (\Rbb^{d-1})$ into $L_\mathrm{loc}^1 (\Rbb^{d-1})$. 

\subsection{Weakly singular integral operators}

In order to estimate the mapping properties of $\Qcal_\iota$, we consider the following class of weakly singular integral operators. 

\begin{defi}\label{defi:wSIO}
    For $\theta>0$ and $\delta\in \Rbb$, we define the class $\Sigma^\theta_\delta$ of weakly singular integral operators as the class of integral operators
    \begin{equation}\label{eq:SIO:int}
        \Tcal [f](x)=\int_{\Rbb^n} T(x, y)f(y)\,\df y
    \end{equation}
    with the integral kernel $T(x, y)\in L^1 (\Rbb^n \times \Rbb^n)$ satisfying the following conditions:
    \begin{enumerate}[label=\textnormal{(\alph*)}]
        \item \label{enum:wSIO:supp}$T(x, y)$ is compactly supported. 
        \item \label{enum:wSIO:int}$\displaystyle |T(x, y)|\lesssim \frac{1}{|x-y|^{n-\theta}}$ whenever $x\neq y$;
        \item \label{enum:wSIO:der}$\displaystyle |T(x, y)-T(z, y)|\lesssim \frac{|x-z|^{\theta+\delta}}{|x-y|^{n+\delta}}$ whenever $2|x-z|<|x-y|$.
    \end{enumerate}
\end{defi}

Here, keep in mind that the space dimension $n$ is for that of the boundary $\partial D$, whence we will employ theorem in this subsection with $n=d-1$ later.

\begin{theo}\label{theo:SIO:bdd:Sobolev:0}
    If $\Tcal\in \Sigma^\theta_\delta$ for some $\theta\in (0, 1)$ and $\delta\geq 0$, then 
    \[
        \Tcal \in \Bbb (L^p (\Rbb^n), B^{\theta-\Ge, p} (\Rbb^n))
    \]
    for any $1<p<\infty$ and $\Ge>0$.
\end{theo}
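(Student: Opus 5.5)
We will prove $\Tcal\in\Bbb(L^p(\Rbb^n),B^{\theta-\Ge,p}(\Rbb^n))$ through the difference characterization \eqref{eq:Besov:difference} with $q=p$. Since $\theta-\Ge<\theta<1$, we may take $\Ge$ small so that $\sigma:=\theta-\Ge\in(0,1)$; smaller $\sigma$ then follows from the embedding $B^{\sigma,p}\hookrightarrow B^{\sigma',p}$ for $\sigma'<\sigma$. The $L^p$ part of the norm is immediate: by \ref{enum:wSIO:int} together with the compact support \ref{enum:wSIO:supp}, both $\sup_x\int|T(x,y)|\,\df y$ and $\sup_y\int|T(x,y)|\,\df x$ are finite, and Schur's test gives $\|\Tcal f\|_{L^p}\lesssim\|f\|_{L^p}$. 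It then remains to show
\[
\int_{\Rbb^n}\|\Tcal f(\cdot+h)-\Tcal f\|_{L^p}^p\,\frac{\df h}{|h|^{n+\sigma p}}\lesssim\|f\|_{L^p}^p .
\]
For $|h|\ge1$ we simply bound the difference by $2\|\Tcal f\|_{L^p}$, and the weight $|h|^{-n-\sigma p}$ is integrable there. So everything reduces to $|h|<1$.

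The key estimate, for $|h|<1$, is
\[
\|\Tcal f(\cdot+h)-\Tcal f\|_{L^p}\lesssim |h|^{\theta}\bigl(1+|\log|h||\bigr)\|f\|_{L^p}.
\]
Once this holds, the $h$-integral over $|h|<1$ is bounded by $\int_{|h|<1}|h|^{(\theta-\sigma)p-n}(1+|\log|h||)^p\,\df h<\infty$, because $\theta-\sigma=\Ge>0$. This is exactly where the loss of $\Ge$ comes from.

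To prove the key estimate, write the difference as an integral operator with kernel $T(x+h,y)-T(x,y)$ and split the $y$-integration at $|x-y|=2|h|$.

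In the near region $|x-y|<2|h|$, we bound each term separately using \ref{enum:wSIO:int}. The integrals $\int_{|x-y|<2|h|}|x-y|^{\theta-n}\,\df y$ and $\int_{|x+h-y|<3|h|}|x+h-y|^{\theta-n}\,\df y$ are both $\lesssim|h|^\theta$. The same bounds hold with the roles of $x$ and $y$ exchanged, so Schur's test gives a norm $\lesssim|h|^\theta$.

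In the far region $|x-y|\ge2|h|$, we use \ref{enum:wSIO:der} with $z=x+h$, which gives the kernel bound $|h|^{\theta+\delta}|x-y|^{-n-\delta}$. The main obstacle is this region when $\delta=0$. The bound $|h|^\theta|x-y|^{-n}$ is not integrable at infinity, so we must use the compact support of $T$: both $x$ and $y$ may be restricted to a fixed ball of radius $R$. The far-region integral is then $\lesssim|h|^{\theta}\int_{2|h|}^{2R}r^{-1}\,\df r\lesssim|h|^\theta(1+|\log|h||)$. This is the source of the logarithm, and the $\Ge$-loss absorbs it.

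The same Schur bound holds in the $x$-variable. To see this, note that for fixed $y$ the conditions $|x-y|\ge2|h|$ and $x$ lying in the bounded support give the same integral. So Schur's test applies. When $\delta>0$ the far region instead gives $|h|^{\theta+\delta}|h|^{-\delta}=|h|^\theta$ directly, and the logarithm is unnecessary.

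Combining the near and far regions yields the key estimate, and hence the theorem.
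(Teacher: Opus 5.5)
Your proof is correct, but it takes a different route from the paper's.

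The paper estimates the Gagliardo-type double integral $\iint |\Tcal f(x)-\Tcal f(z)|^p |x-z|^{-n-(\theta-\Ge)p}\,\df x\,\df z$ directly. For each pair $(x,z)$ it splits the $y$-integral at $A_{x,z}=\{y : 2|x-z|<|x-y|\}$. It applies H\"older's inequality with $\Ge$-dependent power weights to each piece, and then uses Fubini to integrate out $z$ and $x$. The regions $U\times U^c$ and $U^c\times U$ are treated separately, using the positive distance from the support of $T$.

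You instead first prove a uniform $L^p$ modulus-of-continuity bound, $\|\Tcal f(\cdot+h)-\Tcal f\|_{L^p}\lesssim |h|^\theta(1+|\log|h||)\|f\|_{L^p}$ for $|h|<1$. You get it by Schur's test on the difference kernel $T(x+h,y)-T(x,y)$, and only afterwards integrate in $h$.

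Your route buys several things:
\begin{enumerate}
\item It gives a stronger intermediate statement: $\Tcal$ maps $L^p$ into $B^{\theta-\Ge,p}_q$ for every $q\in[1,\infty]$, and into $B^{\theta,p}_\infty$ with no loss at all when $\delta>0$.
\item It extends to $p=1$ and $p=\infty$, since Schur's test does.
\item It shows that the $\Ge$-loss comes solely from the logarithmic divergence of the far-field integral when $\delta=0$.
\item Compact support is used only to cut that far-field integral off at $|x-y|\lesssim R$; your trivial bound for $|h|\ge 1$ replaces the paper's separate off-support analysis.
\item It explicitly covers $\Ge\ge\theta$ via monotonicity of the Besov scale, which the paper passes over with ``sufficiently small $\Ge$''.
\end{enumerate}

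The paper's argument is a single direct computation tailored to the $q=p$ seminorm that appears in the statement.
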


\begin{proof}
    Take an open ball $U\subset \Rbb^{d-1}$ such that $\supp T\subset U\times U$. Then, by the property \ref{enum:wSIO:int} for $\Tcal$, we have 
    \begin{align*}
        \sup_{x\in \Rbb^n}\int_{\Rbb^n}|T(x, y)|\,\df  y
        \lesssim \sup_{x\in U}\int_U \frac{\df  y}{|x-y|^{n-\theta}}<\infty,
    \end{align*}
    and similarly
    \[
        \sup_{y\in \Rbb^n}\int_{\Rbb^n} |T(x, y)|\,\df x<\infty.
    \]
    Thus, by the Schur test, we obtain 
    \begin{equation}\label{eq:wSIO:Lp}
        \|\Tcal [f]\|_{L^p(\Rbb^n)}\lesssim \|f\|_{L^p(\Rbb^n)}.
    \end{equation}

    Next, we prove the estimate
    \begin{equation}\label{eq:Besov:seminorm:part}
        \int_{U\times U}\frac{|\Tcal [f](x)-\Tcal [f](z)|^{p}}{|x-z|^{n+(\theta-\Ge)p}}\, \df x \df z\lesssim \|f\|_{L^p(\Rbb^n)}
    \end{equation}
    for $f\in L^p (\Rbb^n)$ and sufficiently small $\Ge>0$. For $x, z\in \Rbb^n$, we set
    \begin{equation}\label{eq:Axz}
        A_{x, z}:=\{ y\in \Rbb^n \mid 2|x-z|< |x-y|\}.
    \end{equation}
    Then, since $\supp T\subset U\times U$, we have 
    \begin{align*}
        |\Tcal [f](x)-\Tcal [f](z)|
        &\leq \int_{U\cap A_{x, z}}|T(x, y)-T(z, y)||f (y)|\, \df y \\
        &\quad+\int_{U\setminus A_{x, z}} |T(x, y)-T(z, y)||f (y)|\, \df y \\
        &=:I_1+I_2,
    \end{align*}
    and hence
    \begin{equation}
        \label{eq:dTf:0}
        |\Tcal [f](x)-\Tcal [f](z)|^p\lesssim I_1^p+I_2^p.
    \end{equation}
    
    Let $q=p/(p-1)$ be the H\"older conjugate of $p$. Then, by the H\"older inequality, $I_1^p$ is estimated as
    \begin{align*}
        I_1^p &\lesssim |x-z|^{(\theta+\delta)p}\left(\int_{U\cap A_{x, z}} \frac{|f(y)|}{|x-y|^{n+\delta}}\,\df y\right)^p \\
        &\leq |x-z|^{(\theta+\delta)p} \left(\int_{U\cap A_{x, z}}\frac{\df y}{|x-y|^{n+\Ge q/p}}\right)^{p/q}\int_{U\cap A_{x,z}}\frac{|f(y)|^p}{|x-y|^{n-\Ge+p\delta}}\,\df y \\
        &\lesssim |x-z|^{(\theta+\delta)p-\Ge} \int_{U\cap A_{x,z}}\frac{|f(y)|^p}{|x-y|^{n-\Ge +p\delta}}\,\df y.
    \end{align*}
    Note that, if $y\in \Rbb^n\setminus A_{x, z}$, then $|z-y|\leq 3|x-z|$. Thus we have  
    \begin{align*}
        I_2^p&\leq \left(\int_{U\setminus A_{x, z}} (|T(x, y)|+|T(z, y)|)|f(y)|\,\df y\right)^p \\
        &\lesssim \left(\int_{U\setminus A_{x, z}} \frac{|f(y)|}{|x-y|^{n-\theta}}\,\df y\right)^p+\left(\int_{\substack{y\in U \\ |z-y|\leq 3|x-z|}} \frac{|f(y)|}{|z-y|^{n-\theta}}\,\df y\right)^p \\
        &\lesssim \left(\int_{U\setminus A_{x, z}} \frac{\df y}{|x-y|^{n-\Ge q/p}}\right)^{p/q}\int_{U\setminus A_{x, z}} \frac{|f(y)|^p}{|x-y|^{n-\theta p+\Ge}}\,\df y \\
        &\quad +\left(\int_{\substack{y\in U \\ |z-y|\leq 3|x-z|}} \frac{\df y}{|z-y|^{n-\Ge q/p}}\right)^{p/q} \int_{\substack{y\in U \\ |z-y|\leq 3|x-z|}} \frac{|f(y)|^p}{|z-y|^{n-\theta p+\Ge}}\,\df y \\
        &\lesssim |x-z|^\Ge\int_{U\setminus A_{x, z}} \frac{|f(y)|^p}{|x-y|^{n-\theta p+\Ge}}\,\df y \\
        &\quad +|x-z|^\Ge \int_{\substack{y\in U \\ |z-y|\leq 3|x-z|}} \frac{|f(y)|^p}{|z-y|^{n-\theta p+\Ge}}\,\df y.
    \end{align*}
    We then have
    \begin{align*}
        &\int_{U\times U}\frac{|\Tcal [f](x)-\Tcal [f](z)|^p}{|x-z|^{n+(\theta-\Ge) p}}\,\df x\df z \\
        &\lesssim \int_U \df y\, |f(y)|^p \int_U \df x \int_{\substack{z\in U \\ 2|x-z|<|x-y|}} \frac{\df z}{|x-z|^{n-\Ge p-\delta p+\Ge}} \\
        &\quad +\int_U \df y\, |f(y)|^p\int_U \frac{\df x}{|x-y|^{n-\theta p+\Ge}} \int_{\substack{ z\in U \\ |x-z|\geq 2|x-y|}}\frac{\df z}{|x-z|^{n+(\theta-\Ge)p-\Ge}} \\
        &\quad +\int_U \df y \, |f(y)|^p \int_U \frac{\df z}{|z-y|^{n-\theta p+\Ge}} \int_{\substack{x\in U \\ |z-y|\leq 3|x-z|}} \frac{\df x}{|x-z|^{n+(\theta-\Ge)p-\Ge}} \\
        &\lesssim \|f\|_{L^p}^p
        +\int_U \df y\, |f(y)|^p\int_U \frac{\df x}{|x-y|^{n-\Ge p}}
        +\int_U \df y \, |f(y)|^p \int_U \frac{\df z}{|z-y|^{n-\Ge p}} \\
        &\lesssim \|f\|_{L^p}^p.
    \end{align*}
    This completes the proof of \eqref{eq:Besov:seminorm:part}. 

    Recall from \eqref{eq:Besov:difference} that
    \begin{equation}\label{eq:Besov:seminorm:full}
        [\Tcal [f]]_{B^{\theta-\Ge, p}(\Rbb^n)}^p=\int_{\Rbb^n\times \Rbb^n}\frac{|\Tcal [f](x)-\Tcal [f](z)|^{p}}{|x-z|^{n+(\theta-\Ge)p}}\, \df x \df z.
    \end{equation}
    We denote the integrand in the right hand side of \eqref{eq:Besov:seminorm:full} by $A(x, z)$.
    Since $\supp T\subset U\times U$, we obtain 
    \begin{align*}
        &[\Tcal [f]]_{B^{\theta-\Ge, p}(\Rbb^n)}^p \\
        &=\int_{U\times U}A(x, z)\, \df x \df z
        +\int_{U\times U^c} A(x, z)\, \df x \df z+\int_{U^c\times U}A(x, z)\, \df x \df z \\
        &=:J_1+J_2+J_3.
    \end{align*}
    It remains to prove that $J_a$ ($a=1, 2, 3$) are bounded by $\|f\|_{L^p (\Rbb^n)}$. The estimate for $J_1$ is already proved in \eqref{eq:Besov:seminorm:part}. To estimate $J_2$ and $J_3$, we note that 
    \[
        J_2=\int_{U\times U^c}\frac{|\Tcal [f](x)|^{p}}{|x-z|^{n+(\theta-\Ge)p}}\, \df x \df z, \quad 
        J_3=\int_{U^c\times U}\frac{|\Tcal [f](z)|^{p}}{|x-z|^{n+(\theta-\Ge)p}}\, \df x \df z,
    \]
    which follow from $\supp T\subset U\times U$. 

    Since the distance between $\pi (\supp T)$ and $U^c$ is positive, where $\pi (x, y):=x$ ($(x, y)\in \Rbb^n\times \Rbb^n$) is the projection with respect to the first component, we have $|x-z|\geq c$ on $(U\times U^c)\cup (U^c\times U)$ for some constant $c>0$. Thus we obtain from \eqref{eq:wSIO:Lp} that  
    \begin{align*}
        J_2\leq \int_U \df x\, |\Tcal [f](x)|^p \int_{|x-z|\geq c}\frac{\df z}{|x-z|^{n+(\theta-\Ge)p}}\lesssim \int_U |\Tcal [f](x)|^p\, \df x\lesssim \|f\|_{L^p(\Rbb^n)}^p
    \end{align*}
    and similarly $J_3\lesssim \|f\|_{L^p(\Rbb^n)}^p$. This completes the proof. 
\end{proof}

\subsection{Mapping property on lower-order Besov spaces}

We now assume that $\partial D$ is $C^{2, \theta}$ for some $\theta\in (0, 1]$. In order to decompose the operator $\Qcal_\iota$ into two parts with different smoothing properties, let
\[
    \begin{aligned}
        \fff_\iota (t)&:=\left(\frac{\partial\Phi_\iota}{\partial t_j}(t)\cdot \frac{\partial\Phi_\iota}{\partial t_k} (t)\right)_{j, k=1}^{d-1}, \\
        \sff_\iota(t)&:=\left(\nv_{\Phi_\iota (t)}\cdot \frac{\partial^2 \Phi_\iota}{\partial t_j\partial t_k}(t)\right)_{j, k=1}^{d-1}
    \end{aligned}
\]
be, respectively, the first fundamental form and the second fundamental form on $\partial D$ at $\Phi_\iota (t)$ (locally represented by the parametrization $\Phi_\iota$). The first fundamental form induces the natural norm
    \[
        |v|_{\fff_\iota (t)}:=\sqrt{v\cdot \fff_\iota (t)v}
    \]
    for $v=(v_j)_{j=1}^{d-1} \in \Rbb^{d-1}$.

    Then, we define
    \begin{equation}
        \label{eq:NP:decomp}
        \begin{aligned}
            \Pcal_\iota [h](t):=
            \frac{\sqrt{\det \fff_\iota (t)}}{2\omega_d} \kappa_\iota (\Phi_\iota (t))\int_{V_\iota} \frac{(t-s)\cdot \sff_\iota (t)(t-s)}{|t-s|_{\fff_\iota (t)}^{d}}\chi_\iota(\Phi_\iota (s))h(s)\,\df s
        \end{aligned}
    \end{equation}
    for $t\in V_\iota$ and $h: \Rbb^{d-1}\to \Cbb$. Since $\supp \Pcal_\iota [h]\subset V_\iota$, we naturally extend $\Pcal_\iota [h]=0$ outside $V_\iota$. We also define the remainder term
    \begin{equation}\label{eq:R:op}
        \Rcal_\iota:=\Qcal_\iota- \Pcal_\iota.
    \end{equation}

    When $d=2$, the operator $\Pcal_\iota$ is more concisely expressed as
    \begin{equation}
        \label{eq:P:2d}
        \Pcal_\iota [h](t)=
            \frac{\kappa_\iota (\Phi_\iota (t))\sff_\iota (t)}{4\pi\sqrt{\fff_\iota (t)}}\int_{V_\iota} \chi_\iota(\Phi_\iota (s))h(s)\,\df s.
    \end{equation}
    Thus we have 
        \begin{align*}
            \| \Pcal_\iota [h]\|_{B^{\theta, p}(\Rbb^{d-1})}
            &=\left|\int_{V_\iota} \chi_\iota(\Phi_\iota (s))h(s)\,\df s\right|\left\|\frac{(\kappa_\iota \circ \Phi_\iota)\sff_\iota}{4\pi\sqrt{\fff_\iota}}\right\|_{B^{\theta, p}(\Rbb^{d-1})} \\
            &\lesssim \|h\|_{B^{-2-\theta, p}(\Rbb^{d-1})}.
        \end{align*}

    On the other hand, when $d\geq 3$, the operator $\Pcal_\iota$ is represented as $\Pcal_\iota [h]=a_\iota (t, -\iu \partial_t)[(\chi_\iota\circ \Phi_\iota)h]$, where
    \[
        a_\iota (t, -\iu \partial_t)[h](t)=\frac{1}{(2\pi)^{d-1}}\int_{\Rbb^{d-1}}\df \xi \int_{\Rbb^{d-1}} \df s\, a_\iota (t, \xi)\e^{\iu \xi \cdot (t-s)}h(s)
    \]
    is the pseudodifferential operator with symbol
    \begin{align*}
        a_\iota (t, \xi):=&\,\frac{\sqrt{\det \fff_\iota (t)}}{2\omega_d} \kappa_\iota (\Phi_\iota (t))\Fcal_{v\to \xi}\left[\frac{v\cdot \sff_\iota (t)v}{|v|_{\fff_\iota (t)}^{d}}\right](\xi) \\
        =&\,\frac{\kappa_\iota (\Phi_\iota (t))}{4}\left(\frac{\tr A_\iota (t)}{|\xi|_{\fff_\iota(t)^{-1}}}-\frac{\xi\cdot \fff_\iota(t)^{-1}A_\iota (t)\xi}{|\xi|_{\fff_\iota(t)^{-1}}^3}\right)
    \end{align*}
    for $(t, \xi)\in V_\iota \times (\Rbb^{d-1}\setminus \{0\})$ and $a_\iota (t, \xi)=0$ otherwise. 
    Here $|\xi|_{\fff_\iota (t)^{-1}}=\sqrt{\xi\cdot \fff_\iota (t)^{-1}\xi}$ and $A_\iota (t)=\sff_\iota (t)\fff_\iota (t)^{-1}$ is the representation matrix of the shape operator on $\partial D$ at $\Phi_\iota (t)$ associated with the basis $\partial_{t_1}, \ldots, \partial_{t_{d-1}}$ of the tangent space $T_{\Phi_\iota (t)}\partial D$. (See \cite{Miyanishi22} for three-dimensional case for example. Same calculation is valid for higher-dimensional cases.)

    Since the symbol $a_\iota (t, \xi)$ is infinitely smooth in $\xi\neq 0$ and admits the estimate
    \[
        \|\partial_\xi^\alpha a_\iota (\cdot, \xi)\|_{C^{0, \theta}} =O(|\xi|^{-1-|\alpha|}) \quad (|\xi|\to \infty)
    \]
    for any multi-index $\alpha\in \Nbb_0^{d-1}$, we can apply the boundedness of pseudodifferential operators on Besov spaces to obtain the following proposition (\cite{Marschall87}, see also \cite[Proposition 4.5]{Escher-Seiler08}):

    \begin{prop}\label{prop:psido:mapping}
        If $\partial D\subset \Rbb^d$ ($d\geq 2$) is $C^{2, \theta}$ for some $\theta\in (0, 1]$, then
        \[
            \Pcal_\iota \in \Bbb (B^{s-1, p} (\Rbb^{d-1}), B^{s, p} (\Rbb^{d-1}))
        \]
        for any $s\in (-\theta, \theta)$ and $p\in (1, \infty)$.
    \end{prop}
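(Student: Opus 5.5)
For $d=2$ the statement needs no new argument. By \eqref{eq:P:2d}, $\Pcal_\iota[h]$ equals the pairing of $h$ with the compactly supported test function $\chi_\iota\circ\Phi_\iota$, multiplied by the fixed function $(\kappa_\iota\circ\Phi_\iota)\sff_\iota/(4\pi\sqrt{\fff_\iota})$. Since $\partial D$ is $C^{2,\theta}$, this coefficient lies in $C^{0,\theta}$ with compact support, so it belongs to $B^{s,p}(\Rbb)$ for every $s<\theta$. The estimate displayed just before the statement then gives $\|\Pcal_\iota[h]\|_{B^{s,p}}\lesssim |\langle h,\chi_\iota\circ\Phi_\iota\rangle|\lesssim \|h\|_{B^{s-1,p}}$ for all $s\in(-\theta,\theta)$. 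Only the pairing needs care, because $s-1$ is negative. Since $\chi_\iota\circ\Phi_\iota$ is $C^{2,\theta}$ and compactly supported, it lies in $B^{1-s,q}$ with $q=p/(p-1)$ whenever $1-s<2+\theta$, and this holds throughout the range of $s$.

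For $d\geq3$ I will invoke the result on pseudodifferential operators with symbols of limited regularity in $t$ (Marschall, in the form of \cite[Proposition 4.5]{Escher-Seiler08}). That result says a symbol $a\in C^{\theta}S^{m}_{1,0}$ gives a bounded operator $B^{r+m,p}\to B^{r,p}$ for $|r|<\theta$ and $p\in(1,\infty)$. We have $\Pcal_\iota[h]=a_\iota(t,-\iu\partial_t)[(\chi_\iota\circ\Phi_\iota)h]$, and the plan runs in three steps.

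\textbf{Step 1: the symbol class.} The explicit formula for $a_\iota$ shows that, for fixed $t$, it is homogeneous of degree $-1$ in $\xi$. Its $t$-dependence enters through $\kappa_\iota\circ\Phi_\iota$, $\fff_\iota$, $\fff_\iota^{-1}$ and $\sff_\iota$, all of which are $C^{0,\theta}$ because $\Phi_\iota\in C^{2,\theta}$ and $\fff_\iota$ is uniformly positive definite. This yields $\|\partial_\xi^\alpha a_\iota(\cdot,\xi)\|_{C^{0,\theta}}\lesssim|\xi|^{-1-|\alpha|}$ for $|\xi|\geq1$.

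\textbf{Step 2: the singularity at $\xi=0$.} The symbol is singular at the origin, so I split $a_\iota=\eta(\xi)a_\iota+(1-\eta(\xi))a_\iota$ with a cutoff $\eta\in C_c^\infty$ equal to $1$ near $0$. The part $(1-\eta)a_\iota$ lies in $C^\theta S^{-1}_{1,0}$ and is handled by Marschall's theorem with $m=-1$ and $r=s\in(-\theta,\theta)$. The low-frequency part $\eta a_\iota$ has homogeneity $-1>-(d-1)$, so it is locally integrable in $\xi$. Its kernel, $\int\eta(\xi)a_\iota(t,\xi)\e^{\iu\xi\cdot v}\,\df\xi$, is therefore bounded, $C^{0,\theta}$ in $t$, and smooth in $v$. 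Because both inputs and outputs are localized (the factor $\chi_\iota\circ\Phi_\iota$ on the input side and $\kappa_\iota\circ\Phi_\iota$ on the output side), this piece maps $B^{s-1,p}$ into $C^{0,\theta}_c\subset B^{s,p}$ for $s<\theta$. The same duality pairing as in the $d=2$ case justifies this.

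\textbf{Step 3: the multipliers.} Multiplication by $\chi_\iota\circ\Phi_\iota\in C^{2,\theta}$ is bounded on $B^{s-1,p}$ because $|s-1|<1+\theta<2+\theta$.

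I expect the main obstacle to be in Step 1: checking that the hypotheses of Marschall's theorem really hold with only $C^{0,\theta}$ regularity in $t$, and that the admissible range $|r|<\theta$ matches the claimed range $s\in(-\theta,\theta)$. Negative $s$ is the delicate case. If the cited theorem only covers $r\in(0,\theta)$, I would deduce the negative range by duality. The formal adjoint has the form $b(s,-\iu\partial)$ with the coefficient now placed on the right, and its boundedness on the dual scale is a standard feature of Marschall's theory for symbols in $C^\theta S^m_{1,0}$. Interpolation would then fill in $r=0$.
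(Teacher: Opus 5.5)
Your proposal is correct and follows the paper's route. For $d=2$ you use the explicit rank-one formula for $\Pcal_\iota$, and for $d\geq 3$ you use the representation $\Pcal_\iota[h]=a_\iota(t,-\iu\partial_t)[(\chi_\iota\circ\Phi_\iota)h]$ together with Marschall's theorem for symbols in $C^{\theta}S^{-1}_{1,0}$. Your extra steps fill in points the paper leaves implicit: the low-frequency cutoff that handles the singularity of $a_\iota$ at $\xi=0$, the multiplier step, and, for $d=2$, placing the merely $C^{0,\theta}$ coefficient in $B^{s,p}$ for $s<\theta$ rather than in $B^{\theta,p}$.
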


    Next, we investigate the remainder term $\Rcal_\iota$ defined in \eqref{eq:R:op}. We recall that $\Sigma^\theta_\delta$ is the class of weakly singular integral operators defined in Definition \ref{defi:wSIO}. 

    \begin{lemm}
        \label{lemm:R:SIO}
        If $\partial D$ is $C^{2, \theta}$ for some $\theta\in (0, 1]$, then $\Rcal_\iota\in \Sigma^{1+\theta}_{-1}$ and $\Rcal_\iota \partial_t \in \Sigma^\theta_0$.
    \end{lemm}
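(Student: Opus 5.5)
The plan is to make the cancellation between $Q_\iota$ and the kernel of $\Pcal_\iota$ explicit by a second-order Taylor expansion of $\Phi_\iota$ about $t$, with the remainder kept in integral form. Throughout, $n=d-1$, and I write $P_\iota(t,s)$ for the kernel of $\Pcal_\iota$ in \eqref{eq:NP:decomp}, normalized so that $\Pcal_\iota[h](t)=\omega_d^{-1}\int P_\iota(t,s)h(s)\,\df s$ like $Q_\iota$. Then $R_\iota=Q_\iota-P_\iota$. Since $\partial D$ is $C^{2,\theta}$, we have
\[
\Phi_\iota(s)-\Phi_\iota(t)=D\Phi_\iota(t)(s-t)+\tfrac12 D^2\Phi_\iota(t)[s-t,s-t]+E(t,s),
\]
where $E(t,s)=\int_0^1(1-r)\bigl(D^2\Phi_\iota(t+r(s-t))-D^2\Phi_\iota(t)\bigr)[s-t,s-t]\,\df r$, so $|E|\lesssim|t-s|^{2+\theta}$. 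Because $\nv_{\Phi_\iota(t)}\perp D\Phi_\iota(t)$, the numerator of $Q_\iota$ equals $-\tfrac12(s-t)\cdot\sff_\iota(t)(s-t)-\nv_{\Phi_\iota(t)}\cdot E(t,s)$. The sign matches $P_\iota$ once the orientation convention for $\NP^*$ is fixed.

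For the denominator I would write $|\Phi_\iota(t)-\Phi_\iota(s)|^2=|t-s|^2_{\fff_\iota(t)}\bigl(1+\eta(t,s)\bigr)$. Here $\eta$ is a smooth function of difference quotients built from $D\Phi_\iota$ and $D^2\Phi_\iota$, with $|\eta|\lesssim|t-s|$. The cutoff/Jacobian factor is handled the same way: $\chi_\iota(\Phi_\iota(s))\sqrt{\det\fff_\iota(s)}-\chi_\iota(\Phi_\iota(t))\sqrt{\det\fff_\iota(t)}=O(|t-s|)$.

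Subtracting $P_\iota$ then splits $R_\iota$ into three groups of terms:
\begin{itemize}
\item the $E$-term, of size $|t-s|^{2+\theta-d}$;
\item the terms $\tfrac12(s-t)\cdot\sff_\iota(t)(s-t)\,|t-s|^{-d}_{\fff_\iota(t)}\,\bigl((1+\eta)^{-d/2}-1\bigr)$, of size $|t-s|^{3-d}$;
\item the cutoff/Jacobian difference terms, also of size $|t-s|^{3-d}$.
\end{itemize}
This gives $|R_\iota(t,s)|\lesssim|t-s|^{-(n-(1+\theta))}$, which is condition \ref{enum:wSIO:int} for $\Sigma^{1+\theta}_{-1}$. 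Compact support is inherited from $\kappa_\iota,\chi_\iota$.

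The main obstacle is the regularity condition \ref{enum:wSIO:der} with $\delta=-1$, namely $|R_\iota(x,y)-R_\iota(z,y)|\lesssim|x-z|^{\theta}|x-y|^{-(n-1)}$ for $2|x-z|<|x-y|$. One cannot differentiate $R_\iota$ in $t$, since that would require third derivatives of $\Phi_\iota$. Instead I would take differences in $t$ directly on each term of the decomposition. Smooth factors such as $\fff_\iota$, the quadratic form $|t-s|^2_{\fff_\iota(t)}$, $\kappa_\iota$, and the difference quotients inside $\eta$ are $C^1$ in $t$ away from the diagonal. Their increments are bounded by $|x-z|\,|x-y|^{-1}$ times the size of the term, and in the regime $|x-z|<|x-y|$ this is at most $|x-z|^{\theta}|x-y|^{-\theta}$ times the size.

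The genuinely Hölder factors ($\sff_\iota(t)$, $\nv_{\Phi_\iota(t)}$, and $D^2\Phi_\iota(t)$ inside $E$) have increments of size $|x-z|^\theta$. Each such factor multiplies a kernel of size $|x-y|^{2-d}=|x-y|^{-(n-1)}$ (for example $(s-t)\cdot(\sff_\iota(x)-\sff_\iota(z))(s-t)/|t-s|^d$). The same holds for $E(x,y)-E(z,y)$, using the integral form and $|x-y|\sim|z-y|$. All contributions are therefore $\lesssim|x-z|^\theta|x-y|^{1-n}$, as required. One must also check that the leading term of $Q_\iota$ and of $P_\iota$ enter only through their difference, so that the Hölder increments never act on a factor of the larger size $|x-y|^{1-d}$.

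For $\Rcal_\iota\partial_t$, integrating by parts shows the kernel is $-\partial_sR_\iota(t,s)$. Differentiating in $s$ uses at most $D^2\Phi_\iota(s)$, which is available. Applying $\partial_s$ to each term of the decomposition loses one power of $|t-s|$, giving $|\partial_sR_\iota|\lesssim|t-s|^{-(n-\theta)}$. The $t$-difference argument above, repeated verbatim, gives $|x-z|^\theta|x-y|^{-n}$, so $\Rcal_\iota\partial_t\in\Sigma^\theta_0$. The boundary terms in the integration by parts vanish because $\chi_\iota$ is compactly supported.
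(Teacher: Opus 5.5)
Your proposal is correct and follows essentially the same route as the paper. Your three groups are exactly the paper's kernels $R_1$, $R_2$, $R_3$: the Taylor remainder $\nv\cdot E$, the denominator correction through $\eta$, and the cutoff/Jacobian mismatch. The paper then concludes, as you do, from the size and $t$-H\"older bounds for $R_\iota$ and $\partial_s R_\iota$ together with integration by parts; it only asserts those four kernel estimates, so your term-by-term check (using only $D^2\Phi_\iota$, the integral form of the remainder, and the near-diagonal cancellation in $\eta$) supplies details the paper omits. One small addition: the integration by parts also needs the boundary term around the diagonal to vanish, which follows from $|R_\iota(t,s)|\lesssim|t-s|^{-(n-1-\theta)}$, and not only from the compact support of $\chi_\iota$.
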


        \begin{proof}
            The operator $\Rcal_\iota$ is represented as
            \[
                \Rcal_\iota [h](t)=\sum_{a=1}^3\int_B R_a (t, s)h(s)\,\df s,
            \]
            where
        \begin{align*}
            R_1(t, s)
            &:= \frac{\nv_{\Phi_\iota (t)}\cdot (\Phi_\iota (t)-\Phi_\iota (s))+(t-s)\cdot \sff_\iota (t)(t-s)/2}{\omega_d|t-s|_{\fff_\iota (t)}^{d}} \\
            &\quad\times \kappa_\iota (\Phi_\iota (t))\chi_\iota (\Phi_\iota (s))\sqrt{\det \fff_\iota (s)}, \\
            R_2(t, s)&:=\frac{\nv_{\Phi_\iota (t)}\cdot (\Phi_\iota (t)-\Phi_\iota (s))\left(|\Phi_\iota (t)-\Phi_\iota (s)|^{d}-|t-s|_{\fff_\iota(t)}^{d}\right)}{2\omega_d|t-s|_{\fff_\iota(t)}^{d}|\Phi_\iota (t)-\Phi_\iota (s)|^{d}} \\
            &\quad \times \kappa_\iota (\Phi_\iota (t))\chi_\iota (\Phi_\iota (s))\sqrt{\det \fff_\iota (s)}
        \end{align*}
        and
        \begin{align*}
            R_3 (t, s):=-\frac{(t-s)\cdot \sff_\iota (t)(t-s)}{2\omega_d|t-s|_{\fff_\iota (t)}^{d}}\left(\sqrt{\det \fff_\iota (t)}-\sqrt{\det \fff_\iota (s)}\right)\kappa_\iota (\Phi_\iota (t))\chi_\iota (\Phi_\iota(s)).
        \end{align*}
        Since $\supp R_a\subset V_\iota \times V_\iota$, we extend them as $R_a=0$ outside $V_\iota \times V_\iota$. 

        Then, one can prove the following estimates for $a=1$, $2$, $3$:
            \begin{align}
                |R_a (t, s)|&\lesssim \frac{1}{|t-s|^{n-1-\theta}}, \label{eq:Rk:0}\\
                |R_a (t, s)-R_a (t^\prime, s)|&\lesssim
                \frac{|t-t^\prime|^\theta}{|t-s|^{n-1}} \quad (2|t-t^\prime|<|t-s|), \label{eq:Rk:1} \\
                |\partial_s R_a (t, s)|&\lesssim
                \frac{1}{|t-s|^{n-\theta}}, \label{eq:Rk:0k} \\
                |\partial_s R_a (t, s)-\partial_s R_a (t^\prime, s)|&\lesssim
                \frac{|t-t^\prime|^\theta}{|t-s|^n} \quad (2|t-t^\prime|<|t-s|), \label{eq:Rk:1k}
            \end{align}
            The estimates \eqref{eq:Rk:0} and \eqref{eq:Rk:1} together with the compactness of $\supp R_a$ ($a=1, 2, 3$) prove $\Rcal_\iota \in \Sigma^{1+\theta}_{-1}$. Since
            \[
                \Rcal_\iota [\partial_{t_j} h](t)=-\sum_{a=1}^3 \int_{\Rbb^n} \partial_{s_j} R_a (t, s)h(s)\,\df s
            \]
            by integration by parts, which is justified by virtue of \eqref{eq:Rk:0} and \eqref{eq:Rk:0k}, the estimates \eqref{eq:Rk:0k} and \eqref{eq:Rk:1k} prove $\Rcal_\iota \partial_t\in \Sigma^\theta_0$.
        \end{proof}

    \begin{lemm}
        \label{lemm:NP:decomp:2}
        If $\partial D$ is $C^{2, \theta}$ for some $\theta\in (0, 1]$, then 
        \[
            \Rcal_\iota \in \Bbb (H^{-1, p} (\Rbb^{d-1}), B^{\theta-\Ge, p}(\Rbb^{d-1}))
        \] 
        for any $p\in (1, \infty)$ and $\Ge>0$.
    \end{lemm}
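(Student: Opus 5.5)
The plan is to write an arbitrary $h\in H^{-1,p}(\Rbb^{d-1})$ as a sum of a function and first derivatives of functions, and then use the two memberships from Lemma \ref{lemm:R:SIO} together with Theorem \ref{theo:SIO:bdd:Sobolev:0}. Concretely, since $(1-\Delta)^{-1}$ is an isomorphism from $H^{-1,p}(\Rbb^{d-1})$ onto $H^{1,p}(\Rbb^{d-1})$, we set $g:=(1-\Delta)^{-1}h$ and decompose
\[
    h=g-\sum_{j=1}^{d-1}\partial_{t_j}g_j, \qquad g_j:=\partial_{t_j}g .
\]
Here $g$ and each $g_j$ belong to $L^p(\Rbb^{d-1})$, with norms bounded by $\|h\|_{H^{-1,p}}$. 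Equivalently, one can use the Riesz-transform representation $h=g_0+\sum_j \partial_{t_j}g_j$.

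\textbf{Derivative terms.} By Lemma \ref{lemm:R:SIO}, $\Rcal_\iota\partial_t\in\Sigma^\theta_0$. For $\theta\in(0,1)$, Theorem \ref{theo:SIO:bdd:Sobolev:0} then gives
\[
    \|\Rcal_\iota[\partial_{t_j}g_j]\|_{B^{\theta-\Ge,p}}\lesssim\|g_j\|_{L^p}.
\]
To justify applying the integral representation to $\partial_{t_j}g_j$, we first take $g_j\in C_c^\infty$, where the integration by parts in Lemma \ref{lemm:R:SIO} is valid, and then pass to general $g_j$ by density.

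\textbf{Zeroth-order term.} Since $\Rcal_\iota\in\Sigma^{1+\theta}_{-1}$, we need a bound $\Rcal_\iota: L^p\to B^{\theta-\Ge,p}$. Theorem \ref{theo:SIO:bdd:Sobolev:0} is stated only for $\theta\in(0,1)$ and $\delta\ge0$, so it does not apply directly. The way around this is to weaken the kernel estimates. Condition (b) with exponent $n-1-\theta$ implies condition (b) with exponent $n-\theta$, because the support is compact. The Hölder estimate \eqref{eq:Rk:1} with $\delta=-1$ reads $|t-t'|^\theta/|t-s|^{n-1}$. When $2|t-t'|<|t-s|$ and the support is bounded, this is $\lesssim |t-t'|^{\theta}/|t-s|^{n}$, which is the $\Sigma^\theta_0$ condition. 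Hence $\Rcal_\iota\in\Sigma^\theta_0$ as well, and the theorem applies.

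\textbf{Endpoint $\theta=1$.} Here we use a reduced Hölder exponent $\theta'<1$, replacing $\theta$ by $\theta'$ in both estimates. This is possible since $|t-t'|^{1}\le C|t-t'|^{\theta'}$ on bounded sets. Choosing $\theta'$ close to $1$ and shrinking $\Ge$ accordingly gives $B^{\theta-\Ge,p}$ for every $\Ge>0$.

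The main obstacle is this bookkeeping: checking that the kernel classes are nested as claimed, namely $\Sigma^{1+\theta}_{-1}\subset\Sigma^{\theta}_0$ for compactly supported kernels and $\Sigma^1_0\subset\Sigma^{\theta'}_0$, and that the density argument for the derivative terms is legitimate. Summing the estimates then yields
\[
    \|\Rcal_\iota h\|_{B^{\theta-\Ge,p}}\lesssim\|g\|_{L^p}+\sum_j\|g_j\|_{L^p}\lesssim\|h\|_{H^{-1,p}}.
\]
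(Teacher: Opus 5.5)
Your proof is correct and uses the same ingredients as the paper: the inclusion $\Sigma^{1+\theta}_{-1}\subset\Sigma^\theta_0$ for compactly supported kernels, Lemma \ref{lemm:R:SIO}, and Theorem \ref{theo:SIO:bdd:Sobolev:0} applied to both $\Rcal_\iota$ and $\Rcal_\iota\partial_t$. The only difference is the last step, which is equivalent: the paper takes adjoints, shows $\Rcal_\iota^*\in\Bbb(B^{-\theta+\Ge,q},H^{1,q})$ and dualizes back, while you use the predual decomposition $h=g-\sum_j\partial_{t_j}g_j$ directly; your reduction to an exponent $\theta'<1$ when $\theta=1$, where Theorem \ref{theo:SIO:bdd:Sobolev:0} as stated does not literally apply, handles a point the paper passes over without comment.
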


    \begin{proof}
         By virtue of the inclusion relation $\Sigma^{1+\theta}_{-1}\subset \Sigma^\theta_0$ and Lemma \ref{lemm:R:SIO}, we can apply Theorem \ref{theo:SIO:bdd:Sobolev:0} to obtain $\Rcal_\iota, \Rcal_\iota \partial_{t_j}\in \Bbb (L^p (\Rbb^n), B^{\theta-\Ge, p} (\Rbb^n))$ for $p\in (1, \infty)$ and $\Ge>0$. We take their adjoints to obtain $\Rcal_\iota^*, \partial_{t_j}\Rcal_\iota^*\in \Bbb (B^{-\theta+\Ge, q} (\Rbb^n), L^q (\Rbb^n))$, where $q=p/(p-1)$ is the H\"older conjugate of $p$. Thus $\Rcal_\iota^* \in \Bbb (B^{-\theta+\Ge, q} (\Rbb^n), H^{1, q}(\Rbb^n))$, from which we have $\Rcal_\iota \in \Bbb (H^{-1, p} (\Rbb^n), B^{\theta-\Ge, p} (\Rbb^n))$.
    \end{proof}

    The preceding Proposition \ref{prop:psido:mapping} and Lemma \ref{lemm:R:SIO} immediately prove the following mapping property of $\Qcal_\iota$.

    \begin{prop}
        \label{prop:Sobolev:low:order}
        If $\partial D$ is $C^{2, \theta}$ for some $\theta\in (0, 1]$, then 
        \[
            \Qcal_\iota\in \Bbb (B^{s-1, p}(\Rbb^n), B^{s, p}(\Rbb^n))
        \] 
        for any $p\in (1, \infty)$ and $s\in [0, \theta)$.
    \end{prop}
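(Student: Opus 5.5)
The plan is to write $\Qcal_\iota=\Pcal_\iota+\Rcal_\iota$ as in \eqref{eq:R:op} and bound the two pieces separately from $B^{s-1,p}(\Rbb^n)$ into $B^{s,p}(\Rbb^n)$, for fixed $p\in(1,\infty)$ and $s\in[0,\theta)$.

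The principal part $\Pcal_\iota$ is immediate from Proposition \ref{prop:psido:mapping}: it gives $\Pcal_\iota\in\Bbb(B^{s-1,p},B^{s,p})$ for all $s\in(-\theta,\theta)$, and this range contains $[0,\theta)$. The cutoff $\chi_\iota\circ\Phi_\iota$ inside $\Pcal_\iota$ is multiplication by a $C^{2,\theta}$ function. Such a multiplication is bounded on $B^{s-1,p}$ for $|s-1|<2+\theta$, so it causes no trouble. In $d=2$ one may use the explicit formula \eqref{eq:P:2d} instead.

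For the remainder, Lemma \ref{lemm:NP:decomp:2} gives $\Rcal_\iota\in\Bbb(H^{-1,p},B^{\theta-\Ge,p})$ for every $\Ge>0$. The plan is to combine three embeddings:
\[
B^{s-1,p}\hookrightarrow H^{-1,p},\qquad B^{\theta-\Ge,p}\hookrightarrow B^{s,p}\ \text{ with }\Ge=(\theta-s)/2 .
\]
The second embedding holds because $s<\theta$, and it is the only place strict inequality is used. The first holds since $s-1>-1$, so any smoothness loss $\eta<s$ suffices, for instance $B^{s-1,p}\hookrightarrow B^{-1+\eta,p}_{\infty}\hookrightarrow H^{-1,p}$. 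When $s=0$ there is a subtlety: one needs $B^{-1,p}\hookrightarrow H^{-1,p}$, and this holds only for $p\le2$ by the embeddings recalled in the introduction.

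To cover $s=0$ for all $p$, I would first prove the bound for $s\in(0,\theta)$. The endpoint $s=0$ then comes either by rerunning Lemma \ref{lemm:NP:decomp:2} with Besov input, or by a duality/interpolation step. The duality route works as follows. Lemma \ref{lemm:R:SIO} gives $\Rcal_\iota,\Rcal_\iota\partial_t\in\Sigma^\theta_0$. Theorem \ref{theo:SIO:bdd:Sobolev:0} then yields $L^q$ bounds, and its proof actually controls the full Besov seminorm. The duality step of Lemma \ref{lemm:NP:decomp:2} can be repeated with $B^{1,q}$ in place of $H^{1,q}$, since $L^q\to B^{\theta-\Ge,q}$ bounds for $\Rcal_\iota^*$ and $\partial_t\Rcal_\iota^*$ give $\Rcal_\iota^*$ into $B^{1,q}_{q}$ after real interpolation. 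The result is $\Rcal_\iota\in\Bbb(B^{-1,p},B^{\theta-\Ge,p})$.

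Summing the two pieces gives $\Qcal_\iota\in\Bbb(B^{s-1,p},B^{s,p})$. The expected obstacle is exactly this $s=0$ endpoint: the embedding $B^{-1,p}\subset H^{-1,p}$ fails for $p>2$. If the Besov version of the duality argument does not close cleanly, the fallback is to interpolate. The endpoint $s=0$ is the case $(B^{-1,p},B^{0,p})$, and it can be obtained by real interpolation between $\Rcal_\iota\in\Bbb(H^{-1-\eta,p},B^{\theta-\Ge-\eta,p})$ and $\Rcal_\iota\in\Bbb(H^{-1+\eta,p},B^{\theta-\Ge+\eta,p})$. The $H^{-1-\eta}$ bound should follow from the same kernel estimates, since $\Rcal_\iota\in\Sigma^{1+\theta}_{-1}$ has spare smoothing.
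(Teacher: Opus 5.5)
For $s\in(0,\theta)$, and for $s=0$ when $p\le 2$, your argument is the paper's. The paper only says the proposition follows immediately from Proposition~\ref{prop:psido:mapping} and Lemma~\ref{lemm:R:SIO}. That means $\Pcal_\iota$ is handled by the pseudodifferential bound, and $\Rcal_\iota$ by Lemma~\ref{lemm:NP:decomp:2} together with $B^{s-1,p}\hookrightarrow H^{-1,p}$ and $B^{\theta-\Ge,p}\hookrightarrow B^{s,p}$. You are right that this breaks at $s=0$, $p>2$, where $B^{-1,p}\not\hookrightarrow H^{-1,p}$. The paper passes over this point silently. That is harmless for its purposes, since the proof of Theorem~\ref{theo:layer:Sobolev}~\ref{enum:np:smoothing:1} only uses $s=\theta-\Ge>0$.

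Neither of your repairs for that endpoint works, so as written the case $s=0$, $p>2$ is a genuine gap.

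\emph{The duality route.} You invoke $L^q\to B^{\theta-\Ge,q}$ bounds for $\Rcal_\iota^*$ and $\partial_t\Rcal_\iota^*$, but these are not available. Lemma~\ref{lemm:R:SIO} with Theorem~\ref{theo:SIO:bdd:Sobolev:0} gives $L^p\to B^{\theta-\Ge,p}$ bounds for $\Rcal_\iota$ and $\Rcal_\iota\partial_t$, whose adjoints map $B^{-\theta+\Ge,q}\to L^q$. This yields only $\Rcal_\iota^*\colon B^{-\theta+\Ge,q}\to H^{1,q}$. For $q=p/(p-1)<2$ one has $H^{1,q}\not\hookrightarrow B^{1,q}_q$, which is the same obstruction dualized, and there is no second output order to interpolate against. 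Even granted, $\Rcal_\iota^*\colon L^q\to B^{1,q}$ dualizes to $\Rcal_\iota\colon B^{-1,p}\to L^p$, not into $B^{\theta-\Ge,p}$.

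\emph{The interpolation fallback.} This needs $\Rcal_\iota$ to act on $H^{-1-\eta,p}$, and the ``spare smoothing'' of $\Sigma^{1+\theta}_{-1}$ cannot supply that. The classes $\Sigma^\theta_\delta$ only record size and regularity of the kernel in the \emph{output} variable; \eqref{eq:Rk:1} and \eqref{eq:Rk:1k} are differences in $t$. Accepting inputs of order below $-1$ requires more than one derivative of the kernel in the integration variable $s$.

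\emph{What is missing.} You need an estimate such as
\[
|\partial_s R_a(t,s)-\partial_s R_a(t,s')|\lesssim \frac{|s-s'|^{\eta}}{|t-s|^{n}}\qquad(2|s-s'|<|t-s|)
\]
for some $\eta\in(0,1)$. This should follow for $C^{2,\theta}$ boundaries from the same Taylor expansions used for \eqref{eq:Rk:0}--\eqref{eq:Rk:1k}, since only $\partial^2\Phi_\iota(s)\in C^{0,\theta}$ enters. With it, the endpoint closes in three steps.
\begin{itemize}
\item Theorem~\ref{theo:SIO:bdd:Sobolev:0} applied to the transposed kernel $(s,t)\mapsto\partial_sR_a(t,s)$ gives $\partial_s\Rcal_\iota^*\colon L^q\to B^{\eta-\Ge,q}$.
\item Together with the Schur bound for $\Rcal_\iota^*$ on $L^q$, this gives $\Rcal_\iota^*\colon L^q\to B^{1+\eta-\Ge,q}\hookrightarrow B^{1,q}$.
\item Dualizing, $\Rcal_\iota\colon B^{-1,p}\to L^p\hookrightarrow B^{0,p}$ for $p\ge 2$.
\end{itemize}
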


\subsection{Commutator estimates}

We derive commutator estimates in order to derive mapping properties between higher-order Sobolev spaces from those between lower-order ones. We introduce the adjoint action $\ad_\Bcal \Acal:=\Bcal\Acal-\Acal\Bcal$ for linear operators $\Acal$ and $\Bcal$ and define
\[
    \ad_\partial^\alpha:=(\ad_{\partial_{t_1}})^{\alpha_1}\cdots (\ad_{\partial_{t_{d-1}}})^{\alpha_{d-1}}
\]
for any multi-index $\alpha=(\alpha_1, \ldots, \alpha_{d-1})$. We let $\ad_\partial^\alpha$ act on the operator $\Qcal_\iota$ defined in \eqref{eq:Q:defi}. If we formally apply the integration by parts, we obtain 
\[
    \ad_{\partial_{t_k}} \Qcal_\iota [h](t)
        =\frac{1}{\omega_d}\int_{V_\iota} [(\partial_{t_k}+\partial_{s_k}) Q_\iota (t, s)]h(s)\,\df s.
\]
In order to justify the above calculation, we need to prove that singularity of $(\partial_{t_k}+\partial_{s_k}) Q_\iota (t, s)$ on $t=s$ is weak. Iterating the above procedure, we expect that 
\begin{equation}\label{eq:ad:ker}
        \ad_\partial^\alpha \Qcal_\iota [h](t)
        =\frac{1}{\omega_d}\int_{V_\iota} [(\partial_t+\partial_s)^\alpha Q_\iota (t, s)]h(s)\,\df s.
    \end{equation}
    What is crucial is to investigate the singularity of $(\partial_t+\partial_s)^\alpha Q_\iota (t, s)$ on $t=s$. The result is summarized in the following lemma.

\begin{lemm}
    \label{lemm:Q:est}
    If $\partial D$ is $C^{k, \theta}$ for some $k\geq 1$ and $\theta\in (0, 1]$, we have $\ad_\partial^\alpha \Qcal_\iota \in \Sigma^\theta_0$ for any $|\alpha|\leq k-1$.

    In particular, $\ad_\partial^\alpha \Qcal_\iota \in \Bbb (L^p (\Rbb^n), B^{\theta-\Ge, p}(\Rbb^n))$ for any $p\in (1, \infty)$ and $\Ge>0$.
\end{lemm}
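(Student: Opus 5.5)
The plan is to analyse the kernel $(\partial_t+\partial_s)^\alpha Q_\iota(t,s)$ directly and to check the three conditions of Definition~\ref{defi:wSIO} with $n=d-1$ and $\delta=0$. Once that is done, the ``in particular'' statement follows from Theorem~\ref{theo:SIO:bdd:Sobolev:0}. (If $\theta=1$, we first replace $\theta$ by some $\theta'<1$, which only weakens the estimates.)

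The key observation is that the operator $\partial_t+\partial_s$ annihilates functions of $t-s$. So we write
\[
    Q_\iota(t,s)=F\bigl(t,s,\Phi_\iota(t)-\Phi_\iota(s)\bigr)\,\kappa_\iota(\Phi_\iota(t))\,\chi_\iota(\Phi_\iota(s))\,\sqrt{\det\fff_\iota(s)},
\]
where $F(t,s,w)=\nv_{\Phi_\iota(t)}\cdot w/|w|^d$. By Taylor's formula with integral remainder,
\[
    \Phi_\iota(t)-\Phi_\iota(s)=M(t,s)(t-s),\qquad M(t,s)=\int_0^1 D\Phi_\iota(s+\lambda(t-s))\,d\lambda .
\]
The matrix $M$ is $C^{k-1,\theta}$ in $(t,s)$, and $|M(t,s)v|\simeq|v|$. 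For the numerator we use
\[
    \nv_{\Phi_\iota(t)}\cdot(\Phi_\iota(t)-\Phi_\iota(s))=(t-s)\cdot G(t,s)(t-s),
\]
where $G=\int_0^1(1-\lambda)\,\nv_{\Phi_\iota(t)}\cdot D^2\Phi_\iota(s+\lambda(t-s))\,d\lambda$ (taken componentwise). $G$ is only $C^{k-2,\theta}$ when we use second derivatives. For $k=1$ we instead use the cruder expression $\nv_t\cdot M(t,s)(t-s)$, together with $\nv_t\cdot D\Phi(t)=0$, which gives a $C^{0,\theta}$ factor of size $|t-s|^{\theta}$. Altogether $Q_\iota$ takes the form
\[
    Q_\iota(t,s)=\frac{P\bigl(t,s;\,t-s\bigr)}{|M(t,s)(t-s)|^{d}},
\]
where the coefficients of $P$ in the variable $v=t-s$ are smooth functions of $v$ (quadratic, or of order $|v|^{1+\theta}$ in the $C^{1,\theta}$ setting) and H\"older functions of $(t,s)$.

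Applying $(\partial_t+\partial_s)^\alpha$ leaves every dependence on $v=t-s$ untouched. The derivatives therefore fall only on the coefficient functions $M$, $G$, $\nv\circ\Phi_\iota$, $\kappa_\iota\circ\Phi_\iota$, $\chi_\iota\circ\Phi_\iota$ and $\sqrt{\det\fff_\iota}$, through their $(t,s)$-dependence. Because $|\alpha|\le k-1$, each of these factors is still at least $C^{0,\theta}$ after differentiation; we have to check this carefully, counting derivatives for $G$, and use the $k=1$ variant whenever derivatives of order $k-1$ hit $G$. The result is a kernel that is homogeneous of degree $-(n-\theta)$ or better in $v$, up to H\"older coefficients. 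This gives condition \ref{enum:wSIO:int}, $|(\partial_t+\partial_s)^\alpha Q_\iota|\lesssim|t-s|^{-(n-\theta)}$, and condition \ref{enum:wSIO:supp} follows from the supports of $\kappa_\iota$ and $\chi_\iota$.

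For condition \ref{enum:wSIO:der} with $\delta=0$, we take $2|t-t'|<|t-s|$. We split the difference $K(t,s)-K(t',s)$ into two parts: the difference in the H\"older coefficients, which is bounded by $|t-t'|^\theta|t-s|^{-n}$, and the difference in the homogeneous $v$-part, bounded by the mean value theorem by $|t-t'|\,|t-s|^{-n-1+\theta}\le|t-t'|^\theta|t-s|^{-n}$.

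The identity \eqref{eq:ad:ker} is justified by induction on $|\alpha|$, integrating by parts against $h\in C_c^\infty$. The boundary terms on small spheres $|t-s|=\epsilon$ vanish because each kernel in the induction is $O(|t-s|^{-(n-\theta)})$ and $\epsilon^{n-1}\cdot\epsilon^{-(n-\theta)}\to0$. The remaining identity then extends by density, using the Schur bound from the proof of Theorem~\ref{theo:SIO:bdd:Sobolev:0}.

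The main obstacle I expect is the regularity bookkeeping. We need that the numerator keeps an extra $|t-s|^{\theta}$ cancellation even after $k-1$ derivatives have fallen on $G$, that is, that the cancellation $\nv_t\cdot D\Phi(t)=0$ survives the differentiation $(\partial_t+\partial_s)^\alpha$. To handle this, I would differentiate the integral representation of the numerator, $\int_0^1(\nv_t\cdot D\Phi(s+\lambda(t-s))-\nv_t\cdot D\Phi(t))\,d\lambda\,(t-s)$. Each $(\partial_t+\partial_s)$-derivative of the bracket is again a difference of $C^{k-1-j,\theta}$ functions evaluated at points a distance $O(|t-s|)$ apart, and so it is $O(|t-s|^\theta)$.
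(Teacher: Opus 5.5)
Apart from one step, your argument follows the paper's proof. The shared ingredients are:
\begin{itemize}
\item the cancellation formula $\nu_{\Phi_\iota(t)}\cdot(\Phi_\iota(t)-\Phi_\iota(s))=\sum_j (t_j-s_j)\,\nu_{\Phi_\iota(t)}\cdot\int_0^1(\partial_j\Phi_\iota(s+\lambda(t-s))-\partial_j\Phi_\iota(t))\,d\lambda$ for the numerator;
\item the observation that $\partial_t+\partial_s$ only hits coefficients that remain $C^{0,\theta}$ for $|\alpha|\le k-1$;
\item the bounds $|t-s|^{1+\theta}$ for the differentiated numerator and $|t-s|^{-d}$ for the differentiated $|\Phi_\iota(t)-\Phi_\iota(s)|^{-d}$. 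You get the latter by the chain rule in $M(t,s)$; the paper uses a Fa\`a di Bruno expansion in $|\Phi_\iota(t)-\Phi_\iota(s)|^2$.
\end{itemize}
Then come conditions (b) and (c) of Definition~\ref{defi:wSIO} with $\delta=0$, and Theorem~\ref{theo:SIO:bdd:Sobolev:0}. Your reduction of $\theta=1$ to some $\theta'<1$ is a correct detail that the paper leaves implicit.

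The step that fails is your justification of \eqref{eq:ad:ker}. Take a kernel $K=(\partial_t+\partial_s)^{\alpha}Q_\iota$ from the induction. With your bound $|K|\lesssim |t-s|^{-(n-\theta)}$, a sphere of radius $\epsilon$ in $\Rbb^n$ contributes $O(\epsilon^{n-1}\cdot\epsilon^{-(n-\theta)})=O(\epsilon^{\theta-1})$. This does not tend to $0$; it blows up for $\theta<1$.

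Moreover, $\partial_t K$ and $\partial_s K$ are each one order more singular than $K$, so in general they are not locally integrable. Hence you can neither write $\partial_t\int K(t,s)h(s)\,ds=\int \partial_t K\,h\,ds$ nor integrate $\int K\,\partial_s h\,ds$ by parts term by term. What actually happens is that two sphere terms cancel exactly. One comes from differentiating $\int_{|s-t|>\epsilon}K(t,s)h(s)\,ds$, whose region moves with $t$. The other comes from integrating $\int_{|s-t|>\epsilon}K\,\partial_s h\,ds$ by parts.

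The clean way to see this is to substitute $s=t-v$. Write $\int K(t,s)h(s)\,ds=\int K(t,t-v)h(t-v)\,dv$ and use $\partial_{t_k}[K(t,t-v)]=[(\partial_{t_k}+\partial_{s_k})K](t,t-v)$. This gives
\[
    \partial_{t_k}\int K(t,s)h(s)\,ds-\int K(t,s)\,\partial_{s_k}h(s)\,ds=\int (\partial_{t_k}+\partial_{s_k})K(t,s)\,h(s)\,ds,
\]
with no boundary terms at all. Differentiation under the integral sign is justified by dominated convergence. Both integrands, $[(\partial_{t_k}+\partial_{s_k})K](t,t-v)\,h(t-v)$ and $K(t,t-v)\,\partial_k h(t-v)$, are bounded by $C|v|^{-(n-\theta)}$ on a compact set, uniformly in $t$. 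That is exactly the weak-singularity estimate you already prove for $(\partial_t+\partial_s)^{\beta}Q_\iota$ with $|\beta|\le k-1$.

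Induction on $|\alpha|$ then gives \eqref{eq:ad:ker}. This is why the paper only needs the weak singularity of the combined kernel, not any vanishing of boundary terms.
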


\begin{proof}

    We aim to prove that $(\partial_t+\partial_s)^\alpha Q_\iota (t, s)$ satisfies \ref{enum:wSIO:int} and \ref{enum:wSIO:der} in Definition \ref{defi:wSIO} with $n=d-1$. To do so, let
    \[
        Q^1_\iota (t, s):=\nv_{\Phi_\iota (t)}\cdot (\Phi_\iota (t)-\Phi_\iota (s))\kappa_\iota (\Phi_\iota (t))\chi_\iota (\Phi_\iota (s))\sqrt{\det \fff_\iota (s)}
    \]
    and
    \[
        Q^2_\iota (t, s):=|\Phi_\iota (t)-\Phi_\iota (s)|^d.
    \]
    Then
    \[
        Q_\iota (t, s)=\frac{Q^1_\iota (t, s)}{\omega_dQ^2_\iota (t, s)}.
    \]
    Since $\nv_{\Phi_\iota (t)}\cdot \partial_{t_j}\Phi_\iota (t)=0$,
    \begin{align*}
        &\nv_{\Phi_\iota (t)}\cdot (\Phi_\iota (t)-\Phi_\iota (s)) \\
        &=\sum_{j=1}^{d-1} (t_j-s_j)\nv_{\Phi_\iota (t)}\cdot \int_0^1 (\partial_{t_j}\Phi_\iota (t-\mu (t-s))-\partial_{t_j}\Phi_\iota (t))\,\df \mu.
    \end{align*}
    So, we can show by iteration that
    \begin{align*}
        |(\partial_t+\partial_s)^\alpha Q^1_\iota (t, s)|
        &\lesssim
        |t-s|^{1+\theta}, \\
        |(\partial_t+\partial_s)^\alpha Q^1_\iota (t, s)-(\partial_t+\partial_s)^\alpha Q^1_\iota (t^\prime, s)|
        &\lesssim |t-t^\prime|^\theta |t-s| \quad (2|t-t^\prime|<|t-s|)
    \end{align*}
    for any $|\alpha|\leq k-1$.

    On the other hand, the differentials of the reciprocal of $Q^2_\iota (t, v)$ is calculated as
    \begin{align*}
        (\partial_t+\partial_s)^\alpha \left(\frac{1}{Q^2_\iota (t, s)}\right)
        &=\sum_{l=1}^{|\alpha|} (-1)^l\left(\frac{d}{2}\right)_l \frac{1}{|\Phi_\iota (t)-\Phi_\iota (s)|^{d+2l}}\\
        &\quad\times \sum_{\substack{\alpha^{(1)}+\cdots+\alpha^{(l)}=\alpha \\ |\alpha^{(1)}|, \ldots, |\alpha^{(l)}|\geq 1}} \prod_{j=1}^l[(\partial_t+\partial_s)^{\alpha^{(j)}}|\Phi_\iota (t)-\Phi_\iota (s)|^2]
    \end{align*}
    for $|\alpha|\leq k-1$, where
    \[
        (x)_l=x(x+1)\cdots (x+l-1)
    \]
    is the Pochhammer symbol. Then, for $|\alpha|\leq k-1$ and $l=1, \ldots, |\alpha|$, we obtain the estimates
    \begin{align*}
    \prod_{j=1}^l\left|\partial_t^{\alpha^{(j)}}| \Phi_\iota (t)-\Phi_\iota (s)|^2\right|
        \lesssim \prod_{j=1}^l |t-s|^2 \lesssim |t-s|^{2l}
    \end{align*}
    and
    \begin{align*}
        &\left|\prod_{j=1}^l [(\partial_t+\partial_s)^{\alpha^{(j)}}|\Phi_\iota (t)-\Phi_\iota (s)|^2]-\prod_{j=1}^l [(\partial_{t^\prime}+\partial_s)^{\alpha^{(j)}}|\Phi_\iota (t^\prime)-\Phi_\iota (s)|^2]\right| \\
        &=\left| \sum_{m=1}^l \sum_{\beta\leq \alpha^{(m)}}\begin{pmatrix}
            \alpha^{(m)} \\
            \beta
        \end{pmatrix}
        (\partial_t^\beta\Phi_\iota (t)+\partial_t^\beta\Phi_\iota (t^\prime)-2\partial_t^\beta\Phi_\iota (s)) \right. \\
        &\quad\cdot (\partial_t^{\alpha^{(m)}-\beta}\Phi_\iota (t)-\partial_t^{\alpha^{(m)}-\beta}\Phi_\iota (t^\prime)) \\
        &\quad\left.\times \prod_{j<m} [(\partial_t+\partial_s)^{\alpha^{(j)}}|\Phi_\iota (t)-\Phi_\iota (s)|^2]\times \prod_{j>m} [(\partial_{t^\prime}+\partial_s)^{\alpha^{(j)}}|\Phi_\iota (t^\prime)-\Phi_\iota (s)|^2]\right| \\
        &\lesssim |t-t^\prime||t-s|^{2l-1} \quad (\text{if } 2|t-t^\prime|<|t-s|).
    \end{align*}
    Thus we obtain the estimates
    \begin{align*}
        \left|\partial_t^\alpha \left(\frac{1}{Q^2_\iota (t, s)}\right)\right|
        &\lesssim \sum_{l=1}^{|\alpha|}\frac{1}{|t-s|^{d+2l}}\times |t-s|^{2l}\lesssim \frac{1}{|t-s|^d}
    \end{align*}
    and
    \begin{align*}
        &\left|(\partial_t+\partial_s)^\alpha \left(\frac{1}{Q^2_\iota (t, s)}\right)-(\partial_{t^\prime}+\partial_s)^\alpha \left(\frac{1}{Q^2_\iota (t^\prime, s)}\right)\right| \\
        &\lesssim \sum_{l=1}^{|\alpha|}\left|\frac{1}{|\Phi_\iota (t)-\Phi_\iota (s)|^d}-\frac{1}{|\Phi_\iota (t^\prime)-\Phi_\iota (s)|^d}\right|\times |t-s|^{2l} \\
        &\quad + \sum_{l=1}^{|\alpha|} \frac{1}{|t-s|^d}\times |s||t-s|^{2l-1}
        \lesssim \frac{|t-t^\prime|}{|t-s|^{d-1}} \quad (2|t-t^\prime|<|t-s|).
    \end{align*}

    Combining these estimates by the Leibnitz rule, we obtain that $(\partial_t+\partial_s)^\alpha Q_\iota (t, s)$ satisfies \ref{enum:wSIO:int} and \ref{enum:wSIO:der} in Definition \ref{defi:wSIO} with $n=d-1$ and $\delta=0$.

    Thus the formula \eqref{eq:ad:ker} is justified for $\alpha\in \Nbb_0^{d-1}$ with $|\alpha|\leq k-1$. Hence $(\partial_t+\partial_s)^\alpha Q_\iota$ is actually the integral kernel of $\ad^\alpha_\partial \Qcal_\iota$. Since $\supp (\partial_t+\partial_s)^\alpha Q_\iota \subset V_\iota \times V_\iota$, we infer that $\ad_\partial^\alpha \Qcal_\iota\in \Sigma^\theta_0$. 

    Now the boundedness of $\ad_\partial^\alpha \Qcal_\iota: L^p (\Rbb^{d-1})\to  B^{\theta-\Ge, p}(\Rbb^{d-1})$ immediately follows from Theorem \ref{theo:SIO:bdd:Sobolev:0}.
\end{proof}

Now we are ready to prove the smoothing property of NP operators.

    \begin{proof}[Proof of Theorem \ref{theo:layer:Sobolev} \ref{enum:np:smoothing:1}]
        Let $\alpha\in \Nbb_0^{d-1}$ be a multi-index with $|\alpha|\leq k-1$. Then a direct calculation proves that
        \[
            \partial_t^\alpha \Qcal_\iota
            =\Qcal_\iota \partial_t^\alpha
            +\sum_{\substack{\beta \leq \alpha \\ \beta\neq \alpha}}
            \begin{pmatrix}
                \alpha \\
                \beta
            \end{pmatrix}
            (\ad_\partial^{\alpha-\beta}\Qcal_\iota) \partial_t^\beta.
        \]
        Let $\Ge\in (0, \theta)$ and take an arbitrary $h\in C^{k, \theta}(\Rbb^n)$ such that $\supp h\subset V_\iota$. Then, by Proposition \ref{prop:Sobolev:low:order} and Lemma \ref{lemm:Q:est}, we have
        \begin{align*}
            &\|\partial_t^\alpha \Qcal_\iota [h]\|_{B^{\theta-\Ge, p}(\Rbb^{d-1})} \\
            &\lesssim \|\Qcal_\iota [\partial_t^\alpha h]\|_{B^{\theta-\Ge, p}(\Rbb^{d-1})}+\sum_{\substack{\beta \leq \alpha \\ \beta\neq \alpha}} \|(\ad_\partial^{\alpha-\beta}\Qcal_\iota) [\partial_t^\beta h]\|_{B^{\theta-\Ge, p}(\Rbb^{d-1})} \\
            &\lesssim \|\partial_t^\alpha h\|_{B^{\theta-\Ge-1, p}(\Rbb^{d-1})}+\sum_{\substack{\beta \leq \alpha \\ \beta\neq \alpha}} \|\partial_t^\beta h\|_{L^p(\Rbb^{d-1})} \\
            &\lesssim \|h\|_{B^{k-2+\theta-\Ge}(\Rbb^{d-1})}=\|h\|_{B^{\overline{s}-2-\Ge, p}(\Rbb^{d-1})}.
        \end{align*}
        Here, the boundedness of $\ad_\partial^\beta \Qcal_\iota: L^p (\Rbb^{d-1})\to B^{\theta-\Ge, p}(\Rbb^{d-1})$ is still available even if $|\beta|< k-1$ since $C^{k, \theta}$-domain can be regarded as $C^{|\beta|+1, 1}$-domain when $|\beta|<k-1$. Thus the operator $\Qcal_\iota: B^{\overline{s}-2-\Ge, p} (\Rbb^{d-1})\to B^{\overline{s}-1-\Ge, p} (\Rbb^{d-1})$ is bounded for any $\Ge\in (0, \theta)$.

        By Lemma \ref{lemm:off:diag}, we obtain the boundedness of $\NP^*: B^{\overline{s}-2-\Ge, p} (\partial D)\to B^{\overline{s}-1-\Ge, p} (\partial D)$. Since $\NP=\SL\NP^*\SL^{-1}$ by \eqref{eq:Plemelj}, we infer from Theorem \ref{theo:layer:Sobolev} \ref{enum:sl:invertible} that 
        \[
            \NP : B^{\overline{s}-1-\Ge, p} (\partial D)\to  B^{\overline{s}-\Ge, p} (\partial D)
        \]
        is bounded. Here, when $d=2$, we may assume that $\capac_D>1$ by suitable dilation. Thus, by duality and interpolation, we see that $\NP^*: B^{s-1, p} (\partial D)\to B^{s, p} (\partial D)$ is bounded for $s\in (-\overline{s}+1, \overline{s}-1)$.
    \end{proof}

\bibliography{small_resistance_260317}

\begin{thebibliography}{10}

\bibitem{Ammari-Kang07}
H.~Ammari and H.~Kang.
\newblock {\em Polarization and moment tensors. {With} applications to inverse problems and effective medium theory}, volume 162 of {\em Applied Mathematical Sciences}.
\newblock Springer, New York, 2007.

\bibitem{AKLLL07}
H.~Ammari, H.~Kang, H.~Lee, J.~Lee, and M.~Lim.
\newblock Optimal estimates for the electric field in two dimensions.
\newblock {\em J. Math. Pures Appl. (9)}, 88(4):307--324, 2007.

\bibitem{BLY09}
E.~S. Bao, Y.~Y. Li, and B.~Yin.
\newblock Gradient estimates for the perfect conductivity problem.
\newblock {\em Arch. Ration. Mech. Anal.}, 193(1):195--226, 2009.

\bibitem{Benveniste-Miloh99}
Y.~Benveniste and T.~Miloh.
\newblock Neutral inhomogeneities in conduction phenomena.
\newblock {\em J. Mech. Phys. Solids}, 47(9):1873--1892, 1999.

\bibitem{Bergh-Loefstroem76}
J.~Bergh and J.~L\"{o}fstr\"{o}m.
\newblock {\em Interpolation spaces. {A}n introduction}.
\newblock Grundlehren der Mathematischen Wissenschaften, No. 223. Springer-Verlag, Berlin-New York, 1976.

\bibitem{BCF80}
H.~Br{\'e}zis, L.~A. Caffarelli, and A.~Friedman.
\newblock Reinforcement problems for elliptic equations and variational inequalities.
\newblock {\em Ann. Mat. Pura Appl. (4)}, 123:219--246, 1980.

\bibitem{CPR09}
L.~P. Castro, E.~Pesetskaya, and S.~V. Rogosin.
\newblock Effective conductivity of a composite material with non-ideal contact conditions.
\newblock {\em Complex Var. Elliptic Equ.}, 54(12):1085--1100, 2009.

\bibitem{Chang-Lee08}
T.~Chang and K.~Lee.
\newblock Spectral properties of the layer potentials on {L}ipschitz domains.
\newblock {\em Illinois J. Math.}, 52(2):463--472, 2008.

\bibitem{Dahlberg-Kenig87}
B.~E.~J. Dahlberg and C.~E. Kenig.
\newblock Hardy spaces and the {Neumann} problem in {{\(L^ p\)}} for {Laplace}'s equation in {Lipschitz} domains.
\newblock {\em Ann. Math. (2)}, 125:437--465, 1987.

\bibitem{DallaRiva-Musolino13}
M.~Dalla~Riva and P.~Musolino.
\newblock A singularly perturbed nonideal transmission problem and application to the effective conductivity of a periodic composite.
\newblock {\em SIAM J. Appl. Math.}, 73(1):24--46, 2013.

\bibitem{Dispa03}
S.~Dispa.
\newblock Intrinsic characterizations of {Besov} spaces on {Lipschitz} domains.
\newblock {\em Math. Nachr.}, 260:21--33, 2003.

\bibitem{Dondi-LdC17}
F.~Dondi and M.~Lanza~de Cristoforis.
\newblock Regularizing properties of the double layer potential of second order elliptic differential operators.
\newblock {\em Mem. Differ. Equ. Math. Phys.}, 71:69--110, 2017.

\bibitem{DLZ2510}
H.~Dong, H.~Li, and Y.~Zhao.
\newblock Optimal gradient estimates for conductivity problems with imperfect low-conductivity interfaces.
\newblock arXiv:2510.10615 [math.AP].

\bibitem{DLY24}
H.~Dong, Y.~Li, and Z.~Yang.
\newblock Gradient estimates for the insulated conductivity problem: the non-umbilical case.
\newblock {\em J. Math. Pures Appl.}, 189:103587, 2024.

\bibitem{DLY25}
H.~Dong, Y.~Li, and Z.~Yang.
\newblock Optimal gradient estimates of solutions to the insulated conductivity problem in dimension greater than two.
\newblock {\em J. Eur. Math. Soc.}, 27(8):3275--3296, 2025.

\bibitem{DYZ26}
H.~Dong, Z.~Yang, and H.~Zhu.
\newblock Gradient estimates for the conductivity problem with imperfect bonding interfaces.
\newblock {\em J. Reine Angew. Math.}, 830:101--139, 2026.

\bibitem{Escher-Seiler08}
J.~Escher and J.~Seiler.
\newblock Bounded {$H_\infty$}-calculus for pseudodifferential operators and applications to the {D}irichlet-{N}eumann operator.
\newblock {\em Trans. Amer. Math. Soc.}, 360(8):3945--3973, 2008.

\bibitem{FMM98}
E.~Fabes, O.~Mendez, and M.~Mitrea.
\newblock Boundary layers on {S}obolev-{B}esov spaces and {P}oisson's equation for the {L}aplacian in {L}ipschitz domains.
\newblock {\em J. Funct. Anal.}, 159(2):323--368, 1998.

\bibitem{Feppon-Ammari22}
F.~Feppon and H.~Ammari.
\newblock Modal decompositions and point scatterer approximations near the minnaert resonance frequencies.
\newblock {\em Stud. Appl. Math.}, 149(1):164--229, 2022.

\bibitem{FJKLfse}
S.~Fukushima, Y.-G. Ji, H.~Kang, and X.~Li.
\newblock Finiteness of the stress in presence of closely located inclusions with imperfect bonding.
\newblock {\em Math. Ann.}, 391(2):1753--1778, 2025.

\bibitem{Gilbarg-Trudinger01}
D.~Gilbarg and N.~S. Trudinger.
\newblock {\em Elliptic partial differential equations of second order}.
\newblock Classics in Mathematics. Springer-Verlag, Berlin, 2001.
\newblock Reprint of the 1998 edition.

\bibitem{Hashin01}
Z.~Hashin.
\newblock Thin interphase/imperfect interface in conduction.
\newblock {\em Journal of Applied Physics}, 89(4):2261--2267, 02 2001.

\bibitem{Hashin02}
Z.~Hashin.
\newblock Thin interphase/imperfect interface in elasticity with application to coated fiber composites.
\newblock {\em J. Mech. Phys. Solids}, 50(12):2509--2537, 2002.

\bibitem{Jerison-Kenig95}
D.~Jerison and C.~E. Kenig.
\newblock The inhomogeneous {Dirichlet} problem in {Lipschitz} domains.
\newblock {\em J. Funct. Anal.}, 130(1):161--219, 1995.

\bibitem{Ji-Kang23IMRN}
Y.-G. Ji and H.~Kang.
\newblock Spectrum of the {N}eumann-{P}oincar\'{e} operator and optimal estimates for transmission problems in the presence of two circular inclusions.
\newblock {\em Int. Math. Res. Not. IMRN}, 2023(9):7638--7685, 2023.

\bibitem{KKLSY16}
H.~Kang, K.~Kim, H.~Lee, J.~Shin, and S.~Yu.
\newblock Spectral properties of the {N}eumann-{P}oincar\'e{} operator and uniformity of estimates for the conductivity equation with complex coefficients.
\newblock {\em J. Lond. Math. Soc. (2)}, 93(2):519--545, 2016.

\bibitem{Kang-Li19}
H.~Kang and X.~Li.
\newblock Construction of weakly neutral inclusions of general shape by imperfect interfaces.
\newblock {\em SIAM J. Appl. Math.}, 79(1):396--414, 2019.

\bibitem{KPS07}
D.~Khavinson, M.~Putinar, and H.~S. Shapiro.
\newblock Poincar\'{e}'s variational problem in potential theory.
\newblock {\em Arch. Ration. Mech. Anal.}, 185(1):143--184, 2007.

\bibitem{Marschall87}
J.~Marschall.
\newblock Pseudodifferential operators with nonregular symbols of the class {$S^m_{\rho\delta}$}.
\newblock {\em Comm. Partial Differential Equations}, 12(8):921--965, 1987.

\bibitem{Mazya-Shaposhnikova05}
V.~Maz'ya and T.~Shaposhnikova.
\newblock Higher regularity in the layer potential theory for {L}ipschitz domains.
\newblock {\em Indiana Univ. Math. J.}, 54(1):99--142, 2005.

\bibitem{Milton23}
G.~W. Milton.
\newblock {\em The theory of composites}, volume~88 of {\em Class. Appl. Math.}
\newblock Philadelphia, PA: Society for Industrial {and} Applied Mathematics (SIAM), 2023.
\newblock Reprint of the 2002 edition.

\bibitem{Miranda70}
C.~Miranda.
\newblock {\em Partial differential equations of elliptic type}, volume Band 2 of {\em Ergebnisse der Mathematik und ihrer Grenzgebiete [Results in Mathematics and Related Areas]}.
\newblock Springer-Verlag, New York-Berlin, revised edition, 1970.

\bibitem{Mitrea97}
D.~Mitrea.
\newblock The method of layer potentials for non-smooth domains with arbitrary topology.
\newblock {\em Integral Equations Operator Theory}, 29(3):320--338, 1997.

\bibitem{MMP94}
D.~Mitrea, M.~Mitrea, and J.~Pipher.
\newblock Vector potential theory on nonsmooth domains in {$\mathbf{R}^3$} and applications to electromagnetic scattering.
\newblock {\em J. Fourier Anal. Appl.}, 3(2):131--192, 1997.

\bibitem{Mitrea94}
M.~Mitrea.
\newblock {\em Clifford wavelets, singular integrals, and {Hardy} spaces}, volume 1575 of {\em Lect. Notes Math.}
\newblock Berlin: Springer-Verlag, 1994.

\bibitem{Mitrea-Taylor00}
M.~Mitrea and M.~Taylor.
\newblock Potential theory on {Lipschitz} domains in {Riemannian} manifolds: {Sobolev}-{Besov} space results and the {Poisson} problem.
\newblock {\em J. Funct. Anal.}, 176(1):1--79, 2000.

\bibitem{Miyanishi22}
Y.~Miyanishi.
\newblock Weyl's law for the eigenvalues of the {N}eumann-{P}oincar\'{e} operators in three dimensions: {W}illmore energy and surface geometry.
\newblock {\em Adv. Math.}, 406:Paper No. 108547, 19, 2022.

\bibitem{Ouhabaz96}
E.-M. Ouhabaz.
\newblock Invariance of closed convex sets and domination criteria for semigroups.
\newblock {\em Potential Anal.}, 5(6):611--625, 1996.

\bibitem{Pernin99}
J.~N. Pernin.
\newblock Diffusion in composite solid: threshold phenomenon and homogenization.
\newblock {\em Internat. J. Engrg. Sci.}, 37(12):1597--1610, 1999.

\bibitem{Persson22}
B.~N.~J. Persson.
\newblock On the electric contact resistance.
\newblock {\em Tribol. Lett.}, 70(88):published online, 2022.

\bibitem{Smolic-Klajn21}
I.~Smolić and B.~Klajn.
\newblock Capacitance matrix revisited.
\newblock {\em Prog. Electromagn. Res. B}, 92:1--18, 2021.

\bibitem{Steinbach-Wendland01}
O.~Steinbach and W.~L. Wendland.
\newblock On {C}. {N}eumann's method for second-order elliptic systems in domains with non-smooth boundaries.
\newblock {\em J. Math. Anal. Appl.}, 262(2):733--748, 2001.

\bibitem{Taylor00}
M.~E. Taylor.
\newblock {\em Tools for {PDE}. {Pseudodifferential} operators, paradifferential operators, and layer potentials}, volume~81 of {\em Mathematical Surveys and Monographs}.
\newblock American Mathematical Society, Providence, RI, 2000.

\bibitem{TWSB19}
N.~O. Taylor, M.-T. Wei, H.~A. Stone, and C.~P. Brangwynne.
\newblock Quantifying dynamics in phase-separated condensates using fluorescence recovery after photobleaching.
\newblock {\em Biophys. J.}, 117(7):1285--1300, 2019.

\bibitem{terElst-Ouhabaz14}
A.~F.~M. ter Elst and E.~M. Ouhabaz.
\newblock Analysis of the heat kernel of the {Dirichlet}-to-{Neumann} operator.
\newblock {\em J. Funct. Anal.}, 267(11):4066--4109, 2014.

\bibitem{Torquato-Rintoul95}
S.~Torquato and M.~D. Rintoul.
\newblock Effect of the interface on the properties of composite media.
\newblock {\em Phys. Rev. Lett.}, 75:4067--4070, Nov 1995.

\bibitem{Triebel92}
H.~Triebel.
\newblock {\em Theory of function spaces {II}}, volume~84 of {\em Monogr. Math., Basel}.
\newblock Basel etc.: Birkh{\"a}user Verlag, 1992.

\bibitem{Verchota84}
G.~Verchota.
\newblock Layer potentials and regularity for the {D}irichlet problem for {L}aplace's equation in {L}ipschitz domains.
\newblock {\em J. Funct. Anal.}, 59(3):572--611, 1984.

\bibitem{VLA16}
L.~W. Votapka, C.~T. Lee, and R.~E. Amaro.
\newblock Two relations to estimate membrane permeability using milestoning.
\newblock {\em J. Phys. Chem. B}, 120(33):8606--8616, 2016.

\bibitem{Yosida74}
K.~Yosida.
\newblock {\em Functional analysis}, volume Band 123 of {\em Die Grundlehren der mathematischen Wissenschaften}.
\newblock Springer-Verlag, New York-Heidelberg, fourth edition, 1974.

\bibitem{Yun07}
K.~Yun.
\newblock Estimates for electric fields blown up between closely adjacent conductors with arbitrary shape.
\newblock {\em SIAM J. Appl. Math.}, 67(3):714--730, 2007.

\end{thebibliography}
\bibliographystyle{abbrv}

\end{document}